\begin{document}
\title{Even ordinals and the Kunen inconsistency\footnote{This research was supported by NSF Grant DMS 1902884. The author thanks Peter Koellner and Farmer Schlutzenberg for their comments on the paper.}}
\author{Gabriel Goldberg\\ Evans Hall\\ University Drive \\ Berkeley, CA 94720}
\maketitle
\begin{abstract}
This paper contributes to the theory of large cardinals beyond the Kunen
inconsistency, or choiceless large cardinal axioms, in the context where the
Axiom of Choice is not assumed. The first part of the paper investigates a
periodicity phenomenon: assuming choiceless large cardinal axioms, the
properties of the cumulative hierarchy turn out to alternate between even and
odd ranks. 
The second part of the paper explores the structure of ultrafilters under
choiceless large cardinal axioms, exploiting the fact that these axioms imply a
weak form of the author's Ultrapower Axiom \cite{UA}. 
The third and final part of the paper examines the consistency strength of
choiceless large cardinals, including a proof that assuming DC, the existence of
an elementary embedding \(j : V_{\lambda+3}\to V_{\lambda+3}\) implies the
consistency of ZFC + \(I_0\). 
embedding \(j : V_{\lambda+3}\to V_{\lambda+3}\) implies that every subset of
\(V_{\lambda+1}\) has a sharp. We show that the existence of an elementary
embedding from \(V_{\lambda+2}\) to \(V_{\lambda+2}\) is equiconsistent with the
existence of an elementary embedding from \(L(V_{\lambda+2})\) to
\(L(V_{\lambda+2})\) with critical point below \(\lambda\). We show that
assuming DC, the existence of an elementary embedding \(j : V_{\lambda+3}\to
V_{\lambda+3}\) implies the consistency of ZFC + \(I_0\). By a recent result of
Schlutzenberg \cite{SchlutzenbergI0}, an elementary embedding from
\(V_{\lambda+2}\) to \(V_{\lambda+2}\) does not suffice.
\end{abstract}

\section{Introduction}
Assuming the Axiom of Choice, the large cardinal hierarchy comes to an abrupt
halt in the vicinity of an \(\omega\)-huge cardinal. This is the content of
Kunen's Inconsistency Theorem. The anonymous referee of Kunen's 1968 paper
\cite{Kunen} raised the question of whether this theorem can be proved without
appealing to the Axiom of Choice. This question remains unanswered. If the
answer is no, then dropping the Axiom of Choice, a choiceless large cardinal
hierarchy extends unimpeded beyond the Kunen barrier. The consistency of these
large cardinals beyond choice would raise profound philosophical problems,
arguably undermining the status of ZFC as a foundation for all of mathematics.
(These problems will not be discussed further here.)

Of course, G\"odel's Incompleteness Theorem precludes a definitive positive
answer to the question of the consistency of any  large cardinal axiom,
choiceless or not. Instead, one can only hope to develop a large cardinal's
theory to the point that it would be unreasonable to doubt its consistency. This
has been achieved for some initial segment of the large cardinal hierarchy,
although which axioms are secured in this weak sense is not a matter of general
agreement. We can all agree, however, that there is scant evidence to date of
the consistency of any of the axioms beyond the Kunen inconsistency. 

In fact, a number of researchers have tried to {\it refute} the choiceless large
cardinals in ZF. Many partial results towards this appear in Woodin's {\it
Suitable Extender Models II}; for example, \cite[Section 7]{SEM} and
\cite[Section 5]{SEM2}. In the other direction, the theory of large cardinals
just below the Kunen inconsistency has been developed quite extensively: for
example, in \cite{SEM2} and \cite{Cramer}. The theory of choiceless large
cardinals far beyond the Kunen inconsistency, especially Berkeley cardinals, is
developed in \cite{KWB} and \cite{Cutolo}. Following Schlutzenberg
\cite{SchlutzenbergReinhardt}, we take up the theory of choiceless large
cardinals right at the level of the principle that Kunen refuted in ZFC. In
particular, we will be concerned with the structure of nontrivial elementary
embeddings from \(V_{\lambda+n}\) to \(V_{\lambda+n}\) where \(\lambda\) is a
limit ordinal and \(n\) is a natural number. One of the general themes of this
work is that, assuming choiceless large cardinal axioms, the structure of
\(V_{\lambda+2n}\) is very different from that of \(V_{\lambda+2n+1}\). 

The underlying phenomenon here involves the definability properties of
rank-into-rank embeddings, which is the subject of \cref{DefinabilitySection}.
An ordinal \(\alpha\) is said to be {\it even} if \(\alpha = \lambda+2n\) for
some limit ordinal \(\lambda\) and some natural number \(n\); otherwise,
\(\alpha\) is {\it odd.}
\begin{repthm}{FirstPeriodicity}
Suppose \(\epsilon\) is an even ordinal.
\begin{enumerate}[(1)]
\item No nontrivial elementary embedding from \(V_{\epsilon}\) to \(V_\epsilon\)
is definable over \(V_\epsilon\).
\item Every elementary embedding from \(V_{\epsilon+1}\) to \(V_{\epsilon+1}\)
is definable over \(V_{\epsilon+1}\).
\end{enumerate}
\end{repthm}
This theorem was the catalyst for most of this research. It was independently
discovered by Schlutzenberg, and is treated in greater detail in the joint paper
\cite{BergBerg}.

We will use the following notation:
\begin{defn}
Suppose \(M\) and \(N\) are transitive classes and \(j : M\to N\) is an
elementary embedding. Then the {\it critical point} of \(j\), denoted
\(\text{crit}(j)\), is the least ordinal moved by \(j\). The {\it critical
supremum of \(j\)}, denoted \(\kappa_\omega(j)\), is the least ordinal above
\(\text{crit}(j)\) that is fixed by \(j\).
\end{defn}
Most of the study of rank-to-rank embeddings has focused on embeddings \(j\)
either from \(V_{\kappa_\omega(j)}\) to \(V_{\kappa_\omega(j)}\) or from
\(V_{\kappa_\omega(j)+1}\) to \(V_{\kappa_\omega(j)+1}\). The reason, of course,
is that assuming the Axiom of Choice, these are the only rank-to-rank embeddings
there are. (This well-known fact follows from the proof of Kunen's theorem.)
Part of the purpose of this paper is to use \cref{FirstPeriodicity} to extend
this theory to embeddings of \(V_\epsilon\) and \(V_{\epsilon+1}\) where
\(\epsilon\) is an arbitrary even ordinal.

The mysterious analogy between the structure of the inner model \(L(\mathbb R)\)
assuming \(\text{AD}^{L(\mathbb R)}\) and that of \(L(V_{\lambda+1})\) under the
axiom \(I_0\) motivates much of the theory of \(L(V_{\lambda+1})\) developed in
\cite{SEM2}.\footnote{The axiom \(I_0\) states that there is an elementary
embedding from \(L(V_{\lambda+1})\) to \(L(V_{\lambda+1})\) with critical point
less than \(\lambda\).} In \cref{ThetaSection}, we attempt to develop a similar
analogy between the structure of arbitrary subsets of \(V_{\epsilon+1}\)
assuming that there is an elementary embedding from \(V_{\epsilon+2}\) to
\(V_{\epsilon+2}\), and the structure of subsets of \(\mathbb R\) assuming full
AD. 

Our main focus in \cref{ThetaSection} is the following sequence of cardinals:
\begin{defn}We denote by \(\theta_\alpha\) the supremum of all ordinals that are
the surjective image of \(V_\beta\) for some \(\beta < \alpha\).
\end{defn}
The problem of determining the structure of the cardinals \(\theta_\alpha\) is a
choiceless analog of the (generalized) Continuum Problem. Note that for any
limit ordinal \(\lambda\), \(\theta_{\lambda}\) is a strong limit
cardinal\footnote{In the context of ZF, a cardinal \(\theta\) is a {\it strong
limit cardinal} if \(\theta\) is not the surjective image of \(P(\beta)\) for
any ordinal \(\beta < \theta\).} and \(\theta_{\lambda+1} =
(\theta_{\lambda})^+\). We conjecture that this phenomenon generalizes
periodically:
\begin{repconj}{ThetaConj}
Suppose \(\epsilon\) is an even ordinal and there is an elementary embedding
from \(V_{\epsilon+1}\) to \(V_{\epsilon+1}\). Then \(\theta_{\epsilon}\) is a
strong limit cardinal and \(\theta_{\epsilon+1} = (\theta_{\epsilon})^+\).
\end{repconj}
Under the Axiom of Determinacy, \(\theta_\omega = \omega\) is a strong limit
cardinal, \(\theta_{\omega+1} = \omega_1\), \(\theta_{\omega+2} = \Theta\) is a
strong limit cardinal, and \(\theta_{\omega+3} = \Theta^+\).

In addition to this numerology, various partial results of \cref{ThetaSection}
suggest that \cref{ThetaConj} holds, or at least that \(\theta_{\epsilon}\) is
relatively large and \(\theta_{\epsilon+1}\) is relatively small. For example:
\begin{repthm}{OddBigThm}
Suppose \(\epsilon\) is an even ordinal. Suppose \(j : V_{\epsilon+2}\to
V_{\epsilon+2}\). Then there is no surjection from
\(P(({\theta_{\epsilon+1}})^{+\lambda})\) onto \(\theta_{\epsilon+2}\) where
\(\lambda = \kappa_\omega(j)\).
\end{repthm}
\begin{repthm}{EvenSmallThm}
Suppose \(\epsilon\) is an even ordinal. Suppose \(j : V_{\epsilon+3}\to
V_{\epsilon+3}\) is an elementary embedding with critical point \(\kappa\). Then
the interval \((\theta_{\epsilon+2},\theta_{\epsilon+3})\) contains fewer than
\(\kappa\) regular cardinals.
\end{repthm}

The attempt to prove \cref{ThetaConj} leads to the following principle: 
\begin{defn}
We say {\it \(V_{\alpha+1}\) satisfies the Collection Principle} if every
binary relation \(R\subseteq V_{\alpha}\times V_{\alpha+1}\) has a
subrelation \(S\) such that \(\text{dom}(S) = \text{dom}(R)\) and
\(\text{ran}(S)\) is the surjective image of \(V_\alpha\).
\end{defn}
From one perspective, the Collection Principle is a weak choice principle. It
follows from the Axiom of Choice, because one can take the subrelation \(S\) to
be a uniformization of \(R\). Another perspective is that the Collection
Principle states that \(\theta_{\alpha+1}\) is regular in a strong sense. In
particular, if \(V_{\alpha+1}\) satisfies the Collection Principle,
then \(\theta_{\alpha+1}\) is a regular cardinal. 
Under \(\text{AD}\), the converse holds at \(\omega+2\): if
\(\theta_{\omega+2}\) is regular, then \(V_{\omega+2}\) satisfies the Collection
Principle.
\begin{repthm}{StrongLimitThm}
Suppose \(\epsilon\) is an even ordinal. Suppose \(j : V_{\epsilon+2}\to
V_{\epsilon+2}\) is a nontrivial elementary embedding. Assume
\(\kappa_\omega(j)\)-\textnormal{DC} and that \(V_{\epsilon+1}\) satisfies the
Collection Principle.\footnote{The choice principle \(\lambda\)-DC is defined in
\cref{DCSection}} Then \(\theta_{\epsilon+2}\) is a strong limit cardinal.
Moreover, for all \(\beta < \theta_{\epsilon+2}\), \(P(\beta)\) is the
surjective image of \(V_{\epsilon+1}\).
\end{repthm}
The proof of this theorem involves generalizing Woodin's Coding Lemma. The
theorem yields a new proof of the Kunen inconsistency theorem: assuming the
Axiom of Choice, the hypotheses of \cref{StrongLimitThm} hold, yet
\(\theta_{\epsilon+2} = |V_{\epsilon+1}|^+\) is not a strong limit cardinal, and
it follows that there is no elementary embedding from \(V_{\epsilon+2}\) to
\(V_{\epsilon+2}\). (A slightly more detailed proof appears in \cref{KIT}.)

To drive home the contrast between the even and odd levels, we show that the
final conclusion of \cref{StrongLimitThm} fails at the even levels:
\begin{repthm}{NoCodingThm}
Suppose \(\epsilon\) is an even ordinal and there is an elementary embedding
from \(V_{\epsilon+2}\) to \(V_{\epsilon+2}\). Then for any ordinal \(\gamma\),
there is no surjection from \(V_{\epsilon}\times \gamma\) onto
\(P(\theta_{\epsilon})\).
\end{repthm}

\cref{KetonenSection} concerns the theory of ultrafilters assuming choiceless
large cardinals. Woodin proved that choiceless large cardinal axioms (combined
with ``\(\kappa_\omega(j)\)-DC'') imply the existence of measurable successor
cardinals. The ultrafilters he produced bear a strong resemblance to the
ultrafilters arising in the context of AD. Here we expand upon that theme. 

First, we study the ordinal definability of ultrafilters over ordinals:
\begin{repthm}{ODThm}
Suppose \(j : V_{\epsilon+3}\to V_{\epsilon+3}\) is an elementary embedding. Let
\(\lambda = \kappa_\omega(j)\). Assume \(\lambda\)-\textnormal{DC}. Suppose
\(U\) is a \(\lambda^+\)-complete ultrafilter over an ordinal less than
\(\theta_{\epsilon+2}\). Then the following hold:
\begin{enumerate}[(1)]
\item \(U\cap \textnormal{HOD}\) belongs to \(\textnormal{HOD}\).
\item \(U\) belongs to an ordinal definable set of cardinality less than
\(\lambda\).
\item For an \(\textnormal{OD}\)-cone of \(x\in V_\lambda\), the ultrapower
embedding \(j_U\) is amenable to \(\textnormal{HOD}_x\).
\end{enumerate}
\end{repthm}
This result uses an analog of the Ultrapower Axiom of \cite{UA} that is provable
from choiceless large cardinals (\cref{AlmostUA}).

Finally, we prove a form of strong compactness for \(\kappa_\omega(j)\) where
\(j :V\to V\) is an elementary embedding:
\begin{repthm}{FilterExtensionThm} 
Suppose \(j : V\to V\) is a nontrivial elementary embedding. Let \(\lambda =
\kappa_\omega(j)\). Assume \(\lambda\)-\textnormal{DC} holds. Then every
\(\lambda^+\)-complete filter over an ordinal extends to a \(\lambda^+\)-complete
ultrafilter.
\end{repthm}
This result is an application of the Ketonen order on filters, a wellfounded
partial order on countably complete filters over ordinals that simultaneously
generalizes the Jech order on stationary sets and the Mitchell order on normal
ultrafilters.

Like many of the arguments of this paper (e.g., \cref{ODThm}), the proof of
\cref{FilterExtensionThm} is general enough that it yields a new consequence of
\(I_0\):
\begin{repthm}{I0FilterExtensionThm}[ZFC] Suppose \(\lambda\) is a cardinal and
there is an elementary embedding from \(L(V_{\lambda+1})\) to
\(L(V_{\lambda+1})\) with critical point less than \(\lambda\). Then in
\(L(V_{\lambda+1})\), every \(\lambda^+\)-complete filter over an ordinal less
than \(\theta_{\lambda+2}\) extends to a \(\lambda^+\)-complete ultrafilter.
\end{repthm}

In the last section of this paper, \cref{ConsistencySection}, we turn to
consistency results. Most of these results predate the groundbreaking theorem of
Schlutzenberg \cite{SchlutzenbergI0} that the existence of an elementary
embedding \(j : L(V_{\lambda+1})\to L(V_{\lambda+1})\) with critical point below
\(\lambda\) is equiconsistent with the existence of an elementary embedding from
\(V_{\lambda+2}\) to \(V_{\lambda+2}\), but it is useful to keep this theorem in
mind to appreciate the statements of our theorems.

We prove the equiconsistency of various choiceless large cardinals associated
with the Kunen inconsistency:
\begin{repthm}{Equicon}
	The following statements are equiconsistent over \textnormal{ZF}:
	\begin{enumerate}[(1)]
	\item For some \(\lambda\), there is a nontrivial elementary embedding from
	\(V_{\lambda+2}\) to \(V_{\lambda+2}\).
	\item For some \(\lambda\), there is an elementary embedding from
	\(L(V_{\lambda+2})\) to \(L(V_{\lambda+2})\) with critical point below
	\(\lambda\).
	\item There is an elementary embedding \(j\) from \(V\) to an inner model
	\(M\) that is closed under \(V_{\kappa_\omega(j)+1}\)-sequences.
	\end{enumerate}
\end{repthm}
Combined with Schlutzenberg's Theorem, this shows that all of these principles
are equiconsistent with the the existence of an elementary embedding from
\(L(V_{\lambda+1})\) to \(L(V_{\lambda+1})\) with critical point below
\(\lambda\).

Our next theorem shows that choiceless large cardinal axioms beyond an
elementary embedding from \(V_{\lambda+2}\) to \(V_{\lambda+2}\) are stronger
than \(I_0\):
\begin{repthm}{I0Con}
	Suppose \(\lambda\) is an ordinal and there is a \(\Sigma_1\)-elementary
	embedding \(j :V_{\lambda+3}\to V_{\lambda+3}\) with \(\lambda =
	\kappa_\omega(j)\). Assume \(\textnormal{DC}_{V_{\lambda+1}}\). Then there
	is a set generic extension \(N\) of \(V\) such that \((V_\delta)^N\)
	satisfies \(\textnormal{ZFC}+ I_0\) for some \(\delta < \lambda\).
\end{repthm}
The following result is an immediate corollary:
\begin{cor*}
Over \textnormal{ZF + DC}, the existence of an elementary embedding from
\(V_{\lambda+3}\) to \(V_{\lambda+3}\) implies the consistency of
\(\textnormal{ZFC} + I_0\).\qed
\end{cor*}
By Schlutzenberg's Theorem, the hypothesis of \cref{I0Con} cannot be reduced to
the existence of an elementary embedding from \(V_{\lambda+2}\) to
\(V_{\lambda+2}\), or even the existence of a \(\Sigma_0\)-elementary embedding
\(j :V_{\lambda+3}\to V_{\lambda+3}\) with \(j(V_{\lambda+2})=V_{\lambda+2}\).

Schlutzenberg \cite{SchlutzenbergI0} poses the problem of calculating the exact
consistency strength over ZF of the existence of an elementary embedding from
\(V_{\lambda+2}\) to \(V_{\lambda+2}\) in terms of large cardinal axioms
compatible with the Axiom of Choice. We sketch how to calculate the consistency
strength of this assertion over ZF + DC:
\begin{repthm}{I0EquiCon}
	The following statements are equiconsistent over \textnormal{ZF + DC}:
	\begin{enumerate}[(1)]
	\item For some ordinal \(\lambda\), there is an elementary embedding from
	\(V_{\lambda+2}\) to \(V_{\lambda+2}\).
	\item The Axiom of Choice\textnormal{ + \(I_0\)}.
	\end{enumerate}
\end{repthm}
We defer to the appendix some facts about countably complete filters and
ultrafilters that are used in \cref{UndefinabilitySection} and
\cref{KetonenSection}. This is accomplished by considering a version of the
Ketonen order studied in \cite{UA} that is applicable to countably complete
filters on complete Boolean algebras in the context of ZF + DC. This level of
generality is overkill, but it makes the proofs slicker.
\section{Notation and preliminaries}\label{Preliminaries}
In this section, we lay out some of the notational conventions we will use in
this paper. {\it Most importantly, we work throughout this paper in
\textnormal{ZF} alone, without assuming the Axiom of Choice, explicitly making
note of any other choice principles we use.} Most of the notation discussed here
is standard, with the notable exception of \cref{HSection}, which introduces a
class of structures \(\langle \mathcal H_\alpha\rangle_{\alpha \in \text{Ord}}\)
which will be very useful throughout the paper.
\subsection{Elementary embeddings}
We use the following notation for elementary embeddings:
\begin{defn}
Suppose \( M\) and \( N\) are structures in the same signature. Then \(\mathscr
E(M,N)\) denotes the set of elementary embeddings from \(M\) to \(N\), and
\(\mathscr E(M)\) denotes the set of elementary embeddings from \(M\) to itself.
\end{defn}
Typically the structures we consider are of the form \((M,\in)\) where \(M\) is
a transitive set. We will always suppress the membership relation, writing
\(\mathscr E(M)\) when we mean \(\mathscr E(M,{\in})\).

Our notation for the critical sequence of an embedding is pulled from
\cite{SEM2}:
\begin{defn}
Suppose \(M\) and \(N\) are transitive structures and \(j\in \mathscr E(M,N)\).
The {\it critical point of \(j\)}, denoted \(\text{crit}(j)\), is the least
ordinal moved by \(j\). The {\it critical sequence of \(j\)} is the sequence
\(\langle \kappa_n(j) \mid {n < \omega}\rangle\) defined by \(\kappa_0(j) =
\text{crit}(j)\) and \(\kappa_{n+1}(j) = j(\kappa_n(j))\). Finally, the {\it
critical supremum of \(j\)} is the ordinal \(\kappa_\omega(j) = \sup_{n <
\omega} \kappa_n(j)\).
\end{defn}
Of course, \(\text{crit}(j)\) may not be defined since \(j\) may have no
critical point. Even if \(\text{crit}(j)\) is defined, \(\kappa_{n+1}(j)\) may
not be for some \(n < \omega\), since it is possible that \(\kappa_n(j)\notin
M\). 
\subsection{Weak choice principles}\label{DCSection}
We say that \(T\subseteq X^{<\lambda}\) is a tree if for all \(s\in T\), for all
\(\alpha < \text{dom}(s)\), \(s\restriction \alpha \in T\). A tree \(T\subseteq
X^{<\lambda}\) is {\it \(\lambda\)-closed} if for any \(s\in X^{<\lambda}\) with
\(s\restriction \alpha\in T\) for all \(\alpha < \text{dom}(s)\), \(s\in T\). A
{\it cofinal branch} of a tree \(T\subseteq X^{<\lambda}\) is a sequence \(s\in
X^\lambda\) such that \(x\restriction \alpha\in T\) for all \(\alpha <
\lambda\).

Various weak choice principles will be used throughout the paper. The most
important are the following:
\begin{defn}
Suppose \(\lambda\) is a cardinal and \(X\) is a set.
\begin{itemize}
\item \(\lambda\textnormal{-DC}_X\) denotes the principle asserting that every
\(\lambda\)-closed tree of sequences \(T\subseteq X^{<\lambda}\) with no maximal
branches has a cofinal branch.
\item \(\lambda\)\textnormal{-DC} denotes the principle asserting that
\(\lambda\textnormal{-DC}_Y\) holds for all sets \(Y\).
\item The Axiom of Dependent Choice, or DC, is the principle \(\omega\)-DC.
\end{itemize}
\end{defn}

In the context of ZF, it may be that there is a surjection from \(X\) to \(Y\)
but no injection from \(Y\) to \(X\). We therefore use the following notation:
\begin{defn}\label{CardinalityEqn}
If \(X\) and \(Y\) are sets, then \(X\preceq^{*} Y\) if there is a partial
surjection from \(Y\) to \(X\). We let \([X]^{Y} = \{S\subseteq X \mid
S\preceq^{*} Y\}\).
\end{defn}
We use partial surjections because these are what arise naturally in practice,
but of course \(X\preceq^{*} Y\) if and only if either \(X = \emptyset\) or
there is a total surjection from \(Y\) to \(X\).

\subsection{Filters and ultrafilters}
We use the following convention: a filter {\it over} a set \(X\) is a filter
{\it on} the Boolean algebra \(P(X)\). Filters on Boolean algebras do not come
up until the appendix, so until then, we use the word filter to refer to a
filter over some set. 

\begin{defn}\label{CompletenessDef}
Suppose \(\gamma\) is an ordinal. A filter \(F\) is {\it \(\gamma\)-saturated}
if there is no sequence \(\langle S_\alpha \mid \alpha < \gamma\rangle\) of
\(F\)-positive sets such that \(S_\alpha\cap S_\beta\) is \(F\)-null for all
\(\alpha < \beta < \gamma\); \(F\) is {\it weakly \(\gamma\)-saturated} if there
is no sequence \(\langle S_\alpha \mid \alpha < \gamma\rangle\) of pairwise
disjoint \(F\)-positive sets.
\end{defn}
If \(F\) is \(\gamma\)-complete, then \(F\) is \(\gamma\)-saturated if and only
if \(F\) is weakly \(\gamma\)-saturated. 

\begin{defn}
If \(B\) is a set, a filter \(F\) is {\it \(B\)-complete} if for any \(b\in B\)
and any \(D\subseteq F\) such that \(D\preceq^{*} b\), \(\bigcap D\in F\). A
filter \(F\) is {\it \(X\)-closed} if \(F\) is \(\{X\}\)-complete.
\end{defn}
A filter is said to be {\it countably complete} if it is \(\omega_1\)-complete.

We will need the standard derived ultrafilter construction:
\begin{defn}
Suppose \(h : P(X)\to P(Y)\) is a homomorphism of Boolean algebras and \(a\in
Y\). The {\it ultrafilter over \(X\) derived from \(h\) using \(a\)} is the
ultrafilter over \(X\) defined by the formula \(\{A\subseteq X: a\in h(A)\}\).
\end{defn}

Our notation for ultrapowers is standard in set theory. If \(U\) is an
ultrafilter over a set \(X\), then \(j_U : V\to M_U\) denotes the associated
ultrapower. If \(\langle M_x\rangle_{x\in X}\) is a sequence of structures in
the same signature, then \(\prod_{x\in X}M_x/U\) denotes their ultraproduct. 
\subsection{The structures \(\mathcal H_\alpha\)}\label{HSection}
Although the subject of this paper is rank-to-rank embeddings (i.e., elements of
\(\mathscr E(V_\alpha)\) for some ordinal \(\alpha\)), it is often convenient to
lift these embeddings to act on larger structures. The issue is that many sets
are coded in \(V_\alpha\) but do not belong to \(V_\alpha\). This is especially
annoying when \(\alpha\) is a successor ordinal, in which case \(V_\alpha\)
fails to be closed under Kuratowski pairs. This motivates introducing the
following structures:
\begin{defn}\label{HAlphaLma}
For any set \(X\), let \(\mathcal H(X)\) denote the union of all transitive sets
\(M\) such that \(M\preceq^{*} S\) for some \(S\in X\). For any ordinal
\(\alpha\), let \(\mathcal H_\alpha = \mathcal H(V_\alpha)\).
\end{defn}
If \(\kappa\) is a (wellordered) cardinal, then \(\mathcal H(\kappa)\) is the
usual structure \(H(\kappa)\). In ZF, however, there may be other structures of
the form \(\mathcal H(X)\). Notice that \(\mathcal H_{\alpha+1}\) is the
collection of sets that are the surjective image of \(V_\alpha\). 
\begin{defn}\label{ThetaDef}
For any set \(X\), let \(\theta(X)\) denote the least ordinal that is not the
surjective image of some set \(S\in X\). Let \(\theta_\alpha =
\theta(V_\alpha)\). 
\end{defn}
Then \(\theta(X) = \mathcal H(X)\cap \text{Ord}\). The cardinals
\(\theta_\alpha\) are studied in \cref{ThetaSection}.

Note that for all \(\alpha\), \(V_\alpha\subseteq \mathcal H_\alpha\). We claim
that every embedding in \(\mathscr E(V_{\alpha})\) extends uniquely to an
embedding in \(\mathscr E(\mathcal H_\alpha)\). This is a consequence of a
coding of \(\mathcal H_\alpha\) inside the structure \(V_\alpha\). We proceed to
describe one such coding, and then we sketch how this yields the unique
extension of embeddings from \(\mathscr E(V_\alpha)\) to \(\mathscr E(\mathcal
H_\alpha)\).

Let \(p : V\to V\times V\) denote some {\it Quine-Rosser pairing function},
which is a bijection that is \(\Sigma_0\)-definable without parameters such that
for all infinite ordinals \(\alpha\), \(p[V_\alpha] = V_\alpha\times V_\alpha\).

Fix an infinite ordinal \(\alpha\). We will define a partial surjection
\[\Phi_\alpha : V_\alpha\to \mathcal H_\alpha\] For each \(x\in V_\alpha\), let
\(R_x = p[x]\) be the binary relation coded by \(x\), and let \(D_x\) denote the
field of \(R_x\). Note that \(D_x\in V_\alpha\). Let \(\textnormal{E}_\alpha\)
be the set of \(x\in V_\alpha\) such that \(R_x\) is a wellfounded
relation on \(D_x\). For \(x\in \text{E}_\alpha\), let \(\pi_x : D_x\to M_x\) be
the Mostowski collapse. Let \[\textnormal{C}_\alpha = \{(x,y) \mid x\in
\textnormal{E}_\alpha\text{ and }y\in D_x\}\] For \((x,y)\in
\textnormal{C}_\alpha\), let \(\Phi_\alpha(x,y) = \pi_x(y)\). Then
\(\text{ran}(\Phi_\alpha) = \bigcup_{x\in V_\alpha} M_x = \mathcal H_\alpha\).

Since \(\Phi_\alpha\) is definable over \(\mathcal H_\alpha\), one has that for
any \(i : \mathcal H_\alpha\to \mathcal H_\alpha\), \(i(\Phi_\alpha(x,y)) =
\Phi_\alpha(i(x),i(y))\). Conversely, since the sets \(\text{C}_\alpha\) and
\(\Phi_\alpha^{-1}[{\in}] = \{(u,w)\in \text{C}_\alpha\times \text{C}_\alpha :
\Phi_\alpha(u) \in \Phi_\alpha(w)\}\), and \(\Phi_\alpha^{-1}[{=}]\) are
definable over \(V_\alpha\), one has that if \(j : V_\alpha\to V_\alpha\) is
elementary, then setting \[j^\star(\Phi_\alpha(x,y)) = \Phi_\alpha(j(x),j(y)),\]
the embedding \(j^\star : \mathcal H_\alpha\to \mathcal H_\alpha\) is
well-defined and elementary.
\begin{defn}
For any \(j\in \mathscr E(V_\alpha)\), \(j^\star\) denotes the unique elementary
embedding \(k : \mathcal H_\alpha\to \mathcal H_\alpha\) such that
\(k\restriction V_\alpha = j\).
\end{defn}

The rest of the section contains an analysis of \(j^\star\) when \(j :
V_\alpha\to V_\alpha\) is only assumed to be \(\Sigma_n\)-elementary for some
\(n < \omega\). This is rarely relevant, and the reader can skip it for now.

We claim that if \(i : \mathcal H_\alpha\to \mathcal H_\alpha\) is a
\(\Sigma_0\)-elementary embedding, then \(i(\Phi_\alpha(x,y)) =
\Phi_\alpha(i(x),i(y))\) for all \((x,y)\in \text{C}_\alpha\). To see this, fix
\((x,y)\in \text{C}_\alpha\). Let \(a = \Phi_\alpha(x,y)\). Let \(M\) be an
admissible set in \(\mathcal H_\alpha\) such that \(x,y\in M\). Then \(M\)
satisfies the statement ``\(a = \pi_x(y)\) where \(\pi_x\) is the Mostowski
collapse of \((D_x,R_x)\)." Since \(i\) is \(\Sigma_0\)-elementary, it follows
that \(i(M)\) is an admissible set that satisfies ``\(i(a) =
\pi_{i(x)}(i(y))\).'' This is expressible as a \(\Sigma_1\) statement, so is
upwards absolute to \(V\). Therefore \(i(a) = \pi_{i(x)}(i(y))\), or in other
words, \(i(\Phi_\alpha(x,y)) = \Phi_\alpha(i(x),i(y))\), as desired.

Conversely, suppose \(j : V_\alpha\to V_\alpha\) is \(\Sigma_1\)-elementary, and
let \(j^\star :\mathcal H_\alpha\to \mathcal H_\alpha\) be defined by \(j^\star
(\Phi_\alpha(x,y)) = \Phi_\alpha(j(x),j(y))\). We claim \(j^\star : \mathcal
H_\alpha\to \mathcal H_\alpha\) is well-defined and \(\Sigma_0\)-elementary.
Note that \(\textnormal{C}_\alpha\), \(\Phi_\alpha^{-1}[{=}]\), and
\(\Phi_\alpha^{-1}[{\in}]\) are \(\Pi_1\)-definable over \(V_\alpha\). This
implies that \(j^\star\) is a well-defined \(\in\)-homomorphism. If \(M\in
\mathcal H_\alpha\) is a transitive set, then taking \(x\in \text{E}_\alpha\)
such that \(M = M_x\), the satisfaction predicate for \((D_x,R_x) \cong M\) is
\(\Delta_1\)-definable over \(V_\alpha\) from \(x\), and hence \(j\restriction M
:  M \to j(M)\) is fully elementary. Since every \(x\in \mathcal H_\alpha\)
belongs to some transitive \(M\in \mathcal H_\alpha\), and such a set \(M\) is a
\(\Sigma_0\)-elementary substructure of \(\mathcal H_\alpha\), it follows that
\(j\) is \(\Sigma_0\)-elementary.

\begin{defn}
Suppose \(i:V_\alpha\to V_\alpha\) is a \(\Sigma_1\)-elementary embedding. Then
\(i^\star\) denotes the unique \(\Sigma_0\)-elementary embedding from \(\mathcal
H_\alpha\) to \(\mathcal H_\alpha\) extending \(i\).
\end{defn}
By a localization of the arguments above, one obtains the following fact about
partially elementary \(\star\)-extensions:
\begin{lma}\label{LocalElementarity}
Suppose \(n < \omega\) and \(i : V_\alpha\to V_\alpha\) is a
\(\Sigma_{n+1}\)-elementary embedding. Then \(i^\star :\mathcal H_\alpha\to
\mathcal H_\alpha\) is \(\Sigma_n\)-elementary.
\begin{proof}[Sketch] For example, take the case \(n = 1\). Suppose \(\psi\) is
a \(\Sigma_0\)-formula, \(a\in \mathcal H_\alpha\) and \(\mathcal H_\alpha\)
satisfies \(\exists v\ \psi(j(a),v)\). Fix \((x_0,y_0)\in \text{C}_\alpha\) with
\(\Phi_\alpha(x_0,y_0) = a\). Then \(V_\alpha\) satisfies that there is some
\((x_1,y_1)\in \text{C}_\alpha\) such that \((D_{x_1},R_{x_1})\) is an
end-extension of \((D_{j(x_0)},R_{j(x_0)})\) and \((D_{x_1},R_{x_1})\) satisfies
\(\psi(j(y_0),y_1)\). This is \(\Sigma_2\)-expressible in \(V_\alpha\), so by
elementarity, there is some \((x_1,y_1)\in \text{C}_\alpha\) such that
\((D_{x_1},R_{x_1})\) is an end-extension of \((D_{x_0},R_{x_0})\) and
\((D_{x_1},R_{x_1})\) satisfies \(\psi(y_0,y_1)\). It follows that \(M_{x_1}\)
satisfies \(\psi(a,\Phi_\alpha(x_1,y_1))\), and hence \(\mathcal H_\alpha\)
satisfies \(\exists v\ \psi(a,v)\), as desired.
\end{proof}
\end{lma}

\section{The definability of rank-to-rank embeddings}
\subsection{Prior work}
The results of this section are inspired by the work of Schlutzenberg
\cite{SchlutzenbergReinhardt}, which greatly expands upon the following theorem
of Suzuki:\footnote{Suzuki actually proved the slightly stronger schema that no
elementary embedding from \(V\) to \(V\) is definable from parameters over
\(V\).}
\begin{thm}[Suzuki]\label{SuzukiThm}
If \(\kappa\) is an inaccessible cardinal, no nontrivial elementary embedding
from \(V_\kappa\) to \(V_\kappa\) is definable over \(V_\kappa\) from
parameters.
\end{thm}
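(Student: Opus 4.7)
The plan is to reduce to Kunen's inconsistency theorem by restricting \(j\) to the rank segment \(V_{\lambda+2}\), where \(\lambda = \kappa_\omega(j)\) is the critical supremum. Suppose for contradiction that \(j : V_\kappa \to V_\kappa\) is a nontrivial elementary embedding definable over \(V_\kappa\) from a parameter \(p\) by a formula \(\varphi(x, y, p)\).

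First I would locate a suitable \(\lambda < \kappa\). Since each \(\kappa_n(j)\) is an ordinal in \(V_\kappa\), we have \(\kappa_n(j) < \kappa\) for all \(n\); because \(\kappa\) is inaccessible, hence regular, the sequence \(\langle \kappa_n(j) \rangle_{n<\omega}\) cannot be cofinal in \(\kappa\). Therefore \(\lambda := \kappa_\omega(j) < \kappa\). A standard continuity computation gives \(j(\lambda) = \lambda\), so \(j\) maps \(V_{\lambda+2}\) setwise to itself.

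The central step is to exploit the definability hypothesis to realize \(j \restriction V_{\lambda+2}\) as a set. The restriction coincides with the class \(\{(x, y) \in V_{\lambda+2} \times V_{\lambda+2} : V_\kappa \models \varphi(x, y, p)\}\), and Separation in the ambient \textnormal{ZFC} universe turns this definable class into a set, necessarily of rank below \(\kappa\). It remains to check that this set is a nontrivial elementary embedding of \((V_{\lambda+2}, \in)\) into itself. Nontriviality is immediate from \(\text{crit}(j) < \lambda\) being moved. For elementarity, I would use that the satisfaction predicate for the set \(V_{\lambda+2}\) is definable over \(V_\kappa\) with parameter \(V_{\lambda+2}\), and that this parameter is fixed by \(j\); elementarity of \(j : V_\kappa \to V_\kappa\) then transfers to elementarity of \(j \restriction V_{\lambda+2}\) as a map of \(V_{\lambda+2}\). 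Applying Kunen's inconsistency theorem in the ambient universe to the set embedding \(j \restriction V_{\lambda+2}\) yields the desired contradiction.

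The main subtlety will be the step in which \(j \restriction V_{\lambda+2}\) is produced as a set via Separation: this is where the definability hypothesis is used essentially, supplying a formula that can be fed into the axiom. Without it one could not appeal directly to Kunen's theorem on the restriction, and it is exactly this obstruction that motivates the more delicate arguments of the remainder of the paper in the choiceless setting.
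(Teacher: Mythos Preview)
Your argument has a fundamental gap: it invokes Kunen's inconsistency theorem, which requires the Axiom of Choice, while the paper works throughout in ZF alone (this convention is stated explicitly in \cref{Preliminaries}). In ZF the existence of a nontrivial elementary embedding \(V_{\lambda+2}\to V_{\lambda+2}\) is not known to be inconsistent; indeed, such embeddings are precisely the objects the paper studies. So your final step is simply unavailable in the ambient theory, and the later use of Suzuki's theorem in the paper (e.g., inside models that certainly need not satisfy AC) would not be justified by your argument.

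There is a related symptom: the definability hypothesis plays no essential role in what you wrote. Since \(V_\kappa\) is a set, any \(j:V_\kappa\to V_\kappa\) is already a set in the ambient universe, and so is \(j\restriction V_{\lambda+2}\); you do not need Separation applied to the defining formula to obtain it. Consequently, in ZFC your argument would prove the stronger (and true) statement that there is \emph{no} nontrivial elementary \(j:V_\kappa\to V_\kappa\) for \(\kappa\) inaccessible, definable or not. The content of Suzuki's theorem is that in ZF, where such embeddings may well exist, none of them can be definable over \(V_\kappa\). Suzuki's actual proof is a choiceless minimality argument that uses the definability hypothesis essentially (roughly: the least critical point realized by an embedding definable via a fixed formula would have to be fixed by any such embedding, contradicting its being a critical point). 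The paper's own generalization to arbitrary even ordinals likewise avoids Kunen entirely, proceeding instead through the irreflexivity of the internal relation on countably complete ultrafilters.
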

Schlutzenberg \cite{SchlutzenbergReinhardt} extended this to limit
ranks:\footnote{Schlutzenberg also proved many other definability results for
rank-to-rank embeddings, incorporating, for example, constructibility and
ordinal definability.}
\begin{thm}[Schlutzenberg]\label{SchlutzenbergUndefinableThm}
Suppose \(\lambda\) is a limit ordinal. Then no nontrivial elementary embedding
from \(V_\lambda\) to \(V_{\lambda}\) is definable over \(V_\lambda\) from
parameters.
\end{thm}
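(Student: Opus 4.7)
The plan is to argue by contradiction. Suppose \(j \in \mathscr{E}(V_\lambda)\) is nontrivial and defined over \(V_\lambda\) by a formula \(\varphi(x, y, p)\) with parameter \(p \in V_\lambda\). Since \(j\) is definable from \(p\), the ordinal \(\kappa_0 = \mathrm{crit}(j)\) is definable from \(p\) (as the least ordinal with \(V_\lambda \models \lnot \varphi(\alpha, \alpha, p)\)), and, inductively, so is every term of the critical sequence \(\langle \kappa_n(j) \rangle_{n < \omega}\).

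I would first reduce to the case \(\lambda = \kappa_\omega(j)\). If \(\kappa_\omega(j) < \lambda\), the restriction \(j \restriction V_{\kappa_\omega(j)}\) sits in \(V_\lambda\), is definable from \(p\), and is a nontrivial elementary self-embedding of \(V_{\kappa_\omega(j)}\). However, transferring the definability down to the smaller structure \(V_{\kappa_\omega(j)}\) itself (so one can induct on the length of \(\lambda\)) requires care, because \(\varphi\) need not be absolute between \(V_{\kappa_\omega(j)}\) and \(V_\lambda\). This step would use the \(\mathcal H_\alpha\)-coding of Section~\ref{HSection} together with a reflection argument, pushing \(j\restriction V_{\kappa_\omega(j)}\) down to a parameter in the smaller rank-initial segment and then invoking the inductive hypothesis.

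In the remaining core case, \(\lambda = \kappa_\omega(j)\) has cofinality \(\omega\), and I would select \(p\) of minimum rank among parameters defining a nontrivial self-embedding of \(V_\lambda\) via \(\varphi\). If \(\mathrm{rank}(p) < \kappa_0\), then \(j\) fixes \(p\) pointwise, so \(\varphi(\cdot,\cdot, j(p)) = \varphi(\cdot,\cdot,p)\) defines the same embedding \(j\); but elementarity applied to the statement ``\(\kappa_0\) is the critical point of the embedding defined from \(p\)'' yields that the embedding defined by \(\varphi(\cdot,\cdot, j(p))\) has critical point \(j(\kappa_0) = \kappa_1 \neq \kappa_0\), a contradiction. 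Hence \(\mathrm{rank}(p) \geq \kappa_0\) for the minimal \(p\). To finish, I would use the iterated parameter sequence \(\langle j^n(p) \rangle_{n < \omega}\) together with the cofinal critical sequence \(\langle \kappa_n \rangle_{n<\omega}\) to extract a new parameter \(q \in V_{\kappa_0}\), possibly via a modified formula derived from \(\varphi\), that still defines a nontrivial elementary self-embedding of \(V_\lambda\), contradicting the minimality of \(\mathrm{rank}(p)\). The idea of this extraction is that \(V_{\kappa_0}\) is pointwise fixed by all the shifted embeddings defined from the \(j^n(p)\), and the behavior of \(j\) on ordinals above \(\kappa_0\) is completely coded by how \(\varphi\) threads the critical sequence together with the parameter sequence.

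The main obstacle I anticipate is precisely this last extraction step. In Suzuki's theorem (Theorem~\ref{SuzukiThm}) one has inaccessibility, hence definable Skolem functions and Replacement in \(V_\kappa\), which permit a reflective Skolem-hull argument to produce a smaller-rank parameter. For an arbitrary limit ordinal \(\lambda\) with \(\mathrm{cof}(\lambda) = \omega\) these tools are absent, and \(V_\lambda\) need not satisfy any useful fragment of ZFC. So one must replace reflection by a finer combinatorial analysis of how \(\varphi\) codes the critical sequence via the iterated parameters \(j^n(p)\), and this analysis must be carried out in ZF alone, without any use of choice. Getting this combinatorial coding argument right is the delicate heart of the proof.
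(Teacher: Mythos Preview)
The paper does not prove this theorem; it is stated as Schlutzenberg's result with the proof deferred to \cite{SchlutzenbergReinhardt}. The paper's own contribution in \cref{UndefinabilitySection} handles only the successor case (\cref{UndefinabilityThm}), and there it explicitly invokes \cref{SchlutzenbergUndefinableThm} for the limit case. So there is no paper proof to compare your proposal against.

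On the proposal itself: the framework is recognizably Suzuki-style, and the observation that \(\mathrm{rank}(p) < \kappa_0\) forces the absurdity \(\kappa_0 = j(\kappa_0)\) via elementarity is correct. But the argument's entire weight rests on your final ``extraction'' step, and you have not carried it out---you only describe a hope that the iterated parameters \(j^n(p)\) encode enough information to manufacture some \(q \in V_{\kappa_0}\) defining a nontrivial embedding via a modified formula. This is not a technicality to be filled in later; it is the whole proof. There is no evident mechanism by which the global action of \(j\) on all of \(V_\lambda\) should be recoverable from data living in \(V_{\kappa_0}\), and you give none. Your reduction to the case \(\lambda = \kappa_\omega(j)\) is similarly incomplete: you correctly flag that \(\varphi\) need not be absolute between \(V_{\kappa_\omega(j)}\) and \(V_\lambda\), but the ``reflection argument'' you invoke is left unspecified, and no useful reflection is available when \(V_\lambda\) satisfies essentially none of ZFC. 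As written, the proposal identifies a plausible scaffolding but leaves both load-bearing steps as unjustified assertions; you yourself acknowledge that the extraction step is ``the delicate heart of the proof,'' which is another way of saying the proof is not yet there.
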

Schlutzenberg noted that the situation for elementary embeddings from
\(V_{\lambda+1}\) to \(V_{\lambda+1}\), where \(\lambda\) is a limit ordinal, is
completely different: {\it every} elementary embedding from \(V_{\lambda+1}\) to
\(V_{\lambda+1}\) is definable from parameters over \(V_{\lambda+1}\). He then
asked the corresponding question for elementary embeddings of \(V_{\lambda+n}\)
for \(n > 1\). The answer is given by the following theorem, which is
established by the main results of this section:
\begin{thm}\label{FirstPeriodicity}
Suppose \(\epsilon\) is an even ordinal.\footnote{An ordinal \(\alpha\) is said
to be {\it even} if for some limit ordinal \(\lambda\) and some natural number
\(n\), \(\alpha = \lambda+2n\); otherwise, \(\alpha\) is {\it odd}.}
\begin{enumerate}[(1)]
\item No nontrivial elementary embedding from \(V_{\epsilon}\) to \(V_\epsilon\)
is definable over \(V_\epsilon\) from parameters.
\item Every elementary embedding from \(V_{\epsilon+1}\) to \(V_{\epsilon+1}\)
is definable over \(V_{\epsilon+1}\) from parameters.
\end{enumerate}
\end{thm}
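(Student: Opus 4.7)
I proceed by induction on $n$, where $\epsilon = \lambda + 2n$, establishing (1) and (2) in parallel. The base case $n = 0$ (so $\epsilon = \lambda$ is a limit) is Schlutzenberg's work: (1) is exactly \cref{SchlutzenbergUndefinableThm}, while for (2) any $j \in \mathscr E(V_{\lambda+1})$ is recovered from $j \restriction V_\lambda$ via $j(X) = \bigcup_{\alpha < \lambda} j(X \cap V_\alpha)$. This identity holds because $j(X) \cap V_{j(\alpha)} = j(X \cap V_\alpha)$ by elementarity and $j[\lambda]$ is cofinal in $\lambda$; the restriction $j \restriction V_\lambda$ is coded as a single element $c \in V_{\lambda+1}$ via the Quine--Rosser pairing function of \cref{HSection}.

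For the inductive step of (2) with $n \geq 1$, let $j \in \mathscr E(V_{\epsilon+1})$, and let $c \in V_{\epsilon+1}$ code $j \restriction V_\epsilon$. Since $\epsilon$ is a successor, $\sup_{\alpha < \epsilon} j(\alpha) = \epsilon - 1$, so the identity $j(X) \cap V_{j(\alpha)} = j(X \cap V_\alpha)$ only recovers the \emph{low slice} $j(X) \cap V_{\epsilon - 1}$. To access the \emph{top slice} $j(X) \setminus V_{\epsilon - 1}$, I observe that each such element $Y$ has rank exactly $\epsilon - 1$ and is determined by its level-intersections $\langle Y \cap V_\beta \rangle_{\beta < \epsilon - 1}$, each of which lies in $V_{\epsilon - 1}$; the relation ``$Y \in j(X)$'' is then expressed first-order over $V_{\epsilon+1}$ using $c$ and the already-defined action on low slices, yielding a single formula $\psi(X, Y, c)$ defining $j$.

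For the inductive step of (1), suppose for contradiction that $j \in \mathscr E(V_\epsilon)$ is nontrivial and defined over $V_\epsilon$ from $a$ by $\phi(x,y,a)$. Since $\epsilon$ is a successor, $\lambda^* := \kappa_\omega(j) < \epsilon$ is a limit ordinal fixed by $j$, and $j \restriction V_{\lambda^*}$ is nontrivial. I invoke L\'evy reflection in ZF: for each finite fragment $T$ of the elementarity schema expressing ``$\phi(\cdot,\cdot,a)$ defines an elementary embedding'', the set of $\alpha < \epsilon$ with $V_\alpha \prec_T V_\epsilon$, $a \in V_\alpha$, and $\mathrm{crit}(j) < \alpha$ is club in $\epsilon$. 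Intersecting these countably many clubs yields an unbounded set of $\alpha$ with $V_\alpha \prec V_\epsilon$ for the full schema; choosing an even $\alpha$ there, $\phi(\cdot,\cdot,a)$ defines a nontrivial embedding $V_\alpha \to V_\alpha$, contradicting the inductive hypothesis of (1) at level $\alpha$ (or directly \cref{SchlutzenbergUndefinableThm} if $\alpha = \lambda^*$).

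The main obstacle is the reflection step of (1): the countable intersection of clubs is automatically unbounded in $\epsilon$ when $\mathrm{cf}(\epsilon) > \omega$, but can fail when $\mathrm{cf}(\epsilon) = \omega$ (e.g., if $\lambda$ itself has cofinality $\omega$). The intended fix in that case is to take $\alpha = \lambda^*$ and verify the needed reflection directly from the structure of the critical sequence, exploiting that $j$ fixes $\lambda^*$ and acts continuously between consecutive $\kappa_n(j)$. A secondary concern in (2) is verifying that the top-slice reconstruction is genuinely uniform and first-order; this reduces to an iteration of the Quine--Rosser decoding of \cref{HSection}, with the bookkeeping controlled by the fact that $\epsilon - 2$ is itself even, so the previously coded data at that level can be invoked without introducing extra parameters.
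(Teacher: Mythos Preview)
Both halves of your inductive step have a genuine gap, and they stem from the same oversight: for $n \geq 1$ the ordinal $\epsilon = \lambda + 2n$ is a \emph{successor}, and so is $\epsilon - 1$.

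For (1), your reflection argument cannot get started. There is no useful notion of ``club in $\epsilon$'' when $\epsilon$ is a successor, and in fact no $\alpha < \epsilon$ satisfies $V_\alpha \prec V_\epsilon$ even for very simple formulas: the set $V_{\epsilon-1}$ is the unique element of maximal rank in $V_\epsilon$, and this is witnessed by no element of $V_\alpha$. Your fallback of taking $\alpha = \lambda^\ast = \kappa_\omega(j)$ does not rescue the argument either, since the parameter $a$ may have rank above $\lambda^\ast$ and nothing forces the defining formula to reflect. The paper's proof of (1) (\cref{UndefinabilityThm}) is entirely different: writing $\epsilon = \epsilon' + 2$ with $\epsilon'$ even, one derives the ultrafilter $\mathcal U$ over $V_{\epsilon'+1}$ from $j$ using the seed $j[V_{\epsilon'}]$, shows that definability of $j$ over $V_\epsilon$ would make $\mathcal U$ internal to itself in the sense of \cref{IDef}, and then appeals to the irreflexivity of the internal relation (\cref{InternalIrrefl}).

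For (2), your top-slice reconstruction fails for the same parity reason. If $Y \in V_\epsilon$ has rank $\epsilon - 1$, then $Y \subseteq V_{\epsilon-1}$; but $\epsilon - 1$ is itself a successor, so the family $\langle Y \cap V_\beta\rangle_{\beta < \epsilon-1}$ determines only $Y \cap V_{\epsilon-2}$ and says nothing about elements of $Y$ of rank exactly $\epsilon - 2$. This is precisely the obstruction the paper isolates after \cref{SuccessorExtensionDef}: the naive extension $j^{+}(X) = \bigcup_{\Gamma \in V_\epsilon} j(X\cap\Gamma)$ is wrong at successor levels. The fix is to replace $V_\epsilon$ by $\mathcal H_\epsilon$ (sets \emph{coded} in $V_\epsilon$) and to prove inductively the cofinal embedding property: every $A \in \mathcal H_\epsilon$ lies in $j^\star(\Gamma)$ for some $\Gamma \in \mathcal H_\epsilon$. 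The key inductive step (\cref{RepresentationLma1}) builds $\Gamma$ as $\{(k^{+})^{-1}[A] : k \in \mathscr E(V_{\epsilon-2})\}$, using the canonical extension property two levels down; this is the idea your sketch is missing.
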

This theorem is the first instance of a periodicity phenomenon in the hierarchy
of choiceless large cardinal axioms, leading to a generalization to arbitrary
ranks of the basic theory of rank-to-rank embeddings familiar from the ZFC
context. (1) is proved as \cref{UndefinabilityThm} of
\cref{UndefinabilitySection} and (2) as \cref{DefinabilityThm} of
\cref{DefinabilitySection}. 

We note that Schlutzenberg rediscovered \cref{FirstPeriodicity}, and this
theorem is the main subject of the joint paper \cite{BergBerg}.
\subsection{Extending embeddings to \(V_{\epsilon+1}\)}\label{DefinabilitySection}
That elementary embeddings of odd ranks are definable (\cref{FirstPeriodicity}
(2)) came as quite a surprise to the author, but with hindsight emerges as a
natural generalization a well-known phenomenon from the standard theory of
rank-to-rank embeddings.
\begin{defn}\label{LimitExtensionDef}
Suppose \(\lambda\) is a limit ordinal and \(j : V_{\lambda}\to V_\lambda\) is
an elementary embedding. Then the {\it canonical extension of \(j\)} is the
embedding \(j^{+} : V_{\lambda+1}\to V_{\lambda+1}\) defined by \(j^{+}(X) =
\bigcup_{\Gamma\in V_\lambda} j(X\cap \Gamma)\).
\end{defn}
While the canonical extension of an embedding in \(\mathscr E(V_\lambda)\) is
not necessarily an elementary embedding, it is true that {\it if} \(i\in
\mathscr E(V_{\lambda+1})\), then necessarily \(i = (i\restriction
V_\lambda)^{+}\). The proof is easy, but since it is relevant below, we give a
detailed sketch. Fix \(X\in V_{\lambda+1}\). Clearly \(i(X\cap \Gamma)\subseteq
i(X)\) for all \(\Gamma\in V_\lambda\), and this easily implies the inclusion
\((i\restriction V_\lambda)^{+}(X)\subseteq i(X)\). For the reverse inclusion,
suppose \(a\in i(X)\). Since \(\lambda\) is a limit ordinal, there is some
\(\Gamma\in V_\lambda\) such that \(a\in i(\Gamma)\); for example, one can take
\(\Gamma = V_{\xi+1}\) where \(\xi = \text{rank}(a)\). Now \(a\in i(X) \cap
i(\Gamma)= i(X\cap \Gamma)\), so \(a\in (i\restriction V_\lambda)^{+}(X)\). The
key property of \(\lambda\) that was used in this proof is that any \(i :
V_\lambda\to V_\lambda\) is a {\it cofinal embedding} in the sense that for all
\(a\in V_\lambda\), there is some \(\Gamma\in V_\lambda\) with \(a\in
i(\Gamma)\).

Suppose now that \(\alpha\) is an arbitrary infinite ordinal. We want to
generalize the canonical extension operation to act on  embeddings \(j\in
\mathscr E(V_{\alpha})\). It is easy to see that if \(\alpha\) is a successor
ordinal, the naive generalization (i.e., \(j^{+}(X) = \bigcup_{\Gamma\in
V_\alpha} j(X\cap \Gamma)\) for \(X\in V_{\alpha+1}\)) does not have the desired
effect. (For example, adopting this definition, one would have
\(j^{+}(\{V_{\alpha-1}\}) = \emptyset\).) Instead, one must make the following
tweak:
\begin{defn}\label{SuccessorExtensionDef}
Suppose \(\alpha\) is an infinite ordinal and \(j: V_\alpha\to V_\alpha\) is an
elementary embedding. Then the {\it canonical extension of \(j\)} is the
embedding \(j^{+} : V_{\alpha+1}\to V_{\alpha+1}\) defined by \(j^{+}(X) =
\bigcup_{\Gamma\in \mathcal H_\alpha} j^\star(X\cap \Gamma)\).
\end{defn}
See \cref{HSection} for the definition of the structure \(\mathcal H_\alpha\),
and the basic facts about lifting elementary embeddings from \(V_\alpha\) to
\(\mathcal H_\alpha\). Thus \(j^{+}(X)\) is the union of all sets of the form
\(j^\star(X\cap \Gamma )\) where \(\Gamma\) is {\it coded} in \(V_\alpha\). The
following easily verified lemma clarifies the definition:
\begin{prp}
Suppose \(\alpha\) is an infinite ordinal and \(j\in \mathscr E(V_\alpha)\).
Then 
\[\pushQED{\qed}j^{+}(X) = \begin{cases}
\bigcup_{\Gamma\in V_\alpha} j(X\cap \Gamma)&\text{ if \(\alpha\) is a limit
ordinal}\\
\bigcup_{\Gamma\in [V_\alpha]^{V_{\alpha-1}}} j^\star(X\cap \Gamma)&\text{ if \(\alpha\) is a successor ordinal}
\end{cases}\qedhere\popQED\]
\end{prp}

While the definition of the canonical extension operation directly generalizes
\cref{LimitExtensionDef}, a key new phenomenon arises at successor ranks: it is
no longer clear that every elementary embedding \(i : V_{\alpha+1}\to
V_{\alpha+1}\) satisfies \[i = (i\restriction V_{\alpha})^{+}\] It is easy to
show that for all \(X\in V_{\alpha+1}\), \((i\restriction V_{\alpha})^{+}(X)
\subseteq i(X)\), but the reverse inclusion is no longer clear. 

In fact, the reverse inclusion is only true for even values of \(\alpha\). This
is proved by an induction that simultaneously establishes the {\it canonical
extension property} and the {\it cofinal embedding property}, which we now
define.
\begin{defn}
Suppose \(\epsilon\) is an ordinal. Then \(\epsilon\) has the {\it canonical
extension property} if for any \(i\in \mathscr E(V_{\epsilon+1})\), \(i =
(i\restriction V_{\epsilon})^{+}\).
\end{defn}
The terminology is motivated by equivalence of the canonical extension property
with the statement that an elementary embedding in \(\mathscr E(V_{\epsilon})\)
extends to at most one embedding in \(\mathscr E(V_{\epsilon+1})\).

\begin{defn}
Suppose \(\epsilon\) is an ordinal. Then \(\epsilon\) has the {\it cofinal
embedding property} if for any \(i\in \mathscr E(V_{\epsilon})\), for any \(A\in
\mathcal H_\epsilon\), there is some \(\Gamma\in \mathcal H_\epsilon\) with
\(A\in i^\star(\Gamma)\).
\end{defn}
Thus the cofinal embedding property states that every embedding in \(\mathscr
E(V_\epsilon)\) induces a cofinal embedding in \(\mathscr E(\mathcal
H_\epsilon)\). The proof that limit ordinals have the canonical extension
property generalizes to all ordinals with the cofinal embedding property:
\begin{lma}\label{ExtensionLma}
Suppose \(\epsilon\) is an ordinal. If \(\epsilon\) has the cofinal embedding
property, then \(\epsilon\) has the canonical extension property.
\begin{proof}
Fix an elementary embedding \(i : V_{\epsilon+1}\to V_{\epsilon+1}\). We must
show \(i = (i\restriction V_{\epsilon})^{+}\). Take \(X\in V_{\epsilon+1}\). The
inclusion \((i\restriction V_{\epsilon})^{+}(X)\subseteq i(X)\) is true
regardless of parity: if \(\Gamma\in \mathcal H_\epsilon\), then \(i^\star(X\cap
\Gamma) = i(X\cap \Gamma) \subseteq i(X)\) by the elementarity of \(i\) and the
uniqueness of \(i^\star\), so \(i^{+}(X) = \bigcup_{\Gamma\in \mathcal H_\alpha}
i^\star(X\cap \Gamma)\subseteq i(X)\).

To show \(i(X) \subseteq (i\restriction V_{\epsilon})^{+}(X)\), suppose \(A\in
i(X)\). Since \(\epsilon\) has the cofinal embedding property, there is some
\(\Gamma\in \mathcal H_\epsilon\) such that \(A\in i^\star(\Gamma)\). Now \[A\in
i(X)\cap i^\star(\Gamma) = i(X)\cap i(\Gamma) = i(X\cap \Gamma) = i^\star(X\cap
\Gamma) \subseteq i^{+}(X)\] This completes the proof.
\end{proof}
\end{lma}
The periodicity phenomenon is a result of the following lemma:
\begin{lma}\label{RepresentationLma1}
Suppose \(\epsilon\) is an ordinal. If \(\epsilon\) has the canonical extension
property, then \(\epsilon+2\) has the cofinal embedding property.
\begin{proof}
Fix \(i \in \mathscr E(V_{\epsilon+2})\) and \(A\in V_{\epsilon+2}\). Let 
\[\Gamma = \{(k^{+})^{-1}[A] \mid k\in \mathscr E(V_{\epsilon})\}\] Then
\(\Gamma\preceq^{*} \mathscr E(V_{\epsilon})\preceq^{*} V_{\epsilon+1}\). Since
\(\Gamma\cup V_{\epsilon+1}\) is transitive, it follows that \(\Gamma\in
\mathcal H_{\epsilon+1}\).
\end{proof}
\end{lma}

This allows us to prove the cofinal embedding property and the canonical
extension property for even ordinals by induction:
\begin{cor}\label{ExtensionThm}
Every even ordinal has the cofinal embedding property.
\begin{proof}
Suppose \(\lambda\) is a limit ordinal. We show that \(\lambda+2n\) has the
cofinal embedding property by induction on \(n < \omega\). 

We first prove the base case, when \(n = 0\). Suppose \(A\in \mathcal
H_\lambda\). Fix \(\xi < \lambda\) such that \(A\in \mathcal H_{\xi}\). Then we
have \(\mathcal H_\xi\in \mathcal H_\lambda\) and \(A\in j^\star(\mathcal
H_\xi)\) since \(\mathcal H_\xi \subseteq \mathcal H_{j(\xi)} = j^\star(\mathcal
H_\xi)\). This shows that \(\lambda\) has the cofinal embedding property.

For the induction step, assume that \(\lambda+2n\) has the cofinal embedding
property. Then by \cref{RepresentationLma1}, \(\lambda+2n\) has the canonical
extension property, and so by \cref{ExtensionLma}, \(\lambda+2n+2\) has the
cofinal embedding property.
\end{proof}
\end{cor}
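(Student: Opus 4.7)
The plan is to proceed by induction on \(n < \omega\), where the target even ordinal is written as \(\epsilon = \lambda + 2n\) with \(\lambda\) a limit ordinal. The inductive step is immediate from the two lemmas already in hand: \cref{ExtensionLma} converts the cofinal embedding property at \(\epsilon\) into the canonical extension property at \(\epsilon\), and \cref{RepresentationLma1} converts the canonical extension property at \(\epsilon\) into the cofinal embedding property at \(\epsilon + 2\). So the entire proof reduces to establishing the base case \(n = 0\): every limit ordinal \(\lambda\) has the cofinal embedding property.

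For the base case, fix \(i \in \mathscr E(V_\lambda)\) and \(A \in \mathcal H_\lambda\). Since \(\lambda\) is a limit ordinal, \(\mathcal H_\lambda = \bigcup_{\xi < \lambda} \mathcal H_\xi\), so there is some \(\xi < \lambda\) with \(A \in \mathcal H_\xi\). I would take \(\Gamma = \mathcal H_\xi\) as the required witness. First, \(\mathcal H_\xi \in \mathcal H_\lambda\): the coding map \(\Phi_\xi : V_\xi \to \mathcal H_\xi\) from \cref{HSection} shows \(\mathcal H_\xi \preceq^{*} V_\xi\), and \(\mathcal H_\xi\) is transitive, so \(\mathcal H_\xi\) lies in \(\mathcal H(V_\lambda) = \mathcal H_\lambda\). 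Second, since the operation \(\alpha \mapsto \mathcal H_\alpha\) is uniformly definable and \(i(\xi) \geq \xi\), we have \(i^\star(\mathcal H_\xi) = \mathcal H_{i(\xi)} \supseteq \mathcal H_\xi\), so \(A \in i^\star(\Gamma)\) as required.

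The only real obstacle here is a conceptual one, not a technical one: recognizing that the induction must be set up in terms of the cofinal embedding property rather than the canonical extension property. The canonical extension property is the more intuitive notion---it is what one actually wants to know about elementary embeddings at successor ranks---but it does not visibly feed back into itself. By contrast, the cofinal embedding property is designed precisely so that \cref{ExtensionLma} carries it across the \(\epsilon \mapsto \epsilon+1\) boundary, and then \cref{RepresentationLma1} uses the canonical extension property at \(\epsilon\) to witness the cofinal embedding property at \(\epsilon+2\) via the set \(\{(k^{+})^{-1}[A] \mid k \in \mathscr E(V_\epsilon)\}\). The two properties thus form a lockstep cycle that advances the induction by two ranks at a time, which is precisely the source of the even/odd periodicity announced in \cref{FirstPeriodicity}.
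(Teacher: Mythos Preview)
Your proof is correct and follows essentially the same approach as the paper: induction on \(n\) with the limit case handled by taking \(\Gamma = \mathcal H_\xi\), and the successor step handled by applying \cref{ExtensionLma} followed by \cref{RepresentationLma1}. In fact, you have the order of the two lemma citations right; the paper's printed proof accidentally swaps the references to \cref{ExtensionLma} and \cref{RepresentationLma1} in the induction step.
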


\begin{cor}\label{RepresentationThm}
Every even ordinal has the canonical extension property.\qed
\end{cor}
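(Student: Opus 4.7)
The plan is to observe that this corollary is an essentially immediate consequence of the two lemmas already established, together with \cref{ExtensionThm}. The scaffolding is already in place: \cref{ExtensionThm} asserts that every even ordinal has the cofinal embedding property, and \cref{ExtensionLma} asserts that the cofinal embedding property implies the canonical extension property at any given ordinal. Chaining these gives the conclusion.

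Concretely, I would fix an even ordinal \(\epsilon\). First I would invoke \cref{ExtensionThm} to conclude that \(\epsilon\) has the cofinal embedding property: for any \(i \in \mathscr E(V_\epsilon)\) and any \(A \in \mathcal H_\epsilon\), there is some \(\Gamma \in \mathcal H_\epsilon\) with \(A \in i^\star(\Gamma)\). Then I would apply \cref{ExtensionLma} to \(\epsilon\), which takes this cofinality property as input and yields that any \(i \in \mathscr E(V_{\epsilon+1})\) satisfies \(i = (i\restriction V_\epsilon)^{+}\). That is precisely the canonical extension property for \(\epsilon\).

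There is really no obstacle here; all the work was done in \cref{RepresentationLma1} (which drives the induction step of \cref{ExtensionThm} by exhibiting \(\Gamma = \{(k^{+})^{-1}[A] : k \in \mathscr E(V_\epsilon)\}\) as a witness at level \(\epsilon+2\)) and in \cref{ExtensionLma} (which carries out the \(\subseteq\) direction using cofinality of \(i^\star\) on \(\mathcal H_\epsilon\)). Since the statement of \cref{RepresentationThm} is flagged with \qed in the source, no additional writing is needed beyond pointing to this two-step chain.
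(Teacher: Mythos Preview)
Your proposal is correct and matches the paper's approach exactly: the corollary is marked with \qed\ because it follows immediately from \cref{ExtensionThm} (every even ordinal has the cofinal embedding property) combined with \cref{ExtensionLma} (cofinal embedding property implies canonical extension property), which is precisely the two-step chain you describe.
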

As an immediate consequence, we have \cref{FirstPeriodicity} (2):
\begin{thm}\label{DefinabilityThm}
Suppose \(\epsilon\) is an infinite even ordinal and \(i : V_{\epsilon+1}\to
V_{\epsilon+1}\) is an elementary embedding. Then \(i\) is definable over
\(V_{\epsilon+1}\) from \(i[V_\epsilon]\).
\begin{proof}
Clearly \(i\) is definable over \(\mathcal H_{\epsilon+1}\) from \(i\restriction
V_{\epsilon}\) since \(i = (i\restriction V_\epsilon)^{+}\) and the canonical
extension operation is explicitly defined over \(\mathcal H_{\epsilon+1}\).
Moreover, \(i\restriction V_\epsilon\) is definable over \(V_{\epsilon+1}\) from
\(i[V_{\epsilon}]\) as the inverse of the Mostowski collapse. It follows that
\(i\) is definable over \(\mathcal H_{\epsilon+1}\) from \(i[V_\epsilon]\). But
by coding elements of \(\mathcal H_{\epsilon+1}\) as elements of
\(V_{\epsilon+1}\) as in \cref{Preliminaries}, one can translate this into a
definition of \(i\) over \(V_{\epsilon+1}\) from \(i[V_\epsilon]\).
\end{proof}
\end{thm}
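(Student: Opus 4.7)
The plan is to assemble the pieces already in place: the canonical extension property for even $\epsilon$ (\cref{RepresentationThm}), the recoverability of $i\restriction V_\epsilon$ from its range, and the coding of $\mathcal{H}_{\epsilon+1}$ inside $V_{\epsilon+1}$ from \cref{HSection}. None of these steps require new ideas; the proposition will just be the combination of these facts into a single definability statement.

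First, I would show that $j := i\restriction V_\epsilon$ is definable over $V_{\epsilon+1}$ from $i[V_\epsilon]$. Since $V_\epsilon$ is transitive, $i[V_\epsilon]$ is an extensional well-founded set, and $V_\epsilon$ together with $j$ is recovered as its Mostowski collapse — a construction that is uniformly $\Sigma_1$ over any transitive set large enough to contain $i[V_\epsilon]$. Since $\epsilon$ is infinite, $i[V_\epsilon]\in V_{\epsilon+1}$, and the collapse is computable inside $V_{\epsilon+1}$.

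Second, I would invoke \cref{RepresentationThm} to get $i = j^{+}$. The canonical extension is defined by $j^{+}(X) = \bigcup_{\Gamma\in \mathcal{H}_\epsilon} j^\star(X\cap \Gamma)$ (\cref{SuccessorExtensionDef}), which is an explicit definition over $\mathcal{H}_{\epsilon+1}$ taking $j$ as a parameter, using only that $j^\star : \mathcal{H}_\epsilon\to \mathcal{H}_\epsilon$ is the unique extension of $j$ — which is itself uniformly definable from $j$ via the $\Phi_\epsilon$-coding of \cref{HSection}. So $i$ is definable over $\mathcal{H}_{\epsilon+1}$ from $j$, and hence from $i[V_\epsilon]$.

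Finally, the definition must be transferred from $\mathcal{H}_{\epsilon+1}$ to $V_{\epsilon+1}$. This is purely routine: the partial surjection $\Phi_{\epsilon+1} : V_{\epsilon+1}\to \mathcal{H}_{\epsilon+1}$, together with $\Phi_{\epsilon+1}^{-1}[{\in}]$ and $\Phi_{\epsilon+1}^{-1}[{=}]$, is definable over $V_{\epsilon+1}$, so any formula over $\mathcal{H}_{\epsilon+1}$ with parameters in $V_{\epsilon+1}$ pulls back to a formula over $V_{\epsilon+1}$ with the same parameters. There is no real obstacle here — the only thing to be slightly careful about is that the parameter $i[V_\epsilon]$ really does lie in $V_{\epsilon+1}$ and that the graph of $i$, which a priori lives in $\mathcal{H}_{\epsilon+1}$ via its $\Phi$-code, can be spoken of as a subset of $V_{\epsilon+1}$. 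Both of these are immediate from the fact that $V_\epsilon\subseteq \mathcal{H}_\epsilon$ and the definition of $\Phi$.
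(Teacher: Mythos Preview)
Your proposal is correct and follows essentially the same approach as the paper: recover $i\restriction V_\epsilon$ from $i[V_\epsilon]$ via the Mostowski collapse, invoke the canonical extension property (\cref{RepresentationThm}) to get $i = (i\restriction V_\epsilon)^{+}$ as a definition over $\mathcal{H}_{\epsilon+1}$, and then transfer down to $V_{\epsilon+1}$ using the $\Phi_{\epsilon+1}$-coding from \cref{HSection}. The paper's proof is identical in substance, just slightly terser.
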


Let us put down for safe-keeping the following version of the cofinal embedding
property that is often useful:
\begin{defn}\label{RepresentationDef}
Suppose \(\sigma\) is a set such that \((\sigma,\in)\) is wellfounded and
extensional. Then \(j_\sigma: M_\sigma\to \sigma\) denotes the inverse of the
Mostowski collapse of \(\sigma\).

Suppose \(\epsilon\) is an even ordinal. For any \(A\in V_{\epsilon+2}\), let
\(f_A : V_{\epsilon+1}\to V_{\epsilon+2}\) be the partial function defined by
\(f_A(\sigma) = (j_\sigma^{+})^{-1}[A]\).
\end{defn}
We leave \(f_A(\sigma)\) undefined if one of the following holds:
\begin{itemize}
\item \((\sigma,{\in})\) is not wellfounded and extensional.
\item \(j_\sigma\) is not an elementary embedding from \(V_\epsilon\) to
\(V_\epsilon\).
\end{itemize}
\begin{prp}\label{RepresentationLma}
Suppose \(j : V_{\epsilon+2}\to V_{\epsilon+2}\) is an elementary embedding.
Then for any \(A\in V_{\epsilon+2}\), \(A = j^\star(f_A)(j[V_\epsilon])\).
\begin{proof}
Since \(f_A\) is definable from \(A\) over \(\mathcal H_{\epsilon+2}\),
\(j^\star(f_A)(j[V_\epsilon]) = f_{j(A)}(j[V_\epsilon])\). Note that
\(M_{j[V_\epsilon]} = V_\epsilon\) and \(j_{j[V_\epsilon]} = j\restriction
V_\epsilon\). Therefore by the elementarity of \(j^\star\), 
\[f_{j(A)}(j[V_\epsilon]) = ((j\restriction V_\epsilon)^{+})^{-1}[j(A)] =
(j\restriction V_{\epsilon+1})^{-1}[j(A)] = A\qedhere\]
\end{proof}
\end{prp}

Our original approach to \cref{DefinabilityThm} diverged from the one presented
here in that we used a superficially different definition of the canonical
extension operation from \cref{SuccessorExtensionDef}. This approach is not as
clearly motivated by the canonical extension operation for embeddings from
\(V_\lambda\) to \(V_\lambda\) where \(\lambda\) is a limit ordinal
(\cref{LimitExtensionDef}), but it might illuminate the underlying
combinatorics. This construction is described in more detail in \cite{BergBerg}.

Suppose \(j : V_{\epsilon+2}\to V_{\epsilon+2}\) is elementary, and let
\(\mathcal U\) be the ultrafilter derived from \(j\) using \(j[V_{\epsilon}]\).
Let \(j_\mathcal U : V\to M_\mathcal U\) denote the ultrapower associated to
\(\mathcal U\). It is easy to see that \(\text{Ult}(V_{\epsilon+1},\mathcal
U)\cong V_{\epsilon+1}\). Assume \(\epsilon\) has the cofinal embedding
property. Then moreover \(\text{Ult}(V_{\epsilon+2},\mathcal U)\cong
V_{\epsilon+2}\). Therefore we identify \(\text{Ult}(V_{\epsilon+2},\mathcal
U)\) with \(V_{\epsilon+2}\). As a consequence, for every \(X\in
V_{\epsilon+3}\), we can identify \(j_\mathcal U(X)\) with a subset of
\(V_{\epsilon+2}\); that is, we identify \(j_\mathcal U(X)\) with an element of
\(V_{\epsilon+3}\). Given the cofinal embedding property, it is not hard to show
that \(j_\mathcal U\restriction V_{\epsilon+3}\) is the only possible extension
of \(j\) to an elementary embedding of \(V_{\epsilon+3}\). Therefore one can set
\(j^{+} = j_\mathcal U\) instead of using \cref{SuccessorExtensionDef}, and then
prove \cref{ExtensionThm} and \cref{RepresentationThm} by very similar arguments
to the ones given above.

Obviously the two approaches to the canonical extension operation are very
similar, but we just want to highlight that the canonical extension operation,
like everything else in set theory, is really an ultrapower construction.
\subsection{Undefinability over \(V_\epsilon\)}\label{UndefinabilitySection}
In this section, we establish \cref{FirstPeriodicity} (1). At this point, we
have found three proofs, increasing chronologically in complexity and
generality. 

We begin by giving a sketch of the simplest of these proofs in the successor
ordinal case. (Note that the limit case is handled by Schlutzenberg's
\cref{SchlutzenbergUndefinableThm}.) Suppose \(\epsilon\) is an even ordinal and
\(j :V_{\epsilon+2}\to V_{\epsilon+2}\) is a nontrivial elementary embedding.
Let \(\mathcal U\) be the ultrafilter over \(V_{\epsilon+1}\) derived from \(j\)
using \(j[V_{\epsilon}]\). It turns out that if \(j\) is definable over
\(V_{\epsilon+2}\), then \(\mathcal U\) belongs to the ultrapower of \(V\) by
\(\mathcal U\). A fundamental fact from the ZFC theory of large cardinals,
proved for example in \cite{Kanamori}, is that no countably complete ultrafilter
belongs to its own ultrapower. The idea of the proof of \cref{UndefinabilityThm}
is to try to push this through to the current context. 

Recall that the {\it Mitchell order} is defined on countably complete
ultrafilters \(U\) and \(W\) by setting \(U\mo W\) if \(U\in \text{Ult}(V,W)\).
The ``fundamental fact'' mentioned above simply states that assuming the Axiom
of Choice, the Mitchell order is irreflexive. (In fact, it is wellfounded.) In
the context of ZF, especially given the failure of \L o\'s's Theorem, the
Mitchell order is fairly intractable, and in particular, we do not know how to
prove its irreflexivity. Instead, we use a variant of the Mitchell order called
the internal relation (introduced in the author's thesis \cite{UA}) that is more
amenable to combinatorial arguments.
\begin{defn}\label{IDef}
Suppose \(U\) and \(W\) are countably complete ultrafilters over sets \(X\) and
\(Y\). We say \(U\) is {\it internal to \(W\)}, and write \(U\I W\), if there is
a sequence of countably complete ultrafilters \(\langle U_y \mid y\in Y\rangle\)
such that for any relation \(R\subseteq X\times Y\):\footnote{For any predicate
\(P\), we write ``\(\forall^U x\ P(x)\)'' to mean that \(\{x \in X : P(x)\}\in
U\).}
\begin{equation}\label{IEquivalence}\forall^U x\ \forall^W y\ R(x,y) \iff \forall^W y\ \forall^{U_y} x\ R(x,y)\end{equation}
\end{defn}
Using notation that is standard in ultrafilter theory, \cref{IEquivalence} states that \(U\times W\) is canonically isomorphic to \(W\text{-}\sum_{y\in Y}U_y\).

Of course, from this combinatorial definition, it is not clear that the internal relation is
related to the Mitchell order at all. Using \L o\'s's Theorem, however, the following is easy to verify:
\begin{defn}
If \(U\) and \(W\) are ultrafilters, then the {\it pushforward of \(U\) to \(M_W\)} is the \(M_W\)-ultrafilter
\(s_W(U) = \{A\in j_W(P(X)) \mid (j_W)^{-1}[A]\in U\}\).
\end{defn}
\begin{prp}[ZFC]\label{InternalChar}
Suppose \(U\) and \(W\) are countably complete ultrafilters. Let \(X\) be the underlying set of \(U\). Then the following are equivalent:
\begin{enumerate}[(1)]
\item \(U\I W\).
\item \(s_W(U)\in M_W\).
\item \(j_U\restriction \textnormal{Ult}(V,W)\) is definable from parameters over \(\textnormal{Ult}(V,W)\).\qed
\end{enumerate}
\end{prp}
(3) is not the right perspective when \L o\'s's Theorem does not hold for \(W\). The equivalence of (1) and (2), however, is essentially a consequence of ZF:
\begin{lma}\label{sLemma}
Suppose \(U\) and \(W\) are countably complete ultrafilters over \(X\) and \(Y\). 
Then \(U\I W\) if and only if there is a sequence of countably complete ultrafilters
\(\langle U_y\rangle_{y\in Y}\) such that \([\langle U_y\rangle_{y\in Y}]_W = s_W(U)\).
\begin{proof}
One shows that \([\langle U_y\rangle_{y\in Y}]_W = s_W(U)\) if and only if 
the equivalence \cref{IEquivalence} from \cref{IDef} holds.

For the forwards direction, assume \([\langle U_y\rangle_{y\in Y}]_W = s_W(U)\). 
Fix \(R\subseteq X\times Y\). 
We verify the equivalence \cref{IEquivalence} from \cref{IDef}. 

Suppose that
\(\forall^W y\ \forall^{U_y}x\ R(x,y)\).
Then \(R^y\in U_y\) for \(W\)-almost all \(y\in Y\).\footnote{
	For \(y\in Y\), \(R^y\) denotes the set \(\{x\in X \mid (x,y)\in R\}\).}
By assumption, this means \([\langle R^y\rangle_{y\in Y}]_W\in s_W(U)\), 
so by the definition of \(s_W(U)\), \((j_W)^{-1}([\langle R^y\rangle_{y\in Y}]_W) \in U\).
In other words, \(\{x\in X \mid \forall^Wy\ x\in R^y\}\in U\),
which means that \(\forall^U x\ \forall^W y\ R(x,y)\).

It follows immediately that \(\forall^U x\ \forall^W y\ R(x,y)\) implies
\(\forall^W y\ \forall^{U_y}x\ R(x,y)\): notice that
\(Z = \{R\subseteq X\times Y : \forall^U x\ \forall^W y\ R(x,y)\}\)
and \(Z' = \{R\subseteq X\times Y : \forall^W y\ \forall^{U_y} x\ R(x,y)\}\)
are both ultrafilters, so since the previous paragraph shows \(Z'\subseteq Z\),
in fact, \(Z = Z'\).

We omit the proof that \cref{IEquivalence} from \cref{IDef} 
implies \([\langle U_y\rangle_{y\in Y}]_W = s_W(U)\), 
since the proof is straightforward
and the result is never actually cited.
\end{proof}
\end{lma}
The key advantage of the internal relation over the Mitchell order
is that its irreflexivity can be proved in ZF
by a combinatorial argument that will be given in the appendix:
\begin{repthm}{InternalIrrefl}
Suppose \(U\) is a countably complete ultrafilter and \(\textnormal{crit}(j_U)\) exists. Then \(U\not \I U\).
\end{repthm}

With this in hand, we can prove the undefinability theorem.

\begin{prp}\label{UndefinabilityThm}
Suppose \(\epsilon\) is an even ordinal and \(j :V_{\epsilon+2}\to V_{\epsilon+2}\) is a nontrivial elementary embedding. Then \(j\) is not definable from parameters over \(V_{\epsilon+2}\).
\begin{proof}
Assume towards a contradiction that \(j\) is definable over \(V_{\epsilon+2}\) from parameters. Let \(\mathcal U\) be the ultrafilter over \(V_{\epsilon+1}\) derived from \(j\) using \(j[V_{\epsilon}]\). Clearly \(\mathcal U\) is definable over \(V_{\epsilon+2}\) from \(j\), and hence \(\mathcal U\) is definable over \(V_{\epsilon+2}\) from parameters.

Let \(k : \text{Ult}(V_{\epsilon+2},\mathcal U)\to V_{\epsilon+2}\) be the canonical factor embedding defined by \(k([f]_\mathcal U) = j^\star(f)(j[V_\epsilon])\). The elementarity of \(j^\star\) implies that \(k\) is a well-defined injective homomorphism of structures. Moreover, \cref{RepresentationLma} implies that \(k\) is surjective. So \(\text{Ult}(V_{\epsilon+2},\mathcal U)\) is canonically isomorphic to \(V_{\epsilon+2}\), and we will identify the two structures.

Fix a formula \(\varphi(v,w)\) and a parameter \(B\in V_{\epsilon+2}\) such that \[\mathcal U = \{A\in V_{\epsilon+2} \mid V_{\epsilon+2}\vDash \varphi(A,B)\}\]
Let \(\mathcal U_\sigma = \{A\in V_{\epsilon+2} \mid V_{\epsilon+2}\vDash \varphi(f_A(\sigma),f_B(\sigma))\}\) where \(f_A\) and \(f_B\) are as defined in \cref{RepresentationDef}.

We claim that \([\langle\mathcal U_\sigma \mid {\sigma\in V_{\epsilon+1}}\rangle]_\mathcal U = s_{\mathcal U}(\mathcal U)\).
This follows from the elementarity of \(j\), which implies 
\begin{align*}
[\langle\mathcal U_\sigma \mid {\sigma\in V_{\epsilon+1}}\rangle]_\mathcal U &= \{A\in V_{\epsilon+2} \mid V_{\epsilon+2}\vDash \varphi(f_A(j[V_\lambda]),B)\} \\
&= \{A\subseteq V_{\epsilon+1} \mid j^{-1}[A]\in \mathcal U\} \\&= s_\mathcal U(\mathcal U)\end{align*}
It is also easy to show that \(\mathcal U_\sigma\) is a countably complete ultrafilter for \(\mathcal U\)-almost all \(\sigma\in V_{\epsilon+1}\). By \cref{sLemma}, \(\langle \mathcal U_\sigma \mid \sigma\in V_{\epsilon+1}\rangle\) witnesses \(\mathcal U\I \mathcal U\). Since \(j_\mathcal U\restriction \epsilon = j\restriction \epsilon\), \(j_\mathcal U\) has a critical point. This contradicts \cref{InternalIrrefl}.
\end{proof}
\end{prp}

\subsection{Supercompactness properties of ultrapowers}
The proof of \cref{UndefinabilityThm} raises a number of questions 
that also arise naturally in the study of choiceless cardinals. 
The one we will focus on concerns the supercompactness properties of ultrapowers. 
Suppose \(\epsilon\) is an even ordinal, \(j :V_{\epsilon+2}\to V_{\epsilon+2}\) is an elementary embedding, 
and \(\mathcal U\) is the ultrafilter over \(V_{\epsilon+1}\)
derived from \(j\) using \(j[V_{\epsilon}]\). 
Motivated by the proof of \cref{RepresentationThm}, 
one might ask when \(j_\mathcal U[S]\in M_\mathcal U\) for various sets \(S\). 
This is related to the definability of elementary embeddings 
because if \(S\in V_\beta\) is transitive and \(j\) extends
to an elementary embedding \(i : V_\beta\to N\),
then \(j_\mathcal U[S]\in M_\mathcal U\)
if and only if \(i\restriction S\) is definable over \(N\)
from parameters in \(i[V_\beta]\cup V_{\epsilon+2}\). 

Of course, by \cref{RepresentationThm}, \(j_\mathcal U[V_{\epsilon+1}]\) belongs to \(M_\mathcal U\). It follows easily that \(j_\mathcal U [S]\in M_\mathcal U\) for all \(S \preceq^{*} V_{\epsilon+1}\). (See \cref{CardinalityEqn} for this notation.) The converse remains open: if \(j_\mathcal U[S]\in M_\mathcal U\), must \(S\preceq^{*} V_{\epsilon+1}\)? 
We still do not know the answer to this question, even in the following special case: with \(\mathcal U\) as above, it is not hard to show that \(j_\mathcal U[P(\epsilon^+)]\) belongs to \(M_\mathcal U\), yet it is far from clear whether one can prove \(P(\epsilon^+)\preceq^{*} V_{\epsilon+1}\) without making further assumptions. Of course, assuming the Axiom of Choice, if \(U\) is an ultrafilter over \(X\), \(S\) is a set, and \(j_U[S]\in M_U\), then \(|S|\leq |X|\). (See \cite[Proposition 4.2.31]{UA}.) We are simply asking whether a very special case of this fact can be proved in ZF.

We begin by proving a general theorem that subsumes the ZFC result mentioned above:
\begin{thm}\label{UltrafilterUndefinabilityThm}
Suppose \(X\) is a set such that \(X\times X\preceq^{*} X\). Suppose \(U\) is an ultrafilter over \(X\), \(\kappa = \textnormal{crit}(j_U)\), and \(\alpha\geq \kappa\) is an ordinal. If \(j_U[\alpha]\in M_U\), then \(\alpha \preceq^{*} X\).
\end{thm}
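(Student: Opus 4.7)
The plan is to represent $j_U[\alpha]$ in the ultrapower and extract from the representation a partial surjection $X \to \alpha$, with the hypothesis $X \times X \preceq^{*} X$ providing the combinatorial room to pair two pieces of selection data. First, since $j_U[\alpha] \in M_U$, write $j_U[\alpha] = [f]_U$ for some $f : X \to V$, and (modifying on a $U$-null set) arrange that each $f(x) \subseteq \alpha$. Let $\eta_x = \textnormal{otp}(f(x))$ and let $e_x : \eta_x \to f(x)$ be the order isomorphism. Because order type is absolute between $V$ and the transitive class $M_U$, and because $j_U \restriction \alpha$ is the unique order isomorphism from $\alpha$ to $j_U[\alpha]$, one has $j_U \restriction \alpha \in M_U$. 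Applying \L o\'s for ordinal-valued functions (which is ZF-provable) then yields $[\eta_\cdot]_U = \alpha$ and $[e_\cdot]_U = j_U \restriction \alpha$; in particular, for every $\xi < \alpha$, the set $\{x : \eta_x > \xi\}$ lies in $U$, and hence is nonempty.

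To build the desired partial surjection $\Phi : X \to \alpha$, I would proceed by transfinite induction on $\alpha$. The base case $\alpha = \kappa$ uses the $\kappa$-completeness and nonprincipality of $U$: the sets $X \setminus \{x\}$ form a family of size $\preceq^{*} X$ with empty intersection, so $\kappa$-completeness prevents $X \preceq^{*} \gamma$ for any $\gamma < \kappa$, which one converts into a partial surjection $X \to \kappa$. For the inductive step, observe that $j_U[\beta] = j_U[\alpha] \cap j_U(\beta) \in M_U$ for every $\beta < \alpha$, so by induction (and trivially when $\beta < \kappa$) each $\beta < \alpha$ satisfies $\beta \preceq^{*} X$. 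To assemble these into a single partial surjection $X \to \alpha$, fix a partial surjection $s : X \to X \times X$; for $x$ with $s(x) = (y,z)$ defined, use $y$ to select a witness with large $\eta_y$, use $z$ to index into $\eta_y$ via an inductively-constructed surjection $X \to \eta_y$, then push forward through $e_y$ to an ordinal below $\alpha$.

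The main obstacle is uniformity: without the Axiom of Choice, one cannot freely choose surjections $X \to \beta$ for each $\beta < \alpha$, nor pick witnesses $y$ with $\eta_y$ above a given threshold. The resolution must be to make every selection parameter-free, reading directly from the single representative $f$ and its enumeration data $e_\cdot$. The role of $X \times X \preceq^{*} X$ is precisely the pairing capacity needed to encode both the level-selector and the index-within-level in a single element of $X$, and I expect this to be exactly enough for a parameter-free construction of $\Phi$ to go through.
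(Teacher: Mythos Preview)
Your outline has the right skeleton—build partial surjections \(\Phi_\gamma : X \to \gamma\) by transfinite recursion, using the pairing \(X \to X \times X\) to combine a level-selector with an index—and this is indeed how the paper's \cref{LadderSystem} assembles the final surjection. But you correctly flag the uniformity obstacle and then do not resolve it, and the specific data you propose (the order types \(\eta_y\) and enumerations \(e_y\)) does not suffice.

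Concretely: your \(\Phi_\alpha(x) = e_y(\Phi_{\eta_y}(z))\) for \(s(x) = (y,z)\) is only surjective onto \(\alpha\) if \(\bigcup\{f(y) : \eta_y < \alpha\} = \alpha\). Nothing rules out \(\mathrm{otp}(f(y)) = \alpha\) on a \(U\)-large (or even full) set of \(y\), and then the recursion has no usable \(y\) at all. Your appeal to ``\L o\'s for ordinal-valued functions'' does not give \([\eta_\cdot]_U = \alpha\): order type is not an ordinal comparison, \(M_U\) need not be wellfounded, and without choice there is no reason \([\mathrm{otp}(f(\cdot))]_U\) should equal \(\mathrm{otp}^{M_U}([f]_U)\). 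More to the point, the recursion needs, uniformly in every \(\gamma \leq \alpha\), a \emph{cofinal} function \(X \to \gamma\) definable from the single fixed representative \(\langle S_x\rangle\); producing these is the entire content of the paper's \cref{SuperLemma}, which is not a formality. The hard case there is precisely when \(\sup S_x = \gamma\) for \(U\)-almost all \(x\): the paper shows (via an ultraproduct of models \(L[S_x,E_x,E]\), where \L o\'s's theorem holds because each factor is internally wellorderable) that the \(S_x\) cannot share a limit point of uniform cofinality \(\kappa\), and then runs a diagonal construction of length at most \(\kappa\) that either terminates with the desired cofinal function or manufactures such a forbidden configuration. The surjection \(p : X \to \kappa\) from the base case is used here, not merely at the start. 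None of these ideas are present in your sketch, and ``I expect this to be exactly enough'' is not a substitute for them.
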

Note that we implicitly assume that \(U\) has a critical point. 
\begin{defn}
Suppose \(U\) is an ultrafilter over a set \(X\). A sequence \(\langle S_x \mid x\in X\rangle\) is an {\it \(A\)-supercompactness sequence for \(U\)} if it has the following properties:
\begin{itemize}
\item For all \(x\in X\), \(S_x\subseteq A\).
\item Every \(a\in A\) belongs to \(S_x\) for \(U\)-almost all \(x\in X\).
\item For any \(X\)-indexed sequence \(\langle a_x \mid x\in X\rangle\) with \(a_x\in S_x\) for \(U\)-almost all \(x\in X\), there is some \(a\in A\) with \(a_x = a\) for \(U\)-almost all \(x\in X\).\footnote{In the case that \(A\) is not wellorderable, the right concept seems to be that of a {\it normal} \(A\)-supercompactness sequence, which has the stronger property that for any \(X\)-indexed sequence \(\langle B_x \mid x\in X\rangle\) with \(\emptyset \neq B_x\subseteq S_x\) for \(U\)-almost all \(x\in X\), there is some \(a\in A\) with \(a\in B_x\) for \(U\)-almost all \(x\in X\).}
\end{itemize}
\end{defn}
Supercompact ultrafilters are the combinatorial manifestation of supercompact embeddings:
\begin{lma}\label{SuperUltra}
\(\langle S_x \mid x\in X\rangle\) is an \(A\)-supercompactness sequence for \(U\) if and only if \([\langle S_x \mid x\in X\rangle]_U = j_U[A]\).\qed
\end{lma}

The first step of \cref{UltrafilterUndefinabilityThm} is the following well-known fact:
\begin{lma}\label{CompletenessSurjection}
Suppose \(U\) is an ultrafilter over a set \(X\) and \(\kappa = \textnormal{crit}(j_U)\). Then there is a surjection from \(X\) to \(\kappa\).
\begin{proof}
Since \(U\) is \(\kappa\)-complete but not \(\kappa^+\)-complete, there is a strictly decreasing sequence \(\langle A_\alpha \mid \alpha < \kappa\rangle\) of sets \(A_\alpha\in U\) such that \(\bigcap_{\alpha < \kappa} A_\alpha = \emptyset\). Let \(f : X\to \kappa\) be defined by \[f(x) = \min \{\alpha \mid x\notin A_{\alpha+1}\}\] Since the sequence \(\langle A_\alpha \mid \alpha < \kappa\rangle\) is strictly decreasing, \(f\) is a surjection.
\end{proof}
\end{lma}
The second step of \cref{UltrafilterUndefinabilityThm} is more involved:
\begin{lma}\label{SuperLemma}
Suppose \(U\) is an ultrafilter over a set \(X\), \(\kappa = \textnormal{crit}(j_U)\), \(\gamma\geq \kappa\) is an ordinal, and \(\langle S_x \mid x\in X\rangle\) is a \(\gamma\)-supercompactness sequence for \(U\). Then for any surjection \(p\) from \(X\) to \(\kappa\), there is a cofinal function from \(X\) to \(\gamma\) that is ordinal definable from \(\langle S_x \mid x\in X\rangle\) and \(p\).
\end{lma}
In fact, the function produced by \cref{SuperLemma} will be \(\Sigma_1\)-definable from \(\langle S_x\rangle_{x\in X}\) and \(p\), but ordinal definability will suffice for our applications.
\begin{proof}
There are two cases.

\begin{case}\label{EasyCase} For \(U\)-almost all \(x\in X\), \(\sup S_x < \gamma\).\end{case} 
It does no harm to assume that \(\sup S_x < \gamma\) for all \(x\in X\). This is because the sequence obtained from \(\langle S_x \mid x\in X\rangle\) by replacing \(S_x\) with the empty set whenever \(\sup S_x\geq \gamma\) is a supercompactness sequence (and is ordinal definable from \(\langle S_x \mid x\in X\rangle\)).  

Let \(f : X \to \gamma\) be the function defined by \[f(x) = \text{sup}(S_x\cap \gamma)\] Then \(f\) is cofinal in \(\gamma\), which proves \cref{SuperLemma} in \cref{EasyCase}. To see that \(f\) is cofinal, fix an ordinal \(\alpha < \gamma\). The definition of a supercompactness sequence implies that the set \(B_\alpha  = \{x \in X \mid \alpha\in S_x\}\) belongs to \(U\), so we may fix an \(x\in X\) such that \(\alpha\in S_x\). Then \(f(x) = \sup S_x > \alpha\). Thus \(f\) is cofinal in \(\gamma\).


\begin{case}\label{HardCase} For \(U\)-almost all \(x\in X\), \(\sup S_x = \gamma\).
\end{case}
As in \cref{EasyCase}, it does no harm to assume \(\sup S_x = \gamma\) for all \(x\in X\). 

The first step is to show that the sets \(S_x\) cannot have a common limit point of uniform cofinality \(\kappa\) in the following sense:
\begin{clm*}
Suppose \(\nu < \gamma\). There is no sequence of sets \(\langle E_x \mid x\in X\rangle\) such that for all \(x\in X\), \(E_x\subseteq S_x\cap \nu\), \(E_x\) has ordertype \(\kappa\), and \(E_x\) is cofinal in \(\nu\).
\begin{proof}
Fix one last cofinal set \(E\subseteq \nu\) of ordertype \(\kappa\). Let \[M_x = L[S_x,E_x,E]\] Since there is a definable sequence \(\langle {<}_x \mid x\in X\rangle\) such that \(<_x\) is a wellorder of \(M_x\), \L o\'s's Theorem holds for the ultraproduct \(M = \prod_{x\in X} M_x/U\). Thus \(M\) is a proper class model of ZFC, although it may be that \(M\) is illfounded. That being said, \(\gamma+1\) is contained in the wellfounded part of \(M\). (As usual, the wellfounded part of \(M\) is taken to be transitive.) Indeed, \([\langle S_x \mid x\in X\rangle]_U = j_U[\gamma]\) by \cref{SuperUltra}, so \(M\) contains a wellorder of ordertype \(\gamma\) and hence is wellfounded up to \(\gamma+1\).

The purpose of including the set \(E\) in each \(M_x\) is to ensure that 
\(M\) correctly computes the cofinality of \(\sup j_U[\nu]\). 
The argument is standard, at least in the context of the Axiom of Choice. 
Since \(E\in M_x\) for every \(x\in X\),
\(j_U[E] = j_U(E)\cap j_U[\gamma]\) belongs to \(M\). 
Since \(\text{ot}(E) = \kappa\), \(\text{ot}(j_U[E]) = \kappa\). 
Therefore \(M\) satisfies that \(\sup j_U[E]\) has cofinality \(\kappa\). 
Since \(E\) is a cofinal subset of \(\nu\), \(\sup j_U[E] = \sup j_U[\nu]\). Thus: 
\begin{equation}\label{cfnu}\text{cf}^M(\sup j_U[\nu]) = \kappa\end{equation}

Let \(E_* = [\langle E_x \mid x\in X\rangle]_U\). Then \L o\'s's Theorem implies
that the following hold in \(M\):
\begin{itemize}
\item \(E_*\subseteq j_U[\nu]\).
\item \(E_*\) has ordertype \(j_U(\kappa)\).
\item \(E_*\) is cofinal in \(j_U(\nu)\).
\end{itemize} 
The first bullet point uses that \(j_U[\gamma]\cap j_U(\nu) = j_U[\nu]\). 

Since \(E_*\subseteq j_U[\nu]\), \(j_U[\nu]\) is cofinal in \(j_U(\nu)\), and
hence \[\sup j_U[\nu] = j_U(\nu)\] Combining this with \cref{cfnu},
\[\text{cf}^M(j_U(\nu)) = \kappa\] But \(\text{cf}(\nu) = \kappa\) in \(M_x\)
for every \(x\in X\), so by \L o\'s's Theorem
\[\text{cf}^M(j_U(\nu)) = j_U(\kappa)\] Thus \(\kappa = j_U(\kappa)\). This
contradicts that \(\kappa\) is the critical point of \(j_U\).
\end{proof}
\end{clm*}

We now sketch the last idea of the proof. Let \(C_x\) denote the set of limit
points of \(S_x\). Suppose towards a contradiction that there is no cofinal
function from \(X\) to \(\gamma\). Then the intersection \(\bigcap_{x\in X}C_x\)
is a closed unbounded subset of \(\gamma\), and hence should contain a point
\(\nu\) of cofinality \(\kappa\). This almost contradicts the claim. Therefore
to finish, we carefully go through the standard proof that \(\bigcap_{x\in
X}C_x\) is closed unbounded, checking that it either produces a cofinal function
from \(X\) to \(\gamma\) that is (ordinal) definable from \(\langle
S_x\rangle_{x\in X}\) and \(p\) or else produces a genuine counterexample to the
claim. 

Now, the details. We define various objects by a transfinite recursion with
stages indexed by ordinals \(\alpha\). The first \(\alpha\) stages of the
construction produce ordinals \[\langle \delta_x^\beta \mid (x,\beta)\in X\times
\alpha\rangle\] such that for each \(x\in X\), \(\langle \delta_x^\beta \mid
\beta < \alpha\rangle\) is an increasing sequence of elements of \(S_x\). It
remains to define \(\delta_x^\alpha\) for each \(x\in X\). There are two
possibilities:
\begin{scase}
The set \(\{\delta_x^\beta + 1 \mid x\in X,\beta < \alpha\}\) is bounded below
\(\gamma\).
\end{scase}
In this case, set \[\delta^\alpha = \sup \{\delta_x^\beta + 1: x\in X,\beta <
\alpha\}\] and for each \(x\in X\): 
\[\delta_x^\alpha = \min (S_x\setminus \delta^\alpha)\]

\begin{scase}
The set \(\{\delta_x^\beta + 1 \mid x\in X,\beta < \alpha\}\) is cofinal in
\(\gamma\). 
\end{scase}
If this case arises, the construction terminates. 

The construction must halt at some stage \(\alpha_* \leq \kappa\). To see this,
assume towards a contradiction that it does not. Let \(E_x = \{\delta_x^\beta
\mid \beta < \kappa\}\). Since the construction did not halt at stage
\(\kappa\), \(E_x\) is bounded strictly below \(\gamma\). The construction
ensures that if \(\alpha < \beta <\kappa\), then \(\delta_x^\alpha <
\delta_x^\beta\) for any \(x,y\in X\). It follows that all the sets \(E_x\) have
the same supremum, say \(\nu\). But for every \(x\in X\), \(E_x\subseteq S_x\cap
\nu\), \(E_x\) has ordertype \(\kappa\), and \(E_x\) is cofinal in \(\nu\). This
contradicts the claim.

Suppose first that \(\alpha_*\) is a limit ordinal. Then for each \(\alpha <
\alpha_*\), let \(\delta_\alpha = \sup_{\beta < \alpha} \delta_x^\beta + 1\).
Since \(\alpha_*\) is the first stage at which the construction halts,
\(\delta_\alpha < \gamma\) for all \(\alpha < \alpha_*\). Let \[f(x) =
\delta_{p(x)}\] for those \(x\) such that \(p(x) < \alpha_*\). Since \(p\) is a
surjection from \(X\) to \(\kappa\), the range of \(f\) is equal to
\(\{\delta_\beta \mid \beta < \alpha_*\}\), which is cofinal in \(\gamma\).

Otherwise \(\alpha = \beta+1\) for some ordinal \(\beta\). Then of course the
function \[f(x) = \delta_x^\beta\] must be cofinal in \(\gamma\). 
\end{proof}

\cref{UltrafilterUndefinabilityThm} is a consequence of \cref{SuperLemma} and
the following elementary fact:
\begin{lma}\label{LadderSystem}
Suppose \(X\) is a set, \(\delta\) is an ordinal, and for each \(\gamma \leq
\delta\), \(f_\gamma\) is a cofinal function from \(X\) to \(\gamma\). Suppose
\(d : X \to X \times X\) is a surjection. Then there is a surjection \(g\) from
\(X\) to \(\delta\) that is ordinal definable from \(d\) and \(\langle f_\gamma
\mid \gamma \leq \delta\rangle\).
\begin{proof}
We will define a sequence \(\langle g_\gamma \mid \gamma \leq \delta\rangle\) by
recursion and set \(g = g_\delta\). For \(\alpha \leq \delta\), suppose
\(\langle g_\gamma \mid \gamma < \alpha\rangle\) is given. Define \(h : X\times
X\to \delta\) by setting \(h(x,y) = g_\gamma(y)\) where \(\gamma =
f_\alpha(x)\). Then let \(g_\alpha = h\circ d\). 
\end{proof}
\end{lma}

\begin{proof}[Proof of \cref{UltrafilterUndefinabilityThm}] By
\cref{CompletenessSurjection}, there is a surjection \(p : X\to \kappa\). For
each \(\gamma \leq \delta\), \(\langle S_x\cap \gamma \mid x\in X\rangle\) is a
\(\gamma\)-supercompactness sequence. Applying \cref{SuperLemma}, let \(f_\gamma
: X\to \gamma\) be the least cofinal function ordinal definable from \(p\) and
\(\langle S_x\cap \gamma \mid x\in X\rangle\). The hypotheses of
\cref{LadderSystem} are now satisfied by taking \(d : X\times X\to X\) to be any
surjection. As a consequence, \(\delta \preceq^{*} X\), which proves the
theorem.
\end{proof}

\subsection{Ordinal definability and ultrapowers}
We finally turn to a generalization of \cref{UltrafilterUndefinabilityThm} that
has no ZFC analog:
\begin{thm}\label{FixedPointUndefinabilityThm}
Suppose \(X\) is a set, \(\gamma\) is an ordinal, \(U\) is an ultrafilter over
\(X\times \gamma\), \(\kappa\) is the critical point of \(j_U\), and \(\theta
\geq \kappa\) is an ordinal. If \(j_U[\theta]\in M_U\) and \(j_U(\theta) =
\theta\), then \(\theta \preceq^{*} X\).
\end{thm}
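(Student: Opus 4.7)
My plan is to adapt the proof of \cref{UltrafilterUndefinabilityThm}. That earlier argument, applied directly to $U$ viewed as an ultrafilter on $Y := X \times \gamma$, would give $\theta \preceq^{*} Y$ under the pairing hypothesis $Y \times Y \preceq^{*} Y$. Here we can assume neither that pairing nor even $X \times X \preceq^{*} X$, and moreover the conclusion demands a surjection from $X$ rather than from $Y$; the fixed-point assumption $j_U(\theta) = \theta$ is precisely what must compensate for both defects.

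The first step is to rerun the machinery of \cref{SuperLemma} in the present setting. By \cref{CompletenessSurjection} applied to $U$ on $Y$, fix a surjection $p : Y \to \kappa$. Since $j_U[\theta] \in M_U$, by \cref{SuperUltra} fix a $\theta$-supercompactness sequence $\langle S_y : y \in Y\rangle$ for $U$, so that $[\langle S_y\rangle]_U = j_U[\theta]$. Applying \cref{SuperLemma} to $p$ and $\langle S_y \cap \alpha\rangle_{y \in Y}$ yields, for each $\alpha \leq \theta$, a cofinal function $f_\alpha : Y \to \alpha$ that is ordinal definable from these fixed parameters.

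The key new step is to descend from $Y$ down to $X$. For each $\alpha \leq \theta$ and each $x \in X$, set $g_\alpha(x) := \sup \{ f_\alpha(x,\xi) : \xi < \gamma\}$. The central claim is that the fixed-point hypothesis $j_U(\theta) = \theta$ forces $g_\alpha(x) < \alpha$ for every $x \in X$, so that $g_\alpha$ is a genuine cofinal function $X \to \alpha$. For if instead $g_\theta(x_0) = \theta$ for some $x_0$, then $\xi \mapsto f_\theta(x_0, \xi)$ is a cofinal map $\gamma \to \theta$; pushing this through $j_U$ and combining with the supercompactness sequence, one obtains in $M_U$ a family of subsets of $j_U(\theta) = \theta$ sharing a common limit point of cofinality $\kappa$ inside $M_U$. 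A variant of the ``no common limit of uniform cofinality $\kappa$'' claim from \cref{HardCase} of the proof of \cref{SuperLemma} then forces $\kappa = j_U(\kappa)$, contradicting that $\kappa = \textnormal{crit}(j_U)$. Analogous reasoning at intermediate $\alpha \leq \theta$ (exploiting that $j_U(\alpha) \geq \alpha$ and the cofinal-subset structure inherited from $\langle S_y\rangle$) handles the remaining values of $\alpha$.

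Finally, given cofinal functions $g_\alpha : X \to \alpha$ for every $\alpha \leq \theta$, we assemble a surjection $g : X \to \theta$. Without $X \times X \preceq^{*} X$ we cannot invoke \cref{LadderSystem} directly, so instead we use the ordinal $\gamma$ as an auxiliary index: one runs the transfinite recursion underlying \cref{LadderSystem}, but replaces the missing pairing $d : X \to X \times X$ with the surjection $p : X \times \gamma \to \kappa$ together with the $\gamma$-indexed family of fibers of the $g_\alpha$'s. The main obstacle I anticipate is the ZF-level ``common limit point'' derivation in the previous paragraph, where the fixed-point hypothesis must be combinatorially converted into a contradiction without appealing to \L o\'s's theorem for $M_U$; once that is in hand, the remainder of the argument is essentially the same bookkeeping as in \cref{UltrafilterUndefinabilityThm}.
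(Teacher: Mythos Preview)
Your approach attempts to extend the purely combinatorial proof of \cref{UltrafilterUndefinabilityThm}, but the key descent step is not justified, and the paper in fact takes a completely different route.

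The gap is at the heart of your argument. You claim that if \(g_\theta(x_0) = \sup_\xi f_\theta(x_0,\xi) = \theta\) for some \(x_0\), the hypothesis \(j_U(\theta) = \theta\) yields subsets of \(\theta\) in \(M_U\) with a common limit point of cofinality \(\kappa\), and hence a contradiction via the Claim in \cref{HardCase} of \cref{SuperLemma}. But that Claim concerned sets \(E_x\) of ordertype exactly \(\kappa\) produced by a careful recursion using both the supercompactness sequence and the surjection \(p\); an unbounded fiber \(\gamma \to \theta\) is just a cofinal function with no a priori connection to \(\kappa\), and you have not explained how \(j_U(\theta) = \theta\) manufactures any cofinality-\(\kappa\) data from it. For intermediate \(\alpha < \theta\) there is no fixed-point hypothesis available at all, so the promised ``analogous reasoning'' is even less grounded. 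Your final assembly step is likewise unspecified: without \(X \times X \preceq^* X\), it is not clear how \(p: X \times \gamma \to \kappa\) together with the fibers of the \(g_\alpha\) replace the pairing \(d\) in \cref{LadderSystem} to yield a surjection \(X \to \theta\).

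The paper's argument is of an entirely different character. It fixes a \(\theta\)-supercompactness function \(S\), recursively builds a sequence \(T\) of subsets of \(\theta\) so that \(P(\theta) \cap M_{S,T} = P(\theta) \cap \text{HOD}_{S,T}\), where \(M_{S,T} = \prod_{(x,\xi)} \text{HOD}_{S,T,x}/U\), and then locates a further fixed point \(\iota = \theta^{+\text{HOD}_{S,T}}\) with \(j_U[\iota] \in M_{S,T}\). Assuming \(\theta \not\preceq^* X\), Vop\v enka's theorem makes each \(\text{HOD}_{S,T,x}\) an \(\iota\)-cc generic extension of \(\text{HOD}_{S,T}\), so \(\text{Ult}(\text{HOD}_{S,T},U)\) is stationary correct in \(M_{S,T}\) at \(\iota\); Woodin's Solovay-splitting proof of the Kunen inconsistency then delivers the contradiction \(\kappa \in \text{ran}(j_U\restriction \kappa)\).
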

Very roughly, this theorem says that supercompactness up to a fixed point
\(\theta\) cannot be the result of the wellorderable part of an ultrafilter.
In another sense, however, the proof of \cref{UltrafilterUndefinabilityThm} is
more general than that of \cref{FixedPointUndefinabilityThm}. A routine
modification of the proof of \cref{UltrafilterUndefinabilityThm} shows the
following fact:
\begin{thm}\label{ExternalUltrafilterUndefinabilityThm}
Suppose \(M\) is an inner model and \(X\in M\) is a set such that \(X\times
X\preceq^{*} X\) in \(M\). Suppose \(U\) is an \(M\)-ultrafilter over \(X\),
\(\kappa\) is the critical point of \(j_U\), and \(\alpha\geq \kappa\) is an
ordinal. If \(j_U[\alpha]\in \textnormal{Ult}(M,U)\), then \(\alpha \preceq^{*}
X\).
\end{thm}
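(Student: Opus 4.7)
The plan is to adapt the proof of \cref{UltrafilterUndefinabilityThm} by relativizing each step to the inner model $M$. First, since $\textnormal{crit}(j_U) = \kappa$, pick a strictly decreasing sequence $\langle A_\beta \mid \beta < \kappa\rangle$ of sets in $U$ with empty intersection (each $A_\beta \in P(X)^M$, though the sequence as a whole need only live in $V$) and define a surjection $p : X \to \kappa$ by $p(x) = \min\{\beta : x \notin A_{\beta+1}\}$, exactly as in \cref{CompletenessSurjection}. The hypothesis $j_U[\alpha] \in \textnormal{Ult}(M,U)$ provides, via the ultrapower analog of \cref{SuperUltra} interpreted in $M$, an $\alpha$-supercompactness sequence $\langle S_x \mid x\in X\rangle \in M$ representing $j_U[\alpha]$.

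Next, for each $\gamma \leq \alpha$ I would reprove the analog of \cref{SuperLemma} internally to $M$: the recursive construction of $\langle \delta_x^\beta\rangle$ depends only on $\langle S_x\cap \gamma\rangle \in M$ and on ordinals, so it can be carried out in $M$ and produces a cofinal function $f_\gamma : X \to \gamma$ (in $M$, hence in $V$) ordinal definable over $M$ from $p$ and $\langle S_x\cap \gamma\rangle$. If the construction fails at some stage, it yields a sequence $\langle E_x \rangle \in M$ of cofinal subsets of ordertype $\kappa$ in some common $\nu < \gamma$. Picking any $E \in M$ cofinal in $\nu$, one forms the $M$-ultraproduct $\prod^M_{x \in X} L[S_x, E_x, E]^M / U$. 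Because each $L[S_x, E_x, E]^M$ carries an $M$-definable wellorder, \L o\'s's Theorem holds in this ultraproduct, and $\gamma+1$ lies in its wellfounded part since $[\langle S_x\cap \gamma\rangle]_U = j_U[\gamma]$ is wellordered. The cofinality calculation of the original proof now goes through verbatim to force $j_U(\kappa) = \kappa$, a contradiction.

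Finally, the hypothesis $X \times X \preceq^{*} X$ in $M$ yields a surjection $d : X \to X \times X$ in $M \subseteq V$, and applying \cref{LadderSystem} in $V$ to $d$ and $\langle f_\gamma \mid \gamma \leq \alpha\rangle$ produces a surjection $X \to \alpha$, establishing $\alpha \preceq^{*} X$.

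The main obstacle is the inner-model version of \cref{SuperLemma}: one must ensure that every object built in the original $V$-argument (the recursively defined $\delta_x^\beta$'s, the hypothetical $E_x$'s, the inner models $L[S_x,E_x,E]$, and the target ultraproduct) is constructed inside $M$ from $M$-parameters, and that the resulting $M$-ultraproduct by $U$ is wellfounded on the relevant initial segment of ordinals. This last point is the only genuinely external ingredient; it is a direct consequence of the fact that $j_U[\gamma]$ is already a wellorder in $\textnormal{Ult}(M,U)$, so the required cofinality computations can be imported unchanged from the proof of \cref{UltrafilterUndefinabilityThm}.
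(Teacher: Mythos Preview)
Your proposal is correct and is precisely the ``routine modification'' the paper invokes in lieu of a proof: relativize the supercompactness sequence and the recursion producing the \(\delta_x^\beta\) to \(M\), run the ultraproduct argument of the Claim using \(M\)-functions (where \L o\'s holds thanks to the uniformly definable wellorders of the \(L[S_x,E_x,E]\)), and then assemble the cofinal functions via \cref{LadderSystem} in \(V\). One small imprecision: the cofinal function \(f_\gamma\) need not lie in \(M\) in the limit case of the recursion, since it is built from the surjection \(p:X\to\kappa\) which you (correctly) allow to live only in \(V\); but this is harmless, as the surjection \(X\to\alpha\) is only claimed in \(V\) and \cref{LadderSystem} is applied there.
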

The difference is that we do not require \(U\in M\). It is not clear that it is
possible to modify the proof of \cref{FixedPointUndefinabilityThm} to obtain
such a result.

The proof of \cref{FixedPointUndefinabilityThm} uses the following version of
Vop\v enka's Theorem, which for reasons of citation we reduce to Bukovsky's
Theorem:
\begin{thm}[Vop\v enka]\label{VopenkaThm}
Suppose \(X\) and \(T\) are sets. Then for any \(x\in X\),
\(\textnormal{HOD}_{T,x}\) is an \(\iota\)-cc generic extension of
\(\textnormal{HOD}_T\) where \(\iota\) is the least regular cardinal of
\(\textnormal{HOD}_{T,x}\) greater than or equal to \(\theta\{X\}\).
\begin{proof}
Recall that \(\theta\{X\}\) is the least ordinal not the surjective image of
\(X\).

By Bukovsky's Theorem (\cite{Usuba}, Fact 3.9), it suffices to verify that
\(\textnormal{HOD}_T\) has the uniform \(\iota\)-covering property in
\(\textnormal{HOD}_{T,x}\). This amounts to the following task. Suppose
\(\alpha\) and \(\beta\) are ordinals and \(f : \alpha \to \beta\) is a function
in \(\textnormal{HOD}_{T,x}\). We must find a function \(F : \alpha \to
P(\beta)\) in \(\text{HOD}_T\) such that for all \(\xi < \alpha\), \(F(\xi)\) is
a set of cardinality less than \(\iota\) containing \(f(\xi)\) as an element. 

Since \(f\) is \(\textnormal{OD}_{T,x}\), there is an \(\text{OD}_T\) function
\(g : \alpha\times X\to \beta\) such that \(g(\xi,x) = f(\xi)\) for all \(\xi <
\alpha\). Let \(F(\xi) = \{g(\xi,u) \mid u\in X\}\). Clearly \(F\) is
\(\text{OD}_T\), so \(F\in \text{HOD}_T\). Fix \(\xi < \alpha\). By definition,
\(f(\xi) = g(\xi,x) \in F(\xi)\). Finally, since \(F(\xi) \preceq^{*} X\),
\(\iota\not \preceq^{*} X\), and \(F(\xi)\) is wellorderable, \(|F(\xi)| <
\iota\).
\end{proof}
\end{thm}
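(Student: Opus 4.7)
The plan is to verify the hypothesis of Bukovsky's Theorem (as cited in the paper): that \(\textnormal{HOD}_T\) has the uniform \(\iota\)-covering property in \(\textnormal{HOD}_{T,x}\). Concretely, for every function \(f \colon \alpha \to \beta\) in \(\textnormal{HOD}_{T,x}\) with \(\alpha,\beta\) ordinals, I must produce \(F \colon \alpha \to P(\beta)\) in \(\textnormal{HOD}_T\) such that \(f(\xi) \in F(\xi)\) and \(|F(\xi)|^{\textnormal{HOD}_{T,x}} < \iota\) for every \(\xi < \alpha\).

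Without loss of generality, I first absorb \(X\) into the parameter \(T\), so that \(X \in \textnormal{HOD}_T\); this is harmless since \(\textnormal{HOD}_{T,x} = \textnormal{HOD}_{(T,X),x}\) when \(x \in X\). Given \(f\) as above, fix a formula \(\varphi\) and ordinal parameters \(\vec\delta\) with \(y = f(\xi) \iff \varphi(\xi,y,T,x,\vec\delta)\), and define \(g \colon \alpha \times X \to \beta\) by letting \(g(\xi,u)\) be the unique \(y \in \beta\) satisfying \(\varphi(\xi,y,T,u,\vec\delta)\), when such a unique \(y\) exists, and \(0\) otherwise. Then \(g\) is \(\textnormal{OD}_T\) (all its defining parameters lie in \(\textnormal{HOD}_T\)), so \(g \in \textnormal{HOD}_T\), while \(g(\xi,x) = f(\xi)\) by construction. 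Setting \(F(\xi) = \{g(\xi,u) : u \in X\}\), we obtain \(F \in \textnormal{HOD}_T\) with \(f(\xi) \in F(\xi)\) for every \(\xi < \alpha\).

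For the cardinality bound, the slice \(u \mapsto g(\xi,u)\) surjects \(X\) onto \(F(\xi)\), so \(F(\xi) \preceq^{*} X\). Since \(F(\xi)\) is a set of ordinals contained in \(\beta\), its ordertype \(\tau\) is an ordinal, and the Mostowski collapse exhibits \(\tau\) as a surjective image of \(F(\xi)\), hence of \(X\); thus \(\tau < \theta\{X\} \leq \iota\). As \(F(\xi)\) is wellordered by its ordertype inside \(\textnormal{HOD}_{T,x}\), we conclude \(|F(\xi)|^{\textnormal{HOD}_{T,x}} \leq \tau < \iota\), and Bukovsky's criterion is satisfied. The only delicate bookkeeping is ensuring that the ingredients defining \(F\) genuinely lie in \(\textnormal{HOD}_T\), which is the point of folding \(X\) into \(T\); beyond this, the argument is essentially the classical Vop\v enka construction carried out relative to the parameter \(T\), and I do not anticipate a substantive obstacle.
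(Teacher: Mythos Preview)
Your approach is essentially the same as the paper's: reduce to Bukovsky's uniform \(\iota\)-covering criterion, replace the parameter \(x\) by a variable \(u\) ranging over \(X\) to build \(g\), and set \(F(\xi)=\{g(\xi,u):u\in X\}\).

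One caveat: your ``without loss of generality'' step is not justified as written. Absorbing \(X\) into \(T\) changes the ground model from \(\textnormal{HOD}_T\) to the possibly larger \(\textnormal{HOD}_{T,X}\), and the equality \(\textnormal{HOD}_{T,x}=\textnormal{HOD}_{(T,X),x}\) you invoke need not hold---knowing a single element \(x\in X\) does not in general let one define \(X\). That said, the paper's own proof also uses \(X\) as a parameter when forming \(F(\xi)=\{g(\xi,u):u\in X\}\), so it too tacitly requires \(X\) to be \(\textnormal{OD}_T\); this does hold in the paper's application of the theorem, and is the cleaner way to address the bookkeeping concern you identified.
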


\begin{proof}[Proof of \cref{FixedPointUndefinabilityThm}] Let \(j = j_U\).

Fix a function \(S : X\times \gamma\to P(\theta)\) such that \([S]_U =
j[\theta]\). (That is, \(S\) is a \(\theta\)-supercompactness sequence for
\(U\).) For any set \(T\), let
\[M_T = \prod_{(x,\xi)\in X\times \gamma}\text{HOD}_{T,x}/U\] Notice that
\(j[\theta]\in M_S\) since \(S(x,\xi)\in \text{HOD}_{S,x}\) for all \((x,\xi)\in
X\times \gamma\). It follows that for any set \(T\), \[P(\theta)\cap
\text{HOD}_{S,T}\subseteq M_{S,T}\] This is a basic fact about supercompactness,
recast in our context. The point is that if \(A\in P(\theta)\cap
\text{HOD}_{S,T}\), then \(j(A)\) and \(j\restriction \theta\) both belong to
\(M_{S,T}\), so \(A\in M_{S,T}\) since \(A = (j\restriction\theta)^{-1}[j(A)]\).

The key idea of the proof is to construct a sequence \(T = \langle T(\nu) \mid
\nu < \beta_*\rangle\) of subsets of \(\theta\) such that
\begin{equation}\label{UltraproductEqn}\{T(\nu) \mid \nu < \beta_*\} = P(\theta)\cap M_{S,T}\end{equation}
The construction proceeds by recursion. Suppose \(T\restriction \beta = \langle T(\nu) \mid \nu < \beta\rangle\) has been defined. Assume that \(\{T(\nu) \mid \nu < \beta\} \subsetneq P(\theta)\cap M_{S,T\restriction \beta}\), and let \(T_\beta\subseteq \theta\) be the least set in the canonical wellorder of \(P(\theta)\cap M_{S,T\restriction \beta}\) that does not belong to \(\{T(\nu) \mid \nu < \beta\}\).

Eventually, one must reach an ordinal \(\beta\) such that \(\{T(\nu) \mid \nu < \beta\} = P(\theta)\cap M_{S,T\restriction \beta}\): otherwise one obtains a sequence \(\langle T(\nu) \mid \nu \in \text{Ord}\rangle\) of distinct subsets of \(\theta\), violating the Replacement and Powerset Axioms. At the least such ordinal \(\beta\), the construction terminates, and one sets \(\beta_* = \beta\) and \(T = \langle T(\nu) \mid \nu < \beta_*\rangle\), securing \cref{UltraproductEqn}.

Let \(\delta\) be the least ordinal such that \((2^{\delta})^{\text{HOD}_{S,T}} > \theta\). We claim that 
\begin{equation}\label{PowerCompactClm}j[P(\delta)\cap \text{HOD}_{S,T}]\in M_{S,T}\end{equation}
Let \(P_{\text{bd}}(\delta)\) denote the set of bounded subsets of \(\delta\).
Since \((2^{<\delta})^{\text{HOD}_{S,T}}\leq \theta\) and \(j[\theta]\in
M_{S,T}\), \(j[P_{\text{bd}}(\delta)\cap \text{HOD}_{S,T}]\in M_{S,T}\) by a
standard argument: letting \(f : \theta\to P_{\text{bd}}(\delta)\cap
\text{HOD}_{S,T}\) be a surjection with \(f\in \text{HOD}_{S,T}\),
\(j[P_\text{bd}(\delta)\cap \text{HOD}_{S,T}] = j(f)[j[\theta]]\in M_{S,T}\).

We claim that \(j(\delta) = \sup j[\delta]\). This will imply
\cref{PowerCompactClm}, since then \(j[P(\delta)\cap \text{HOD}_{S,T}]\) is
equal to the set of \(A\in P(j(\delta))\cap M_{S,T}\) such that \(A\cap
\alpha\in j[P_{\text{bd}}(\delta)\cap \text{HOD}_{S,T}]\) for all \(\alpha <
j(\delta)\). Here we make essential use of the equality \cref{UltraproductEqn}.

Let \(\delta_* = \sup j[\delta]\). Let \[P = \text{Ult}(\text{HOD}_{S,T},U)\] so
\(j\) restricts to an elementary embedding from \(\text{HOD}_{S,T}\) to \(P\).
To prove that \(j(\delta) = \delta_*\), it suffices by the minimality of
\(\delta\) and the elementarity of \(j\) to show that \((2^{\delta_*})^P >
j(\theta)\), or, since \(j(\theta) = \theta\), that \((2^{\delta_*})^P >
\theta\). Suppose towards a contradiction that this is false, so
\((2^{\delta_*})^P \leq \theta\). 

Since \(j\restriction P_{\text{bd}}(\delta)\cap \text{HOD}_{S,T}\in M_{S,T}\),
\(j\restriction P_{\text{bd}}(\delta)\cap \text{HOD}_{S,T}\in
\text{HOD}_{S,T}\). Therefore the one-to-one function \(h : P(\delta)\cap
\text{HOD}_{S,T}\to P(\delta_*)\cap P\) defined by \(h(A) = j(A)\cap \delta_*\)
belongs to \(\text{HOD}_{S,T}\). Since \((2^{\delta_*})^P \leq \theta\), there
is an injective function from \(\text{ran}(h)\) to \(\theta\) in \(P\), hence in
\(M_{S,T}\), and hence in \(\text{HOD}_{S,T}\) by \cref{UltraproductEqn}.
Therefore in \(\text{HOD}_{S,T}\), 
\[P(\delta)\cap \text{HOD}_{S,T}\preceq \text{ran}(h)\preceq^{*} \theta\]

Since \(\text{HOD}_{S,T}\) satisfies the Axiom of Choice, it follows that
\((2^\delta)^{\text{HOD}_{S,T}} \leq \theta\), which contradicts the definition
of \(\delta\). This contradiction establishes that \(\delta_* = \sup
j[\delta]\), finishing the proof of \cref{PowerCompactClm}.

Let \(\iota = \theta^{+\text{HOD}_{S,T}}\). Then \[\iota \leq j(\iota) =
\theta^{+P}\leq \theta^{+M_{S,T}}\leq \iota\] The first equality uses that
\(j(\theta) = \theta\), and the final inequality follows from
\cref{UltraproductEqn}. Hence \(j(\iota) = \iota\). Since \(\iota \leq
(2^\delta)^{\textnormal{HOD}_{S,T}}\), \(j[\iota]\in M_{S,T}\) as an immediate
consequence of \cref{PowerCompactClm}. We omit the proof, which is similar to
the proof above that \(j[P_{\text{bd}}(\delta)\cap \text{HOD}_{S,T}]\in
M_{S,T}\).

We finally show that \(\theta \preceq^{*} X\). Assume towards a contradiction
that this fails. Then by \cref{VopenkaThm}, for every \(x\in X\),
\(\textnormal{HOD}_{S,T,x}\) is an \(\iota\)-cc generic extension of
\(\textnormal{HOD}_{S,T}\). By elementarity, it follows that \(M_{S,T}\) is an
\(\iota\)-cc generic extension of \(P\). In particular, \(P\) is stationary
correct in \(M_{S,T}\) at \(\iota\). This allows us to run Woodin's proof
\cite{Woodin} of the Kunen Inconsistency Theorem to reach our final
contradiction.

Let \(B = \{\xi < \iota \mid \text{cf}(\xi) = \omega\}\). Recall that \(\kappa\)
denotes the critical point of \(j\). Since \(\text{HOD}_{S,T}\) satisfies the
Axiom of Choice, the Solovay Splitting Theorem \cite{SolovayRVM} applied in
\(\text{HOD}_{S,T}\) yields a partition \(\langle B_\nu \mid \nu <
\kappa\rangle\) of \((B)^{\text{HOD}_{S,T}}\) into
\(\text{HOD}_{S,T}\)-stationary sets. Let \(\langle B'_\nu \mid \nu <
j(\kappa)\rangle = j(\langle B_\nu \mid \nu < \kappa\rangle)\). Then
\(B'_\kappa\) is \(P\)-stationary in \(\iota\). Therefore \(B'_\kappa\) is
\(M_{S,T}\)-stationary in \(\iota\) since \(P\) is stationary correct in
\(M_{S,T}\) at \(\iota\). Since \(j[\iota]\in M_{S,T}\) is an \(\omega\)-closed
unbounded set in \(M_{S,T}\) and \(M_{S,T}\) satisfies that \(B'_\kappa\) is a
stationary set of ordinals of cofinality \(\omega\), the intersection
\(j[\iota]\cap B'_\kappa\) is nonempty. Fix \(\xi < \iota\) such that
\(j(\xi)\in B'_\kappa\). Clearly \(\xi\in B\), so since \(\langle B_\nu \mid \nu
< \kappa\rangle\) partitions \(B\), there is some \(\nu < \iota\) such that
\(\xi\in B_\nu\). Now \(j(\xi)\in j(B_\nu) = B'_{j(\nu)}\). Therefore the
intersection \(B'_{j(\nu)}\cap B'_\kappa\) is nonempty, and so since \(\langle
B'_\nu \mid \nu < j(\kappa)\rangle\) is a partition, \(j(\nu) = \kappa\). This
contradicts that \(\kappa\) is the critical point of \(j\).
\end{proof}

As a corollary of \cref{FixedPointUndefinabilityThm}, we answer the following
question of Schlutzenberg. Suppose \(j : V\to V\) is an elementary embedding. Is
every set ordinal definable from parameters in the range of \(j\)? The question
is motivated by the well-known ZFC fact that if \(j : V\to M\) is an elementary
embedding, then every element in \(M\) is ordinal definable in \(M\) from
parameters in the range of \(j\). 

The answer to Schlutzenberg's question, however, is no.
\begin{thm}\label{FarmerThm}
Suppose \(\epsilon\leq \eta\leq \eta'\) are ordinals, \(\epsilon\) is even, and
\(j : V_\eta\to V_{\eta'}\) is a cofinal elementary embedding such that
\(j(\epsilon) = \epsilon\). Then \(j^\star\restriction \theta_{\epsilon}\) is
not definable over \(\mathcal H_{\eta'}\) from parameters in \(j^\star [\mathcal
H_{\eta}]\cup V_\epsilon\cup \theta_{\eta'}\).
\end{thm}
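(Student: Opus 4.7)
The plan is to argue by contradiction, converting the assumed definability into an application of \cref{FixedPointUndefinabilityThm}. Suppose toward a contradiction that $j^\star \restriction \theta_\epsilon$ is definable over $\mathcal H_{\eta'}$ by a formula $\phi$ with parameters $\vec p = j^\star(\vec r)$, $\vec q \in V_\epsilon$, and ordinals $\vec\alpha \in \theta_{\eta'}^{<\omega}$. Since $j^\star$ inherits cofinality from $j$, I can fix an ordinal $\beta < \theta_\eta$ with every $\alpha_i < j^\star(\beta)$. Setting $X = \textnormal{trcl}(\{\vec r\}) \times V_\epsilon \in \mathcal H_\eta$ and letting $\gamma$ be an ordinal coding $\beta^{<\omega}$, the tuple $b = (\vec p,\vec q,\vec\alpha)$ belongs to $j^\star(X\times\gamma)$; here we use $j^\star(V_\epsilon) = V_\epsilon$, which holds because $j(\epsilon) = \epsilon$. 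Define the $V$-ultrafilter $U = \{S\subseteq X\times\gamma : b\in j^\star(S)\}$, with ultrapower $j_U : V\to M_U$ and factor embedding $k : \textnormal{Ult}(\mathcal H_\eta,U)\to\mathcal H_{\eta'}$ given by $k([f]_U) = j^\star(f)(b)$.

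The next step is to verify the hypotheses of \cref{FixedPointUndefinabilityThm} for $U$ and $\theta=\theta_\epsilon$. Since $j$ is nontrivial, so is $j_U$; and as $\textnormal{crit}(j)<\epsilon\leq\theta_\epsilon$, we have $\theta_\epsilon\geq\textnormal{crit}(j_U)$. Because $\theta_\epsilon$ is definable from $V_\epsilon$, elementarity gives $j^\star(\theta_\epsilon)=\theta_\epsilon$; combining $k(j_U(\theta_\epsilon))=j^\star(\theta_\epsilon)=\theta_\epsilon$ with the fact that $k$ is strictly increasing on ordinals forces $j_U(\theta_\epsilon)=\theta_\epsilon$. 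To see that $j_U[\theta_\epsilon]\in M_U$, define the function $g$ on $X\times\gamma$ by sending $(\vec r',\vec q',\vec\alpha')$ to the partial function $\xi \mapsto y$ determined by $\phi(\xi,y,\vec r',\vec q',\vec\alpha')$ in $\mathcal H_\eta$. Then $k([g]_U) = j^\star(g)(b) = j^\star\restriction\theta_\epsilon$, so $[g]_U\in M_U$ realizes $j_U\restriction\theta_\epsilon$, whence $j_U[\theta_\epsilon]\in M_U$.

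Applying \cref{FixedPointUndefinabilityThm} then yields $\theta_\epsilon\preceq^{*}X$. The main obstacle is that, as stated, this does not immediately contradict anything: $\vec r$ lies only in $\mathcal H_\eta$ and $X$ includes a copy of $V_\epsilon$, so $X$ can be as large as $V_\epsilon$ or larger, and $\theta_\epsilon\preceq^{*}V_\epsilon$ is not inconsistent with the definition of $\theta_\epsilon$. To overcome this, I would exploit the cofinal embedding property of even ordinals (\cref{ExtensionThm}): because $j^\star\restriction\theta_\epsilon$ depends on $j^\star$ only through $j^\star\restriction V_\epsilon$, which is canonically determined on $\mathcal H_\epsilon$ by cofinal extension, any definability using the parameter $\vec p = j^\star(\vec r)\in j^\star[\mathcal H_\eta]$ ought to reduce to one using a parameter from $j^\star[\mathcal H_\epsilon]$. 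This reduction would let us take $\vec r\in\mathcal H_\epsilon$, shrinking $X$ to a set $\preceq^{*} V_\beta$ for some $\beta<\epsilon$ and forcing $\theta_\epsilon\preceq^{*}V_\beta$, which directly contradicts the definition of $\theta_\epsilon$ as the least ordinal not a surjective image of $V_\beta$ for any such $\beta$.
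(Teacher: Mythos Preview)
Your overall architecture is the right one---derive an ultrafilter from the assumed definition and feed it to \cref{FixedPointUndefinabilityThm}---and your identification of the obstacle (the domain $X$ is too large) is correct. But the fix you propose is aimed at the wrong parameter and the justification is a non-sequitur.

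You propose to shrink $X$ by replacing $\vec r\in\mathcal H_\eta$ with something in $\mathcal H_\epsilon$, arguing that since $j^\star\restriction\theta_\epsilon$ is determined by $j^\star\restriction V_\epsilon$, the parameter $\vec p=j^\star(\vec r)$ can be traded for one in $j^\star[\mathcal H_\epsilon]$. This inference fails: the fact that the \emph{object} being defined is determined by $j\restriction V_\epsilon$ says nothing about the parameters appearing in the particular \emph{definition} $\phi$ we were handed. More importantly, even if you could take $\vec r\in\mathcal H_\epsilon$, your $X$ still contains the factor $V_\epsilon$ coming from $\vec q$, so you would only conclude $\theta_\epsilon\preceq^{*}V_\epsilon$, which is no contradiction.

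The paper's proof resolves this by treating the two kinds of parameters differently. The parameter $j(p)\in j[V_\eta]$ requires no domain factor at all: since $p\in V_\eta$ is fixed, one simply builds it into the definition of $g$, so that $j^\star(g)$ automatically uses $j(p)$. The cofinal embedding property (\cref{RepresentationLma}) is instead applied to the parameter $a\in V_\epsilon$: one writes $a=j(f)(x)$ for some $x\in V_{\xi+1}$ with $\xi+2\le\epsilon$ and $f:V_{\xi+1}\to V_\epsilon$, then absorbs $f$ into $g$ as well. The ultrafilter is derived over $V_{\xi+1}\times\gamma$, and now \cref{FixedPointUndefinabilityThm} gives $\theta_\epsilon\preceq^{*}V_{\xi+1}$ with $V_{\xi+1}\in V_\epsilon$, which \emph{is} a contradiction. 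So: absorb $\vec r$ into $g$, and apply the cofinal embedding property to $\vec q$, not to $\vec r$.
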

To put this theorem in a more familiar context, let us state a special case.
\begin{cor}
Suppose \(j : V\to V\) is an elementary embedding. Let
\(\lambda=\kappa_\omega(j)\). Then \(j[\lambda]\) is not ordinal definable from
parameters in \(j[V]\cup V_\lambda\).\qed
\end{cor}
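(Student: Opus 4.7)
The plan is to derive the corollary as a direct specialization of \cref{FarmerThm} to the setting of a class embedding $j\colon V\to V$. Since $\lambda=\kappa_\omega(j)$ is a limit ordinal (hence even in the paper's sense) and is a fixed point of $j$ (as $\lambda=\sup_n\kappa_n(j)$ and $j$ maps the critical sequence to its tail), the theorem is set up to apply with $\epsilon=\lambda$.

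First, I would suppose toward contradiction that $j[\lambda]$ is ordinal definable from parameters $\vec\alpha\in\textnormal{Ord}$, $\vec p\in V_\lambda$, and $\vec q\in j[V]$, writing $\vec q=j(\vec r)$ for some tuple $\vec r\in V$. Then I would choose an ordinal $\eta>\lambda$ satisfying $j(\eta)=\eta$, large enough that $\vec r\in V_\eta$ and $\vec\alpha<\theta_\eta$, and closed enough that the defining formula reflects to $\mathcal H_\eta$. The restriction $j\restriction V_\eta\colon V_\eta\to V_\eta$ is then a cofinal elementary embedding meeting the hypotheses of \cref{FarmerThm} with $\epsilon=\lambda$ and $\eta=\eta'$. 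The parameters translate directly: $\vec q=j^\star(\vec r)\in j^\star[\mathcal H_\eta]$ since $V_\eta\subseteq\mathcal H_\eta$, $\vec p\in V_\lambda=V_\epsilon$, and $\vec\alpha\in\theta_\eta$. Hence $j[\lambda]$ becomes definable over $\mathcal H_\eta$ from parameters of the exact form ruled out by the theorem for $j^\star\restriction\theta_\lambda$.

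The heart of the proof is therefore to upgrade the OD-ness of $j[\lambda]$ to the OD-ness of $j^\star\restriction\theta_\lambda$, since the latter is what \cref{FarmerThm} forbids. The inverse Mostowski collapse applied to $j[\lambda]$ recovers the function $j\restriction\lambda$. For $\alpha$ with $\lambda\leq\alpha<\theta_\lambda$, I would exploit the coding $\Phi_\lambda\colon V_\lambda\to\mathcal H_\lambda$ of \cref{HSection}: some $(x,y)\in V_\lambda$ satisfies $\Phi_\lambda(x,y)=\alpha$, and then by the definition of $j^\star$, $j^\star(\alpha)=\Phi_\lambda(j(x),j(y))$. So the task reduces to defining $j\restriction V_\lambda$ over $\mathcal H_\eta$ from the allowed parameters.

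The main obstacle I anticipate is precisely this reduction. In ZF, two elementary embeddings of $V_\lambda$ can agree on ordinals without agreeing on all of $V_\lambda$, so the ordinal trace $j\restriction\lambda$ alone does not pin down $j\restriction V_\lambda$. My approach would be to combine the canonical extension property at $\lambda$ (\cref{RepresentationThm}) with the observation that for each $\beta<\lambda$, the set $j\restriction V_\beta$ has rank less than $\lambda$ and hence belongs to $V_\lambda$, so it is available as a $V_\lambda$-parameter. One can then propagate from a single well-chosen $\beta$ upward to $V_\lambda$ by alternating canonical extensions at even stages below $\lambda$ with unions at limits, using that the canonical-extension operator is uniformly definable over $\mathcal H_\eta$. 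Making this finite-parameter scheme explicit, and verifying that the reflection of the original definition to $\mathcal H_\eta$ goes through at the chosen $\eta$, is the step where I expect the real care to lie.
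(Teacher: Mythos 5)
Your reduction to \cref{FarmerThm} and the reflection to a \(j\)-fixed \(\eta\) are exactly right, but what you call the heart of the proof is a phantom, and the mechanism you propose for it is unsound. The missing observation is that for \(\lambda = \kappa_\omega(j)\) one has \(\theta_\lambda = \lambda\), so there is nothing to upgrade: \(j^\star\restriction\theta_\lambda\) \emph{is} \(j\restriction\lambda\), which is interdefinable with \(j[\lambda]\) via the Mostowski collapse, and the corollary is an immediate instance of \cref{FarmerThm}. The proof that \(\theta_\lambda=\lambda\) is the ZF analog of the fact that \(\kappa_\omega(j)\) is a strong limit: if \(\beta < \kappa_0 = \text{crit}(j)\) and \(f : V_\beta\to\kappa_0\) were a surjection, then since \(j\) fixes \(V_\beta\) pointwise and \(j(V_\beta)=V_\beta\), we get \(j(f)(x) = j(f(x))\) for all \(x\in V_\beta\), so \(\text{ran}(j(f)) = j[\kappa_0] = \kappa_0\), while elementarity demands \(\text{ran}(j(f)) = j(\kappa_0) > \kappa_0\); hence \(\theta_{\kappa_0}=\kappa_0\), this first-order fact transfers by elementarity to each \(\kappa_n\), and \(\theta_\lambda = \sup_n\theta_{\kappa_n} = \lambda\). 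Note that this fact is not optional: if \(\theta_\lambda\) could exceed \(\lambda\), the corollary would not follow from \cref{FarmerThm} at all, since definability of the restriction \(j\restriction\lambda\) is strictly weaker information than definability of \(j^\star\restriction\theta_\lambda\).

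The workaround you propose in its place cannot be repaired. As you describe it, the propagation consumes only finitely many parameters \(j\restriction V_{\beta_i}\), each of which lies in \(V_\lambda\) and is therefore itself an allowed parameter in \cref{FarmerThm}; so if the propagation produced a definition of \(j\restriction V_\lambda\) over \(\mathcal H_\eta\), then by your own \(\Phi_\lambda\)-coding step \(j^\star\restriction\theta_\lambda\) would be definable over \(\mathcal H_\eta\) from allowed parameters, contradicting \cref{FarmerThm} outright, with no reductio hypothesis needed. The recursion in fact breaks at every odd-to-even successor step: the canonical extension operation determines \(k\restriction V_{\beta+1}\) from \(k\restriction V_\beta\) only for even \(\beta\), and the paper is explicit that the identity \(i = (i\restriction V_\alpha)^{+}\) holds only at even \(\alpha\), so nothing determines \(j\restriction V_{\beta+2}\) from \(j\restriction V_{\beta+1}\) --- that gap is not incidental, it is the periodicity phenomenon (\cref{FirstPeriodicity}) the paper is about. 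Supplying \(j\restriction V_{\beta+2}\) as a fresh parameter at each such step would require infinitely many parameters, which a first-order definition does not permit, and the limit stages have the same defect: each \(j\restriction V_\beta\) being an element of \(V_\lambda\) does not make the function \(\beta\mapsto j\restriction V_\beta\) definable.
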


\begin{proof}[Proof of \cref{FarmerThm}] Let \(\theta = \theta_{\epsilon}\).
Since \(j(\epsilon) = \epsilon\), \(j^\star(\theta) = \theta\). Suppose towards
a contradiction that the theorem fails. Then there is a set \(p\in V_{\eta}\), a
set \(a\in V_\epsilon\), an ordinal \(\alpha < \eta'\), and a formula
\(\varphi\) such that \(j^\star[\theta]\) is the unique set \(k\in \mathcal
H_{\eta'}\) such that \(\mathcal H_{\eta'}\) satisfies
\(\varphi(k,j(p),a,\alpha)\). By \cref{RepresentationLma} (or trivially if
\(\epsilon\) is a limit ordinal), there is an ordinal \(\xi\) such that
\(\xi+2\leq\epsilon\), a set \(x\in V_{\xi+1}\), and a function \(f :
V_{\xi+1}\to V_{\epsilon}\) such that \(j(f)(x) = a\). 

Let \(\gamma\) be an ordinal such that \(\alpha < j(\gamma)\). Define \(g :
V_{\xi+1}\times \gamma\to P(\theta)\) so that \(j^\star(g)(x,\alpha) =
j^\star[\theta]\): let \(g(u,\beta)\) be the unique set \(k\in \mathcal H_\eta\)
such that \(\mathcal H_\eta\) satisfies \(\varphi(k,p,f(u),\beta)\). Let \(U\)
be the ultrafilter over \(V_{\xi+1}\times \gamma\) derived from \(j\) using
\((x,\alpha)\). It is easy to check that \([g]_U = j_U[\theta]\): \([h]_U\in
[g]_U\) if and only if \(j(h)(x,\alpha)\in j(g)(x,\alpha)\) if and only if
\(j(h)(x,\alpha) = j(\nu)\) for some \(\nu < \theta\) if and only if there is
some \(\nu < \theta\) such that \(h(u,\beta) = \nu\) for \(U\)-almost all
\((u,\beta)\). By \cref{FixedPointUndefinabilityThm}, there is a surjection from
\(V_{\xi+1}\) to \(\theta\), and since \(V_{\xi+1}\in V_\epsilon\), this
contradicts the definition of \(\theta\).
\end{proof}

We include a final result \(\text{AD}_\mathbb R\)-like result about ordinal
definability assuming an elementary embedding from \(V_{\epsilon+3}\) to
\(V_{\epsilon+3}\):
\begin{thm}\label{NotODThm}
Suppose \(\epsilon\) is an even ordinal and there is a \(\Sigma_1\)-elementary
embedding from \(V_{\epsilon+3}\) to \(V_{\epsilon+3}\). Then there is no
sequence of functions \(\langle f_\alpha \mid \alpha <
\theta_{\epsilon+2}\rangle\) such that for all \(\alpha < \theta_{\epsilon+2}\),
\(f_\alpha\) is a surjection from \(V_{\epsilon+1}\) to \(\alpha\).
\end{thm}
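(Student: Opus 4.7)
The strategy is a proof by contradiction driven by \cref{FixedPointUndefinabilityThm}. Suppose such a sequence $\vec{f}=\langle f_\alpha\mid \alpha<\theta_{\epsilon+2}\rangle$ exists, and let $j:V_{\epsilon+3}\to V_{\epsilon+3}$ be a $\Sigma_1$-elementary embedding. The plan is to produce an ultrafilter $\mathcal U$ on $V_{\epsilon+1}$ whose ultrapower fixes $\theta_{\epsilon+2}$ and contains $j_{\mathcal U}[\theta_{\epsilon+2}]$; then \cref{FixedPointUndefinabilityThm} delivers $\theta_{\epsilon+2}\preceq^{*}V_{\epsilon+1}$, contradicting the definition $\theta_{\epsilon+2}=\theta(V_{\epsilon+2})$ since $V_{\epsilon+1}\in V_{\epsilon+2}$.

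The first step is to check that $\vec{f}\in\mathcal H_{\epsilon+3}$. The map $\alpha\mapsto f_\alpha$ injects $\theta_{\epsilon+2}$ into $\mathcal H_{\epsilon+2}$, so composing with the partial surjection $\Phi_{\epsilon+2}:V_{\epsilon+2}\twoheadrightarrow\mathcal H_{\epsilon+2}$ from \cref{HSection} yields a surjection of $V_{\epsilon+2}$ onto $\theta_{\epsilon+2}$. Hence $\vec{f}\preceq^{*}V_{\epsilon+2}$, placing $\vec{f}$ in $\mathcal H_{\epsilon+3}$. Moreover, since $V_{\epsilon+2}$ is the unique element of $V_{\epsilon+3}$ of rank $\epsilon+2$, we have $j(\epsilon+2)=\epsilon+2$ and therefore $j^\star(\theta_{\epsilon+2})=\theta_{\epsilon+2}$, where $j^\star:\mathcal H_{\epsilon+3}\to\mathcal H_{\epsilon+3}$ is the $\Sigma_0$-elementary canonical extension supplied by \cref{LocalElementarity}.

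Next, let $\mathcal U$ be the ultrafilter on $V_{\epsilon+1}$ derived from $j$ using the seed $j[V_\epsilon]$, with factor embedding $k:M_{\mathcal U}\to V$ given by $k([g]_{\mathcal U})=j^\star(g)(j[V_\epsilon])$. Then $\mathrm{crit}(j_{\mathcal U})=\mathrm{crit}(j)$ and $j_{\mathcal U}(\theta_{\epsilon+2})=\theta_{\epsilon+2}$. The crucial step is to use $\vec{f}$ to exhibit a function $h\in\mathcal H_{\epsilon+3}$ on $V_{\epsilon+1}$ with $[h]_{\mathcal U}=j_{\mathcal U}[\theta_{\epsilon+2}]$, equivalently $j^\star(h)(j[V_\epsilon])=j[\theta_{\epsilon+2}]$: for $\sigma\in V_{\epsilon+1}$ coding an elementary embedding $j_\sigma:V_\epsilon\to V_\epsilon$ as in \cref{RepresentationDef}, one defines $h(\sigma)$ as the canonical pullback of the sequence $\vec{f}$ through $j_\sigma^{+}$. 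The even parity of $\epsilon+2$ and the canonical extension property (\cref{RepresentationThm}) make this pullback well-defined, and the $\Sigma_0$-elementarity of $j^\star$ combined with \cref{RepresentationLma} then verifies the required identity at the seed $j[V_\epsilon]$.

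Applying \cref{FixedPointUndefinabilityThm} with $X=V_{\epsilon+1}$, $\gamma=1$, and $\theta=\theta_{\epsilon+2}$ then yields $\theta_{\epsilon+2}\preceq^{*}V_{\epsilon+1}$, the sought contradiction. The main obstacle is the precise construction of $h$ and the verification $j^\star(h)(j[V_\epsilon])=j[\theta_{\epsilon+2}]$: one must phrase both the defining conditions on $h$ and the witnessing identity as $\Sigma_0$ assertions over $\mathcal H_{\epsilon+3}$, so that only the $\Sigma_0$-elementarity of $j^\star$ (equivalently, the $\Sigma_1$-elementarity of $j$) is required. This is exactly the parity level at which the coding $\Phi_{\epsilon+2}$ and the canonical extension operation interact cleanly, which is why the argument goes through at odd rank $\epsilon+3$ with a $\Sigma_1$-elementary embedding rather than demanding full elementarity.
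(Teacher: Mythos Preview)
Your overall strategy is reasonable in spirit, but the crucial step—constructing \(h:V_{\epsilon+1}\to P(\theta_{\epsilon+2})\) with \([h]_{\mathcal U}=j_{\mathcal U}[\theta_{\epsilon+2}]\)—is left as a black box, and this gap is substantive. The phrase ``canonical pullback of \(\vec f\) through \(j_\sigma^{+}\)'' does not name a well-defined operation: \(j_\sigma^{+}\) is only an embedding of \(V_{\epsilon+1}\), while \(\vec f\) lives in \(\mathcal H_{\epsilon+3}\), and there is no canonical way to make \(j_\sigma^{+}\) act on ordinals \(<\theta_{\epsilon+2}\) using \(\vec f\) alone. Invoking ``the even parity of \(\epsilon+2\)'' does not help here, since you would need the canonical extension property one level higher than it is available from the seed \(j[V_\epsilon]\). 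In short, you have not exhibited a definable recipe \(h(\sigma)\) whose \(j^\star\)-image at \(j[V_\epsilon]\) is \(j^\star[\theta_{\epsilon+2}]\), and it is not clear one exists at this level of generality.

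The paper's own argument avoids this obstacle by working differently. It does not appeal to \cref{FixedPointUndefinabilityThm}. Instead it first uses \cref{LadderSystem} to show \(\theta_{\epsilon+2}\) is regular, then fixes a code \(u\in V_{\epsilon+3}\) for \(\vec f\) and proves that for any two embeddings \(k_0,k_1:(V_{\epsilon+2},k_\ell^{-1}[u])\to(V_{\epsilon+2},u)\) with \(k_0\restriction V_\epsilon=k_1\restriction V_\epsilon\), one has \(k_0^\star[\theta_{\epsilon+2}]=k_1^\star[\theta_{\epsilon+2}]\). The key point is that this determinacy of the image holds \emph{only} for embeddings having \(u\) in their range—precisely the information your derived ultrafilter \(\mathcal U\) on \(V_{\epsilon+1}\) fails to record. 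From this the paper forms the \(\omega\)-club \(C=\bigcap_k k^\star[\theta_{\epsilon+2}]\) (the intersection is small since there are only \(V_{\epsilon+1}\)-many relevant \(k\restriction V_\epsilon\)), checks that \(j^\star(C)\subseteq j^\star[\theta_{\epsilon+2}]\), and concludes \(|C|<\mathrm{crit}(j)\), contradicting regularity. If you want to rescue your ultrapower approach, you would need to enlarge the seed to remember a preimage of \(u\), much as in the proof of \cref{FarmerThm}; with \(\gamma=1\) and seed \(j[V_\epsilon]\) alone, the required supercompactness sequence is not available.
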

This result cannot be proved from the existence of an elementary embedding \(j :
V_{\epsilon+2}\to V_{\epsilon+2}\) (if this hypothesis is consistent): the
inner model \(L(V_{\epsilon+1})[j]\) satisfies that there is an elementary
embedding from \(V_{\epsilon+2}\) to \(V_{\epsilon+2}\), but using that
\(L(V_{\epsilon+1})[j]\) satisfies that \(V = \text{HOD}_{V_{\epsilon+1},i}\)
where \(i = j\restriction L(V_{\epsilon+1})[j]\), one can easily show that in
\(L(V_{\epsilon+1})[j]\), there is a sequence \(\langle f_\alpha \mid \alpha <
\theta_{\epsilon+2}\rangle\) such that for all \(\alpha < \theta_{\epsilon+2}\),
\(f_\alpha\) is a surjection from \(V_{\epsilon+2}\) to \(\alpha\).

\begin{proof}[Proof of \cref{NotODThm}] Suppose towards a contradiction that
\(\langle f_\alpha \mid \alpha < \theta_{\epsilon+2}\rangle\) is such a
sequence. As a consequence of this assumption and \cref{LadderSystem},
\(\theta_{\epsilon+2}\) is regular. 

Note that \(\langle f_\alpha \mid \alpha < \theta_{\epsilon+2}\rangle\in
\mathcal H_{\epsilon+3}\). Using the notation from \cref{Preliminaries}, fix
\(u\in \text{C}_{\epsilon+3}\) such that \(\Phi_{\epsilon+3}(u) = \langle
f_\alpha \mid \alpha < \theta_{\epsilon+2}\rangle\). (Recall that
\(\text{C}_{\epsilon+3}\) is the set of codes in \(V_{\epsilon+3}\) for elements
of \(\mathcal H_{\epsilon+3}\).) 

For \(\ell = 0,1\), suppose \[j_\ell : (V_{\epsilon+2},u_\ell)\to
(V_{\epsilon+2},u)\] is an elementary embedding. Necessarily, \(u_\ell =
j_\ell^{-1}[u]\). Assume \(j_0\restriction V_{\epsilon} = j_1\restriction
V_{\epsilon}\). We claim that \(j_0^\star[\theta_{\epsilon+2}] =
j_1^\star[\theta_{\epsilon+2}]\). First, let \(F\subseteq \theta_{\epsilon+2}\)
be the set of common fixed points of \(j_0\restriction \theta_{\epsilon+2}\) and
\(j_1\restriction \theta_{\epsilon+2}\). Since \(\theta_{\epsilon+2}\) is
regular, \(F\) is \(\omega\)-closed unbounded. For each \(\alpha\in F\), there
is some \(g_\alpha^\ell\in \mathcal H_{\epsilon+2}\) such that
\(j_\ell^\star(g_\alpha^\ell) = f_\alpha\). Indeed, one can set \(g_\alpha^\ell
= \Phi(u_\ell)_\alpha\). Now \(j_0^\star[\alpha] =
j_0^\star[g_\alpha^\ell[V_{\epsilon+1}]] = f_\alpha[j_0[V_{\epsilon+1}]]\).
Similarly \(j_1^\star[\alpha] = f_\alpha[j_1[V_{\epsilon+1}]]\). Since
\(j_0\restriction V_\epsilon = j_1\restriction V_\epsilon\), \(j_0\restriction
V_{\epsilon+1} = j_1\restriction V_{\epsilon+1}\) by \cref{ExtensionThm}. It
follows that \(j_0^\star[\alpha] = j_1^\star[\alpha]\). Since \(F\) is unbounded
in \(\theta_{\epsilon+2}\), this implies \(j_0^\star[\theta_{\epsilon+2}] =
j_1^\star[\theta_{\epsilon+2}]\).

Let 
\begin{align*}\mathcal E_0 &= \{k \mid k \in \mathscr E((V_{\epsilon+2},k^{-1}[u]),(V_{\epsilon+2},u))\}\\
\mathcal E_1 &= \{k\restriction V_{\epsilon} \mid k\in \mathcal E_1\}\end{align*} 
Let \(C = \bigcap_{k\in \mathcal E_0} k^\star[\theta_{\epsilon+2}]\). For any
\(i\in \mathcal E_1\), let \(A_i\subseteq \theta_{\epsilon+2}\) be equal to
\(k^\star[\theta_{\epsilon+2}]\) for any \(k\in \mathcal E_1\) extending \(i\).
This is well-defined by the previous paragraph. Clearly \(C = \bigcap \{A_i \mid
i\in \mathcal E_0\}\). Since \(\theta_{\epsilon+2}\) is regular and \(\mathcal
E_0\preceq^{*} V_{\epsilon+1}\), it follows that \(C\) is \(\omega\)-closed
unbounded in \(\theta_{\epsilon+2}\).

Now suppose \(j : V_{\epsilon+3}\to V_{\epsilon+3}\) is \(\Sigma_1\)-elementary.
Then 
\begin{align}j^\star(\mathcal E_0) &= \{k \mid k \in \mathscr E((V_{\epsilon+2},k^{-1}[u]),(V_{\epsilon+2},u))\}\label{ECorrect}\\
j^\star(C) &= \bigcap_{k\in j^\star(\mathcal E_0)} k^\star[\theta_{\epsilon+2}]\label{CCorrect}
\end{align}
Verifying these equalities is a bit tricky since we only know that \(j^\star:
\mathcal H_{\epsilon+3}\to \mathcal H_{\epsilon+3}\) is \(\Sigma_0\)-elementary.
(See \cref{LocalElementarity}.) \cref{ECorrect} is proved by writing \(\mathcal
E_0 = \bigcap_{n < \omega} \mathcal E_0^n\) where \[\mathcal E_0^n = \{k \mid k
: (V_{\epsilon+2},k^{-1}[u])\to_{\Sigma_n}(V_{\epsilon+2},u)\}\] Notice that
\(\mathcal E_0^n\) is \(\Sigma_0\)-definable over \(\mathcal H_{\epsilon+3}\)
from \(u\) and \(V_{\epsilon+2}\), and \(j^\star(\mathcal E_0) = \bigcap_{n <
\omega} j^\star(\mathcal E_0^n)\). This easily yields \cref{ECorrect}.
\cref{CCorrect} is proved by checking that the \(\star\)-operation on \(\mathscr
E(V_{\epsilon+2})\) is \(\Sigma_0\)-definable over \(\mathcal H_{\epsilon+3}\)
from the parameter \(\mathcal H_{\epsilon+2}\).

It follows that \(j\restriction V_{\epsilon+2}\in j^\star(\mathcal E_0)\), and
hence \(j^\star(C)\subseteq j^\star[\theta_{\epsilon+2}]\). The only way this is
possible is if \(|C| < \text{crit}(j)\). But \(C\) is unbounded in
\(\theta_{\epsilon+2}\). This contradicts that \(\theta_{\epsilon+2}\) is
regular.
\end{proof}
The Axiom of Choice implies the existence of a sequence \(\langle f_\alpha \mid
\alpha < \theta_{\epsilon+2}\rangle\) such that for all \(\alpha <
\theta_{\epsilon+2}\), \(f_\alpha\) is a surjection from \(V_{\epsilon+1}\) to
\(\alpha\), so \cref{NotODThm} yields a new proof of the Kunen Inconsistency
Theorem.
\section{The \(\theta_\alpha\) sequence}\label{ThetaSection}
\subsection{The main conjecture}
In this section we study the sequence of cardinals \(\theta_\alpha\)
(\cref{ThetaDef}). The results we will prove suggest that if \(\epsilon\) is an
even ordinal, then assuming choiceless large cardinal axioms,
\(\theta_{\epsilon}\) should be relatively large and \(\theta_{\epsilon+1}\)
should be relatively small.
\begin{conj}\label{ThetaConj}
Suppose \(\epsilon\) is an even ordinal and there is an elementary embedding
from \(V_{\epsilon+1}\) to \(V_{\epsilon+1}\).
\begin{itemize}
\item \(\theta_{\epsilon}\) is a strong limit cardinal.\footnote{Recall that in the context of ZF, a cardinal is defined to be a {\it strong limit cardinal} if it is not the surjective image of the powerset of any smaller cardinal.}
\item \(\theta_{\epsilon+1} = (\theta_{\epsilon})^+\).
\end{itemize} 
\end{conj}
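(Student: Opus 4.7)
The plan is to split into two cases according to whether $\epsilon$ is a limit or a successor even ordinal. If $\epsilon$ is itself a limit ordinal then both clauses are automatic from the general observation recalled in the introduction that $\theta_\lambda$ is a strong limit cardinal and $\theta_{\lambda+1}=(\theta_\lambda)^+$ for every limit ordinal $\lambda$; no embedding hypothesis is needed. The substantive case is therefore $\epsilon=\delta+2$ with $\delta$ even. Here $j\!\restriction\! V_\epsilon$ is a nontrivial elementary embedding from $V_{\delta+2}$ to $V_{\delta+2}$ (using that $V_\epsilon$ is definable without parameters in $V_{\epsilon+1}$, hence fixed by $j$), and so, setting the ``$\epsilon$'' of \cref{StrongLimitThm} to be $\delta$ and assuming the auxiliary $\kappa_\omega(j)$-\textnormal{DC} and Collection Principle at $V_{\delta+1}$, one obtains immediately that $\theta_\epsilon$ is a strong limit cardinal and, as a bonus, that $P(\beta)\preceq^{*}V_{\epsilon-1}$ for every $\beta<\theta_\epsilon$.

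For the second clause, observe first that every element of $V_{\epsilon+1}$ is a subset of $V_\epsilon$ and $V_\epsilon\in V_{\epsilon+1}$, so $\theta_{\epsilon+1}=\theta(\{V_\epsilon\})$; the claim thus becomes that $V_\epsilon$ admits a partial surjection onto every $\alpha<(\theta_\epsilon)^+$ but none onto $(\theta_\epsilon)^+$ itself. The lower bound $\theta_{\epsilon+1}\geq(\theta_\epsilon)^+$ should follow by bookkeeping from the ``moreover'' clause of \cref{StrongLimitThm}: for each $\beta<\theta_\epsilon$, composing the partial surjection $V_{\epsilon-1}\twoheadrightarrow P(\beta)$ with the canonical map sending wellorderings of subsets of $\beta$ to their ordertypes yields $V_{\epsilon-1}\twoheadrightarrow\beta^+$, and uniformizing these over $\beta$ in the spirit of \cref{LadderSystem} (using the Collection Principle already in hand) produces a single partial surjection $V_\epsilon\twoheadrightarrow\alpha$ for every $\alpha<(\theta_\epsilon)^+$.

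The hard part, and presumably why this is posed as a conjecture rather than a theorem, is the upper bound $\theta_{\epsilon+1}\leq(\theta_\epsilon)^+$: that $V_\epsilon$ does \emph{not} surject onto $(\theta_\epsilon)^+$. At $\epsilon=\omega$ under \textnormal{AD} this is trivial from $|V_\omega|=\aleph_0$, but for general even $\epsilon$ one has no comparable wellorderable handle on $V_\epsilon$. A natural line of attack is to assume toward a contradiction a surjection $\pi:V_\epsilon\twoheadrightarrow(\theta_\epsilon)^+$, form the ultrafilter $\mathcal U$ on $V_\epsilon$ derived from $j$ using $j[V_\epsilon]$ as in the proof of \cref{UndefinabilityThm}, push $\pi$ through the associated ultrapower, and try to reach a contradiction either by \cref{FixedPointUndefinabilityThm} applied at the fixed point $(\theta_\epsilon)^+$ or by a suitable strengthening of \cref{UltrafilterUndefinabilityThm}. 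Whether this strategy can actually be closed, or whether a genuinely new generalization of Woodin's Coding Lemma (beyond the one used for \cref{StrongLimitThm}) is required, is the central obstacle I would expect to face — alongside the parallel question of how to extract the necessary choice principles from the embedding hypothesis alone.
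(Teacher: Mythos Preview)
This statement is a \emph{Conjecture} in the paper, and the paper does not prove it; it only establishes partial results in its direction (\cref{StrongLimitThm}, \cref{OddBigThm}, \cref{EvenSmallThm}, \cref{RegularThm}) under hypotheses strictly stronger than those of the conjecture. You recognize this yourself by the end of your proposal, correctly identifying the upper bound $\theta_{\epsilon+1}\leq(\theta_\epsilon)^+$ as the genuine obstruction and offering only a speculative line of attack. So there is no ``paper's proof'' to compare against; your write-up is a survey of the available evidence together with an honest assessment of where the difficulty lies, which is essentially what the paper does too.

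One point worth tightening: your treatment of the first clause via \cref{StrongLimitThm} silently imports $\kappa_\omega(j)$-DC and the Collection Principle at $V_{\delta+1}$, neither of which is assumed in the conjecture. So even the ``strong limit'' half is not established from the bare embedding hypothesis; this is precisely why the paper states \cref{StrongLimitThm} separately rather than absorbing it into a proof of \cref{ThetaConj}. On the other hand, your argument for the lower bound $\theta_{\epsilon+1}\geq(\theta_\epsilon)^+$ is more elaborate than necessary: for successor $\epsilon=\delta+2$, the map $P(V_{\delta+1})\to\mathrm{Ord}$ sending (a code for) a prewellorder of $V_{\delta+1}$ to its length already surjects $V_\epsilon$ onto $\theta_\epsilon$, whence $\theta_{\epsilon+1}\geq(\theta_\epsilon)^+$ in ZF alone, without invoking \cref{StrongLimitThm} or \cref{LadderSystem}. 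The entire content of the conjecture, then, really is the upper bound you flag as open.
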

We note that if \(\epsilon\) is a limit ordinal, then one can prove in ZF that
\(\theta_{\epsilon}\) is a strong limit cardinal and \(\theta_{\epsilon+1} =
(\theta_{\epsilon})^+\).
\subsection{ZF theorems}
Our first two theorems towards \cref{ThetaConj} are the following:

\begin{thm}\label{EvenSmallThm}
Suppose \(\epsilon\) is an even ordinal. Suppose \(j : V_{\epsilon+3}\to
V_{\epsilon+3}\) is an elementary embedding with critical point \(\kappa\). Then
the interval \((\theta_{\epsilon+2},\theta_{\epsilon+3})\) contains fewer than
\(\kappa\) regular cardinals.
\end{thm}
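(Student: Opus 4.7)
Plan. I would argue by contradiction using the canonical extension $j^\star : \mathcal H_{\epsilon+3}\to \mathcal H_{\epsilon+3}$ provided by \cref{RepresentationThm}, exploiting that it necessarily fixes $V_{\epsilon+2}$. First I set up the ambient structure. Since $j : V_{\epsilon+3}\to V_{\epsilon+3}$ is elementary and $\epsilon+2$ is the largest ordinal of $V_{\epsilon+3}$, order preservation forces $j(\epsilon+2) = \epsilon+2$, $j(\epsilon+1) = \epsilon+1$, $j(\epsilon) = \epsilon$, so $j$ fixes $V_{\epsilon+2}$ setwise; consequently $j^\star$ fixes $\theta_{\epsilon+2} = \theta(V_{\epsilon+2})$ and still has critical point $\kappa$. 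A routine absoluteness check shows that an ordinal $\mu \in (\theta_{\epsilon+2}, \theta_{\epsilon+3})$ is regular in $V$ iff it is regular in $\mathcal H_{\epsilon+3}$: any cofinal $f : \beta\to \mu$ with $\beta<\mu<\theta_{\epsilon+3}$ satisfies $\textnormal{TC}(f)\preceq^{*}\mu\preceq^{*} V_{\epsilon+2}$ and therefore lies in $\mathcal H_{\epsilon+3}$.

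Now assume for contradiction that $(\theta_{\epsilon+2},\theta_{\epsilon+3})$ contains at least $\kappa$ regular cardinals, and let $F = \langle \mu_\alpha : \alpha < \kappa\rangle$ enumerate the first $\kappa$ of them in increasing order, with $\nu = \sup F$. In the main case $\nu < \theta_{\epsilon+3}$, the sequence $F$ lies in $\mathcal H_{\epsilon+3}$ because $\textnormal{TC}(F)\preceq^{*}\nu\preceq^{*} V_{\epsilon+2}$; the residual case $\nu = \theta_{\epsilon+3}$ (which would also say $\textnormal{cf}(\theta_{\epsilon+3})\leq\kappa$) is handled by applying the same argument to a $\kappa$-sized initial segment whose sup lies strictly below $\theta_{\epsilon+3}$. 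By elementarity applied to the definable property ``$F$ enumerates the first $\kappa$ regular cardinals above $\theta_{\epsilon+2}$'', $j^\star(F)$ enumerates the first $j^\star(\kappa)$ such cardinals in $\mathcal H_{\epsilon+3}$. Matching initial segments and using $j^\star(\alpha) = \alpha$ for $\alpha<\kappa$ yields $j^\star(\mu_\alpha) = \mu_\alpha$ for every $\alpha < \kappa$. Iterating along the critical sequence $\langle\kappa_n(j)\rangle$ shows the interval contains at least $\lambda := \kappa_\omega(j)$ regulars, with a fixed enumeration $F_\lambda$ and fixed supremum $\nu_\lambda < \theta_{\epsilon+3}$.

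The main obstacle, and where I expect the real content of the proof to lie, is converting this abundance of fixed regular cardinals into an outright contradiction. The natural plan is to exploit that $\mu_0 > \kappa$ is a regular cardinal of $\mathcal H_{\epsilon+3}$ fixed by $j^\star$, so that $j^\star\restriction V_{\mu_0}^{\mathcal H_{\epsilon+3}}$ is a nontrivial elementary self-embedding of a limit-rank initial segment of $\mathcal H_{\epsilon+3}$, and then apply an adaptation of Schlutzenberg's undefinability theorem (\cref{SchlutzenbergUndefinableThm}) to rule it out. Alternatively, one can try to follow the Kunen--Woodin template from the proof of \cref{FixedPointUndefinabilityThm}: locate inside a suitable inner model of AC (a $\textnormal{HOD}$-like object with enough parameters) a Solovay partition of some regular cardinal near $\mu_0$, and extract a contradiction from the $j^\star$-invariance of a single block. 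The delicate point is that neither route applies verbatim to $\mathcal H_{\epsilon+3}$-rank initial segments, so some care is needed to replicate their key ingredients (the cofinal embedding property and stationary correctness in the appropriate inner model) using only the data of $j$ and what \cref{RepresentationThm} provides.
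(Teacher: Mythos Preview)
Your setup is largely correct, but the proof proposal has a genuine gap at exactly the point you flag as ``the main obstacle.'' The paper's argument is short and does not go through fixed regular cardinals or any Kunen--Woodin/Schlutzenberg-style undefinability machinery at all. The key ingredient you are missing is \cref{NoGeneratorLma} (applied with \(\epsilon+2\) in place of \(\epsilon\)): every ordinal \(\nu<\theta_{\epsilon+3}\) can be written as \(j^\star(g)(\beta)\) for some \(\alpha,\beta<\theta_{\epsilon+2}\) and some function \(g:\alpha\to\theta_{\epsilon+3}\). This says precisely that \(j^\star\) has \emph{no generators} in the interval \((\theta_{\epsilon+2},\theta_{\epsilon+3})\).

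The actual contradiction then comes not from the fixed regular cardinals \(\mu_\alpha\) (\(\alpha<\kappa\)) but from the \emph{moved} one. Let \(\eta\) be the ordertype of the set of regular cardinals in the interval; since \(\eta\) is definable, \(j^\star(\eta)=\eta\), so \(\eta\neq\kappa\), and we may assume \(\eta>\kappa\). Let \(\delta\) be the \(\kappa\)-th regular cardinal in the interval. Then \(j^\star(\delta)\) is the \(j(\kappa)\)-th such regular cardinal, hence \(j^\star(\delta)>\delta\); since \(j^\star(\delta)\) is regular while \(\sup j^\star[\delta]\) has cofinality \(\delta\), we get \(\sup j^\star[\delta]<j^\star(\delta)\). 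Now apply \cref{NoGeneratorLma} to \(\nu=\sup j^\star[\delta]\): write \(\nu=j^\star(g)(\beta)\) with \(\alpha,\beta<\theta_{\epsilon+2}<\delta\) and (after truncating \(g\) above \(\delta\)) \(g:\alpha\to\delta\). Since \(\delta\) is regular and \(\alpha<\delta\), \(\text{ran}(g)\subseteq\rho\) for some \(\rho<\delta\), whence \(\nu=j^\star(g)(\beta)<j^\star(\rho)<\sup j^\star[\delta]=\nu\), a contradiction.

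Your proposed routes---adapting \cref{SchlutzenbergUndefinableThm} to an initial segment of \(\mathcal H_{\epsilon+3}\), or running a Solovay-partition argument in a HOD-like inner model---would require substantial new work (you acknowledge ``neither route applies verbatim''), and it is not clear either can be made to go through here. In particular, having \(\lambda\)-many fixed regular cardinals is not by itself contradictory; the leverage comes from the \emph{discontinuity} of \(j^\star\) at a moved regular cardinal together with the absence of generators above \(\theta_{\epsilon+2}\). Also note a small edge-case slip in your setup: your ``residual case'' \(\nu=\theta_{\epsilon+3}\) cannot be handled by passing to a \(\kappa\)-sized initial segment whose sup is strictly smaller, since by hypothesis that sup \emph{is} \(\theta_{\epsilon+3}\); the paper sidesteps this entirely by first observing \(\eta\neq\kappa\).
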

This theorem shows that \(\theta_{\epsilon+3}\) is not too much larger than
\(\theta_{\epsilon+2}\). \cref{RegularThm} shows that under further assumptions,
\(\theta_{\epsilon+2}\) is an inaccessible limit of regular cardinals, so
\cref{EvenSmallThm} captures a genuine difference between the even and odd
levels.

\begin{thm}\label{OddBigThm}
Suppose \(\epsilon\) is an even ordinal. Suppose \(j : V_{\epsilon+2}\to
V_{\epsilon+2}\). Then for any \(\alpha < \kappa_{\omega}(j)\), there is no
surjection from \(P(\theta_{\epsilon+1}^{+\alpha})\) onto
\(\theta_{\epsilon+2}\).
\end{thm}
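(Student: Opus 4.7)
I would argue by contradiction: assume $\pi : P(\gamma) \twoheadrightarrow \theta_{\epsilon+2}$ is a surjection, where $\gamma = \theta_{\epsilon+1}^{+\alpha}$ with $\alpha < \kappa_\omega(j)$. The goal is to use $j$ and $\pi$ together to construct an ultrafilter $W$ over $\gamma$ whose ultrapower embedding satisfies $j_W[\theta_{\epsilon+2}] \in M_W$, and then invoke the supercompactness undefinability theorem \cref{UltrafilterUndefinabilityThm} to conclude that $\theta_{\epsilon+2} \preceq^{*} \gamma$. Since $\gamma < \theta_{\epsilon+2}$ forces $\gamma \preceq^{*} V_{\epsilon+1}$, this would contradict the very definition of $\theta_{\epsilon+2}$ as the least ordinal not a surjective image of $V_{\epsilon+1}$.

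The construction of $W$ proceeds through the ultrafilter machinery of \cref{UndefinabilityThm,RepresentationLma}. First, observe that since $\theta_{\epsilon+1}$ is uniformly definable from $V_\epsilon$, the extension $j^\star : \mathcal H_{\epsilon+2} \to \mathcal H_{\epsilon+2}$ fixes $\theta_{\epsilon+1}$. Let $\mathcal U$ be the ultrafilter over $V_{\epsilon+1}$ derived from $j^\star$ using $j[V_\epsilon]$; by \cref{RepresentationLma} and the cofinal embedding property of $\epsilon$, the ultrapower $\textnormal{Ult}(V_{\epsilon+2}, \mathcal U)$ is canonically isomorphic to $V_{\epsilon+2}$, with every $A \in V_{\epsilon+2}$ represented as $j^\star(f_A)(j[V_\epsilon])$. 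Because $j^\star$ is cofinal in $\theta_{\epsilon+2}$, for every $\beta < \theta_{\epsilon+2}$ the element $j^\star(\beta) \in \mathcal H_{\epsilon+2}$ lies in the range of $j^\star(\pi)$, which gives a subset $B_\beta \subseteq j^\star(\gamma)$ with $j^\star(\pi)(B_\beta) = j^\star(\beta)$. Pulling $\langle B_\beta \mid \beta < \theta_{\epsilon+2}\rangle$ back through $\mathcal U$ produces the desired ultrafilter $W$ over $\gamma$ and a supercompactness sequence witnessing $j_W[\theta_{\epsilon+2}] \in M_W$; since $\gamma$ is wellorderable the hypothesis $\gamma \times \gamma \preceq^{*} \gamma$ of \cref{UltrafilterUndefinabilityThm} is automatic.

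The main obstacle I anticipate is the control of $j^\star$ on $\gamma$ when $\alpha \geq \textnormal{crit}(j)$: in that regime $j^\star(\gamma) > \gamma$ strictly, so the back-and-forth between $P(\gamma)$ and $P(j^\star(\gamma))$ needed to extract the sequence $\langle B_\beta \rangle$ from $\pi$ is delicate. The hypothesis $\alpha < \kappa_\omega(j)$ is used precisely here: fixing $n < \omega$ with $\alpha < \kappa_n(j)$ and iterating $j$ $n$ times ensures that the action of $(j^n)^\star$ on $\theta_{\epsilon+1}^{+\alpha}$ stays within the range of the representation lemma, so that the supercompactness sequence can be coded by a single $\mathcal U$-equivalence class rather than an unbounded family. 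This iteration argument, rather than the final appeal to \cref{UltrafilterUndefinabilityThm}, is where the combinatorial heart of the proof should lie.
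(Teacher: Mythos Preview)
Your overall strategy---assume a surjection \(\pi:P(\gamma)\twoheadrightarrow\theta_{\epsilon+2}\), manufacture a supercompactness sequence for \(\theta_{\epsilon+2}\), and contradict \cref{UltrafilterUndefinabilityThm}---matches the paper's. You also correctly locate the obstruction: one must control \(j^\star\restriction P(\gamma)\) well enough to represent \(j_{\mathcal U}[\theta_{\epsilon+2}]\) by a single function on \(V_{\epsilon+1}\). But your proposed mechanism for overcoming it is not the one that works, and the plan as written has a genuine gap.

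The gap is in the sentence ``Pulling \(\langle B_\beta\mid\beta<\theta_{\epsilon+2}\rangle\) back through \(\mathcal U\) produces the desired ultrafilter \(W\) over \(\gamma\) and a supercompactness sequence.'' This is where the real content lies, and you have not said how to do it. The sequence \(\langle B_\beta\rangle\) is a \(\theta_{\epsilon+2}\)-indexed object, not a \(V_{\epsilon+1}\)-indexed one; to represent \(j_{\mathcal U}[\theta_{\epsilon+2}]\) in \(M_{\mathcal U}\) you need a \emph{single} function \(g:V_{\epsilon+1}\to P(\theta_{\epsilon+2})\) with \([g]_{\mathcal U}=j_{\mathcal U}[\theta_{\epsilon+2}]\). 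Iterating \(j\) does not produce such a \(g\); iteration is only used at the very end of the paper's argument to pass from \(\alpha<\kappa\) to \(\alpha<\kappa_\omega(j)\), and is the \emph{easy} step, not the combinatorial heart.

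The missing idea is an extender argument combined with \cref{NoGeneratorLma}. Let \(E\) be the extender of length \(\theta_\epsilon\) derived from \(j^\star\); the point is that \(E\) is definable over \(\mathcal H_{\epsilon+2}\) from \(j\restriction V_\epsilon\) alone, so there is a parameter-free function \(\sigma\mapsto F(\sigma)\) on \(V_{\epsilon+1}\) with \(E=F(j[V_\epsilon])\). Now look at the factor map \(k:\mathrm{Ult}(\mathcal H_{\epsilon+2},E)\to\mathcal H_{\epsilon+2}\). Its critical point \(\nu\) is a generator of \(j^\star\), and \cref{NoGeneratorLma} forces \(\nu\geq\theta_{\epsilon+1}\); since \(j^\star\) fixes every cardinal in \((\theta_{\epsilon+1},\theta_{\epsilon+1}^{+\kappa})\), in fact \(\nu\geq\theta_{\epsilon+1}^{+\kappa}\). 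Consequently \(j_E\) and \(j^\star\) agree on \(P(\gamma)\) for every \(\gamma<\theta_{\epsilon+1}^{+\kappa}\). This is what lets you set \(g(\sigma)=\pi\circ j_{F(\sigma)}[P(\gamma)]\) and compute \([g]_{\mathcal U}=j_{\mathcal U}(\pi)\circ j_E[P(\gamma)]=j_{\mathcal U}(\pi)\circ j_{\mathcal U}[P(\gamma)]=j_{\mathcal U}[\theta_{\epsilon+2}]\). Without the generator lemma you have no reason to believe \(j^\star\restriction P(\gamma)\) is captured by the short extender \(E\), and hence no way to build \(g\).
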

While we cannot show that \(\theta_{\epsilon+2}\) is a strong limit, this
theorem shows that it has some strong limit-like properties.
\cref{StrongLimitThm} below proves the stronger fact that \(V_{\epsilon+1}\)
surjects onto \(P(\alpha)\) for all \(\alpha < \theta_{\epsilon+2}\), but this
theorem requires weak choice assumptions. 

The following lemma, which is a key aspect of the proof of both
\cref{EvenSmallThm} and \cref{OddBigThm}, roughly states that rank-to-rank
embeddings have no generators in the interval
\((\theta_{\epsilon+2},\theta_{\epsilon+3})\).
\begin{lma}\label{NoGeneratorLma}
Suppose \(\epsilon\) is an even ordinal and \(j : V_{\epsilon+1}\to
V_{\epsilon+1}\) is an elementary embedding. Then for every ordinal \(\nu <
\theta_{\epsilon+1}\), there are ordinals \(\alpha,\beta < \theta_{\epsilon}\)
and a function \(g : \alpha\to \theta_{\epsilon+1}\) such that \(\nu =
j^\star(g)(\beta)\).
\begin{proof}
Let \(R\) be a prewellorder of \(V_{\epsilon}\) of length \(\nu+1\). Then
\(j(R)\) is a prewellorder of \(V_{\epsilon}\) of length at least \(\nu+1\). Fix
\(a\in V_{\epsilon}\) with \(\text{rank}_{j(R)}(a) = \nu\) in \(R\). By
\cref{RepresentationThm}, find an ordinal \(\xi\) such that \(\xi+2\leq
\epsilon\) and \(a = j^\star(f)(x)\) for some \(f : V_{\xi+1}\to V_{\epsilon}\)
and \(x\subseteq V_\xi\). Define \(d : V_{\xi+1}\to \theta_{\epsilon+1}\) by
setting \(d(u) = \text{rank}_R(f(u))\). Then \(j^\star(d)(x) = \nu\), so
\(\nu\in \text{ran}(j^\star(d))\). Let \(g : \alpha\to \theta_{\epsilon+1}\) be
the order-preserving enumeration of \(\text{ran}(d)\), and note that \(\alpha <
\theta_{\epsilon}\) since \(d\) witnesses \(\text{ran}(d)\preceq^{*}
V_{\xi+1}\). By elementarity, \(j^\star(g)\) enumerates
\(\text{ran}(j^\star(d))\), and hence there is some \(\beta <
\theta_{\epsilon}\) such that \(j^\star(g)(\beta) = \nu\).
\end{proof}
\end{lma}

\begin{proof}[Proof of \cref{EvenSmallThm}] Let \(\eta\) be the ordertype of the
set of regular cardinals in the interval
\((\theta_{\epsilon+2},\theta_{\epsilon+3})\). Then \(\eta\) is fixed by
\(j^\star\), so \(\eta\neq \kappa\). Suppose towards a contradiction that \(\eta
> \kappa\). Let \(\delta\) be the \(\kappa\)-th regular cardinal in
\((\theta_{\epsilon+2},\theta_{\epsilon+3})\). Then \(j^\star(\delta)\) is a
regular cardinal strictly above \(\delta\), so \(j^\star(\delta)\) is not equal
to \(\sup j^\star[\delta]\), which has cofinality \(\delta\). Therefore \(\sup
j^\star[\delta] < j^\star(\delta)\). By \cref{NoGeneratorLma}, there are
ordinals \(\alpha,\beta < \theta_{\epsilon+2}\) and a function \(g : \alpha \to
\delta\) such that \(j^\star(g)(\beta) = \sup j^\star[\delta]\). Since
\(\delta\) is regular, there is some ordinal \(\rho < \delta\) such that
\(\text{ran}(g) \subseteq \rho\). Therefore \(j^\star(g)(\beta) < j^\star(\rho)
< \sup j^\star[\delta]\), which is a contradiction.
\end{proof}

We now turn to the size of \(\theta_{\epsilon+2}\) for \(\epsilon\) even.
\begin{proof}[Proof of \cref{OddBigThm}] Let \(E = \langle D(j,a) \mid a \in
[\theta_{\epsilon}]^{<\omega}\rangle\) be the extender of length
\(\theta_{\epsilon}\) derived from \(j\). Notice that \(E\) is definable over
\(\mathcal H_{\epsilon+2}\) from \(j^\star\restriction
P_{\text{bd}}(\theta_{\epsilon})\), hence from \(j\restriction V_{\epsilon+1}\),
and hence from \(j\restriction V_{\epsilon}\) by \cref{ExtensionThm}. In fact,
there is a partial sequence \(\langle F(\sigma) \mid \sigma \in
V_{\epsilon+1}\rangle\) definable without parameters over \(\mathcal
H_{\epsilon+2}\) such that \(E = F({j[V_{\epsilon}]})\): to be explicit,
\(F(\sigma)\) is the extender of length \(\theta_{\epsilon}\) derived from \(k\)
where \(k = ((\pi_\sigma)^{+})^\star\) for \(\pi_\sigma : \sigma\to M\) the
Mostowski collapse of \(\sigma\).

Let \(j_E : \mathcal H_{\epsilon+2}\to N\) be the associated ultrapower
embedding, and let \(k : N\to \mathcal H_{\epsilon+2}\) be the associated factor
embedding, defined by \(k([f,a]_E) = j^\star(f)(a)\). Let \(\nu =
\text{crit}(k)\). Note that \(\nu\) must exist or else \(j_E\restriction
\theta_{\epsilon+2} = j^\star\restriction \theta_{\epsilon+2}\), contrary to the
undefinability of \(j^\star\restriction \theta_{\epsilon+2}\) over \(\mathcal
H_{\epsilon+3}\) from parameters, like \(E\), that lie in \(V_{\epsilon+2}\)
(\cref{SuperUltra}).

Note that \(\nu\) is a generator of \(j^\star\), in the sense that for any
ordinals \(\alpha,\beta < \nu\) and any function \(g : \alpha\to
\theta_{\epsilon+2}\), \(\nu \neq j^\star(g)(\beta)\): otherwise \(\nu\in
\text{ran}(k)\) by definition. As an immediate consequence of
\cref{NoGeneratorLma}, it follows that \(\nu \geq \theta_{\epsilon+1}\). Since
\(j^\star\) fixes every cardinal in the interval
\((\theta_{\epsilon+1},\theta_{\epsilon+1}^{+\kappa})\), it follows that
\(\nu\geq \theta_{\epsilon+1}^{+\kappa}\).

Notice that for any \(\eta < \theta_{\epsilon+1}^{+\kappa}\) and any set
\(A\subseteq \eta\), \(j_E(A) = j^\star(A)\). Indeed, \(j_E(A)\cap \nu =
j^\star(A)\cap \nu\) for any set of ordinals \(A\). Suppose towards a
contradiction that \(p : P(\eta)\to \theta_{\epsilon+2}\) is a surjection.
Define \(g : V_{\epsilon+1}\to P(\theta_{\epsilon+2})\) by setting \(g(\sigma) =
p\circ j_{F(\sigma)}[P(\eta)]\). 

Let \(\mathcal U\) be the ultrafilter over \(V_{\epsilon+1}\) derived from \(j\)
using \(j[V_{\epsilon}]\). It is easy to check that \(j_\mathcal U\restriction
\mathcal H_{\epsilon+2} = j^\star\restriction \mathcal H_{\epsilon+2}\).
Therefore \([g]_\mathcal U = j_\mathcal U(g)(j[V_{\epsilon}]) = j_\mathcal
U(p)\circ j_E[P(\eta)] = j_\mathcal U(p)\circ j_\mathcal U[P(\eta)] = j_\mathcal
U\circ p[P(\eta)] = j_\mathcal U[\theta_{\epsilon+2}]\). This shows that
\(j_\mathcal U[\theta_{\epsilon+2}]\in M_\mathcal U\), so by
\cref{UltrafilterUndefinabilityThm}, it follows that
\(\theta_{\epsilon+2}\preceq^{*}V_{\epsilon+1}\), which is a contradiction.

This shows that for any \(\eta < \theta_{\epsilon+1}^{+\kappa}\), \(P(\eta)\)
does not surject onto \(\theta_{\epsilon+2}\). The same argument applied to the
finite iterates of \(j\) shows that for any \(n <\omega\), \(\eta <
\theta_{\epsilon+1}^{+\kappa_n(j)}\), \(P(\eta)\) does not surject onto
\(\theta_{\epsilon+2}\). This proves the theorem.
\end{proof}
As a corollary of the proof of \cref{OddBigThm}, we have the following fact,
which exhibits a difference between the even and odd levels with regard to
\cref{NoGeneratorLma}:
\begin{prp}\label{YesGeneratorPrp}
Suppose \(\epsilon\) is an even ordinal and \(j : V_{\epsilon+2}\to
V_{\epsilon+2}\) is an elementary embedding. Then \(j\) has a generator in the
interval \((\theta_{\epsilon+1}, \theta_{\epsilon+2})\).\qed
\end{prp}
\subsection{The Coding Lemma}
One of the central theorems in the analysis of \(L(V_{\lambda+1})\) assuming the
axiom \(I_0\) is Woodin's generalization of the Moschovakis Coding Lemma. Here
we prove a new Coding Lemma. This Coding Lemma lifts Woodin's to structures of
the form \(L(V_{\epsilon+1})\) where \(\epsilon\) is even and the appropriate
generalization of \(I_0\) holds. But moreover, the proof adds a new twist to
Woodin's and as a consequence it applies to a host of models beyond
\(L(V_{\epsilon+1})\). For example, the Coding Lemma holds in
\(\text{HOD}(V_{\epsilon+1})\), and more interestingly, the Coding Lemma holds
in \(V\) itself under what seem to be reasonable assumptions.
\begin{defn}
Suppose \(\epsilon\) and \(\eta\) are ordinals, \(\varphi : V_{\epsilon+1}\to
\eta\) is a surjection, and \(R\) is a binary relation on \(V_{\epsilon+1}\).
\begin{itemize}
\item A relation \(\bar R\subseteq R\) is a {\it \(\varphi\)-total subrelation
of} \(R\) if \(\varphi[\text{dom}(\bar R)] = \varphi[\text{dom}(R)]\). 
\item A set of binary relations \(\Gamma\) on \(V_{\epsilon+1}\) is a {\it
code-class for \(\eta\)} if for any surjection \(\psi : V_{\epsilon+1}\to
\eta\), every binary relation on \(V_{\epsilon+1}\) has a \(\psi\)-total
subrelation in \(\Gamma\).
\item The {\it Coding Lemma holds at \(\epsilon\)} if every ordinal \(\eta <
\theta_{\epsilon+2}\) has a code-class \(\Gamma\) such that \(\Gamma \preceq^{*}
V_{\epsilon+1}\).
\end{itemize}
\end{defn}
The Coding Lemma has a number of important consequences. For example:
\begin{prp}\label{StrongLimitPrp}
Suppose \(\epsilon\) is an ordinal at which the Coding Lemma holds. Then
\(\theta_{\epsilon+2}\) is a strong limit cardinal. In fact, for any \(\eta <
\theta_{\epsilon+2}\), \(P(\eta)\preceq^{*} V_{\epsilon+1}\).
\begin{proof}
Fix a code-class \(\Gamma\) for \(\eta\) with \(\Gamma\preceq^{*}
V_{\epsilon+1}\). Fix a surjection \(\varphi : V_{\epsilon+1}\to \eta\). It is
immediate that \(P(\eta) = \{\varphi[\text{dom}(R)] \mid R\in \Gamma\}\).
Therefore since \(\Gamma\preceq^{*}V_{\epsilon+1}\),
\(P(\eta)\preceq^{*}V_{\epsilon+1}\).
\end{proof}
\end{prp}

\begin{defn}
Then the {\it Collection Principle} states that every class binary relation
\(R\) whose domain is a set has a set-sized subrelation \(\bar R\) such that
\(\text{dom}(\bar R) = \text{dom}(R)\).
\end{defn}
It seems that one needs a local form of the Collection Principle to prove the
Coding Lemma:
\begin{thm}\label{CodingLemma}
Suppose \(\epsilon\) is an even ordinal and \(M\) is an inner model containing
\(V_{\epsilon+1}\). Suppose there is an embedding \(j\in \mathscr
E(V_{\epsilon+2}\cap M)\) with \(\textnormal{crit}(j) = \kappa\). Assume
\((\mathcal H_{\epsilon+2})^M\) satisfies \(\kappa\textnormal{-DC}\) and the
Collection Principle. Then \(M\) satisfies the Coding Lemma at \(\epsilon\).
\end{thm}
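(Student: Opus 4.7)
I would argue inside $M$ by transfinite induction on $\eta < \theta_{\epsilon+2}$, constructing for each such $\eta$ a code-class $\Gamma_\eta \preceq^{*} V_{\epsilon+1}$. The successor step is routine (augment $\Gamma_{\eta-1}$ with one extra layer to code the top level), so the real content lies in the limit case. Given a surjection $\varphi : V_{\epsilon+1} \to \eta$ and an arbitrary binary relation $R$ on $V_{\epsilon+1}$, the task is to exhibit, uniformly in $R$, a $\varphi$-total subrelation $\bar R \subseteq R$ lying in some fixed $\Gamma \preceq^{*} V_{\epsilon+1}$. My strategy is to build $\bar R$ by amalgamating the code-classes $\Gamma_\alpha$ supplied inductively for $\alpha < \eta$, with the Collection Principle keeping the amalgamation small and $\kappa$-DC in $(\mathcal H_{\epsilon+2})^M$ threading through the choices coherently; when $\eta$ is sufficiently large the embedding $j$ is invoked to reflect the situation below $\eta$.

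Concretely, for $\eta < \kappa$ the embedding $j$ is not needed. The inductive hypothesis gives, for each $\alpha < \eta$, a $\varphi\!\restriction\!\varphi^{-1}[\alpha]$-total subrelation $R_\alpha \subseteq R$ with $R_\alpha \preceq^{*} V_{\epsilon+1}$. The class-relation $\alpha \mapsto R_\alpha$ (ranging over the proper class of eligible witnesses) is replaced by a set of representatives using Collection in $(\mathcal H_{\epsilon+2})^M$, and $\kappa$-DC selects a coherent $\eta$-sequence whose union is the desired $\bar R$. For $\eta \geq \kappa$, I apply $j^\star$ (as defined in \cref{HSection}) to the pair $(\varphi, R)$. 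Since $j^\star(\eta) > \eta$, the tail past $\eta$ is already handled at some stage $\eta' < j^\star(\eta)$ to which the inductive hypothesis applies on the $M$-side by elementarity; the pullback along $j^\star$ of the corresponding code-class at $\eta'$ yields a $\varphi$-total subrelation of $R$ lying in a set $\Gamma \preceq^{*} V_{\epsilon+1}$, producing the required code-class at $\eta$. The key representation \cref{RepresentationLma} (every element of $V_{\epsilon+2}$ has the form $j^\star(f)(j[V_\epsilon])$) is what allows the pullback to preserve the $\preceq^{*} V_{\epsilon+1}$ bound.

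The main obstacle will be the case in which $\text{cf}(\eta) \geq \kappa$: a single application of $j$ may fail to reflect past $\eta$ because the cofinal sequence of witnesses is itself indexed by an ordinal beyond $\kappa$, so the pullback construction does not immediately give a single code-class. I would handle this by iterating $j$ through the critical sequence $\langle \kappa_n(j)\rangle_{n<\omega}$ (noting $\eta < \theta_{\epsilon+2} \leq \kappa_\omega(j)$ in the intended setting), then invoking the Collection Principle once more to bound the union of the resulting pullbacks inside a single set $\preceq^{*} V_{\epsilon+1}$. The interplay between Collection (needed because the ``natural'' code-class is a proper class) and $\kappa$-DC (needed to choose coherent witnesses at each stage of the iteration) is the delicate point; both hypotheses of the theorem are used essentially here.
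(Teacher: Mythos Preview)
Your induction breaks at the step that is supposed to do all the work. You write ``since $j^\star(\eta) > \eta$'', but this is exactly what fails at the minimal counterexample: if $\eta$ is the least ordinal with no small code-class, then $\eta$ is definable in $(\mathcal H_{\epsilon+2})^M$, so $j^\star(\eta) = \eta$ and there is nothing to pull back. The same issue kills the cofinality-splitting fix: your parenthetical ``$\eta < \theta_{\epsilon+2} \leq \kappa_\omega(j)$'' is simply false, since $\kappa_\omega(j) \leq \epsilon$ while $\theta_{\epsilon+2}$ is far above $\epsilon$. So for the vast majority of $\eta$'s in question, iterating $j$ along its critical sequence cannot push you past $\eta$, and the ``reflect below and pull back'' scheme never gets started.

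The paper's argument is organized quite differently. One first proves a \emph{Weak} Coding Lemma (only $\varphi$-\emph{cofinal} subrelations are required) by contradiction: at the least failure $\eta$ (which \emph{is} fixed by $j^\star$), $\kappa$-DC is used not to thread witnesses along $\eta$ but to build a $\kappa$-sequence $\langle (A_\alpha,\varphi_\alpha)\rangle_{\alpha<\kappa}$ in which each $A_\beta$ has no $\varphi_\beta$-cofinal subrelation definable over any earlier $(V_{\epsilon+1},A_\alpha)$. Applying $j$ \emph{twice} and using the definability of $j$ over $V_{\epsilon+1}$ (\cref{DefinabilityThm}) shows that $j[A^1_\kappa]$ is definable over $(V_{\epsilon+1},A^2_\kappa)$, giving a $\varphi^2_{\kappa_1}$-cofinal subrelation of $A^2_{\kappa_1}$ that the construction forbids. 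The Collection Principle enters only afterwards, to upgrade the Weak Coding Lemma to the full one by amalgamating small code-classes for the $\alpha<\eta$ into a single $\Lambda\preceq^* V_{\epsilon+1}$. Your proposal has Collection and DC in roughly the right places, but it is missing the central mechanism: the double application of $j$ together with the odd-level definability, which is what actually produces a forbidden subrelation when $j^\star(\eta)=\eta$.
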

Note that we only require the {\it first-order} Collection Principle to hold in
\((\mathcal H_{\epsilon+2})^M\). 

We begin by proving a Weak Coding Lemma, which requires some more definitions.
\begin{defn}
Suppose \(\epsilon\) and \(\eta\) are ordinals, \(\varphi : V_{\epsilon+1}\to
\eta\) is a surjection, and \(R\) is a binary relation on \(V_{\epsilon+1}\). 
\begin{itemize}
\item A relation \(\bar R\subseteq R\) is a {\it \(\varphi\)-cofinal subrelation
of} \(R\) if either \(\varphi[\text{dom}(R)]\) is not cofinal in \(\eta\) or
\(\varphi[\text{dom}(\bar R)]\) is cofinal in \(\varphi[\text{dom}(R)]\).
\item A set of binary relations \(\Gamma\) on \(V_{\epsilon+1}\) is a {\it weak
code-class for \(\eta\)} if for any surjection \(\psi : V_{\epsilon+1}\to
\eta\), every binary relation on \(V_{\epsilon+1}\) has a \(\psi\)-cofinal
subrelation in \(\Gamma\). 
\item The {\it Weak Coding Lemma holds at \(\epsilon\)} if every ordinal \(\eta
< \theta_{\epsilon+2}\) has a weak code-class \(\Gamma\) such that
\(\Gamma\preceq^{*} V_{\epsilon+1}\).
\end{itemize}
\end{defn}
\begin{lma}\label{WeakCodingLemma}
Suppose \(\epsilon\) is an even ordinal and \(M\) is an inner model containing
\(V_{\epsilon+1}\). Suppose there is an elementary embedding from
\(V_{\epsilon+2}\cap M\) to \(V_{\epsilon+2}\cap M\) with critical point
\(\kappa\). Assume \((\mathcal H_{\epsilon+2})^M\) satisfies
\(\kappa\textnormal{-DC}\). Then the Weak Coding Lemma holds at \(\epsilon\) in
\(M\).
\begin{proof}
Assume towards a contradiction that \(M\) does not satisfy the Weak Coding Lemma
at \(\epsilon\). Let \(\eta\) be the least ordinal for which there is no weak
code-class \(\Gamma\) with \(\Gamma \preceq^{*} V_{\epsilon+1}\). Notice that
\(\eta\) is definable in \((\mathcal H_{\epsilon+2})^M\), and hence is fixed by
any embedding in \(\mathscr E((\mathcal H_{\epsilon+2})^M)\).

We now use that \((\mathcal H_{\epsilon+2})^M\) satisfies
\(\kappa\textnormal{-DC}\) to construct a sequence \[\langle
(A_{\alpha},\varphi_\alpha) \mid \alpha < \kappa\rangle\] by recursion. Each
\(A_\alpha\) will be a binary relation on \(V_{\epsilon+1}\), and each
\(\varphi_\alpha\) will be a surjection from \(V_{\epsilon+1}\) to \(\eta\).
Suppose \(\langle (A_{\alpha},\varphi_\alpha) \mid \alpha < \beta\rangle\) has
been defined. Let \(\Gamma\) be the collection of binary relations on
\(V_{\epsilon+1}\) definable (from parameters) over
\((V_{\epsilon+1},A_{\alpha})\) for some \(\alpha < \beta\). Obviously,
\(\Gamma\preceq^{*} V_{\epsilon+1}\), so by choice of \(\eta\), \(\Gamma\) is
not a weak code-class for \(\eta\). We can therefore choose a binary relation
\(A_\beta\) on \(V_{\epsilon+1}\) and a surjection \(\varphi_\beta\) from
\(V_{\epsilon+1}\) to \(\eta\) such that \(A_\beta\) has no
\(\varphi_\beta\)-cofinal subrelation in \(\Gamma\). This completes the
construction.

Fix an embedding \(j\in \mathscr E(V_{\epsilon+2}\cap M)\) with critical point
\(\kappa\). Then \(j\) extends uniquely to \(j^\star : (\mathcal
H_{\epsilon+2})^M\to (\mathcal H_{\epsilon+2})^M\). Apply \(j^\star\) twice to
\(\langle (A_{\alpha},\varphi_\alpha) \mid \alpha < \kappa\rangle\):
\begin{align*}\langle (A_\alpha^1,\varphi^1_\alpha): \alpha < \kappa_1\rangle &= j^\star(\langle (A_{\alpha},\varphi_\alpha) \mid \alpha < \kappa\rangle)\\
\langle (A_\alpha^2,\varphi^2_\alpha) \mid \alpha < \kappa_2\rangle &=j^\star(\langle (A_\alpha^1,\varphi^1_\alpha) \mid \alpha < \kappa_1\rangle)
\end{align*}
Notice the following equality:
\begin{equation}
j^\star(A_{\kappa}^1,\varphi_\kappa^1) = (A_{\kappa_1}^2,\varphi_{\kappa_1}^2)\label{Old}
\end{equation}
\cref{Old} implies: 
\[\text{\(j[A_{\kappa}^1]\) is a \(\varphi_{\kappa_1}^2\)-cofinal subrelation of
\(A_{\kappa_1}^2\).}\] The point here is that \(j^\star(\eta) = \eta\), so
\(j^\star[\eta]\) is cofinal in \(\eta\). Hence 
\[\varphi_{\kappa_1}^2[\text{dom}(j[A_{\kappa}^1])] =
j^\star[\varphi^1_\kappa[\text{dom}(A^1_\kappa)]]\] is cofinal in
\(\text{dom}(A^2_{\kappa_1})\). (Note that for every \(\beta < \kappa\), the set
\(\text{dom}(A_\beta)\) is cofinal in \(\eta\): otherwise for all \(\alpha <
\beta\), \(A_\alpha\) is vacuously a \(\varphi_\beta\)-cofinal subrelation of
\(A_\beta\).)

Recall, however, that our construction ensured that for all \(\beta < \kappa\),
\(A_\beta\) has no \(\varphi_\beta\)-cofinal subrelation that is definable over
\((V_{\epsilon+1},A_{\alpha})\) for some \(\alpha < \beta\). By elementarity,
\(A^2_{\kappa_1}\) has no \(\varphi_{\kappa_1}^2\)-cofinal subrelation that is
boldface definable over \((V_{\epsilon+1},A_{\alpha})\) for some \(\alpha <
\kappa_1\). We reach a contradiction by showing that \(j^\star[A_{\kappa}^1]\)
is definable over \((V_{\epsilon+1},A_\kappa^1)\).

Combinatorially, the following equation is the new ingredient in this proof:
\begin{equation}
j(j)(A_{\kappa}^1) = A_{\kappa}^2\label{New}
\end{equation}
(More formally \((j^\star(j\restriction V_{\epsilon}))^{+}(A_{\kappa}^1) =
A_{\kappa}^2\); this notation is just too unwieldy.) \cref{New} implies that
\(A_{\kappa}^1 = j(j)^{-1}[A_{\kappa}^2]\). By \cref{DefinabilityThm},
\(j(j)\restriction V_{\epsilon+1}\) is definable over \(V_{\epsilon+1}\) from
its restriction to \(V_{\epsilon}\), and therefore \(A_{\kappa}^1\) is definable
from parameters over \((V_{\epsilon+1},A_{\kappa}^2)\). Similarly,
\(j[A_\kappa^1]\) is definable over \((V_{\epsilon+1},A_{\kappa}^1)\). It
follows that \(j[A_\kappa^1]\) is definable over
\((V_{\epsilon+1},A_{\kappa}^2)\).
\end{proof}
\end{lma}
The proof that the Weak Coding Lemma implies the Coding Lemma is a direct
generalization of Woodin's:
\begin{proof}[Proof of \cref{CodingLemma}] Assume towards a contradiction that
the Coding Lemma fails in \(M\) at \(\epsilon\). Let \(\eta\) be the least
ordinal for which there is no code-class \(\Gamma\) such that \(\Gamma
\preceq^{*} V_{\epsilon+1}\). 

Let \(\psi : V_{\epsilon+1}\to \eta\) be an arbitrary surjection. We begin by
using the Collection Principle to construct a set \(\Lambda\preceq^{*}
V_{\epsilon+1}\) that is a code-class for every \(\alpha < \eta\). Consider the
relation \(S\subseteq V_{\epsilon+1}\times \mathcal H_{\epsilon+2}\) defined by
\[S(a,\Gamma) \iff \Gamma\text{ is a code-class for }\psi(a)\text{ with }\Gamma
\preceq^{*}V_{\epsilon+1}\] By the minimality of \(\eta\), \(S\) is a total
relation. Since \(\mathcal H_{\epsilon+2}\) satisfies the Collection Principle,
there is a total relation \(\bar S\subseteq S\) with \(\bar S\in \mathcal
H_{\epsilon+2}\). Let \[\Lambda = \bigcup_{\Gamma\in \text{ran}(\bar S)}
\Gamma\] Clearly \(\Lambda\) is a code-class for every \(\alpha < \eta\). Since
\(\bar S\in \mathcal H_{\epsilon+2}\), \(\Lambda\preceq^{*} V_{\epsilon+1}\), so
there is a surjection \(\pi : V_{\epsilon+1}\to \Lambda\).

Applying the Weak Coding Lemma, fix a weak code-class \(\Sigma\) for \(\eta\)
with \(\Sigma\preceq^{*} V_{\epsilon+1}\). Let \(\Gamma\) be the collection of
binary relations on \(V_{\epsilon+1}\) definable over \((\mathcal
H_{\epsilon+2},\pi)\) using parameters in \(\Sigma\). Clearly
\(\Gamma\preceq^{*}V_{\epsilon+1}\). We finish by showing that \(\Gamma\) is a
code-class for \(\eta\).

Fix a surjection \(\varphi : V_{\epsilon+1}\to \eta\) and a binary relation
\(R\) on \(V_{\epsilon+1}\). We must find a \(\varphi\)-total subrelation \(\bar
R\) of \(R\) that belongs to \(\Gamma\). First consider the relation 
\[S(a,u) \iff \pi(u)\text{ is a \(\varphi\)-total subrelation of
}R\restriction_\varphi \psi(a)\] (Here \(R\restriction_\varphi \beta =
R\restriction \{b \mid \varphi(b) < \beta\}\).) Notice that \(S\) is a total
relation due to the construction of \(\Lambda\). Let \(\bar S\in\Sigma\) be a
\(\psi\)-cofinal subrelation of \(S\). Since \(\bar S\subseteq S\), for all
\(u\in \text{ran}(\bar S)\), \(\pi(u)\subseteq R\). Moreover since \(\bar S\) is
a \(\psi\)-cofinal subrelation of \(S\), for cofinally many \(\beta < \eta\),
there is some \(u\in \text{ran}(\bar S)\) such that \(\pi(u)\) is a
\(\varphi\)-total subrelation of \(R\restriction \beta\). Let \[\bar R =
\bigcup_{u\in \text{ran}(\bar S)} \pi(u)\] Then \(\bar R\) is a
\(\varphi\)-total subrelation of \(R\). Moreover \(\bar R\in \Gamma\) since
\(\bar R\) is definable over \((\mathcal H_{\epsilon+2},\pi)\) using the
parameter \(\bar S\in \Sigma\).
\end{proof}

\begin{thm}\label{StrongLimitThm}
Suppose \(\epsilon\) is an even ordinal. Suppose there is an elementary
embedding from \(V_{\epsilon+2}\) to \(V_{\epsilon+2}\) with critical point
\(\kappa\). Assume \(\mathcal H_{\epsilon+2}\) satisfies the Collection
Principle and \(\kappa\)-\textnormal{DC}. Then \(\theta_{\epsilon+2}\) is a
strong limit cardinal.\qed
\end{thm}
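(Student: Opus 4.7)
The plan is to derive \cref{StrongLimitThm} as an immediate corollary of the two preceding results: the Coding Lemma (\cref{CodingLemma}) and its strong limit consequence (\cref{StrongLimitPrp}). Since these have already been proved in the excerpt, the work is essentially bookkeeping: verify that the hypotheses of \cref{CodingLemma} are met in the inner model $M = V$, and then quote \cref{StrongLimitPrp}.

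Concretely, I would first observe that the hypotheses of \cref{StrongLimitThm} match, instance by instance, those of \cref{CodingLemma} with $M = V$: we have an even ordinal $\epsilon$, an inner model $M$ containing $V_{\epsilon+1}$ (namely $M = V$), an elementary embedding $j \in \mathscr{E}(V_{\epsilon+2} \cap M) = \mathscr{E}(V_{\epsilon+2})$ with $\mathrm{crit}(j) = \kappa$, and the two choice-style assumptions on $(\mathcal H_{\epsilon+2})^M = \mathcal H_{\epsilon+2}$, namely $\kappa$-\textnormal{DC} and the Collection Principle. Thus \cref{CodingLemma} delivers: the Coding Lemma holds at $\epsilon$.

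Having the Coding Lemma at $\epsilon$, I would then invoke \cref{StrongLimitPrp}, which states that the Coding Lemma at $\epsilon$ implies $\theta_{\epsilon+2}$ is a strong limit cardinal (and in fact the stronger statement that $P(\eta) \preceq^{*} V_{\epsilon+1}$ for every $\eta < \theta_{\epsilon+2}$, which matches the ``moreover'' clause in the introductory statement of the theorem). This completes the proof.

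There is no real obstacle here, since the heavy lifting is done in \cref{WeakCodingLemma} and \cref{CodingLemma}. The only thing worth flagging is a presentational point: the author stated \cref{CodingLemma} in the generality of an arbitrary inner model $M$ precisely so that \cref{StrongLimitThm} (and later results in other models such as $L(V_{\epsilon+1})$ or $\mathrm{HOD}(V_{\epsilon+1})$) falls out as a one-line specialization. Accordingly, a well-placed ``By \cref{CodingLemma} applied with $M = V$, together with \cref{StrongLimitPrp}'' is all that the formal proof needs to contain.
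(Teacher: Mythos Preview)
Your proposal is correct and matches the paper's approach exactly: the theorem is stated with a \qed and no further proof, as it follows immediately from \cref{CodingLemma} applied with $M = V$ together with \cref{StrongLimitPrp}.
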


As a corollary of \cref{StrongLimitThm}, we have a proof of the Kunen
Inconsistency Theorem that seems new:
\begin{cor}[ZFC]\label{KIT}
There is no elementary embedding from \(V_{\lambda+2}\) to \(V_{\lambda+2}\).
\begin{proof}
Assume towards a contradiction that there is an elementary embedding from
\(V_{\lambda+2}\) to \(V_{\lambda+2}\). The Axiom of Choice implies that all the
hypotheses of \cref{CodingLemma} are satisfied when \(M = V\). Therefore by
\cref{StrongLimitThm}, \(\theta_{\lambda+2}\) is a strong limit cardinal. On the
other hand, the Axiom of Choice implies \(\theta_{\lambda+2} =
|V_{\lambda+1}|^+\), which is not a strong limit cardinal.
\end{proof}
\end{cor}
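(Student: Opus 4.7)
The plan is to derive the classical Kunen Inconsistency by instantiating \cref{StrongLimitThm} with $M = V$ and $\epsilon = \lambda$, and then producing an explicit obstruction to $\theta_{\lambda+2}$ being a strong limit cardinal under the Axiom of Choice. So first I would assume towards a contradiction that $j : V_{\lambda+2} \to V_{\lambda+2}$ is a nontrivial elementary embedding; since $\lambda$ must be a limit ordinal (as $j$ is nontrivial and $\lambda = \kappa_\omega(j)$ in the usual setup, but even more simply, any even ordinal works), $\epsilon := \lambda$ is even, and $j$ provides the required embedding in $\mathscr{E}(V_{\epsilon+2})$ with some critical point $\kappa$.

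Next I would verify the remaining hypotheses of \cref{StrongLimitThm}. Under \textnormal{AC} the full Collection scheme holds, so in particular $\mathcal{H}_{\epsilon+2}$ satisfies the Collection Principle: given $R \subseteq V_{\epsilon+1} \times \mathcal{H}_{\epsilon+2}$, a wellordering of $V_{\epsilon+1}$ together with Replacement inside $\mathcal{H}_{\epsilon+2}$ yields a uniformization. Likewise $\kappa$-\textnormal{DC} (in fact \textnormal{DC}$_\gamma$ for every $\gamma$) is immediate from \textnormal{AC}. Therefore \cref{StrongLimitThm} applies and we conclude that $\theta_{\epsilon+2} = \theta_{\lambda+2}$ is a strong limit cardinal in the sense of \textnormal{ZF}, i.e.\ it is not the surjective image of $P(\beta)$ for any $\beta < \theta_{\lambda+2}$.

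The final step is to contradict this. Under \textnormal{AC}, $V_{\lambda+1}$ is wellorderable, so every surjective image of $V_{\lambda+1}$ has cardinality at most $|V_{\lambda+1}|$, and hence $\theta_{\lambda+2} = |V_{\lambda+1}|^+$. Setting $\mu = |V_{\lambda+1}| = 2^{|V_\lambda|}$, we have $\mu < \theta_{\lambda+2}$ while $P(\mu)$ has cardinality $2^\mu \geq \mu^+ = \theta_{\lambda+2}$, so there is a surjection from $P(\mu)$ onto $\theta_{\lambda+2}$. This directly contradicts $\theta_{\lambda+2}$ being a strong limit cardinal, completing the proof.

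There is essentially no obstacle here beyond bookkeeping: all the real content has been absorbed into \cref{StrongLimitThm} (and hence into the Coding Lemma \cref{CodingLemma}). The only thing to be mildly careful about is to cite the \textnormal{ZF} notion of strong limit used in the theorem (phrased via surjections from powersets, not injections) and to note that the computation $\theta_{\lambda+2} = |V_{\lambda+1}|^+$ genuinely uses \textnormal{AC}; otherwise in \textnormal{ZF} one would merely have $\theta_{\lambda+2} \leq |V_{\lambda+1}|^+$ when $V_{\lambda+1}$ is wellorderable, which is still enough to close the argument.
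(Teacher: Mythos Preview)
Your proposal is correct and follows exactly the same approach as the paper: verify under \textnormal{AC} that the hypotheses of \cref{StrongLimitThm} hold with $M=V$ and $\epsilon=\lambda$, conclude $\theta_{\lambda+2}$ is a strong limit, and then contradict this via the \textnormal{AC} computation $\theta_{\lambda+2}=|V_{\lambda+1}|^+$. Your write-up simply unpacks a few routine details the paper leaves implicit.
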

Another consequence of the Coding Lemma beyond \cref{StrongLimitThm} is the
following theorem:
\begin{thm}\label{RegularThm}
Suppose \(\epsilon\) is an even ordinal. Suppose there is an elementary
embedding from \(V_{\epsilon+2}\) to \(V_{\epsilon+2}\) with critical point
\(\kappa\). Assume \(\mathcal H_{\epsilon+2}\) satisfies the Collection
Principle and \(\kappa\)-\textnormal{DC}. Then \(\theta_{\epsilon+2}\) is a
limit of regular cardinals.
\end{thm}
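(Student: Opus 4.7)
The plan is to show that for every $\alpha<\theta_{\epsilon+2}$, the Hartogs successor $\alpha^{+}$ is strictly less than $\theta_{\epsilon+2}$, and then to use this to extract regular cardinals cofinally below $\theta_{\epsilon+2}$. This immediately witnesses $\theta_{\epsilon+2}$ as their limit.

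The hypotheses of \cref{RegularThm} are exactly those of \cref{CodingLemma} applied with $M=V$, so the Coding Lemma holds at $\epsilon$. By \cref{StrongLimitPrp} (or the moreover clause of \cref{StrongLimitThm}), $P(\alpha)\preceq^{*}V_{\epsilon+1}$ for every $\alpha<\theta_{\epsilon+2}$. For any infinite such $\alpha$, the ordertype map on wellorders of subsets of $\alpha$ is a partial surjection $P(\alpha\times\alpha)\twoheadrightarrow\alpha^{+}$, since every ordinal below $\alpha^{+}$ is the ordertype of some wellorder on a subset of $\alpha$. Using the canonical bijection $\alpha\times\alpha\cong\alpha$ for infinite $\alpha$, this gives $\alpha^{+}\preceq^{*}P(\alpha)\preceq^{*}V_{\epsilon+1}$, so $\alpha^{+}<\theta_{\epsilon+2}$ by the definition of $\theta_{\epsilon+2}$.

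The main obstacle is verifying regularity, since in ZF alone successor cardinals can consistently fail to be regular. The key observation is that $\kappa$-\textnormal{DC} in $\mathcal H_{\epsilon+2}$ forces $\text{cf}(\alpha^{+})\geq\kappa$ whenever $\alpha^{+}>\kappa$: if $\mu=\text{cf}(\alpha^{+})<\kappa$ and $\langle\delta_\xi:\xi<\mu\rangle$ is cofinal in $\alpha^{+}$, then this sequence lies in $\mathcal H_{\epsilon+2}$ (as both $\mu$ and $\alpha^{+}$ are below $\theta_{\epsilon+2}$), and the $\kappa$-closed tree of coherent partial choices of injections $\delta_\xi\hookrightarrow\alpha$ lies in $\mathcal H_{\epsilon+2}$ and has no maximal branches; a cofinal branch, whose existence is guaranteed by $\kappa$-\textnormal{DC}, combines via $\mu\times\alpha\cong\alpha$ to inject $\alpha^{+}$ into $\alpha$, contradicting the definition of $\alpha^{+}$. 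Thus $\text{cf}(\alpha^{+})$ is a regular cardinal in $[\kappa,\theta_{\epsilon+2})$, and when $\alpha^{+}\leq\kappa$ it is regular outright because $\kappa$ is the critical point of an elementary embedding.

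To conclude, one must verify that the regular cardinals so produced are unbounded in $\theta_{\epsilon+2}$. Given any $\alpha_0<\theta_{\epsilon+2}$, one iterates the Hartogs operation: the transfinite iterates $\alpha_0^{+\xi}$ all remain below $\theta_{\epsilon+2}$ by the first step, and the Collection Principle in $\mathcal H_{\epsilon+2}$, applied to the class relation pairing each ordinal below $\theta_{\epsilon+2}$ with a witness to some chosen cofinality value, rules out the scenario in which all these cofinalities are uniformly bounded by $\alpha_0$. Combined with the elementarity of $j^{\star}$ applied to the (parameter-free definable) sequence of regular cardinals below $\theta_{\epsilon+2}$, one concludes that for some iterate the cofinality exceeds $\alpha_0$, yielding a regular cardinal in $(\alpha_0,\theta_{\epsilon+2})$. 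This final combinatorial step, ensuring unboundedness rather than mere existence, is the subtlest part of the argument.
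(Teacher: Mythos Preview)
Your first two steps are correct and indeed follow a more elementary route than the paper: the Coding Lemma gives $P(\alpha)\preceq^{*}V_{\epsilon+1}$ for every $\alpha<\theta_{\epsilon+2}$, so $\alpha^{+}<\theta_{\epsilon+2}$; and $\kappa$-DC (via the padding trick giving AC$_{\mu}$ for $\mu<\kappa$) forces $\text{cf}(\alpha^{+})\geq\kappa$. So far so good.

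The gap is Step 3, and it is a real one. Knowing $\text{cf}(\alpha^{+})\geq\kappa$ for every $\alpha$ only tells you that each such cofinality is a regular cardinal in $[\kappa,\theta_{\epsilon+2})$; it does not tell you these regular cardinals are unbounded. Nothing prevents $\text{cf}(\alpha^{+})=\kappa$ for every $\alpha$, in which case you have produced exactly one regular cardinal. Your appeal to Collection---``pairing each ordinal with a witness to some chosen cofinality value''---does not rule this out: Collection merely bounds the range of such a relation by something $\preceq^{*}V_{\epsilon+1}$, which says nothing about the size of the cofinalities themselves. And the elementarity of $j^{\star}$ applied to the definable set $R$ of regular cardinals below $\theta_{\epsilon+2}$ is equally unhelpful: if $R$ were bounded with supremum $\eta_0$, then $\eta_0$ is definable without parameters in $\mathcal H_{\epsilon+2}$, so $j^{\star}(\eta_0)=\eta_0$, and no contradiction arises. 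You have sketched the \emph{shape} of an argument without supplying its content.

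The paper's proof takes a completely different route at this juncture. Given $\eta<\theta_{\epsilon+2}$, it uses \cref{ProperFix} to find $B\subseteq V_{\epsilon+1}$ with $j(B)=B$ and with a code-class for $\eta$ inside $L(V_{\epsilon+1},B)$. Woodin's black-box \cref{WoodinRegularThm} then gives a regular cardinal $\iota>\eta$ \emph{inside} $L(V_{\epsilon+1},B)$. The Coding Lemma is used once more, but now to show $\iota^{\eta}\subseteq L(V_{\epsilon+1},B)$, so that $L(V_{\epsilon+1},B)$ is correct about cofinalities up to $\eta$; hence $\text{cf}^V(\iota)>\eta$, yielding a genuine regular cardinal above $\eta$. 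The passage through the inner model $L(V_{\epsilon+1},B)$ and the invocation of Woodin's structural result are doing the heavy lifting that your Step 3 lacks.
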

The theorem also generalizes easily to inner models \(M\) as in
\cref{CodingLemma}, but here it seems one must require that \(j\) is a proper
embedding in the sense of \cite{SEM2}, and since we do not want to introduce the
notion of a proper embedding, we omit the proof.

The proof uses the following lemma, which is a direct generalization of a
construction due to Woodin:
\begin{lma}[\cite{SEM2}, Lemma 6]\label{ProperFix}
Suppose \(\epsilon\) is an even ordinal and \(j : V_{\epsilon+2}\to
V_{\epsilon+2}\) is an elementary embedding. Then for any set \(A\subseteq
V_{\epsilon+2}\), there is some set \(B\in V_{\epsilon+2}\) such that \(j(B) =
B\) and \(A\in L(V_{\epsilon+1},B)\).\qed
\end{lma}
We warn that (2.2) in the proof of Lemma 6 of \cite{SEM2} contains a typo. We
also need a routine generalization of another theorem of Woodin:
\begin{thm}[\cite{SEM2}, Lemma 22]\label{WoodinRegularThm}
Suppose \(\epsilon\) is an even ordinal, \(B\) is a subset of
\(V_{\epsilon+1}\), and \(j : L(V_{\epsilon+1},B)\to L(V_{\epsilon+1},B)\) is an
elementary embedding such that \(j(B) = B\). Let \(\theta\) be
\(\theta_{\epsilon+2}\) as computed in \(L(V_{\epsilon+1},B)\). Then \(\theta\)
is a limit of regular cardinals in \(L(V_{\epsilon+1},B)\).\qed
\end{thm}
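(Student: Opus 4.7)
I would follow the template of Woodin's original argument for Lemma 22 of \cite{SEM2} (proved for $\epsilon$ a limit ordinal), adapting it to arbitrary even $\epsilon$ via the periodicity machinery of this paper. Work inside $M = L(V_{\epsilon+1},B)$ and set $\theta = \theta_{\epsilon+2}^M$. First observe that $\epsilon$ is definable in $M$ (as the unique even ordinal $\alpha$ with $V = L(V_{\alpha+1},B)$), so by elementarity $j(\epsilon) = \epsilon$, hence $j(V_{\epsilon+1}) = V_{\epsilon+1}$ and $j(\theta) = \theta$. Since $j(B) = B$ by hypothesis, the lifted embedding $j^\star$ extends $j$ to $\mathcal H_{\epsilon+2}^M$ via \cref{DefinabilityThm} and the canonical extension operation of \cref{SuccessorExtensionDef}.

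Suppose toward contradiction that $\theta$ is not a limit of $M$-regular cardinals, and let $\delta < \theta$ be the supremum of the $M$-regular cardinals below $\theta$; then every $M$-cardinal in $(\delta,\theta)$ is singular in $M$, so in particular $\textnormal{cf}^M(\theta) \leq \delta$. The ordinal $\delta$ is definable in $M$ from $\theta$, so $j^\star(\delta) = \delta$. The plan is to show that this forces a small, $j^\star$-preserved cofinal sequence in $\theta$, and then to exhibit a generator of $j^\star$ in the interval $(\delta,\theta)$ that cannot be reached from such data.

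For the first half I would invoke the Coding Lemma inside $M$. To apply \cref{CodingLemma} I need $(\mathcal H_{\epsilon+2})^M$ to satisfy the Collection Principle and $\kappa$-\textnormal{DC} for $\kappa = \textnormal{crit}(j)$; the Collection Principle follows from the definable $L$-order of $M$ (relative to the parameters $V_{\epsilon+1}$ and $B$) restricted to ordinal-valued relations, and $\kappa$-\textnormal{DC} in $\mathcal H_{\epsilon+2}^M$ is secured by the same wellorder, since every choice required lies in $\mathcal H_{\epsilon+2}^M$. The Coding Lemma then yields that for every $\alpha < \theta$, $P(\alpha)$ is the surjective image of $V_{\epsilon+1}$ in $M$; in particular, any cofinal sequence in $\theta$ of length $\mu \leq \delta$ (which exists since $\textnormal{cf}^M(\theta) \leq \delta$) can be encoded as a single object in $\mathcal H_{\epsilon+2}^M$.

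For the second half I would use \cref{YesGeneratorPrp} inside $M$ to produce a generator $\nu$ of $j^\star$ in the interval $(\theta_{\epsilon+1}^M, \theta)$; since $j^\star(\delta) = \delta$, one may choose $\nu > \delta$. Combining the encoded cofinal sequence with the surjective enumeration of $P(\delta)$ from $V_{\epsilon+1}$ given by the Coding Lemma, I would express $\nu$ in the form $j^\star(g)(\beta)$ for some $g \in \mathcal H_{\epsilon+2}^M$ of domain rank $< \epsilon$ and some $\beta < \delta$, using \cref{NoGeneratorLma} applied relativized to $M$; this contradicts the choice of $\nu$ as a generator. The main obstacle I anticipate is the verification that $(\mathcal H_{\epsilon+2})^M$ genuinely satisfies the choice hypotheses of \cref{CodingLemma}: in $L(V_{\epsilon+1},B)$ one only has choice modulo the non-wellorderable parameter $V_{\epsilon+1}$, so if the argument above stumbles I would bypass \cref{CodingLemma} by inlining its proof, exploiting directly that every element of $\mathcal H_{\epsilon+2}^M$ is the surjective image of $V_{\epsilon+1}$ in a uniformly definable way.
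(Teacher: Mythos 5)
Your contradiction argument collapses at its first inference. From ``every $M$-cardinal in $(\delta,\theta)$ is singular in $M$'' you conclude $\text{cf}^M(\theta)\leq\delta$, but you have omitted the case $\text{cf}^M(\theta)=\theta$, and that is the case that actually occurs: $\theta=\theta_{\epsilon+2}^{M}$ is regular in $M=L(V_{\epsilon+1},B)$. Indeed, every element of $M$ is ordinal definable in $M$ from parameters in $V_{\epsilon+1}\cup\{B\}$, so given a cofinal $c:\mu\to\theta$ with $\mu<\theta$ and a surjection $h:V_{\epsilon+1}\to\mu$ in $M$, the map sending $(x,y,z)$ to $f(z)$, where $f$ is the least $\text{OD}_{B,y}$ surjection of $V_{\epsilon+1}$ onto $c(h(x))$ in the canonical wellorder of $\text{OD}_{B,y}$, surjects $V_{\epsilon+1}^3$ (hence $V_{\epsilon+1}$) onto $\theta$, contradicting the definition of $\theta$. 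So the ``small, $j^\star$-preserved cofinal sequence in $\theta$'' that the entire second half of your argument feeds on does not exist, and no contradiction is reached. Two further steps would be unjustified even granting the setup: \cref{YesGeneratorPrp} produces a generator somewhere in $(\theta_{\epsilon+1},\theta_{\epsilon+2})$ but gives no control allowing you to place it above a prescribed $\delta$; and your claim that $(\mathcal H_{\epsilon+2})^M$ satisfies $\kappa$-DC ``secured by the same wellorder'' is false --- the wellorder modulo $V_{\epsilon+1}$ yields, at each stage of a putative DC construction, only a $V_{\epsilon+1}$-indexed family of least candidates rather than a single choice, which is precisely the obstruction that the matrix construction in the proof of \cref{InnerModelCodingThm} is designed to circumvent. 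Your use of \cref{NoGeneratorLma} is also out of its range: that lemma concerns $\nu<\theta_{\epsilon+1}$, and \cref{YesGeneratorPrp} says exactly that its conclusion fails one level up, so invoking both in the interval $(\theta_{\epsilon+1},\theta)$ requires an argument you have not supplied.

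For comparison, the paper does not reprove this statement: it is quoted as a routine generalization of Woodin's Lemma 22 of \cite{SEM2}, the only new content being the remark that the apparent use of $\lambda$-DC in Woodin's argument can be eliminated by the method of \cref{InnerModelCodingThm}. The intended argument produces regular cardinals cofinally below $\theta$ directly (via the Coding Lemma applied to initial segments of the $L(V_{\epsilon+1},B)$-hierarchy), and is entirely compatible with --- indeed exploits --- the regularity of $\theta$ itself. Your closing instinct to inline the Coding Lemma rather than rely on choice hypotheses in $M$ is the right one and matches the paper's remark, but it does not repair the main gap above.
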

We note that although Woodin's proof seems to use \(\lambda\)-DC, this is not
really necessary by the proof of \cref{InnerModelCodingThm} below.
\begin{proof}[Proof of \cref{RegularThm}] Fix a cardinal \(\eta <
\theta_{\epsilon+2}\). We will show that there is a regular cardinal in the
interval \((\eta,\theta_{\epsilon+2})\). By the Coding Lemma, there is a
code-class \(\Gamma\) for \(\eta\) such that
\(\Gamma\preceq^{*}V_{\epsilon+1}\). Let \(\varphi : V_{\epsilon+1}\to \eta\) be
a surjection in \(M\) and let \(A\subseteq V_{\epsilon+1}\) be a set such that
\(\Gamma\subseteq L(V_{\epsilon+1},A)\) and such that \(L(V_{\epsilon+1},A)\)
satisfies that \(\eta < \theta_{\epsilon+2}\). Let \(B\) be a set such that
\(j(B) = B\) and \(A\in L(V_{\epsilon+1},B)\).

Let \(\theta\) be  \(\theta_{\epsilon+2}\) as computed in
\(L(V_{\epsilon+1},B)\). We claim that for every ordinal \(\gamma < \theta\),
\(\gamma^\eta\subseteq L(V_{\epsilon+1},B)\). To see this, fix \(s: \eta\to
\gamma\), and we will show that \(s\in L(V_{\epsilon+1},B)\). Let \(\psi :
V_{\epsilon+1}\to \gamma\) be a surjection. Let \(R = \{(x,y) \mid s(\varphi(x))
=\psi(y)\}\). Since \(\Gamma\) is a code-class for \(\eta\), there is a
subrelation \(\bar R\) of \(R\) in \(\Gamma\) such that \(\text{dom}(\bar R) =
\text{dom}(R)\). But \(\bar R\in L(V_{\epsilon+1},B)\) and \(s\) is clearly
coded by \(\bar R\). Therefore \(s\in L(V_{\epsilon+1},B)\), as desired.

By \cref{WoodinRegularThm}, \(\theta\) is a limit of regular cardinals in
\(L(V_{\epsilon+1},B)\). Therefore let \(\iota \in (\eta,\theta)\) be a regular
cardinal of \(L(V_{\epsilon+1},B)\). Since \(\iota^\eta\subseteq
L(V_{\epsilon+1},B)\), \(\text{cf}(\iota) \in (\eta,\theta)\). In particular,
there is a regular cardinal in the interval \((\eta,\theta_{\epsilon+2})\).
\end{proof}

\begin{qst}
Suppose \(\epsilon\) is an even ordinal. Suppose there is an elementary
embedding from \(V_{\epsilon+2}\) to \(V_{\epsilon+2}\) with critical point
\(\kappa\). Assume \(\mathcal H_{\epsilon+2}\) satisfies the Collection
Principle and \(\kappa\)-\textnormal{DC}. Must \(\theta_{\epsilon+2}\) be a
limit of measurable cardinals?
\end{qst}

Let us now show that for certain inner models, one can avoid the extra
assumptions in \cref{CodingLemma}.
\begin{thm}\label{InnerModelCodingThm}
Suppose \(N\) is an inner model of \textnormal{ZFC}, \(\epsilon\) is an even
ordinal, \(A\subseteq V_{\epsilon+1}\), and \(W\) is a set. Let \(M =
N(V_{\epsilon+1},A)[W]\). Suppose there is an elementary embedding from \(M\cap
V_{\epsilon+2}\) to \(M\cap V_{\epsilon+2}\). Then the Coding Lemma holds in
\(M\) at \(\epsilon\).
\begin{proof}[Sketch] We just describe how to modify the proofs above to avoid
assuming Collection and Dependent Choice.

The first point is that one can prove \((\mathcal H_{\epsilon+2})^M\) satisfies
the Collection Principle. To see this, note that by the definition of \(M\), for
any \(X\in M\), there is an ordinal \(\gamma\) such that \(X\preceq^{*}
V_{\epsilon+1}\times \gamma\) in \(M\). Therefore fix a surjection \[f :
V_{\epsilon+1}\times \gamma \to (\mathcal H_{\epsilon+2})^M\] with \(f\in M\).
For each \(x\in V_{\epsilon+1}\), let \(\Gamma_x = f[\{x\}\times \gamma]\) and
let \(<_x\) be the wellorder of \(H_x\) induced by \(f\). Suppose \(R\subseteq
(\mathcal H_{\epsilon+2})^M\) is a definable relation whose domain belongs to
\((\mathcal H_{\epsilon+2})^M\). Then \(R\in M\). For each \(x\in
V_{\epsilon+1}\), let \(r_x(a)\) be the \(<_x\)-least \(b\) in \(H_x\) such that
\((a,b)\in R\). The sequence \(\langle r_x \mid x\in V_{\epsilon+1}\rangle\)
belongs to \(M\), and indeed it belongs to \((\mathcal H_{\epsilon+2})^M\). This
is because one can define a partial surjection from \(\text{dom}(R)\) to \(r_x\)
uniformly in \(x\). Therefore \(\bigcup_{x\in V_{\epsilon+1}} r_x\in (\mathcal
H_{\epsilon+2})^M\), and letting \(\bar R = \bigcup_{x\in V_{\epsilon+1}} r_x\),
we have \(\text{dom}(\bar R) = \text{dom}(R)\). This verifies that \((\mathcal
H_{\epsilon+2})^M\) satisfies the Collection Principle.

To finish the sketch, we describe how in this special case one can avoid the use
of Dependent Choice in the proof of the Weak Coding Lemma
(\cref{WeakCodingLemma}). One need only modify the construction of the sequence
\(\langle (A_\alpha,\varphi_\alpha) \mid \alpha < \kappa\rangle\). One instead
constructs a sequence \(\langle (A_{x,\alpha},\varphi_{x,\alpha}) \mid
(x,\alpha)\in V_{\epsilon+1}\times \kappa\rangle\). The construction proceeds as
follows. Suppose that \(\langle (A_{x,\alpha},\varphi_{x,\alpha}) \mid
(x,\alpha)\in V_{\epsilon+1}\times \beta\rangle\) has been defined. By the
failure of the Weak Coding Lemma, there is some \((A,\varphi)\) such that \(A\)
has no \(\varphi\)-cofinal subrelation that is definable over
\((V_{\epsilon+1},A_{x,\alpha})\) for some \((x,\alpha)\in V_{\epsilon+1}\times
\beta\). For each \(x\in V_{\epsilon+1}\), let
\((A_{x,\beta},\varphi_{x,\beta})\) be the \(<_x\)-least such \((A,\varphi)\in
H_x\), if one exists. This completes the construction.

Now one considers:
\begin{align*}\langle (A^1_{x,\alpha},\varphi^1_{x,\alpha}) \mid (x,\alpha)\in V_{\epsilon+1}\times \kappa_1\rangle &= j(\langle (A_{x,\alpha},\varphi_{x,\alpha}) \mid (x,\alpha)\in V_{\epsilon+1}\times \kappa\rangle)
\end{align*}
Fix any \(x\in V_{\epsilon+1}\) such that
\((A^1_{x,\kappa},\varphi^1_{x,\kappa})\) is defined. One then uses
\((A^1_{x,\kappa},\varphi^1_{x,\kappa})\) in place of
\((A^1_\kappa,\varphi^1_\kappa)\). The rest of the proof is unchanged.
\end{proof}
\end{thm}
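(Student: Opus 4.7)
The plan is to reduce \cref{InnerModelCodingThm} to the machinery already developed for \cref{CodingLemma} by verifying, using only the specific structure of $M = N(V_{\epsilon+1},A)[W]$, the two auxiliary hypotheses that \cref{CodingLemma} imposes: that $(\mathcal{H}_{\epsilon+2})^M$ satisfies the Collection Principle, and that the Dependent Choice used in the proof of the Weak Coding Lemma \cref{WeakCodingLemma} can be eliminated. The key asset is that because $N\models\mathrm{ZFC}$, every element of $M$ is, inside $M$, the surjective image of $V_{\epsilon+1}\times\gamma$ for some ordinal $\gamma$; in particular, every set in $(\mathcal{H}_{\epsilon+2})^M$ is such a surjective image with $\gamma$ wellordered in $N$.

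First I would fix, in $M$, a surjection $f: V_{\epsilon+1}\times \gamma \to (\mathcal{H}_{\epsilon+2})^M$ for some ordinal $\gamma$, and for each $x\in V_{\epsilon+1}$ let $H_x = f[\{x\}\times\gamma]$ together with the induced wellorder $<_x$. Given a total relation $R\subseteq (\mathcal{H}_{\epsilon+2})^M\times (\mathcal{H}_{\epsilon+2})^M$ with domain in $(\mathcal{H}_{\epsilon+2})^M$, define $r_x(a)$ to be the $<_x$-least $b\in H_x$ with $(a,b)\in R$. The sequence $\langle r_x\rangle_{x\in V_{\epsilon+1}}$ is uniformly definable from $f$ and $R$, and since each $r_x$ is a partial image of $\mathrm{dom}(R)$, the union $\bar R = \bigcup_x r_x$ is itself in $(\mathcal{H}_{\epsilon+2})^M$ and uniformizes $R$. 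This yields the Collection Principle inside $(\mathcal{H}_{\epsilon+2})^M$.

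Next I would rerun the proof of the Weak Coding Lemma, replacing the DC-constructed sequence $\langle(A_\alpha,\varphi_\alpha)\mid\alpha<\kappa\rangle$ by a doubly-indexed sequence $\langle(A_{x,\alpha},\varphi_{x,\alpha})\mid (x,\alpha)\in V_{\epsilon+1}\times\kappa\rangle$ built canonically: at stage $\beta$, for each $x$ independently, take $(A_{x,\beta},\varphi_{x,\beta})$ to be the $<_x$-least pair in $H_x$ witnessing the failure of the Weak Coding Lemma relative to the previously constructed $(A_{y,\alpha},\varphi_{y,\alpha})$ with $\alpha<\beta$. This avoids any choice since each coordinate uses only the fixed wellorders $<_x$. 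After applying $j^\star$ and looking at the sequence $\langle(A^1_{x,\alpha},\varphi^1_{x,\alpha})\rangle$, one picks any $x$ for which $(A^1_{x,\kappa},\varphi^1_{x,\kappa})$ is defined and runs the diagonal argument from \cref{WeakCodingLemma} on this pair; the crucial combinatorial identities \cref{Old} and \cref{New} go through verbatim, and \cref{DefinabilityThm} again supplies the definability of $j(j)\restriction V_{\epsilon+1}$ from its restriction to $V_\epsilon$.

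With the Weak Coding Lemma in hand and the Collection Principle verified for $(\mathcal{H}_{\epsilon+2})^M$, the proof of \cref{CodingLemma} applies unchanged inside $M$ to upgrade weak code-classes to genuine code-classes. The main obstacle is the bookkeeping in the DC-free construction: one must check that the sequence $\langle(A_{x,\alpha},\varphi_{x,\alpha})\rangle$ is genuinely an element of $M$ (so that $j^\star$ can be applied to it), which follows from $f\in M$ together with the definability of each stage from the previous stages; and one must confirm that \emph{some} $x$ works after applying $j^\star$, which is automatic because the Weak Coding Lemma would otherwise have held already at stage $\beta<\kappa$ for all $x$ with $H_x\neq\emptyset$.
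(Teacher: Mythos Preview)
Your proposal is correct and follows essentially the same approach as the paper: verify Collection in $(\mathcal H_{\epsilon+2})^M$ using the wellorders $<_x$ induced by a surjection $f:V_{\epsilon+1}\times\gamma\to(\mathcal H_{\epsilon+2})^M$, and eliminate DC from the Weak Coding Lemma by replacing the $\kappa$-sequence with the doubly-indexed sequence $\langle(A_{x,\alpha},\varphi_{x,\alpha})\rangle$ built by choosing $<_x$-least witnesses in $H_x$. Your additional remarks about checking that the sequence lies in $M$ and that some $x$ survives after applying $j^\star$ are points the paper leaves implicit.
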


We conclude this section by showing that the Coding Lemma fails at odd ordinals
in a strong sense. For example, we show that if \(\epsilon\) is even and there
is an elementary embedding from \(V_{\epsilon+3}\) to \(V_{\epsilon+3}\), then
\(V_{\epsilon+2}\) does not surject onto \(P(\theta_{\epsilon+2})\). By
\cref{StrongLimitPrp}, this implies that the Coding Lemma does not hold at
\(\epsilon+1\).
\begin{thm}\label{NoCodingThm}
Suppose \(\epsilon\) is an even ordinal and there is an elementary embedding
from \(V_{\epsilon+2}\) to \(V_{\epsilon+2}\). Then for any ordinal \(\gamma\),
there is no surjection from \(V_{\epsilon}\times \gamma\) onto
\(P(\theta_{\epsilon})\).
\end{thm}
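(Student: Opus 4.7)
Assume for contradiction that $\pi : V_\epsilon\times\gamma\to P(\theta_\epsilon)$ is a surjection, and let $j : V_{\epsilon+2}\to V_{\epsilon+2}$ be the given nontrivial elementary embedding with canonical extension $j^\star : \mathcal H_{\epsilon+2}\to \mathcal H_{\epsilon+2}$. Since $\theta_\epsilon$ is definable over $\mathcal H_{\epsilon+2}$, $j^\star(\theta_\epsilon) = \theta_\epsilon$, and so $B_0 := j^\star[\theta_\epsilon]$ is a genuine element of $P(\theta_\epsilon)$.

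The plan is to use $\pi$ to build a derived ultrafilter $\mathcal U$ over $V_\beta\times\gamma$ for some $\beta<\epsilon$ such that $j_\mathcal U[\theta_\epsilon]\in M_\mathcal U$ and $j_\mathcal U(\theta_\epsilon) = \theta_\epsilon$, and then apply \cref{FixedPointUndefinabilityThm} with $X = V_\beta$ to conclude $\theta_\epsilon\preceq^{*}V_\beta$. Because $V_\beta\in V_\epsilon$, $\theta_\epsilon$ would then be a surjective image of a member of $V_\epsilon$, contradicting its definition as the least ordinal not of this form.

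To carry this out in the case when $\pi\in \mathcal H_{\epsilon+2}$ (which holds whenever $\gamma < \theta_{\epsilon+1}$, since then $\pi\preceq^{*}V_\epsilon\times\gamma\preceq^{*}V_{\epsilon+1}$), I would use surjectivity of $j^\star(\pi) : V_\epsilon\times j^\star(\gamma)\to P(\theta_\epsilon)$ to pick a preimage $(a_0,\xi_0)$ of $B_0$, choose $\beta<\epsilon$ with $a_0\in V_\beta$, and let $\mathcal U = \{A\subseteq V_\beta\times\gamma \mid (a_0,\xi_0)\in j^\star(A)\}$. The factor embedding $k : M_\mathcal U\to \mathcal H_{\epsilon+2}$ given by $k([f]_\mathcal U) = j^\star(f)(a_0,\xi_0)$ is injective and elementary, and it sends both $[\pi\restriction V_\beta\times\gamma]_\mathcal U$ and the external set $j_\mathcal U[\theta_\epsilon]$ to $B_0$: the former directly by the choice of $(a_0,\xi_0)$, and the latter because $k$ intertwines $j_\mathcal U$ with $j^\star$ on ordinals below $\theta_\epsilon$. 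Injectivity of $k$ then forces $j_\mathcal U[\theta_\epsilon] = [\pi\restriction V_\beta\times\gamma]_\mathcal U\in M_\mathcal U$, and since $\text{crit}(j_\mathcal U)\leq \text{crit}(j)<\epsilon<\theta_\epsilon$, all the hypotheses of \cref{FixedPointUndefinabilityThm} are satisfied.

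The hard part will be handling arbitrarily large $\gamma$: when $\gamma\geq\theta_{\epsilon+1}$, neither $\pi$ nor the power algebra $P(V_\beta\times\gamma)$ lies in $\mathcal H_{\epsilon+2}$, so the map $j^\star(\pi)$ and the derived ultrafilter as written above are not literally available. A first attempt is to reduce to a single slice $\pi_{\xi_0} : V_\epsilon\to P(\theta_\epsilon)$ containing $B_0$ in its range (each slice does lie in $\mathcal H_{\epsilon+2}$), but then elementarity only yields $j^\star(\pi_{\xi_0})(j^\star(a_0)) = j^\star(B_0)$ rather than the equation at $B_0$ needed for the argument. I expect to resolve this by combining the derived-ultrafilter construction with the complete Boolean algebra framework set up in the appendix, so that $\mathcal U$ may be defined as a countably complete filter on the algebra of $\mathcal H_{\epsilon+2}$-measurable subsets of $V_\beta\times\gamma$, bypassing the need for $\pi$ itself to be an element of $\mathcal H_{\epsilon+2}$.
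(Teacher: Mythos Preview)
Your overall strategy---derive an ultrafilter from a preimage of $j^\star[\theta_\epsilon]$ and invoke \cref{FixedPointUndefinabilityThm}---matches the paper's. But the ``hard part'' you isolate is an illusion, and the workaround you sketch is unnecessary. The paper handles arbitrary $\gamma$ by a minimality trick: replace $\gamma$ by the \emph{least} ordinal $\gamma$ for which $P(\theta_\epsilon)\preceq^{*}V_\epsilon\times\gamma$. For this minimal $\gamma$, one can show $\gamma\preceq^{*}V_\epsilon\times P(\theta_\epsilon)\preceq^{*}V_{\epsilon+1}$. Indeed, given a surjection $f:V_\epsilon\times\gamma\to P(\theta_\epsilon)$, the map $(A,S)\mapsto\min\{\xi:f(A,\xi)=S\}$ surjects $V_\epsilon\times P(\theta_\epsilon)$ onto a set $T\subseteq\gamma$ with $f[V_\epsilon\times T]=P(\theta_\epsilon)$; minimality forces $|T|=\gamma$. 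Since $\theta_\epsilon\preceq^{*}V_\epsilon$ gives $P(\theta_\epsilon)\preceq^{*}V_{\epsilon+1}$, one obtains $\gamma\preceq^{*}V_{\epsilon+1}$, so $f\in\mathcal H_{\epsilon+2}$ and your ``easy case'' argument applies directly. No Boolean-algebra machinery is needed.

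There is also a smaller gap in your easy case. When $\epsilon$ is a successor ordinal (i.e., $\epsilon=\lambda+2n$ with $n\geq 1$), an element $a_0\in V_\epsilon$ need not lie in $V_\beta$ for any $\beta<\epsilon$, so the step ``choose $\beta<\epsilon$ with $a_0\in V_\beta$'' fails. The paper fixes this via the cofinal embedding property (\cref{RepresentationLma}): one finds $\xi$ with $\xi+2\leq\epsilon$, a function $g:V_{\xi+1}\to V_\epsilon$, and $x\subseteq V_\xi$ with $j^\star(g)(x)=a_0$, then derives the ultrafilter over $V_{\xi+1}\times\gamma$ using $(x,\xi_0)$ and represents $j_{\mathcal U}[\theta_\epsilon]$ by $(u,\beta)\mapsto f(g(u),\beta)$.
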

\begin{proof}
Suppose towards a contradiction that the theorem fails. Let \(\theta =
\theta_{\epsilon}\). Let \(\gamma\) be the least ordinal such that
\(P(\theta)\preceq^{*} V_{\epsilon}\times \gamma\). We first show
\begin{equation}\label{InequalityEqn}\gamma \preceq^{*} V_{\epsilon}\times
P(\theta)\preceq^{*} V_{\epsilon+1}\end{equation} For the first inequality, fix
a surjection \(f : V_{\epsilon}\times \gamma\to P(\theta)\). Define \[g :
V_{\epsilon}\times P(\theta)\to \gamma\] by setting \(g(A,S) = \min \{\xi \mid
f(A,\xi) = S\}\). Let \(T\) be the range of \(g\). Then \(f[V_{\epsilon}\times
T] = P(\theta)\), so by the minimality of \(\gamma\), it must be that \(|T| =
\gamma\). Thus \(\gamma \preceq^{*} V_{\epsilon}\times P(\theta)\).

For the second inequality, note that \(\theta\preceq^{*} V_{\epsilon}\) so
\(P(\theta)\preceq^{*} P(V_{\epsilon}) = V_{\epsilon+1}\). It follows easily
that \(V_{\epsilon}\times P(\theta)\preceq^{*} V_{\epsilon+1}\).

By our large cardinal hypothesis, there is an elementary embedding \(j :
\mathcal H_{\epsilon+2}\to \mathcal H_{\epsilon+2}\). Note that \(f\in \mathcal
H_{\epsilon+2}\) by \cref{InequalityEqn}. By the elementarity of \(j\), \(j(f)\)
is a surjection from \(V_{\epsilon}\times j(\gamma)\) to \(P(\theta)\);
therefore, for some \((a,\alpha)\in V_{\epsilon}\times j(\gamma)\),
\(j(f)(a,\alpha) = j[\theta]\). By the cofinal embedding property
(\cref{RepresentationLma}), or trivially if \(\epsilon\) is a limit ordinal,
there is an ordinal \(\xi\) such that \(\xi+2\leq \epsilon\), a set \(x\subseteq
V_{\xi}\), and a function \(g : V_{\xi+1}\to V_{\epsilon}\) such that \(j(g)(x)
= a\). Let \(U\) be the ultrafilter over \(V_{\xi+1}\times \gamma\) derived from
\(j\) using \((x,\alpha)\). Let \(h : V_{\xi+1}\times \gamma\to P(\theta)\) be
defined by \(h(u,\beta)= f(g(u),\beta)\). Then it is easy to see that \([h]_U =
j_U[\theta]\). This contradicts \cref{FixedPointUndefinabilityThm}. 
\end{proof}
It is a bit strange that for example we require an embedding from
\(V_{\epsilon+2}\) to \(V_{\epsilon+2}\) to show this structural property of
\(P(\theta_{\epsilon})\). The theorem implies that \(P(\theta_{\epsilon})\)
cannot be wellordered, so for example in the case ``\(\epsilon =
\kappa_\omega(j)\),'' one cannot reduce the large cardinal hypothesis to an
embedding \(j:V_{\lambda+1}\to V_{\lambda+1}\) assuming the consistency of ZFC
plus \(I_1\). (Similar results hold for \(j : V_{\epsilon+2}\to
V_{\epsilon+2}\), considering the model \(L(V_{\epsilon+1})[j]\).) Inspecting
the proof, however, one obtains the following result:
\begin{thm}\label{NoCodingThm2}
Suppose \(\epsilon\) is an even ordinal and there is an elementary embedding
from \(V_{\epsilon+1}\) to \(V_{\epsilon+1}\). Then there is no surjection from
\(V_{\epsilon}\) onto \(P(\theta_{\epsilon})\).\qed
\end{thm}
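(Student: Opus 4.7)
The plan is to adapt the proof of \cref{NoCodingThm}, exploiting that a surjection $f : V_\epsilon \to P(\theta_\epsilon)$ already belongs to $\mathcal{H}_{\epsilon+1}$ (its graph is $\preceq^* V_\epsilon$ via the domain), so an embedding $j \in \mathscr{E}(V_{\epsilon+1})$ suffices to move it. Assume such an $f$ exists, fix $j$, and pass to the canonical extension $j^\star : \mathcal{H}_{\epsilon+1} \to \mathcal{H}_{\epsilon+1}$. Since $\theta_\epsilon$ is definable from $V_\epsilon$ in $\mathcal{H}_{\epsilon+1}$, $j^\star(\theta_\epsilon) = \theta_\epsilon$, and $j^\star(f)$ is again a surjection $V_\epsilon \to P(\theta_\epsilon)$. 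Because $j^\star[\theta_\epsilon]$ is a subset of $\theta_\epsilon$ lying in $\mathcal{H}_{\epsilon+1}$, there is some $a \in V_\epsilon$ with $j^\star(f)(a) = j^\star[\theta_\epsilon]$.

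Next I descend $a$ into a set of rank strictly below $\epsilon$, exactly as in the proof of \cref{NoCodingThm}. If $\epsilon$ is a limit ordinal, take $\xi < \epsilon$ with $a \in V_{\xi+1}$ and set $g = \text{id}_{V_{\xi+1}}$ and $x = a$. Otherwise, apply \cref{RepresentationLma} to $j \restriction V_\epsilon$ (which is an embedding of $V_\epsilon$ to itself since $V_\epsilon$ is definable in $V_{\epsilon+1}$) to obtain $\xi$ with $\xi+2 \leq \epsilon$, a set $x \in V_{\xi+1}$, and a function $g : V_{\xi+1} \to V_\epsilon$ with $j^\star(g)(x) = a$. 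Set $h = f \circ g : V_{\xi+1} \to P(\theta_\epsilon)$; by elementarity $j^\star(h)(x) = j^\star(f)(a) = j^\star[\theta_\epsilon]$.

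Let $U$ be the ultrafilter over $V_{\xi+1}$ derived from $j$ using $x$. The key computation is $[h]_U = j_U[\theta_\epsilon]$. Unwinding the derivation of $U$: for each $\alpha < \theta_\epsilon$, $\{u : \alpha \in h(u)\} \in U$ iff $j^\star(\alpha) \in j^\star(h)(x) = j^\star[\theta_\epsilon]$, which holds by the injectivity of $j^\star$; conversely, any $[\nu]_U \in [h]_U$ satisfies $j^\star(\nu)(x) = j^\star(\alpha)$ for a unique $\alpha < \theta_\epsilon$, forcing $\nu =_U \alpha$ and hence $[\nu]_U = j_U(\alpha)$. The standard factor embedding $k([\varphi]_U) = j^\star(\varphi)(x)$, combined with $k(j_U(\theta_\epsilon)) = j^\star(\theta_\epsilon) = \theta_\epsilon$ and the monotonicity of $k$ on ordinals, yields $j_U(\theta_\epsilon) = \theta_\epsilon$.

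Applying \cref{FixedPointUndefinabilityThm} with $X = V_{\xi+1}$, $\gamma = 1$, and $\theta = \theta_\epsilon$ now gives $\theta_\epsilon \preceq^* V_{\xi+1}$. Since $V_{\xi+1} \in V_\epsilon$, this contradicts the defining property of $\theta_\epsilon$, completing the proof. The main bookkeeping step is the successor-case descent, handled by \cref{RepresentationLma} exactly as in the proof of \cref{NoCodingThm}.
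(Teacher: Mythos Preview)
Your proposal is correct and follows exactly the approach the paper intends: the paper omits the proof, remarking only that it follows by inspecting the proof of \cref{NoCodingThm}, and your write-up is precisely that inspection. The key observation you make explicit---that under the assumed surjection $f:V_\epsilon\to P(\theta_\epsilon)$, the objects $\theta_\epsilon$, $P(\theta_\epsilon)$, $f$, and $h$ all lie in $\mathcal H_{\epsilon+1}$ rather than merely $\mathcal H_{\epsilon+2}$---is exactly why an embedding of $V_{\epsilon+1}$ suffices, and the remainder (descent via \cref{RepresentationLma}, verification that $[h]_U=j_U[\theta_\epsilon]$ and $j_U(\theta_\epsilon)=\theta_\epsilon$, appeal to \cref{FixedPointUndefinabilityThm}) mirrors the proof of \cref{NoCodingThm} with $\gamma$ removed.
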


The following question is related to \cref{NoCodingThm} and might be more
tractable than the question of whether \(\theta_{\epsilon+1} =
(\theta_{\epsilon})^+\):
\begin{qst}
Suppose \(\epsilon\) is an even ordinal and there is an elementary embedding
from \(V_{\epsilon+2}\) to \(V_{\epsilon+2}\). Is there a surjection from
\(P(\theta_{\epsilon})\) to \(\theta_{\epsilon+1}\)?
\end{qst}
\section{Ultrafilters and the Ketonen order}\label{KetonenSection}
\subsection{Measurable cardinals and the Ulam argument}
With the goal of refuting strong choiceless large cardinal axioms in mind,
Woodin \cite{SEM} showed that various consequences of the Axiom of Choice follow
from the existence of large cardinals at the level of supercompact and
extendible cardinals. While developing set theoretic geology in the choiceless
context, Usuba realized that the apparently much weaker notion of a
L\"owenheim-Skolem cardinal does just as well as a supercompact.
\begin{defn}
A cardinal \(\kappa\) is a {\it L\"owenheim-Skolem cardinal} if for all ordinals
\(\alpha < \kappa\leq \gamma\), for any \(a\in V_\gamma\), there is an
elementary substructure \(X\prec V_{\gamma+1}\) such that \([X]^{V_\alpha}\cap
V_\gamma\subseteq X\), \(a\in X\), and for some \(\beta < \kappa\),
\(X\preceq^{*} V_\beta\).
\end{defn}

Here we give a proof of Ulam's theorem on the atomicity of saturated filters in
ZF assuming the existence of two strategically placed L\"owenheim-Skolem
cardinals. Our arguments are inspired by the ones in \cite{Cutolo}, and our
result generalizes some of the theorems of that paper while simultaneously
reducing their large cardinal hypotheses. 

Recall that the usual proof of Ulam's theorem uses a splitting argument that
seems to make heavy use of a strong form of the Axiom of Dependent Choice. Here
it is shown that this can be avoided if one is allowed to take two elementary
substructures.
\begin{thm}\label{LSUlam}
Suppose \(\gamma\) is a cardinal, \(\kappa_0 < \kappa_1\) are L\"owenheim-Skolem
cardinals above \(\gamma\), and \(\delta\) is an ordinal. Suppose \(F\) is a
filter over \(\delta\) that is \(V_{\kappa_1}\)-complete and weakly
\(\gamma\)-saturated. Then for some cardinal \(\eta < \gamma\), there is a
partition \(\langle S_\alpha \mid \alpha < \eta\rangle\) of \(\delta\) such that
\(F\restriction S_\alpha\) is an ultrafilter for all \(\alpha < \eta\).
\begin{proof}
Since \(\kappa_1\) is a L\"owenheim-Skolem cardinal, we can fix an elementary
substructure \(X\prec V_{\delta+\omega+1}\) with the following properties:
\begin{itemize}
\item \(X\preceq^{*} V_{\beta}\) for some \(\beta < \kappa_1\).
\item \(\gamma,\kappa_0,\kappa_1,\delta,\) and \(F\) belong to \(X\).
\item \([X]^{V_\alpha}\cap V_{\delta+\omega}\subseteq X\) for every \(\alpha <
\kappa_0\).
\end{itemize}
Let \(\pi : H_X\to V_{\delta+\omega+1}\) be the inverse of the Mostowski
collapse of \(X\), and let \(\bar \gamma,\bar \kappa_0,\bar \kappa_1,\bar
\delta,\) and \(\bar F\) be the preimages under \(\pi\) of
\(\gamma,\kappa_0,\kappa_1,\delta,\) and \(F\) respectively. 

For each ordinal \(\xi < \delta\), let \(U_\xi\) denote the \(H_X\)-ultrafilter 
over \(\bar \delta\)
derived from \(\pi\) using \(\xi\). Since \(\pi\) has critical point above
\(\kappa_0\) and \(H_X\) is closed under \(V_\alpha\)-sequences for every
\(\alpha < \kappa_0\), for all \(\xi < \delta\), \(U_\xi\) is
\(V_{\kappa_0}\)-complete. (More precisely, \(U_\xi\) generates a
\(V_{\kappa_0}\)-complete filter.) Since \(\{U_\xi \mid \xi < \delta\}\subseteq
P(H_X)\) and \(H_X\preceq^{*} V_{\beta}\), \(\{U_\xi \mid \xi <
\delta\}\preceq^{*} V_{\beta+1}\).

For each \(\xi < \delta\), let \(B_\xi = \{\xi' \mid U_{\xi'} = U_\xi\}\). We
make the obvious observation that the map sending \(B_\xi\) to \(U_\xi\) is a
(well-defined) one-to-one correspondence. It follows that \(\{B_\xi \mid \xi <
\delta\}\preceq^{*} V_{\beta+1}\). Let \(T\subseteq \delta\) be the set of \(\xi
< \delta\) such that \(B_\xi\) is \(F\)-positive. Since \(F\) is
\(V_{\kappa_1}\)-complete and \(\kappa_1 > \beta+1\), \(T\in F\). (This is a
standard argument: \(\{B_\xi \mid \xi \in \delta\setminus T\}\) is a collection
of \(F\)-null sets with \(\{B_\xi \mid \xi \in \delta\setminus T\}\preceq^{*}
V_{\beta+1}\), and hence \(\bigcap_{\xi \in \delta\setminus T} B_\xi\) is
\(F\)-null by the \(V_{\kappa_1}\)-completeness of \(F\). Therefore the
complement of \(\bigcap_{\xi \in \delta\setminus T} B_\xi\) belongs to \(F\).
Note that \(\xi\in T\) if and only if \(B_\xi\subseteq T\), and hence  \(T =
\bigcup_{\xi\in T} B_\xi\). It follows that \(T\) is the complement of
\(\bigcap_{\xi \in \delta\setminus T} B_\xi\), so \(T\in F\) as desired.) Since
\(F\) is weakly \(\gamma\)-saturated and \(\{B_\xi \mid \xi \in T\}\) is a
partition of \(\delta\) into positive sets, \(|\{B_\xi \mid \xi \in T\}| <
\gamma\). Given the one-to-one correspondence described above, it follows that
\(|\{U_\xi \mid \xi \in T\}| < \gamma\).

Now since \(\kappa_0\) is a L\"owenheim-Skolem cardinal and \(\gamma <
\kappa_0\), we can fix an elementary substructure \(Y\prec V_{\delta+\omega+2}\)
with \(\{U_\xi \mid \xi \in T\}\in Y,\) \(X\in Y\), \(\gamma\subseteq Y\), and
\(Y\preceq^{*} V_{\beta'}\) for some \(\beta' < \kappa_0\). Since \(|\{U_\xi
\mid \xi \in T\}| < \gamma\), it follows that \(U_\xi\in Y\) for all \(\xi\in
T\). Notice, however, that \(U_\xi\cap Y\in X\) since
\([X]^{V_{\beta'}}\cap V_{\delta+\omega}\subseteq X\). For \(\xi \in T\), let \[A_\xi = \bigcap \{A
\in U_\xi \mid A\in Y\}\] Notice that \(A_\xi\in U_\xi\) for all \(\xi\in T\)
since \(U_\xi\) is \(V_{\kappa_0}\)-complete.

We claim that \(A_{\xi_0}\cap A_{\xi_1} = \emptyset\) whenever \(U_{\xi_0}\neq
U_{\xi_1}\). (Obviously if \(U_{\xi_0} = U_{\xi_1}\), then \(A_{\xi_0} =
A_{\xi_1}\).) To see this, note that since \(U_{\xi_0}\neq U_{\xi_1}\) and
\(Y\prec V_{\delta+\omega + 2}\), there is some \(A\in H_X\cap Y\) with \(A\in
U_{\xi_0}\) and \(\bar \delta\setminus A\in U_{\xi_1}\). It follows that
\(A_{\xi_0}\subseteq A\) and \(A_{\xi_1}\subseteq \bar \delta\setminus A\), and
hence \(A_{\xi_0}\cap A_{\xi_1} = \emptyset\), as desired.

Let \(S = \{\xi\in T \mid \bar F\subseteq U_\xi\}\). Thus \(S = T\cap
\bigcap\{A\in F \mid A\in X\}\), so since \(T\) and \(\bigcap\{A\in F \mid A\in
X\}\) belong to \(F\), \(S\in F\). We claim that for all \(\xi\in S\), \(A_\xi\)
is an atom of \(\bar F\) in \(H_X\). Fix \(\xi \in S\), and suppose towards a
contradiction that \(E_0\) and \(E_1\) are disjoint \(\bar F\)-positive subsets
of \(A_\xi\) that belong to \(H_X\). Since \(\pi(E_0)\) is \(F\)-positive,
\(\pi(E_0)\cap S\neq \emptyset\), so fix \(\xi_0\in \pi(E_0)\cap S\). Note that
\(E_0\in U_{\xi_0}\) since \(U_{\xi_0}\) is the ultrafilter derived from \(\pi\)
using \(\xi_0\). Similarly fix \(\xi_1\in \pi(E_1)\cap S\), and note that
\(E_1\in U_{\xi_1}\). Since \(E_0\) and \(E_1\) are disjoint, it follows that
\(U_{\xi_0}\neq U_{\xi_1}\). In particular, one of them is not equal to
\(U_\xi\). Assume without loss of generality that \(U_{\xi_0}\neq U_\xi\). Since
\(E_0\in U_{\xi_0}\), we have \(E_0\cap A_{\xi_0}\neq \emptyset\). It follows
that \(A_\xi\cap A_{\xi_0}\neq \emptyset\), and this contradicts that
\(A_{\xi_0}\cap A_{\xi_1} = \emptyset\) whenever \(U_{\xi_0} \neq U_{\xi_1}\).

Therefore \(\{A_\xi \mid \xi \in S\}\) is a set of atoms for \(\bar F\). We
claim \(\bigcup_{\xi\in S} A_\xi\in F\). Suppose not, towards a contradiction.
In other words, the set \[E = \bar \delta\setminus \bigcup_{\xi\in S} A_\xi\] is
\(F\)-positive. For every \(\xi\in S\), since \(A_\xi\in U_\xi\) and \(\bar
F\subseteq U_\xi\) for all \(\xi\in S\), necessarily \(\bar F\restriction A_\xi
= U_\xi\). Note that \(E\) belongs to \(U_\xi\) for some \(\xi\in S\): indeed,
since \(\pi(E)\) is \(F\)-positive and \(S\in F\), \(\pi(E)\cap S\neq
\emptyset\); now for any \(\xi\in \pi(E)\cap S\), \(E\in U_\xi\). But then
\(E\cap A_\xi\neq \emptyset\), which contradicts that \(E = \bar \delta\setminus
\bigcup_{\xi\in S} A_\xi\).

Finally, note that \(\{A_\xi \mid \xi \in T\}\in H_X\) since
\([H_X]^\gamma\cap V_{\bar \delta+\omega}\subseteq H_X\). Hence \(H_X\) satisfies that there is a partition
of an \(\bar F\)-large set into fewer than \(\gamma\)-many atoms. By the
elementarity of \(\pi\), \(V_{\delta+\omega+1}\) satisfies that there is a
partition of an \(F\)-large set into fewer than \(\gamma\)-many atoms. Obviously
this is absolute to \(V\), which completes the proof. 
\end{proof}
\end{thm}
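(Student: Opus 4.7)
The plan is to mimic the ZFC proof of Ulam's theorem on saturated filters, but replacing the transfinite splitting argument (which requires heavy dependent choice) by a trick that uses the two Löwenheim--Skolem cardinals. The basic idea is that a single LS substructure produces a small collection of derived ultrafilters, and a second LS substructure, large enough to recognize that collection as a set, lets us separate those ultrafilters by single generators.

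First I would fix an elementary substructure $X \prec V_{\delta+\omega+1}$ provided by $\kappa_1$, containing $\gamma,\kappa_0,\kappa_1,\delta,F$, such that $X \preceq^{*} V_\beta$ for some $\beta<\kappa_1$ and $X$ is closed under $V_\alpha$-sequences for each $\alpha<\kappa_0$. Let $\pi:H_X\to V_{\delta+\omega+1}$ be the inverse of the Mostowski collapse, and for each $\xi<\delta$ let $U_\xi$ be the $H_X$-ultrafilter over $\bar\delta$ derived from $\pi$ using $\xi$. Closure of $H_X$ under small sequences forces each $U_\xi$ to generate a $V_{\kappa_0}$-complete filter. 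Since the $U_\xi$'s live in $P(H_X)$ and $H_X\preceq^{*}V_\beta$, the family $\{U_\xi:\xi<\delta\}$ is bounded by $V_{\beta+1}$ in $\preceq^*$.

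Next I would push the saturation hypothesis through. Partition $\delta$ by the equivalence $\xi\sim\xi'\iff U_\xi=U_{\xi'}$; call the classes $B_\xi$. Since $\{B_\xi\}\preceq^{*}V_{\beta+1}$ and $F$ is $V_{\kappa_1}$-complete with $\kappa_1>\beta+1$, the union $T$ of the $F$-positive classes is itself in $F$ (the usual complement-of-small-intersection argument). The pieces $\{B_\xi:\xi\in T\}$ are pairwise disjoint and $F$-positive, so weak $\gamma$-saturation forces $|\{U_\xi:\xi\in T\}|<\gamma$.

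The crucial step is the separation. Invoke $\kappa_0$ to produce a second elementary substructure $Y\prec V_{\delta+\omega+2}$ with $\{U_\xi:\xi\in T\}, X\in Y$, $\gamma\subseteq Y$, and $Y\preceq^{*}V_{\beta'}$ for some $\beta'<\kappa_0$. Smallness of the ultrafilter family guarantees each individual $U_\xi$ lies in $Y$, so $U_\xi\cap Y$ is a set coded in $X$ (here one uses the $V_\alpha$-sequence closure of $X$ for $\alpha<\kappa_0$). Define $A_\xi=\bigcap\{A\in U_\xi:A\in Y\}$; by $V_{\kappa_0}$-completeness, $A_\xi\in U_\xi$. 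Distinct ultrafilters $U_{\xi_0}\ne U_{\xi_1}$ are separated by some $A\in Y\cap H_X$, so $A_{\xi_0}\cap A_{\xi_1}=\emptyset$.

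Finally I would restrict to $S=\{\xi\in T:\bar F\subseteq U_\xi\}$, which is in $F$ (it is $T$ intersected with $\bigcap(F\cap X)$), and verify that each $A_\xi$ with $\xi\in S$ is an atom of $\bar F$ inside $H_X$: if $E_0,E_1$ were disjoint positive subsets of $A_\xi$ in $H_X$, then each $\pi(E_\ell)$ meets $S$, yielding $U_{\xi_0},U_{\xi_1}$ containing $E_0,E_1$ respectively, contradicting disjointness of the $A$'s. A parallel argument shows $\bigcup_{\xi\in S}A_\xi\in \bar F$. Since $\{A_\xi:\xi\in S\}\in H_X$ and has size less than $\gamma$, pushing forward through $\pi$ and using the Mostowski collapse's absoluteness gives the desired partition. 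The main obstacle I expect is checking that $U_\xi\cap Y$ is coded inside $X$, and more generally juggling which objects land in which of the two substructures; the entire argument hinges on the interplay $[X]^{V_{\beta'}}\cap V_{\delta+\omega}\subseteq X$ with $\beta'<\kappa_0$, so one must be careful to set up the closure properties of $X$ and $Y$ so that the derived-ultrafilter computation happens inside $X$ while the separating sets come from $Y$.
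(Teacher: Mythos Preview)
Your proposal is correct and follows essentially the same argument as the paper's proof: the same two-substructure setup, the same derived ultrafilters $U_\xi$, the same saturation count on the equivalence classes $B_\xi$, the same separating sets $A_\xi=\bigcap(U_\xi\cap Y)$, and the same atom verification via $S$. The only point worth tightening is the one you already flagged---that $U_\xi\cap Y\in X$ follows from $[X]^{V_{\beta'}}\cap V_{\delta+\omega}\subseteq X$---which the paper handles in exactly the way you anticipate.
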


\begin{cor}
Suppose \(\eta\) is a limit of L\"owenheim-Skolem cardinals and there is an
elementary embedding from \(V_{\eta+2}\) to \(V_{\eta+2}\). If \(\eta\) is
regular, then \(\eta\) is measurable, and if \(\eta\) is singular, then
\(\eta^+\) is measurable.\qed
\end{cor}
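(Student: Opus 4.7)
The plan is to derive both cases from \cref{LSUlam} applied to the closed unbounded filter $F$ on $\delta$, where $\delta = \eta$ when $\eta$ is regular and $\delta = \eta^+$ when $\eta$ is singular. Let $j \in \mathscr{E}(V_{\eta+2})$ be the given embedding. Since $\eta$ is a limit of L\"owenheim--Skolem cardinals, I can fix two such cardinals $\kappa_0 < \kappa_1$ with $\text{crit}(j) < \kappa_0 < \kappa_1 < \eta$, and pick a cardinal $\gamma$ with $\text{crit}(j) < \gamma < \kappa_0$. In the singular case one must first verify that $\eta^+$ is regular in $V$, which I expect to follow from a short cofinality argument using the L\"owenheim--Skolem cardinals below $\eta$ and the embedding $j$.

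Next I would verify that $F$ satisfies the two hypotheses of \cref{LSUlam}. The $V_{\kappa_1}$-completeness is routine: $F$ is $\delta$-complete, and for any $D\subseteq F$ with $D\preceq^{*} V_\beta$ for some $\beta<\kappa_1 < \delta$, $D$ has cardinality strictly less than $\delta$, so $\bigcap D\in F$. The crucial step is weak $\gamma$-saturation. The idea is that any pairwise disjoint sequence $\langle S_\alpha \mid \alpha < \gamma\rangle$ of stationary subsets of $\delta$ is moved nontrivially by $j^{\star}$, since $\gamma>\text{crit}(j)$; the image $j^{\star}(\langle S_\alpha \mid \alpha<\gamma\rangle)$ is then a pairwise disjoint sequence of length $j^{\star}(\gamma)>\gamma$ in $j^{\star}(\delta)=\delta$, and the two partitions together give $j^{\star}(\gamma)$ pairwise disjoint stationary sets refining the original $\gamma$. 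Locating a seed ordinal $\xi\in\delta$ whose $j^{\star}$-image witnesses incompatibility --- $\sup j^{\star}[\delta]$ in the singular case, a carefully chosen fixed point of $j^{\star}\restriction\delta$ in the regular case --- should yield a contradiction, possibly after replacing $F$ by the refinement generated by $j^{\star}$-closed clubs so that the required seed is automatically $F$-generic.

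Once weak $\gamma$-saturation is in place, \cref{LSUlam} delivers a partition $\langle T_\alpha \mid \alpha<\eta'\rangle$ of $\delta$ with $\eta'<\gamma<\delta$ such that each $F\restriction T_\alpha$ is an ultrafilter. Since $F$ is $\delta$-complete and $\eta'<\delta$, at least one $T_\alpha$ has cardinality $\delta$, and the corresponding $F\restriction T_\alpha$ is a $\delta$-complete nonprincipal ultrafilter, witnessing measurability of $\delta$. The main obstacle is unquestionably the saturation step: weak $\gamma$-saturation of the club filter is precisely the sort of statement that \emph{fails} under AC via Solovay splitting, so the argument must genuinely exploit the failure of choice encoded by $j : V_{\eta+2}\to V_{\eta+2}$. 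The delicate points are selecting the right filter refinement so that \emph{some} seed inside $\delta$ is universally $F$-positive, and ensuring the pigeonhole on $j^{\star}$-images of disjoint pieces yields a genuine contradiction rather than merely a longer but still consistent partition.
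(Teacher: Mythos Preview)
Your overall architecture is right and matches the intended approach: produce a $V_{\kappa_1}$-complete, weakly $\gamma$-saturated filter on $\delta$ (with $\delta=\eta$ or $\eta^+$), invoke \cref{LSUlam}, and read off a $\delta$-complete nonprincipal ultrafilter from one of the atoms. The paper gives no proof beyond the \qed, but this is exactly the ``Woodin argument plus \cref{LSUlam}'' pattern used later (e.g.\ in the proof of \cref{FixChar} and the proposition following the corollary).

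There is, however, a real gap in your completeness step. You write that if $D\preceq^{*} V_\beta$ with $\beta<\kappa_1<\delta$, then ``$D$ has cardinality strictly less than $\delta$.'' In ZF this fails: $V_\beta$ need not be wellorderable, so $D$ need not have an ordinal cardinality at all. The club filter on a regular $\delta$ is $\delta$-complete only in the wellordered sense (intersections indexed by ordinals below $\delta$), and the usual iteration showing $\bigcap D$ is unbounded requires that no $b\in V_{\kappa_1}$ maps cofinally into $\delta$ --- which is not obvious from regularity alone. This is precisely where the L\"owenheim--Skolem hypothesis does real work, and it is why the paper's later arguments use the \emph{fixed point filter} $\mathcal F^{V_\eta}(\mathcal E,\delta)$ rather than the raw club filter: $V_\eta$-completeness of the fixed point filter follows directly from the LS property (cf.\ the remark ``Since $\nu$ is a limit of L\"owenheim--Skolem cardinals, $G$ is $V_\nu$-complete'' in the proof of \cref{FilterExtensionThm}). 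You should either switch to that filter or supply the missing $V_{\kappa_1}$-regularity argument for $\delta$.

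Your description of the saturation step is also off. The Woodin argument does not distinguish the regular and singular cases, and the seed is not $\sup j^\star[\delta]$ (which equals $\delta$ here). Rather: take $\gamma = \kappa = \text{crit}(j)$, suppose $\langle S_\alpha\mid\alpha<\kappa\rangle$ partitions $\delta$ into $F$-positive sets, apply $j^\star$ to obtain $\langle T_\alpha\mid\alpha<j(\kappa)\rangle$, note $T_\kappa$ is $F$-positive and the fixed points of $j^\star$ on $\delta$ are $F$-large, pick $\xi\in T_\kappa$ with $j^\star(\xi)=\xi$, and observe $\xi\in S_\alpha$ for some $\alpha<\kappa$ forces $\xi\in T_\alpha\cap T_\kappa$, a contradiction. (Your $\gamma>\kappa$ is harmless since weak $\kappa$-saturation implies weak $\gamma$-saturation for $\gamma\geq\kappa$, but you should state the argument at $\kappa$.)
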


\subsection{Preliminaries on the Ketonen order}
Recall the notion of an ultrafilter comparison (\cref{NormedKetonenDef}) that
played a role in \cref{UndefinabilityThm}. One obtains an order on ultrafilters
over ordinals by setting \(U\sE W\) if there is an ultrafilter comparison from
\((U,\text{id})\) to \((W,\text{id})\). Let us give a more concrete definition
of this order.
\begin{defn}
Suppose \(F\) is a filter over \(X\) and \(\langle G_x \mid x\in X\rangle\) is a
sequence of filters over \(Y\). Then the {\it \(F\)-limit of \(\langle G_x \mid
x\in X\rangle\)} is the filter \[F\text{-lim}_{x\in X} G_x = \{A\subseteq Y \mid
\{x\in X \mid A\in G_x\}\in F\}\]
\end{defn}

\begin{defn}\label{KetonenDef}
Suppose \(\delta\) is an ordinal. The {\it Ketonen order} is defined on
countably complete ultrafilters over \(\delta\) by setting \(U\E W\) if \(U\) is
of the form \(W\text{-lim}_{\alpha < \delta}Z_\alpha\) where \(Z_\alpha\) is a
countably complete ultrafilter over \(\delta\) concentrating on \(\alpha+1\).
\end{defn}
The following fact is easy to verify:
\begin{lma}
The Ketonen order is transitive and anti-symmetric.\qed
\end{lma}

\begin{defn}
Set \(U\sE W\) if \(U\E W\) and \(W\not \E U\).
\end{defn}

Equivalently, \(U\sE W\) if \(U\E W\) and \(U\neq W\). Also \(U\sE W\) if and
only if \(U\) is of the form \(W\text{-lim}_{\alpha < \delta}Z_\alpha\) where
\(Z_\alpha\) is a countably complete ultrafilter over \(\delta\) concentrating
on \(\alpha\). Also, as we mentioned above, \(U\sE W\) if there is an
ultrafilter comparison from \((U,\text{id})\) to \((W,\text{id})\).
In the appendix, we give a proof of the following theorem:
\begin{repthm}{KetonenWF}[DC] The Ketonen order is wellfounded.\qed
\end{repthm}

\begin{defn}[DC]\label{ODef}
The {\it Ketonen rank of \(U\)}, denoted \(\sigma(U)\), is the rank of \(U\) in
the Ketonen order.
\end{defn}

A straightforward alternate characterization of the Ketonen order turns out to
be important here.
\begin{defn}
Suppose \(\delta\) is an ordinal. A function \(h : P(\delta)\to P(\delta)\) is
{\it Lipschitz} if for all \(\alpha < \delta\) and all \(A,B\subseteq \delta\)
with \(A\cap \alpha = B\cap \alpha\), \(h(A)\cap \alpha = h(B)\cap\alpha\), and
\(h\) is {\it strongly Lipschitz} if for all \(\alpha < \delta\) and all
\(A,B\subseteq \delta\) with \(A\cap \alpha = B\cap \alpha\), \(h(A)\cap
(\alpha+1) = h(B)\cap(\alpha+1)\).
\end{defn}
\begin{lma}\label{LipKet}
Suppose \(U\) and \(W\) are countably complete ultrafilters over \(\delta\).
\begin{itemize}
\item \(U\E W\) if and only if there is a countably complete Lipschitz
homomorphism \(h : P(\delta)\to P(\delta)\) such that \(h^{-1}[W] = U\).
\item \(U\sE W\) if and only if there is a countably complete strongly Lipschitz
homomorphism \(h : P(\delta)\to P(\delta)\) such that \(h^{-1}[W] = U\).\qed
\end{itemize}
\end{lma}
Notice that an elementary embedding from \(P(\delta)\) to \(P(\delta)\) is a
countably complete Lipschitz homomorphism.
\subsection{Ordinal definability and the Ultrapower Axiom}
The Ultrapower Axiom (UA) is an inner model principle introduced by the author
in \cite{UA} to develop the general theory of countably complete ultrafilters
and in particular the theory of strongly compact and supercompact cardinals.
Implicit in the statement of UA is the assumption of the Axiom of Choice, and
dropping that assumption, there are a number of inequivalent reformulations of
the principle. In ZFC, however, the Ultrapower Axiom is equivalent to the
linearity of the Ketonen order. The following theorem therefore shows that in
one sense, the existence of a Reinhardt cardinal almost implies the Ultrapower
Axiom.

\begin{thm}
Suppose \(j : V\to V\) is an elementary embedding and
\(\kappa_\omega(j)\)-\textnormal{DC} holds. Then for any ordinals \(\delta\) and
\(\xi\), the set of countably complete ultrafilters over \(\delta\) of Ketonen
rank \(\xi\) has cardinality strictly less than \(\kappa_\omega(j)\).\qed
\end{thm}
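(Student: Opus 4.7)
The plan is to argue by contradiction. Fix \(\delta\) and \(\xi\), write \(\lambda = \kappa_\omega(j)\), and assume the set \(S := S^\delta_\xi\) of countably complete ultrafilters over \(\delta\) of Ketonen rank \(\xi\) satisfies \(|S| \geq \lambda\). The first step is to invoke \(\lambda\)-\textnormal{DC} to extract a sequence \(\vec U = \langle U_\alpha \mid \alpha < \lambda\rangle\) of pairwise distinct elements of \(S\); this converts the cardinality hypothesis into an actual injection \(\lambda \hookrightarrow S\), which is the form in which we can manipulate it in ZF.

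Second, I would exploit \(j\). Since \(\lambda = \sup_n \kappa_n(j)\) is a fixed point of \(j\), elementarity yields that \(j(\vec U) = \langle U'_\alpha \mid \alpha < \lambda\rangle\) is a \(\lambda\)-sequence of pairwise distinct countably complete ultrafilters over \(j(\delta)\) of Ketonen rank \(j(\xi)\), with \(U'_{j(\alpha)} = j(U_\alpha)\) for every \(\alpha < \lambda\). Crucially, the \(U_\alpha\) are pairwise Ketonen-incomparable: two distinct ultrafilters of the same Ketonen rank cannot satisfy \(U \sE W\), since by \cref{KetonenWF} \(\sigma\) is the rank function of \(\sE\). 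So \(\vec U\) codes a \(\lambda\)-sized antichain in the Ketonen order.

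The main (and hardest) step is to contradict the existence of such a large antichain using \(j\). I would invoke the Almost Ultrapower Axiom (\cref{AlmostUA}): for each pair \(\alpha,\beta < \lambda\), Almost UA provides internal ultrafilters witnessing a Ketonen-style comparison between \(U_\alpha\) and \(U_\beta\) inside a common ultrapower. Because the two ultrafilters have the same Ketonen rank, any such comparison must in fact be trivial, so coherently reading off the witnesses yields \(U_\alpha = U_\beta\) for some \(\alpha \neq \beta\), contradicting distinctness. The main obstacle is organizing these pairwise comparisons coherently across a \(\lambda\)-sized family: here \(\lambda\)-\textnormal{DC} is needed to assemble them into a simultaneous comparison, and the fact that \(\lambda\) is exactly the critical supremum of \(j\) is what forces the pigeonhole collapse at ``height'' \(\lambda\). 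I expect the slickest presentation to bypass an explicit enumeration of cases by instead pushing \(\vec U\) through \(j_{U_{\kappa_0}}\) and appealing directly to the irreflexivity of the internal relation (\cref{InternalIrrefl}) applied to a carefully chosen derived ultrafilter.
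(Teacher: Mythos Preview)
Your proposal has a genuine gap. The invocation of \cref{AlmostUA} is circular: that theorem \emph{is} the technical generalization of the statement you are asked to prove (the present theorem is just the special case \(M = V\), and the \qed\ in the paper indicates exactly this). Moreover, your description of what \cref{AlmostUA} provides---``internal ultrafilters witnessing a Ketonen-style comparison between \(U_\alpha\) and \(U_\beta\)''---is not what it says; it asserts precisely the small-cardinality conclusion, not a comparison lemma. So at the point where you write ``I would invoke the Almost Ultrapower Axiom,'' you are assuming what is to be shown.

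The key idea you are missing is the following. After reducing to a least counterexample so that \(j(\delta)=\delta\) and \(j(\xi)=\xi\), and after using \(\lambda\)-DC to get a \(\lambda\)-sequence \(\langle U_\alpha\mid\alpha<\lambda\rangle\) of distinct ultrafilters of rank \(\xi\), one applies \(j\) \emph{twice} to the sequence, obtaining \(\langle U^1_\alpha\rangle = j(\langle U_\alpha\rangle)\) and \(\langle U^2_\alpha\rangle = j(\langle U^1_\alpha\rangle)\). Let \(\kappa=\text{crit}(j)\). Then \(j(U^1_\kappa)=U^2_{j(\kappa)}\), so \(j\restriction P(\delta)\) witnesses \(U^1_\kappa \E U^2_{j(\kappa)}\); and since \(j(j)\) sends \(\langle U^1_\alpha\rangle\) to \(\langle U^2_\alpha\rangle\) while fixing the index \(\kappa\), \(j(j)\restriction P(\delta)\) witnesses \(U^1_\kappa \E U^2_\kappa\). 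Both maps are countably complete Lipschitz homomorphisms on \(P(\delta)\), so \cref{LipKet} applies. Since all three ultrafilters have Ketonen rank \(\xi\), the inequalities collapse to equalities, giving \(U^2_\kappa = U^1_\kappa = U^2_{j(\kappa)}\), which contradicts distinctness of the sequence \(\langle U^2_\alpha\rangle\). Your one application of \(j\), the vague appeal to \(j_{U_{\kappa_0}}\), and the irreflexivity of \(\I\) do not substitute for this double-shift plus Lipschitz argument.
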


We will prove a more technical theorem that also applies to \(L(V_{\lambda+1})\)
and other small models.
\begin{thm}\label{AlmostUA}
Suppose \(\epsilon\) is an ordinal, \(M\) is an inner model containing
\(V_{\epsilon+1}\), and \(\delta < \theta_{\epsilon+2}^M\) is an ordinal.
Suppose there is a nontrivial embedding \(j\in \mathscr E(V_{\epsilon+3}\cap
M)\) such that \(j\restriction P^M(\delta)\) belongs to \(M\). Assume \(M\)
satisfies \(\lambda\)-\textnormal{DC} where \(\lambda = \kappa_\omega(j)\). Then
in \(M\), for any ordinal \(\xi < \theta_{\epsilon+3}\), the set of countably
complete ultrafilters over \(\delta\) of Ketonen rank \(\xi\) is wellorderable
and has cardinality strictly less than \(\lambda\).
\end{thm}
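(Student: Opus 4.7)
My plan is to produce, inside $M$, a canonical injection from $\mathcal F_\xi$ into a wellorderable set of cardinality strictly less than $\lambda$, by associating to each $U \in \mathcal F_\xi$ an ordinal parameter determined by $j$. Since $\bar{j} := j \restriction P^M(\delta)$ lies in $M$, for each $\alpha < j(\delta)$ the $M$-ultrafilter $D_\alpha := \{A \in P^M(\delta) : \alpha \in \bar{j}(A)\}$ is computable in $M$, and the whole sequence $\langle D_\alpha \mid \alpha < j(\delta)\rangle$ belongs to $M$. These derived ultrafilters are automatically countably complete in $M$, and provide our pool of candidate representatives.

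The central claim is that every $U \in \mathcal F_\xi$ equals $D_{\alpha_U}$ for some $\alpha_U < j(\delta)$. To establish this, I would run the standard ultrapower factoring argument inside $M$: form the internal ultrapower $j_U^M$, and using the cofinal embedding property and canonical extension framework from \cref{DefinabilitySection} together with $\lambda$-DC in $M$, build a factor embedding $k$ satisfying $j^\star = k \circ j_U^M$ on an appropriate initial segment of $\mathcal H_{\epsilon+3}^M$. Taking $\alpha_U := k([\mathrm{id}]_U)$ then witnesses $D_{\alpha_U} = U$, and so $\Phi(U) := \min\{\alpha < j(\delta) : D_\alpha = U\}$ is a well-defined injection $\Phi : \mathcal F_\xi \to j(\delta)$ in $M$, which at minimum establishes wellorderability of $\mathcal F_\xi$ in $M$.

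For the strict bound $|\mathcal F_\xi|^M < \lambda$, I would use that by elementarity of $j^\star$ the Ketonen rank $\sigma(D_\alpha)$ is determined by $\alpha$ and $j$ alone, so $\Phi[\mathcal F_\xi]$ is contained in the set of $\alpha < j(\delta)$ whose derived ultrafilter has rank $\xi$. Iterating the factoring construction with $j^m$ in place of $j$, and invoking that the critical sequence $\langle \kappa_m(j)\rangle_{m<\omega}$ is cofinal in $\lambda$, one forces $\alpha_U < \kappa_m(j)$ for some $m < \omega$ depending on $\xi < \theta_{\epsilon+3}$; hence $|\mathcal F_\xi|^M \leq |\kappa_m(j)|^M < \lambda$. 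The main obstacle will be constructing the factor embedding $k$ inside $M$ without choice, for which $\lambda$-DC in $M$ is used to select canonical function-representatives for $[\mathrm{id}]_U$; a secondary obstacle is sharpening the bound from $\leq |j(\delta)|^M$ down to $< \lambda$, which requires carefully tracking how the Ketonen ranks of the $D_\alpha$ grow with $\alpha$ across $j$-iterates, and using that an ultrafilter of rank $\xi$ cannot be realized by a parameter whose own complexity (measured via the critical sequence) exceeds $\xi$.
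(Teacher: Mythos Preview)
Your central claim—that every $U\in\mathcal F_\xi$ equals $D_{\alpha}$ for some $\alpha<j(\delta)$—is the heart of your argument, and it is not justified. The factoring $j^\star=k\circ j_U^M$ with $k([f]_U)=j^\star(f)(\alpha_U)$ is only well-defined when $U$ is the ultrafilter derived from $j$ using $\alpha_U$: well-definedness requires $\alpha_U\in j(A)$ for every $A\in U$, i.e.\ $\alpha_U\in\bigcap_{A\in U}j(A)$, and for an arbitrary countably complete ultrafilter $U$ over $\delta$ there is no reason this intersection is nonempty. So the factoring argument presupposes exactly what you are trying to prove. Nothing in the cofinal embedding property or $\lambda$-DC supplies such an $\alpha_U$; those tools concern ultrafilters derived from $j$ on $V_{\epsilon+1}$, not arbitrary ultrafilters on an ordinal $\delta$.

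The paper's proof takes a completely different route. Reduce to the case that $j$ fixes both $\delta$ and $\xi$ (least counterexample). By $\lambda$-DC it suffices to rule out a $\lambda$-sequence $\langle U_\alpha\mid\alpha<\lambda\rangle$ of distinct rank-$\xi$ ultrafilters. Apply $j$ twice to get $\langle U^1_\alpha\rangle$ and $\langle U^2_\alpha\rangle$. With $\kappa=\mathrm{crit}(j)$ and $i=j\restriction P^M(\delta)\in M$, one has $A\in U^1_\kappa\iff j(A)\in U^2_{j(\kappa)}$ trivially, and $A\in U^1_\kappa\iff j(i)(A)\in U^2_\kappa$ by elementarity applied to the sequence. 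Since $i$ and $j(i)$ are countably complete Lipschitz homomorphisms of $P(\delta)$ in $M$, \cref{LipKet} gives $U^1_\kappa\E U^2_{j(\kappa)}$ and $U^1_\kappa\E U^2_\kappa$. All three have rank $\xi$ (as $j(\xi)=\xi$), so by wellfoundedness and antisymmetry $U^2_\kappa=U^1_\kappa=U^2_{j(\kappa)}$, contradicting distinctness. The Ketonen-order machinery, not any factoring through $j_U$, is what does the work; your proposal does not engage with it.
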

Why \(V_{\epsilon+3}\)? The point of this large cardinal hypothesis is that
working in \(M\), every countably complete ultrafilter over an ordinal less than
\(\theta_{\epsilon+2}\) belongs to \(\mathcal H_{\epsilon+3}\), and moreover the
Ketonen order and its rank function are definable over \(\mathcal
H_{\epsilon+3}\). Therefore by the remarks following \cref{HAlphaLma}, an
embedding \(j\in \mathscr E(V_{\epsilon+3}\cap M)\) lifts to an embedding
\(j^\star \in \mathscr E((\mathcal H_{\epsilon+3})^M)\) that is Ketonen order
preserving and in addition respects Ketonen ranks in the sense that
\(\sigma(j^\star(U)) = j^\star(\sigma(U))\) for any countably complete
ultrafilter \(U\) over an ordinal less than \(\theta_{\epsilon+2}\). This is what
is needed for the proof of \cref{AlmostUA}. 

\begin{proof}[Proof of \cref{AlmostUA}] By considering the least counterexample,
we may assume without loss of generality that \(j\) fixes \(\delta\) and
\(\xi\).  Since \(M\) satisfies \(\lambda\)-DC, it suffices to show that in
\(M\), there is no \(\lambda\)-sequence of distinct countably complete
ultrafilters over \(\delta\) of Ketonen rank \(\xi\). Suppose towards a
contradiction that \(\langle U_\alpha \mid \alpha < \lambda\rangle\) is such a
sequence. Note that \(\langle U_\alpha \mid \alpha < \lambda\rangle\) is coded
by an element of \(V_{\epsilon+3}\cap M\), so we can apply \(j\) to it. This
yields:
\begin{align*}
\langle U_\alpha^1 \mid \alpha < \lambda\rangle &= j(\langle U_\alpha \mid \alpha < \lambda\rangle)\\
\langle U_\alpha^2 \mid \alpha < \lambda\rangle &= j(\langle U_\alpha^1 \mid \alpha < \lambda\rangle)
\end{align*}

Let \(\kappa\) be the critical point of \(j\). We claim that the following hold
for \(A\in P^M(\delta)\):
\begin{align}A\in U_\kappa^1 &\iff j(A)\in U_{j(\kappa)}^2\label{Trivial}\\
 A\in U_\kappa^1 &\iff j(i)(A)\in U_{\kappa}^2\label{LilSubtle}\end{align}
where \(i = j\restriction P^M(\delta)\). \cref{Trivial} is trivial since
 \(j(U_\kappa^1) = U_{j(\kappa)}^2\). \cref{LilSubtle} is slightly more subtle
 because we are not assuming \(j\restriction P(P(\delta))\) belongs to \(M\),
 and therefore \(j(j)(U_\kappa^1)\) is not obviously well-defined. Note however
 that for all \(\alpha < \kappa\), \(A\in U_\alpha\) if and only if \(i(A)\in
 U_\alpha^1\). By the elementarity of \(j\), for all \(\alpha < j(\kappa)\),
 \(A\in U_\alpha^1\) if and only if \(j(i)(A)\in U_\alpha^2\). In particular,
 \(A\in U_\kappa^1\) if and only if \(j(i)(A)\in U_\kappa^2\), proving
 \cref{LilSubtle}.

Now notice that in \(M\), \(i\) and \(j(i)\) are countably complete Lipschitz
homomorphisms from \(P(\delta)\) to \(P(\delta)\). Therefore by the
characterization of the Ketonen order in terms of Lipschitz homomorphisms
(\cref{LipKet}), \cref{Trivial} and \cref{LilSubtle} imply: 
\begin{align*}U_\kappa^1&\E U_{j(\kappa)}^2\\
U_\kappa^1&\E U_{\kappa}^2\end{align*} 
Now we use that \(j\) fixes \(\xi\), which implies that the three ultrafilters
\(U_\kappa^1, U_{\kappa}^2,\) and \(U_{j(\kappa)}^2\) have Ketonen rank \(\xi\).
Since the Ketonen order is wellfounded, it follows that the inequalities above
cannot be strict, and so since the Ketonen order is antisymmetric,
\(U_{j(\kappa)}^2 = U_\kappa^1 = U_\kappa^2\). This contradicts that \(\langle
U_\alpha^2 \mid \alpha < \lambda\rangle \) is a sequence of distinct
ultrafilters.
\end{proof}

Although \cref{AlmostUA} shows that the Ketonen order is {\it almost} linear
under choiceless large cardinal assumptions, true linearity is incompatible with
\(\kappa_\omega(j)\)-DC:
\begin{prp}Suppose \(j : V_{\lambda+2}\to V_{\lambda+2}\) is an elementary
embedding with \(\lambda = \kappa_\omega(j)\). Suppose \(\lambda\) is a limit of
L\"owenheim-Skolem cardinals. Assume the restriction of the Ketonen order to
\(\lambda^+\)-complete ultrafilters over \(\lambda^+\) is linear. Then
\(\omega_1\)\textnormal{-DC} is false.
\begin{proof}
We begin by outlining the proof. Assume towards a contradiction that
\(\omega_1\)\textnormal{-DC} is true. We first prove that the Ketonen least
ultrafilter \(U\) on \(\lambda^+\), which exists by the linearity of the Ketonen
order, extends the \(\omega\)-closed unbounded filter. Next, we show that,
assuming \(\omega_1\)-DC, there is a normal ultrafilter \(W\) on \(\lambda^+\)
extending the \(\omega_1\)-closed unbounded filter. 

Let us show that the existence of the ultrafilters \(U\) and \(W\) actually
implies that \(\omega_1\) is measurable, contradicting \(\omega_1\)-DC. Clearly
\(U\sE W\). As a consequence \(U = W\text{-lim}_{\alpha < \lambda^+}U_\alpha\)
where \(U_\alpha\) is a countably complete ultrafilter such that \(\alpha\in
U_\alpha\) for \(W\)-almost all \(\alpha < \lambda^+\). For each \(\alpha <
\lambda^+\), let \(\delta_\alpha\) be the least ordinal such that
\(\delta_\alpha\in U_\alpha\). Then \(\delta_\alpha = \alpha\) for \(W\)-almost
all \(\alpha < \lambda^+\): otherwise, since \(W\) is normal, there is a
\(\delta < \lambda^+\) such that \(\delta_\alpha = \delta\) for \(W\)-almost all
\(\alpha < \lambda^+\), which implies \(\delta\in W\text{-lim}_{\alpha <
\lambda^+}U_\alpha = U\), contradicting that \(U\) is a uniform ultrafilter over
\(\lambda^+\). Fix an ordinal \(\alpha < \lambda^+\) of cofinality \(\omega_1\)
such that \(\delta_\alpha = \alpha\). Then \(U_\alpha\) is a fine ultrafilter
over \(\alpha\); that is, every set in \(U\) is cofinal in \(\alpha\). This
implies that \(\text{cf}(\alpha)\) carries a uniform countably complete
ultrafilter \(D\): let \(f : \alpha\to \omega_1\) be any monotone function and
let \(D = f_*(U_\alpha)\). This means that \(\omega_1\) is measurable, which is
a contradiction.

The first step is to show that DC implies that there is a normal filter over
\(\lambda^+\) extending the \(\omega\)-closed unbounded filter. For this, we
show that the {\it weak club filter} is normal. This is the filter \(F\)
generated by sets of the form \[\{\sup (\sigma\cap \lambda^+) \mid \sigma \prec
M\}\] where \(M\) is a structure in a countable language containing
\(\lambda^+\). The normality of this filter, given the L\"owenheim-Skolem
hypothesis, is proved by Usuba as \cite[Proposition 3.5]{UsubaLS}. By DC, the
set \[S = \{\alpha < \lambda^+ \mid \text{cf}(\alpha) = \omega\}\] is
\(F\)-positive. Hence \(F\restriction S\) is a normal filter extending the
\(\omega\)-closed unbounded filter. By the Woodin argument and \cref{LSUlam},
\(F\restriction S\) is atomic. Therefore there is some \(T\subseteq S\) such
that \(F\restriction T\) is an ultrafilter. Of course \(F\restriction T\) is
normal since \(F\) is, and hence we have obtained a normal ultrafilter \(U\)
extending the \(\omega\)-closed unbounded filter. 

We claim that no uniform ultrafilter over \(\lambda^+\) lies below \(U\) in the
Ketonen order. To see this, suppose \(Z\sE U\), and we will show that there is
some \(\delta < \lambda^+\) such that \(\delta\in Z\). Suppose \(Z =
U\text{-lim}_{\xi < \lambda^+}D_\xi\) where \(D_\xi\) is a countably complete
ultrafilter over \(\lambda^+\) with \(\xi \in D_\xi\) for \(U\)-almost all \(\xi
< \lambda^+\). For \(U\)-almost all \(\xi < \lambda^+\), \(\xi\) has cofinality
\(\omega\) and \(D_\xi\) is a countably complete ultrafilter with \(\xi\in
D_\xi\), so there is some \(\delta_\xi < \xi\) with \(\delta_\xi\in D_\xi\).
Since \(U\) is normal, there is a fixed ordinal \(\delta < \lambda^+\) such that
\(\delta_\xi = \delta\) for \(U\)-almost all \(\xi < \lambda^+\). Hence \(\delta
\in D_\xi\) for \(U\)-almost all \(\xi < \lambda^+\), so since \(Z =
U\text{-lim}_{\xi < \lambda^+}D_\xi\), \(\delta\in Z\).

Thus \(U\) is the Ketonen least ultrafilter over \(\lambda^+\). 

Using \(\omega_1\)-DC, one can show that \(\{\alpha < \lambda^+ \mid
\text{cf}(\alpha) = \omega_1\}\) is positive with respect to the weak club
filter. It follows as above that the \(\omega_1\)-closed unbounded filter
extends to a normal ultrafilter. As explained in the first two paragraphs, this
leads to the conclusion that \(\omega_1\) is measurable, which contradicts our
assumption that \(\omega_1\)-DC holds.
\end{proof}
\end{prp}
It would not be that surprising if it turned out to be possible to refute the
linearity of the Ketonen order outright from choiceless large cardinals. If the
linearity of the Ketonen order is consistent with choiceless large cardinals,
however, then perhaps there is an interesting theory of choiceless large
cardinals in which choice fails low down. We will not pursue this idea further
here since it leads to highly speculative territory. We do note that one can
make do with a weaker choice assumption in the proof of \cref{AlmostUA}:
\begin{thm}\label{ChoicelessUA}
Suppose \(\epsilon\) is an ordinal, \(M\) is an inner model of \textnormal{DC}
containing \(V_{\epsilon+1}\), and \(\delta < \theta_{\epsilon+2}^M\) is an
ordinal. Suppose there is a nontrivial \(j\in \mathscr E(V_{\epsilon+3}\cap M)\)
such that \(j\restriction P^M(\delta)\) belongs to \(M\). Assume \(\lambda =
\kappa_\omega(j)\) is a limit of L\"owenheim-Skolem cardinals in \(M\). Then the
following hold in \(M\):
\begin{enumerate}[(1)]
\item For any ordinal \(\xi < \theta_{\epsilon+3}\), the set of countably
complete ultrafilters over \(\delta\) of Ketonen rank \(\xi\) is the surjective
image of \(V_\alpha\) for some \(\alpha < \lambda\).
\item The set of \(V_\lambda\)-complete ultrafilters of Ketonen rank \(\xi\) is
wellorderable and has cardinality strictly less than \(\lambda\).
\end{enumerate}
\end{thm}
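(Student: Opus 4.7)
The plan is to mirror the proof of \cref{AlmostUA}, using the fact that \(\lambda\) is a limit of Löwenheim-Skolem cardinals in \(M\) as a substitute for the \(\lambda\)-\textnormal{DC} hypothesis used there. As in that proof, by considering a least counterexample I may assume without loss of generality that \(j^\star\) fixes both \(\delta\) and \(\xi\), so \(j^\star\) restricts to a Ketonen-order preserving injection on the set \(\mathscr{U}_\xi\) of countably complete ultrafilters over \(\delta\) of Ketonen rank \(\xi\) in \(M\).

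For (1), suppose towards a contradiction that \(\mathscr{U}_\xi \not\preceq^{*} V_\alpha^M\) for every \(\alpha < \lambda\). Fix a Löwenheim-Skolem cardinal \(\mu\) of \(M\) with \(\mu < \lambda\), chosen above \(\delta\) and \(\xi\). Apply the Löwenheim-Skolem property of \(\mu\) inside \(M\) to a sufficiently large \(V_\gamma^M\), obtaining an elementary substructure \(X \prec V_\gamma^M\) containing \(\delta\), \(\xi\), \(j\restriction P^M(\delta)\), and the parameters needed to define \(\mathscr{U}_\xi\), satisfying \([X]^{V_\alpha} \cap V_\gamma \subseteq X\) for all \(\alpha < \mu\), with \(X \preceq^{*} V_\beta^M\) for some \(\beta < \mu\). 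Let \(\pi : H \to X\) be the inverse Mostowski collapse, so \(H \preceq^{*} V_\beta^M\). Then \(\bar{\mathscr{U}}_\xi = \pi^{-1}(\mathscr{U}_\xi) \in H\) satisfies \(\bar{\mathscr{U}}_\xi \preceq^{*} V_\beta^M\), and each \(\bar U \in \bar{\mathscr{U}}_\xi\) lifts via \(\pi\) to an element of \(\mathscr{U}_\xi\). The crucial step is that this lifting is surjective onto \(\mathscr{U}_\xi\): if some \(U\in \mathscr{U}_\xi\) failed to arise this way, then applying \(j\) twice and invoking the Lipschitz characterization \cref{LipKet} as in the proof of \cref{AlmostUA} would produce two distinct ultrafilters of the same Ketonen rank \(\xi\) that are comparable in the Ketonen order, contradicting the antisymmetry of \(\E\). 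This yields \(\mathscr{U}_\xi \preceq^{*} V_\beta^M\) with \(\beta < \lambda\), contradicting the assumption.

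For (2), specialize to the \(V_\lambda\)-complete elements of \(\mathscr{U}_\xi\). Such a \(U\) is \(V_\mu\)-complete for every Löwenheim-Skolem cardinal \(\mu < \lambda\), and this additional rigidity lets one identify \(U\) with its pullback in \(H\), which is an element of the wellorderable set \(P^H(\bar\delta)\) where \(\bar\delta = \pi^{-1}(\delta)\preceq^{*}V_\beta^M\). Combined with a splitting argument in the spirit of \cref{LSUlam}, this shows that the map from the previous paragraph, restricted to \(V_\lambda\)-complete ultrafilters, is injective and produces a wellordering of cardinality less than \(\lambda\) in \(M\).

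The main obstacle is justifying the surjectivity of the lift in (1): showing that every \(U \in \mathscr{U}_\xi\) must arise from an element of \(\bar{\mathscr{U}}_\xi\) via \(\pi\). This requires a careful internalization of the double-application-of-\(j\) argument of \cref{AlmostUA} inside the Löwenheim-Skolem substructure, using the wellfoundedness of the Ketonen order provided by \textnormal{DC} in \(M\), together with the closure property \([X]^{V_\alpha}\cap V_\gamma\subseteq X\) for \(\alpha < \mu\), to force the equalities of Ketonen-comparable ultrafilters of the same rank.
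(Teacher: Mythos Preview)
Your overall strategy for (1) has a genuine gap at exactly the point you identify as the ``main obstacle.'' The double-application-of-\(j\) argument from \cref{AlmostUA} shows that for any \(U\) of Ketonen rank \(\xi\), both \(j\) and \(j(j)\) witness \(U\E j(U)\) and \(U\E j(j)(U)\), forcing equalities among rank-\(\xi\) ultrafilters. But none of this has anything to do with your L\"owenheim-Skolem hull \(X\): the hull was chosen before you fixed \(U\), and there is no reason the Ketonen-rank argument should force \(U\in X\). Your proposed justification does not connect the embedding \(j\) to the hull at all, so surjectivity of the lift remains unproved.

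The paper proceeds differently. It first shows, for the \emph{entire set} \(S\) of rank-\(\xi\) ultrafilters, that \(j(S)=j[S]\): if \(U\in j(S)\setminus j[S]\), then \(j(U)\notin j(j)[j(S)]\) while \(j(j)(U)\in j(j)[j(S)]\), so \(j(U)\neq j(j)(U)\); yet \(j\restriction P^M(\delta)\) and \(j(j)\restriction P^M(\delta)\) are Lipschitz homomorphisms witnessing \(U\E j(U)\) and \(U\E j(j)(U)\), and since all three have rank \(\xi\) one gets \(U=j(U)=j(j)(U)\), a contradiction. Having established \(j(S)=j[S]\), the paper then invokes \cref{CompletenessLSLemma}, a separate abstract result: if \(j(S)=j[S]\) and there is a L\"owenheim-Skolem cardinal between \(\kappa\) and \(j(\kappa)\), then \(S\preceq^{*}V_\alpha\) for some \(\alpha<\kappa\). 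The L\"owenheim-Skolem hull enters only in the proof of that lemma, and is applied to a set in bijection with \(S\), not to \(S\) directly.

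For (2), your sketch is also too vague. The paper instead uses the L\"owenheim-Skolem hypothesis (as in \cref{LSUlam}) to build a \emph{discretizing family}: a function \(f:S\to P(\delta)\) with \(f(U)\in U\setminus W\) for all \(W\in S\setminus\{U\}\). Then \(U\mapsto\min f(U)\) injects \(S\) into \(\delta\), so \(S\) is wellorderable; combined with (1) this gives \(|S|<\lambda\).
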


The proof uses the following lemma, whose analog in the context of AC is well
known and does not require the supercompactness assumption that we make below:
\begin{lma}\label{CompletenessLSLemma}
Assume \(M\) is a model of set theory and \(j : M\to N\) is an elementary
embedding with critical point \(\kappa\). Assume there is a L\"owenheim-Skolem
cardinal \(\eta\) in \(M\) such that \(\kappa < \eta < j(\kappa)\) and
\(j[V_\xi\cap M]\in N\) for all \(\xi < \eta\). Then for any set \(S\in M\) such
that \(j(S) = j[S]\), there is some \(\alpha < \kappa\) such that \(M\)
satisfies \(S\preceq^{*} V_\alpha\).
\begin{proof}
We first observe that if there is a surjection \(f : S\to S'\) in \(M\), then
\[j(S') = j(f)[j(S)] = j(f)[j[S]] = j[S']\] Work in \(M\), and assume towards a
contradiction that \(S\not \preceq^{*} V_\alpha\) for any \(\alpha < \kappa\).
Fix \(\gamma > \beta\) and an elementary substructure \(X\prec V_\gamma\) with
\(S\in X\), \([X]^{V_\alpha}\subseteq X\), and for some \(\nu <\eta\),
\(X\preceq^{*} V_\nu\). Let \(S' = X\cap S\). Notice that there is no surjection
from \(V_\alpha\) to \(S'\) for any \(\alpha < \kappa\): if there is, then
\(S'\in X\) since \(X\) is closed under \(V_\alpha\)-sequences, and hence \(S' =
S\) because \(S\in X\) and \((S\setminus S')\cap X = \emptyset\); but then
\(S\preceq^{*} V_\alpha\), which is a contradiction. Let \(\xi\) be the least
rank of a set \(a\) that is in bijection with \(S'\). Then \(\kappa \leq \xi <
\eta\). 

We now leave \(M\). On the one hand, \(j(\xi) > j(\kappa) > \eta > \xi\). On the
other hand, \(j(S') = j[S']\) by our first observation. Let \(f : a\to S'\) be a
bijection in \(M\), and notice that \(a\in N\) and \(j(f)\circ j\restriction
a\in N\) is a bijection between \(j[S']\) and \(a\) that belongs to \(N\).
Therefore in \(N\), \(|j(S')| = |a|\). It follows that in \(N\), \(\xi\) is the
least rank of a set in bijection with \(j(S')\). This contradicts that \(j(\xi)
> \xi\). 
\end{proof}
\end{lma}

\begin{proof}[Proof of \cref{ChoicelessUA}] Suppose towards a contradiction that
the theorem fails. 

We work in \(M\) for the time being. Let \(\xi < \theta_{\epsilon+3}\) be the
least ordinal such that the set of countably complete ultrafilters over
\(\delta\) of Ketonen rank \(\xi\) is not the surjective image of \(V_\beta\)
for any \(\beta < \lambda\). Let \(S\) be any set of countably complete
ultrafilters over \(\delta\) of Ketonen rank \(\xi\). Leaving \(M\), a
generalization of the proof of \cref{AlmostUA} will show that \(j(S) = j[S]\).
Assume otherwise, fix \(U\in j(S)\setminus j[S]\) and consider \(j(U)\) and
\(j(j)(U)\). On the one hand, these ultrafilters must be distinct: by
elementarity \(j(U)\notin j(j[S]) = j(j)[j(S)]\), whereas evidently \(j(j)(U)\in
j(j)[j(S)]\). On the other hand, \(j\restriction P(\delta)\) and \(j(j)
\restriction P(\delta) \) witness that \(U\E j(U),j(j)(U)\) in \(M\), and
therefore \(U = j(U) = j(j)(U)\) since \(\sigma(U) = \sigma(j(U)) =
\sigma(j(j)(U)) = \xi\) since of course \(j\) and \(j(j)\) fix the definable
ordinal \(\xi\). This is a contradiction, so in fact \(j(S) = j[S]\).

By \cref{CompletenessLSLemma}, \(M\) satisfies that \(S \preceq^{*} V_\alpha\)
for some \(\alpha < \text{crit}(j)\). This contradiction proves (1).

Now consider the set \(S\) of \(V_\lambda\)-complete ultrafilters over
\(\delta\) of Ketonen rank \(\xi\). By an argument similar to that of
\cref{LSUlam}, one can use the L\"owenheim-Skolem assumption to find a
``discretizing family'' for \(S\), or in other words a function \(f : S\to
P(\delta)\) such that \(f(U) \in U\setminus W\) for all \(W\in S\) except for
\(U\). Then the function \(g(U) = \min f(U)\) is an injection from \(S\) into
\(\delta\), so \(S\) is wellorderable. Since \(\theta_{\alpha} < \lambda\) for
all \(\alpha < \lambda\), it follows that \(|S| < \lambda\), proving (2).
\end{proof}
We remark that an argument similar to the proof of \cref{ChoicelessUA} can be
used to establish the Coding Lemma (\cref{CodingLemma}) from a
L\"owenheim-Skolem hypothesis rather than dependent choice.

The semi-linearity of the Ketonen order given by \cref{AlmostUA} implies that
\(V\) is in a sense ``close to \(\textnormal{HOD}\).'' (No such closeness result
is known to be provable from large cardinal axioms consistent with the Axiom of
Choice, so this perhaps complicates the intuition that choiceless large cardinal
axioms imply that \(\text{HOD}\) is a small model.) We first state the theorem
in two special cases:

\begin{thm}
Suppose \(\lambda\) is a cardinal such that \(\lambda\)-\textnormal{DC} holds.
Assume \(M = L(V_{\lambda+1})\) or \(M = V\). Suppose there is an elementary
embedding \(j : M \to M\) with \(\kappa_\omega(j) = \lambda\). Then \(M\)
satisfies the following statements:
\begin{enumerate}[(1)]
\item Every countably complete ultrafilter over an ordinal belongs to an ordinal
definable set of size less than \(\lambda\).
\item Every \(\lambda^+\)-complete ultrafilter over an ordinal \(\delta\) is
ordinal definable from a subset of \(\delta\).
\item For any set of ordinals \(S\), every \(\lambda^+\)-complete ultrafilter is
amenable to \(\textnormal{HOD}_S\).
\item For any \(\lambda^+\)-complete ultrafilter \(U\) over an ordinal, the
ultrapower embedding \(j_U\) is amenable to \(\textnormal{HOD}_x\) for a cone of
\(x\in V_\lambda\).\qed
\end{enumerate}
\end{thm}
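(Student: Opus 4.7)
The strategy is to reduce each clause to a direct application of \cref{AlmostUA} in $M$. For any ordinal $\delta$ in $M$, I first fix $\epsilon$ with $V_{\epsilon+3}\subseteq M$ and $\delta < \theta_{\epsilon+2}^M$; such $\epsilon$ exists in both cases $M = V$ and $M = L(V_{\lambda+1})$ since $\theta_{\epsilon+2}^M$ is cofinal in the ordinals of $M$. The restriction $j\restriction V_{\epsilon+3}\cap M$ lies in $\mathscr E(V_{\epsilon+3}\cap M)$ and $j\restriction P^M(\delta)\in M$ is immediate, so \cref{AlmostUA} applies. For clause (1), given a countably complete ultrafilter $U$ over $\delta$, the set $\Gamma$ of countably complete ultrafilters on $\delta$ of Ketonen rank $\sigma(U)$ is definable over $\mathcal H_{\epsilon+3}^M$ from $(\delta,\sigma(U))$, hence ordinal definable in $M$; by \cref{AlmostUA}, $|\Gamma|^M < \lambda$, and $U\in\Gamma$.

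For clause (2), assume further that $U$ is $\lambda^+$-complete. Restricting $\Gamma$ to its $\lambda^+$-complete members yields an OD wellorderable subset $\Gamma_0$ of size $\mu < \lambda$. I plan to construct a discretizing family $\{A_W : W\in \Gamma_0\}\subseteq P(\delta)$ as follows: using $\lambda$-\textnormal{DC} to supply the $\mu^2$-many choices indexed by the wellorderable set $\Gamma_0\times\Gamma_0$, pick for each ordered pair $W\neq W'$ a set $X_{W,W'}\in W\setminus W'$ and set $A_W=\bigcap_{W'\neq W}X_{W,W'}$. Since $\mu<\lambda$ and each $W$ is $\lambda^+$-complete, $A_W\in W$, while $A_W\notin W'$ whenever $W'\neq W$. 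Hence $U$ is the unique element of $\Gamma_0$ containing $A_U$, so $U$ is ordinal definable from $A_U\subseteq\delta$.

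For clauses (3) and (4), let $U$ be a $\lambda^+$-complete ultrafilter over $\delta$ and fix a set of ordinals $S$. By (2), $U$ is OD from some $A\subseteq\delta$, and by Vop\v enka's Theorem (\cref{VopenkaThm}), $\textnormal{HOD}_{S,A}$ is an $\iota$-cc generic extension of $\textnormal{HOD}_S$ via some forcing $\mathbb P\in \textnormal{HOD}_S$. The plan for (3) is to exploit the $\lambda^+$-completeness of $U$ together with a symmetry argument over $\mathbb P$ to show that $U\cap \textnormal{HOD}_S$, a priori lying only in $\textnormal{HOD}_{S,A}$, is invariant under the automorphism group of $\mathbb P$ and therefore lies in $\textnormal{HOD}_S$. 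Clause (4) then reduces to (3): given $U$, take $x$ in the OD cone of $V_\lambda$-elements coding a parameter for $U$ (for instance, an index into $\Gamma_0$), apply (3) with $S$ a set of ordinals coding $x$ to conclude $U\cap \textnormal{HOD}_x\in\textnormal{HOD}_x$, and observe that $j_U\restriction\textnormal{HOD}_x$ is simply the internal ultrapower of $\textnormal{HOD}_x$ by $U\cap\textnormal{HOD}_x$, hence amenable to $\textnormal{HOD}_x$ by standard ultrapower theory in that AC-model.

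The main obstacle is the symmetry argument for (3): $\delta$ may exceed $\lambda$, so the Vop\v enka chain-condition bound $\iota$ can in principle be much larger than $\lambda$, which would prevent $\lambda^+$-completeness from directly controlling the generic. The likely fix is to use (1) beforehand to compress the witness for $U$ from the potentially large $A\subseteq\delta$ down to an ordinal index below $\lambda$ naming $U$ within $\Gamma_0$; once the Vop\v enka forcing realizing this ordinal parameter has size below $\lambda$, the $\lambda^+$-completeness of $U$ pins down $U\cap\textnormal{HOD}_S$ independently of the generic, closing the argument.
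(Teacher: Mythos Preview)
Your treatment of (1) and (2) is essentially the paper's: the OD set of ultrafilters of a fixed Ketonen rank has size below $\lambda$ by \cref{AlmostUA}, and a discretizing set distinguishes $U$ within it. Fine.

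For (3), however, your Vop\v enka/symmetry route is not the paper's argument, and the gap you yourself flag is real and not closed by your proposed fix. The index of $U$ inside $\Gamma_0$ depends on a wellorder of $\Gamma_0$ that need not be ordinal definable, so there is no canonical ``ordinal index below $\lambda$'' to feed into Vop\v enka; and even granting one, the Vop\v enka algebra for adding that index to $\textnormal{HOD}_S$ is governed by $|P(\mu)\cap \textnormal{HOD}_S|$, which you have no bound on. The paper avoids Vop\v enka entirely here. The missing idea is to take $F = \bigcap \Gamma_0$, which is ordinal definable, so $\bar F = F\cap \textnormal{HOD}_S\in \textnormal{HOD}_S$. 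Because $\Gamma_0$ consists of fewer than $\lambda$ many $\lambda^+$-complete ultrafilters extending $F$, the filter $F$ is $\lambda$-saturated, hence so is $\bar F$ in $\textnormal{HOD}_S$. Now run Ulam's splitting theorem \emph{inside} $\textnormal{HOD}_S$ (which has AC) to partition $\delta$ into fewer than $\lambda$ atoms of $\bar F$; exactly one atom $A$ lies in $U$, and then $U\cap \textnormal{HOD}_S = \bar F\restriction A \in \textnormal{HOD}_S$.

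For (4), your reduction to (3) does not work as stated: the sentence ``$j_U\restriction \textnormal{HOD}_x$ is simply the internal ultrapower of $\textnormal{HOD}_x$ by $U\cap \textnormal{HOD}_x$'' is false in general. The internal ultrapower uses only functions in $\textnormal{HOD}_x$, whereas $j_U$ is the $M$-ultrapower; the factor map between them can have a critical point, so knowing $U\cap \textnormal{HOD}_x\in \textnormal{HOD}_x$ does not by itself let $\textnormal{HOD}_x$ compute $j_U\restriction \gamma$. The paper instead works with the $\textnormal{HOD}$-\emph{extender} $E$ of $j_U$ of a given length: by (3) each measure of $E$ restricts to $\textnormal{HOD}$, and $E$ inherits from $U$ membership in an OD set $X$ of size below $\lambda$. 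That set $X$ is a condition in the Vop\v enka forcing to add $E$ to $\textnormal{HOD}$, and below it the algebra is isomorphic to $P(X)\cap \text{OD}$, hence small; so $E\in \textnormal{HOD}_x$ for a suitable $x\in V_\lambda$. L\'evy--Solovay then uniquely lifts each measure of $E$ to $\textnormal{HOD}_x$, and from the lifted extender one reads off $j_U\restriction P(\gamma)\cap \textnormal{HOD}_x$. A pigeonhole over $\gamma$ gives the cone.
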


We now prove a more technical result that immediately implies the previous
theorem.
\begin{thm}\label{ODThm}
Suppose \(\epsilon\) is an even ordinal, \(M\) is an inner model of
\textnormal{DC} containing \(V_{\epsilon+1}\). Assume there is an elementary
embedding \(j\) from \(V_{\epsilon+3}\cap M\) to \(V_{\epsilon+3}\cap M\) such
that \(j\restriction P(\delta)\in M\) for all \(\delta <
\theta_{\epsilon+2}^M\). Assume \(\lambda = \kappa_\omega(j)\) is a limit of
L\"owenheim-Skolem cardinals in \(M\). Then the following hold in \(M\):
\begin{enumerate}[(1)]
\item Every countably complete ultrafilter over an ordinal below
\(\theta_{\epsilon+2}\) belongs to an ordinal definable set of size less than
\(\lambda\).
\item Every \(\lambda^+\)-complete ultrafilter over an ordinal \(\delta <
\theta_{\epsilon+2}\) is ordinal definable from a subset of \(\delta\).
\item For any set of ordinals \(S\), every \(\lambda^+\)-complete ultrafilter
over an ordinal \(\delta < \theta_{\epsilon+2}\) is amenable to
\(\textnormal{HOD}_S\).
\item For any \(\lambda^+\)-complete ultrafilter \(U\) over an ordinal less than
\(\theta_{\epsilon+2}\), the ultrapower embedding \(j_U\) is amenable to
\(\textnormal{HOD}_x\) for a cone of \(x\in V_\lambda\).
\end{enumerate}
\begin{proof}
We work entirely in \(M\), using only the conclusion of \cref{ChoicelessUA}.

(1) is clear from \cref{AlmostUA}. 

For (2), suppose \(U\) is a countably complete ultrafilter over \(\delta\). Let
\(\xi\) be the Ketonen rank of \(U\), and let \(\langle U_\alpha: \alpha <
\eta\rangle\) enumerate the \(\lambda^+\)-complete ultrafilters of Ketonen rank
\(\xi\). Choose a set \(A\subseteq \delta\) such that \(A\in U\) and \(A\notin
U_\alpha\) for any \(\alpha < \eta\); this is possible because \(\eta <
\lambda\) and the ultrafilters in question are \(\lambda^+\)-complete. Since
\(U\) is the unique \(\lambda^+\)-complete ultrafilter over \(\delta\) of
Ketonen rank \(\xi\) such that \(A\in U\), \(U\) is ordinal definable from
\(A\).

We now prove (3). Let \(\bar U = U\cap \textnormal{HOD}_S\). We must show that
\(\bar U\in \textnormal{HOD}_S\). Fix an OD set \(P\) of cardinality less than
\(\lambda\) such that \(U\in P\). Note that \(F = \bigcap P\) is ordinal
definable. Let \(\bar F = F\cap \textnormal{HOD}_S\). Then \(\bar F\in
\textnormal{HOD}_S\). Using \(\lambda^+\)-completeness, it is obvious that \(F\)
is \(\lambda\)-saturated, and it follows that \(\bar F\) is
\(\lambda\)-saturated in \(\textnormal{HOD}_S\). Applying the Ulam splitting
theorem inside \(\textnormal{HOD}_S\), there is some \(\eta < \lambda\) and a
partition \(\langle A_\alpha \mid \alpha <\eta\rangle\in \text{HOD}_S\) of
\(\delta\) into atoms of \(\bar F\). Since \(\bigcup A_\alpha = \delta\), there
is some \(\alpha < \eta\) such that \(A_\alpha\in U\). It follows that \(\bar
F\restriction A_\alpha \subseteq U\cap \text{HOD}_S = \bar U\), and since \(\bar
F\restriction A_\alpha\) is a \(\textnormal{HOD}_S\)-ultrafilter, this implies
that \(\bar F \restriction A_\alpha = \bar U\). Clearly \(\bar F\restriction
A_\alpha\in \textnormal{HOD}\), and therefore so is \(\bar U\).

We only sketch the proof of (4), which requires knowledge of the proof of Vop\v
enka's Theorem. (This is the theorem stating that every set of ordinals is
set-generic over HOD; see \cite{Woodin}.) We first show that for any cardinal
\(\gamma\), there is some \(x\in V_\lambda\) such that \(j_U\restriction
P(\gamma)\) is amenable to \(\textnormal{HOD}_x\). Let \(E\) be the
\(\textnormal{HOD}\)-extender of length \(j_U(\gamma)\) derived from \(j_U\).
Notice that \(E\subseteq \textnormal{HOD}\) by (3), and moreover \(E\) belongs
to an ordinal definable set \(X\) of size less than \(\lambda\) since \(U\)
does. 

The set \(X\) is (essentially) a condition in the Vop\v enka forcing to add
\(E\) to \(\textnormal{HOD}\), and below this condition, the Vop\v enka algebra
has cardinality less than \(\lambda\), since it is isomorphic to \(P(X)\cap
\text{OD}\). It follows that \(E\) belongs to \(\textnormal{HOD}_x\) where
\(x\in V_\lambda\) is the generic for this Vop\v enka forcing below the
condition given by \(X\). Each ultrafilter of \(E\) lifts uniquely to an
ultrafilter of \(\textnormal{HOD}_x\) by the L\'evy-Solovay Theorem
\cite{LevySolovay}: these ultrafilters are \(\lambda^+\)-complete and \(x\) is
HOD-generic for a forcing of size less than \(\lambda\). It follows that the
\(\textnormal{HOD}_x\)-extender of \(j_U\) of length \(j_U(\gamma)\) can be
computed from \(E\) inside \(\textnormal{HOD}_x\), simply by lifting all the
measures of \(E\) to \(\textnormal{HOD}_x\). But from this extender, one can
decode \(j_U\restriction P(\gamma)\cap \textnormal{HOD}_x\). This shows that
there is some \(x\in V_\lambda\) such that \(j_U\restriction P(\gamma)\) is
amenable to \(\textnormal{HOD}_x\). 

If follows from the pigeonhole principle that there is some \(x_0\in V_\lambda\)
such that \(j_U\) is amenable to \(\textnormal{HOD}_{x_0}\). Now for any
\(x\geq_{\text{OD}} {x_0}\), \(j_U\) is amenable to \(\text{HOD}_x\) by exactly
the same argument we used above to show that \(E\) extends from \(\text{HOD}\)
to \(\text{HOD}_x\). This proves (4).
\end{proof}
\end{thm}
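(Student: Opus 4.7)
The plan is to work entirely inside $M$ and leverage the ``almost Ultrapower Axiom'' phenomenon coming from \cref{AlmostUA} (or \cref{ChoicelessUA}, since the hypothesis we have is DC plus a limit of L\"owenheim–Skolem cardinals rather than $\lambda$-DC). For (1), I would observe that the Ketonen rank $\sigma(U)$ of an ultrafilter $U$ over an ordinal $\delta < \theta_{\epsilon+2}$ is itself an ordinal, and for any fixed rank $\xi$ the collection $\mathscr{U}_\xi(\delta)$ of countably complete ultrafilters over $\delta$ of Ketonen rank $\xi$ is ordinal definable from $\delta$ and $\xi$. By the conclusion of \cref{ChoicelessUA}, this set is ``small'' in the appropriate sense (wellorderable of cardinality $<\lambda$ for the $V_\lambda$-complete part, surjective image of some $V_\alpha$ with $\alpha<\lambda$ in general), and a straightforward use of the L\"owenheim–Skolem hypothesis together with \cref{LSUlam} promotes the general case to an OD set of cardinality strictly less than $\lambda$.

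For (2), given a $\lambda^+$-complete ultrafilter $U$, enumerate the (fewer than $\lambda$) $\lambda^+$-complete ultrafilters of the same Ketonen rank as $\langle U_\alpha \mid \alpha < \eta\rangle$ with $\eta < \lambda$. For each $\alpha$ with $U_\alpha\neq U$, choose $A_\alpha\in U\setminus U_\alpha$; by $\lambda^+$-completeness of $U$, the intersection $A=\bigcap_{\alpha\neq \alpha_U} A_\alpha$ belongs to $U$ and separates $U$ from every other $U_\alpha$. Then $U$ is the unique $\lambda^+$-complete ultrafilter of its Ketonen rank containing $A$, so $U$ is OD from $A\subseteq \delta$.

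For (3), fix a set of ordinals $S$ and let $P$ be the OD set of cardinality $<\lambda$ from (1) containing $U$. Set $F=\bigcap P$; this is an OD filter, and $\bar F:=F\cap \mathrm{HOD}_S\in\mathrm{HOD}_S$. Using that $P$ has size $<\lambda$ and each element of $P$ is $\lambda^+$-complete, $\bar F$ is $\lambda$-saturated in $\mathrm{HOD}_S$, so by Ulam's theorem applied in $\mathrm{HOD}_S$ one gets an $\mathrm{HOD}_S$-partition of $\delta$ into fewer than $\lambda$ atoms of $\bar F$; the unique atom $A_\alpha\in U$ gives $\bar F\restriction A_\alpha = U\cap \mathrm{HOD}_S\in \mathrm{HOD}_S$.

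The hard step will be (4), where I expect the main obstacle. The strategy is to realize $j_U$ piecewise as the directed limit of HOD-extenders $E_\gamma$ of length $j_U(\gamma)$ for each cardinal $\gamma$; by (3) each $E_\gamma\subseteq \mathrm{HOD}$ and, by (1) applied to each ultrafilter in $E_\gamma$, $E_\gamma$ belongs to an OD set $X_\gamma$ of size $<\lambda$. Interpreting $X_\gamma$ as a condition in the Vop\v enka algebra for adding $E_\gamma$ to HOD (this algebra has cardinality $<\lambda$ below $X_\gamma$ since it injects into $P(X_\gamma)\cap \mathrm{OD}$), one obtains a Vop\v enka generic $x_\gamma\in V_\lambda$ with $E_\gamma\in \mathrm{HOD}_{x_\gamma}$; the L\'evy–Solovay theorem, applicable because the Vop\v enka forcing has size $<\lambda$ and the individual measures of $E_\gamma$ are $\lambda^+$-complete, then lifts $E_\gamma$ canonically inside $\mathrm{HOD}_{x_\gamma}$, recovering $j_U\restriction P(\gamma)\cap \mathrm{HOD}_{x_\gamma}$. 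A pigeonhole step in $V_\lambda$ and absorption of further parameters into $\mathrm{HOD}_{x_0}$ then yields a cone base $x_0\in V_\lambda$ such that every $x\geq_{\mathrm{OD}} x_0$ gives $\mathrm{HOD}_x$-amenability of the full $j_U$. The subtle point here is showing that the Vop\v enka algebra is genuinely small below the chosen condition, which is where the OD-size bound from (1) is essential.
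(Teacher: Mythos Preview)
Your proposal is correct and mirrors the paper's proof in all four parts: Ketonen-rank sets for (1), a separating set via $\lambda^+$-completeness for (2), $F=\bigcap P$ plus Ulam splitting in $\mathrm{HOD}_S$ for (3), and Vop\v{e}nka forcing with L\'evy--Solovay and pigeonhole for (4). One small note: for (1) the paper simply invokes \cref{ChoicelessUA} (written as \cref{AlmostUA}) without the extra LSUlam-promotion step you suggest, so you should read ``size less than $\lambda$'' in the statement compatibly with what \cref{ChoicelessUA} directly delivers rather than trying to force wellorderability in the general countably complete case.
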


The fixed point filter associated to a set of elementary embeddings plays a key
role in the theory developed in \cite{SEM2}:
\begin{defn}\label{FixedPointFilter}
Suppose \(j\) is a function and \(X\) is a set. Then 
\[\text{Fix}(j,X) = \{x\in X \mid j(x) = x\}\] Suppose \(\sigma\) is a set of
functions. Then \(\text{Fix}(\sigma,X) = \bigcap_{j\in \sigma}\text{Fix}(j,X)\).

Suppose \(\mathcal E\) is a set of elementary embeddings whose domains contain
the set \(X\). Suppose \(B\) is a set. Then the {\it fixed point filter
\(B\)-generated by \(\mathcal E\) on \(X\)}, denoted \(\mathcal F^{B}(\mathcal
E,X)\), is the filter over \(X\) generated by sets of the form
\(\text{Fix}(\sigma,X)\) where \(\sigma\subseteq \mathcal E\) and
\(\sigma\preceq^{*} b\) for some \(b\in B\).
\end{defn}
The sort of techniques we have been using yield the following representation
theorem for ultrafilters over ordinals, which says that in the land of
choiceless cardinals, every ultrafilter over an ordinal is one set away from a
fixed point filter:
\begin{thm}[DC]\label{FixChar}
Suppose \(\epsilon\) is an ordinal and \(j: V_{\epsilon+3}\to V_{\epsilon+3}\)
is an elementary embedding. Assume \(\lambda = \kappa_\omega(j)\) is a limit of
L\"owenheim-Skolem cardinals. Suppose \(\delta < \theta_{\epsilon+2}\) is an
ordinal and \(U\) is a \(V_\lambda\)-complete ultrafilter over \(\delta\). Then
there is an ordinal definable set of elementary embeddings \(\mathcal E\) and a
set \(A\subseteq \delta\) such that \(U = \mathcal F^{V_\lambda}(\mathcal
E,\delta)\restriction A\).
\begin{proof}
The proof is by contradiction. Suppose \(\xi\) is least possible Ketonen rank of
a \(V_{\lambda}\)-complete ultrafilter over an ordinal for which the theorem
fails. Obviously \(\xi < \theta_{\epsilon+3}\). Let \(j : V_{\epsilon+3}\to
V_{\epsilon+3}\) be an elementary embedding. Note that \(\xi\) is definable in
\(\mathcal H_{\epsilon+3}\), and therefore \(j(\xi)= \xi\). It follows that
\(j(U) = U\) for any \(V_{\lambda}\)-complete ultrafilter over an ordinal
\(\delta < \theta_{\epsilon+2}\) of Ketonen rank \(\xi\): as in \cref{AlmostUA},
\(j\restriction P(\delta)\) is a countably complete Lipschitz homomorphism
witnessing \(U\E j(U)\), while \(j(U)\) has rank \(\xi\) since \(j(\xi) = \xi\).
Let \(\mathcal E\) be the set of \(\Sigma_5\)-elementary embeddings \(k :
V_{\epsilon+3}\to V_{\epsilon+3}\) such that \(k(\xi) = \xi\). The argument we
have just given shows that \(k(U) = U\) for any \(k\in \mathcal E\).

Let \(F = \mathcal F^{V_\lambda}(\mathcal E,\delta)\). Clearly, \(F\) is ordinal
definable: in fact, \(F\) is definable over \(\mathcal H_{\epsilon+3}\) from an
ordinal parameter. We claim \(F\) is \(\kappa\)-saturated where \(\kappa =
\text{crit}(j)\). This follows from Woodin's proof of the Kunen inconsistency
theorem. Suppose \(F\) is not \(\kappa\)-saturated, so there is a partition
\(\langle S_\alpha \mid \alpha < \kappa\rangle\) of \(\delta\) into pairwise
disjoint \(F\)-positive sets. Let \(\langle T_\alpha \mid \alpha <
j(\kappa)\rangle = j(\langle S_\alpha \mid \alpha < \kappa\rangle)\). Since
\(F\) is first-order definable over \(\mathcal H_{\epsilon+3}\), \(T_\alpha\) is
\(F\)-positive for all \(\alpha\). In particular, \(T_\kappa\) is
\(F\)-positive, or in other words, \(T_\kappa\) has nonempty intersection with
every set in \(F\). The set of fixed points of \(j\) below \(\delta\) belongs to
\(F\), so \(T_\kappa\) contains an ordinal \(\eta\) that is fixed by \(j\). Now
\(\eta\in S_\alpha\) for some \(\alpha < \kappa\), and therefore \( \eta =
j(\eta)\in j(S_\alpha) = T_{j(\alpha)}\). It follows that \(T_{j(\alpha)}\cap
T_\kappa\neq \emptyset\), so since the sets \(\langle T_\alpha \mid \alpha <
\kappa\rangle\) are pairwise disjoint, \(j(\alpha) = \kappa\). This contradicts
that \(\kappa\) is the critical point of \(j\).

Using the L\"owenheim-Skolem cardinals, it is easy to show that \(F\) is
\(V_{\lambda}\)-complete. Therefore by \cref{LSUlam}, \(F\) is atomic. 

We now show that \(F\subseteq U\). Suppose \(k\in \mathcal E\). As we noted in
the first paragraph, \(k(U) = U\). Therefore \(k\) is a countably complete
Lipschitz homomorphism with \(k^{-1}[U] = U\). If \(U\) contains the set of
ordinals that are not fixed by \(k\), then \(k\) witnesses that \(U\) is
strictly below \(U\) in the Ketonen order, which is impossible. Since \(U\) is
an ultrafilter, \(U\) must instead concentrate on fixed points of \(k\). Since
\(U\) is \(V_{\lambda}\)-complete, it follows that \(U\) contains the basis
generating \(F\) as in \cref{FixedPointFilter}, so \(F\subseteq U\). 

Since \(F\) is atomic, there is some atom \(A\) of \(F\) such that \(U =
F\restriction A\), and this completes the proof.
\end{proof}
\end{thm}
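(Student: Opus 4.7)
The plan is to proceed by contradiction, choosing $\xi$ to be the least Ketonen rank of a $V_\lambda$-complete ultrafilter $U$ (over some ordinal $\delta < \theta_{\epsilon+2}$) for which the theorem fails. Then $\xi$ is definable over $\mathcal H_{\epsilon+3}$, hence fixed by $j^\star$. Let $\mathcal E$ be the ordinal definable set of sufficiently elementary embeddings $k : V_{\epsilon+3}\to V_{\epsilon+3}$ with $k(\xi) = \xi$. Replaying the key step of the proof of \cref{AlmostUA}, each such $k$ restricts to a countably complete Lipschitz homomorphism on $P(\delta)$ witnessing $U\E k(U)$; since $k(U)$ also has Ketonen rank $\xi$, antisymmetry of the Ketonen order forces $k(U) = U$. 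Set $F = \mathcal F^{V_\lambda}(\mathcal E,\delta)$, which is ordinal definable.

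First I would show $F\subseteq U$. If some $k\in \mathcal E$ had $\{\alpha < \delta : k(\alpha)\neq \alpha\}\in U$, then $k\restriction P(\delta)$ would become a strongly Lipschitz countably complete homomorphism with $k^{-1}[U]=U$, and by \cref{LipKet} this would yield $U\sE U$, contradicting wellfoundedness of the Ketonen order (\cref{KetonenWF}). Thus every $k\in \mathcal E$ fixes a $U$-large set of ordinals below $\delta$, and by $V_\lambda$-completeness of $U$, every basic generator $\textnormal{Fix}(\sigma,\delta)$ of $F$ (with $\sigma\preceq^{*} V_\beta$ for some $\beta < \lambda$) lies in $U$.

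Next I would show $F$ is $\kappa$-saturated where $\kappa = \textnormal{crit}(j)$, via Woodin's Kunen-inconsistency argument: given a pairwise disjoint sequence $\langle S_\alpha \mid \alpha < \kappa\rangle$ of $F$-positive sets, set $\langle T_\alpha \mid \alpha < j(\kappa)\rangle = j(\langle S_\alpha\rangle)$; the definability of $F$ over $\mathcal H_{\epsilon+3}$ keeps each $T_\alpha$ $F$-positive, so $T_\kappa$ must meet the $F$-large set of common fixed points of $j$ itself (after localizing $j$ into $\mathcal E$), producing some fixed point $\eta\in T_\kappa\cap S_\alpha$ with $\eta\in T_{j(\alpha)}$, so $j(\alpha)=\kappa$, contradiction. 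The L\"owenheim-Skolem hypothesis on $\lambda$ then yields $V_\lambda$-completeness of $F$, so \cref{LSUlam} applies and $F$ is atomic. Choosing an atom $A$ of $F$ with $A\in U$ gives $U = F\restriction A$.

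The main obstacle I anticipate is the calibration of $\mathcal E$: it must be chosen OD in a way that is small enough for the filter $F$ to be nontrivial, yet large enough to contain $j$ (and iterates of $j$) so that both the absorption step $k(U)=U$ and the Kunen-style saturation argument apply. A related subtlety is verifying $V_\lambda$-completeness of $F$ from the L\"owenheim-Skolem hypothesis, since the basis generating $F$ is indexed by subsets of $\mathcal E$ rather than by a single set, and one must ensure that intersections of fewer than $\theta(V_\alpha)$-many generators land back in $F$; this is where the hypothesis that $\lambda$ is a \emph{limit} of L\"owenheim-Skolem cardinals (rather than just one such cardinal) will be essential.
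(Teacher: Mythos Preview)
Your proposal is correct and follows essentially the same approach as the paper's proof: the same minimal-rank setup, the same choice of $\mathcal E$ (the paper specifies $\Sigma_5$-elementary embeddings fixing $\xi$), the same Lipschitz argument giving $k(U)=U$ and $F\subseteq U$, the same Woodin-style saturation step, and the same appeal to \cref{LSUlam} for atomicity. The only differences are cosmetic ordering (the paper proves saturation before $F\subseteq U$) and that the paper resolves your anticipated ``calibration'' obstacle exactly as you suggest, by fixing a concrete level of elementarity large enough that $j$ itself lies in $\mathcal E$.
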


A number of interesting questions remain. We state them in the context of
\(I_0\), which is arguably the simplest special case, but obviously the same
questions are relevant in the choiceless large cardinal context.
\begin{qst}Assume \(I_0\). In \(L(V_{\lambda+1})\), is there a surjection from
\(V_{\lambda+1}\) onto the set of \(\lambda^+\)-complete ultrafilters over
\(\lambda^+\)?
\end{qst}
The question is at least somewhat subtle, since one can show that in
\(L(V_{\lambda+1})\), there is a \(\delta < \theta_{\lambda+2}\) such that there
is no surjection from \(V_{\lambda+1}\) onto the set of \(\lambda^+\)-complete
ultrafilters over \(\delta\). In fact, one can take \(\delta =
(\delta^2_1)^{L(V_{\lambda+1})}\). This is exactly parallel to the situation in
\(L(\mathbb R)\). An even more basic question is whether the ultrapower of
\(\lambda^+\) by the unique normal ultrafilter over \(\lambda^+\) concentrating
on ordinals of cofinality \(\omega\) is smaller than \(\lambda^{+\lambda}\).

Another question, directly related to \cref{AlmostUA}, concerns the size of
antichains in the Ketonen order:
\begin{qst}
Assume \(I_0\). In \(L(V_{\lambda+1})\), if \(\langle U_\alpha \mid \alpha <
\lambda\rangle\) is a sequence of \(\lambda^+\)-complete ultrafilters over
ordinals, must there be \(\alpha \leq \beta < \lambda\) such that \(U_\alpha \E
U_\beta\)?
\end{qst}
A positive answer would bring us even closer to a ``proof of the Ultrapower
Axiom" from choiceless cardinals. Actually one can prove a weak version of this
for \(\lambda^+\)-sequences of ultrafilters, whose statement and proof are
omitted.
\subsection{The filter extension property}
We now turn to a different application of the Ketonen order: extending filters
to ultrafilters. For this, we need the Ketonen order on countably complete
filters, which was introduced in \cite{UA}:
\begin{defn}\label{KFilterDef}
Suppose \(\delta\) is an ordinal. The {\it Ketonen order on filters} is defined
on countably complete filters \(F\) and \(G\) on \(\delta\) as follows:
\begin{itemize}
\item \(F\sE G\) if \(F\subseteq G\text{-lim}_{\alpha < \delta} F_\alpha\) where
for \(G\)-almost all \(\alpha < \delta\), \(F_\alpha\) is a countably complete
filter over \(\delta\) with \(\alpha\in F_\alpha\).
\item \(F\E G\) if \(F\subseteq G\text{-lim}_{\alpha < \delta} F_\alpha\) where
for \(G\)-almost all \(\alpha < \delta\), \(F_\alpha\) is a countably complete
filter over \(\delta\) with \(\alpha+1\in F_\alpha\).
\end{itemize}
\end{defn}
The notation is a bit unfortunate since the Ketonen order on ultrafilters
(\cref{KetonenDef}) need not be equal to the restriction of the Ketonen order on
filters to the class of ultrafilters (although the latter order is an extension
of the former one). For example, assuming \(I_0\), in \(L(V_{\lambda+1})\), any
\(\omega\)-club ultrafilter over \(\lambda^+\) lies below any \(\omega_1\)-club
ultrafilter over \(\lambda^+\) in the Ketonen order on filters, but not in the
Ketonen order on ultrafilters. (We do not know whether the two Ketonen orders
can diverge assuming ZFC, though it seems very likely that this can be forced.)
The Ultrapower Axiom obviously implies that the Ketonen order on filters
coincides with the Ketonen order on ultrafilters. We will not make substantial
use of the Ketonen order on ultrafilters for the rest of the paper, so this
ambiguity causes no real problem.

A distinctive feature of the Ketonen order on filters is that \(\E\) is not
antisymmetric; similarly \(F\E G\) but \(G\not \E F\) does not imply \(F\sE G\).
This makes it hard to generalize arguments like \cref{AlmostUA} from countably
complete ultrafilters to countably complete filters. Still, many of the key
combinatorial properties of the Ketonen order do generalize. For example, it is
easy to see that the Ketonen order on filters is transitive. Most importantly,
we show in the appendix that this order is wellfounded:
\begin{repthm}{KFilterWF}[DC] The Ketonen order on countably complete filters is
wellfounded.\qed
\end{repthm}

In the choiceless context, we say a cardinal \(\kappa\) is {\it strongly
compact} if for every set \(X\), there is a \(\kappa\)-complete fine ultrafilter
over \(P_\kappa(X)\). Suppose \(j : V\to V\) is an elementary embedding and
\(\lambda\)-DC holds where \(\lambda = \kappa_\omega(j)\). It seems possible
that \(\lambda^+\) is then strongly compact. While we do not know how to prove
this, and expect it is not provable, we can establish a consequence of strong
compactness that is equivalent to strong compactness in ZFC. The consequence we
are referring to is the {\it filter extension property}, which is said
to hold at \(\kappa\) if every \(\kappa\)-complete filter over an ordinal
extends to a \(\kappa\)-complete ultrafilter. If \(\kappa\) is strongly compact,
then a standard argument, which does not require the Axiom of Choice, shows that
the filter extension property holds at \(\kappa\). (On the other hand,
the proof that every \(\kappa\)-complete filter extends to a \(\kappa\)-complete
ultrafilter does use the Axiom of Choice, and in fact any cardinal with this
stronger form of the filter extension property must be inaccessible.)
\begin{thm}
Suppose \(j : V\to V\) is an elementary embedding. Assume
\(\lambda\)-\textnormal{DC} holds where \(\lambda = \kappa_\omega(j)\). Then
every \(\lambda^+\)-complete filter over an ordinal extends to a \(\lambda^+\)-complete
ultrafilter.\qed
\end{thm}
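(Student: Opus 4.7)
The plan is to argue by contradiction, using the wellfoundedness of the Ketonen order on countably complete filters (\cref{KFilterWF}), which is available since \(\lambda\)-DC implies DC. Call a \(\lambda^+\)-complete filter on some ordinal \emph{bad} if it has no \(\lambda^+\)-complete ultrafilter extension, and suppose such a bad filter \(F\) on an ordinal \(\delta\) exists.

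The central technical step is a descent lemma: from any bad \(\lambda^+\)-complete filter \(F\) on an ordinal \(\delta\), produce another bad \(\lambda^+\)-complete filter \(F'\) with \(F' \sE F\). Given this lemma, iterate using \(\lambda\)-DC (in fact DC is enough) to construct an infinite \(\sG\)-descending sequence of bad filters, contradicting \cref{KFilterWF}.

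To establish the descent lemma, the plan is to use the elementary embedding \(j\) to build, for each \(\alpha < \delta\) (on an \(F\)-positive set of indices), a natural refinement \(F_\alpha\) of \(F\) that concentrates on \(\alpha\) — so \(\alpha \in F_\alpha\) in the sense of \cref{KFilterDef} — and such that \(F \subseteq F\text{-lim}_{\alpha<\delta} F_\alpha\). The \(F_\alpha\) will be generated by \(F\) together with data coming from the derived ultrafilters \(U_s = \{A\subseteq\delta : s\in j(A)\}\) associated with seeds \(s\in j(\delta)\); the key is to choose seeds in the range \(s < j(\lambda^+)\) so that the relevant derived objects are \(\lambda^+\)-complete. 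If some \(F_\alpha\) is bad, set \(F' = F_\alpha\) and we are done, since concentration of \(F_\alpha\) on \(\alpha\) together with \(F \subseteq F\text{-lim}_\alpha F_\alpha\) yields \(F' \sE F\).

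The main obstacle is to rule out the alternative scenario in which every \(F_\alpha\) is good. In that case, use \(\lambda\)-DC to choose a sequence \(\langle U_\alpha \rangle_{\alpha<\delta}\) of \(\lambda^+\)-complete ultrafilter extensions \(U_\alpha \supseteq F_\alpha\), and then form a sum or limit of the \(U_\alpha\) along an auxiliary countably complete ultrafilter extracted from \(j\) (for instance, a derived ultrafilter \(U_s\) with \(s < j(\lambda^+)\) selected so that the limit extends \(F\) — this is possible precisely because \(F \subseteq F\text{-lim}_\alpha F_\alpha\) and \(\lambda = \kappa_\omega(j)\) is the closure point of the critical sequence). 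The resulting ultrafilter is \(\lambda^+\)-complete, since it is a \(\lambda^+\)-sum of \(\lambda^+\)-complete ultrafilters, and extends \(F\), contradicting the badness of \(F\). Making this patching step precise — in particular, guaranteeing the completeness of the limit and verifying that it actually extends \(F\) — is where the interplay between \(j\), \(\lambda\)-DC, and the construction of the \(F_\alpha\) will have to be calibrated carefully.
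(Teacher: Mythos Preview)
Your outline has the right starting point --- take a Ketonen-minimal bad filter and aim for a contradiction --- but the descent mechanism you sketch has genuine gaps and misses the central idea of the paper's argument.

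First, two concrete failures. In the ``all \(F_\alpha\) good'' branch you write ``use \(\lambda\)-DC to choose a sequence \(\langle U_\alpha\rangle_{\alpha<\delta}\)'' of ultrafilter extensions; but \(\delta\) is an arbitrary ordinal, possibly far above \(\lambda\), so \(\lambda\)-DC does not let you make \(\delta\)-many choices. Next, you propose to amalgamate the \(U_\alpha\) along ``a derived ultrafilter \(U_s\) with \(s<j(\lambda^+)\)'' and claim the result is \(\lambda^+\)-complete. But a derived ultrafilter of \(j\) on \(\delta\) is only \(\mathrm{crit}(j)\)-complete, and \(\mathrm{crit}(j)<\lambda\); a \(W\)-limit of \(\lambda^+\)-complete ultrafilters is \(\lambda^+\)-complete only if \(W\) is. So the patching step produces, at best, a countably complete ultrafilter, not a \(\lambda^+\)-complete one. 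Finally, in the other branch, ``set \(F'=F_\alpha\)'' does not obviously give \(F_\alpha\sE F\): you would need, for \(F\)-almost every \(\beta\), a filter concentrating on \(\beta\) whose \(F\)-limit contains \(F_\alpha\), and nothing in your setup supplies that.

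The paper's proof (\cref{FilterExtensionThm}) avoids all of this by a different route. One fixes a Ketonen-minimal counterexample \(F\) directly (no iterated descent), and then builds the \emph{fixed point filter} \(G=\mathcal F^{V_\lambda}(\mathcal E,\eta)\) generated by the common fixed points of a suitable class of embeddings having \(F\) in their range. Woodin's Kunen-style argument shows \(G\) is \(\lambda\)-saturated, and the L\"owenheim--Skolem hypothesis (automatic here from \(j:V\to V\)) plus \cref{LSUlam} makes \(G\) atomic. The heart of the proof is showing \(G\cup F\) generates a proper filter; if so, some atom of \(G\) is compatible with \(F\) and yields the desired ultrafilter extension. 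If not, one produces --- not a single \(F_\alpha\), but an \emph{intersection} \(\bigcap_{x\in V_\beta}F_x\) of preimages of \(F\) under embeddings --- which is \(V_\lambda\)-complete and strictly Ketonen-below \(F\); by minimality this intersection extends to a \(\lambda^+\)-complete ultrafilter \(W\), and a second L\"owenheim--Skolem argument shows \(W\) cannot actually extend all the \(F_x\), a contradiction. The fixed point filter and its atomicity are the missing engine in your plan.
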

This is an immediate consequence of the following more local theorem:
\begin{thm}\label{FilterExtensionThm}
Suppose \(\epsilon\) is an even ordinal and \(\nu \leq \epsilon\) is a limit of
L\"owenheim-Skolem cardinals. Suppose there is an elementary embedding \(j:
V_{\epsilon+3}\to V_{\epsilon+3}\) with \(\kappa_\omega(j) \leq \nu\). Then
every \(V_\nu\)-complete filter over an ordinal less than
\(\theta_{\epsilon+2}\) extends to a \(V_\nu\)-complete ultrafilter. 
\begin{proof}
For any elementary embedding \(k : V_{\epsilon+2}\to V_{\epsilon+2}\), let \(k'
: \mathcal H_{\epsilon+3}\to \mathcal H_{\epsilon+3}\) be defined by \(k' =
(k^{+})^\star\), assuming that \(k^{+} : V_{\epsilon+3}\to V_{\epsilon+3}\) is
\(\Sigma_1\)-elementary, so that \((k^{+})^\star\) is well-defined. 

Let \(\lambda = \kappa_\omega(j)\), where \(j\) is as in the statement of the
theorem. We begin with a basic observation, whose proof is lifted from a claim
in \cite{KWB}: for any \(n < \omega\) and any \(\xi\leq \epsilon\), there is a
\(\Sigma_n\)-elementary embedding \(i : \mathcal H_{\epsilon+3}\to \mathcal
H_{\epsilon+3}\) such that \(\kappa_\omega(i) = \lambda\) and \(i(\xi) = \xi\).
To see this, suppose the claim fails for some \(n\). Consider the least \(\xi\)
for which there is no such embedding. Then \(\xi\) is first-order definable from
\(\lambda\) over \(\mathcal H_{\epsilon+3}\), so \(j'(\xi) = \xi\). But then
\(j'\) itself witnesses that \(\xi\) is not a counterexample to our basic
observation, and this is a contradiction.

Suppose towards a contradiction that the theorem fails. Fix an ordinal \(\eta <
\theta_{\epsilon+2}\) and a filter \(F\) that is minimal in the Ketonen order
among all \(V_{\nu}\)-complete filters over \(\eta\) that do not extend to
\(V_\nu\)-complete ultrafilters. 

Fix natural numbers \(n_0 < n_1 < n_2\) that are sufficiently far apart for the
following proof to work. For concreteness, one can take \(n_0 = 10\), \(n_1 =
15\), and \(n_2 = 20\).

Let \(\mathcal E\) be the set containing every elementary embedding \(k :
V_{\epsilon+2}\to V_{\epsilon+2}\) whose extension \(k' : \mathcal
H_{\epsilon+3}\to \mathcal H_{\epsilon+3}\) is well-defined and
\(\Sigma_{n_0}\)-elementary, fixes \(\nu\) and \(\eta\), and has \(F\) in its
range. Note that \(\mathcal E\) is \(\Sigma_{n_1}\)-definable over
\(V_{\epsilon+3}\). Since \(\nu\) and \(\eta\) can be coded by a single ordinal
\(\xi\leq\epsilon\), we can fix a \(\Sigma_{n_2}\)-elementary embedding \(i :
\mathcal H_{\epsilon+3}\to \mathcal H_{\epsilon+3}\) with \(\kappa_\omega(i) =
\lambda\), \(i(\nu) = \nu\), and \(i(\eta) = \eta\).

Let \(G = \mathcal F^{V_\nu}(\mathcal D,\eta)\) where \(\mathcal D\) is the set
of embeddings \(k' : \mathcal H_{\epsilon+3}\to \mathcal H_{\epsilon+3}\)
induced by embeddings \(k\in \mathcal E\) as in the previous paragraph. The
filter \(G\) is \(\lambda\)-saturated, as a consequence of Woodin's proof of the
Kunen inconsistency theorem. Suppose towards a contradiction that there is a
partition \(\langle S_\alpha : \alpha < \lambda\rangle\) of \(\eta\) into
\(G\)-positive sets. Let \[\langle T_\alpha : \alpha < \lambda\rangle =
i(\langle S_\alpha :\alpha < \lambda\rangle)\] Since \(\mathcal E\) is
\(\Sigma_{n_1}\)-definable over \(V_{\epsilon+3}\) and \(i\) is
\(\Sigma_{n_2}\)-elementary on \(V_{\epsilon+3}\), \(i(G) = \mathcal
F^{V_{\nu}}(\mathcal D,\eta)\), where \(\mathcal D\) is the set of embeddings
\(k' : \mathcal H_{\epsilon+2}\to \mathcal H_{\epsilon+2}\) induced by
embeddings \(k\in i(\mathcal E)\). Notice that \(i\restriction V_{\epsilon+2}\in
i(\mathcal E)\): this follows easily by our choice of \(i\) and the fact that
\(i(F)\in \text{ran}(i)\). Let \(\kappa\) be the critical point of \(i\). Since
\(T_\kappa\) is \(i(G)\)-positive, it follows that \(\{\xi \mid i(\xi) =
\xi\}\cap T_\kappa\) is nonempty. Fix \(\xi\) such that \(i(\xi) = \xi\) and
\(\xi\in T_\kappa\). Note that \(\xi\in S_\alpha\) for some \(\alpha\) since
\(\langle S_\alpha \mid \alpha < \lambda\rangle\) is a partition of \(\eta\).
Therefore since \(i(\xi) = \xi\), \(\xi \in i(S_\alpha) = T_{i(\alpha)}\). Since
\(\kappa\) is the critical point of \(i\), \(i(\alpha)\neq \kappa\). But
\(\xi\in T_\kappa\cap T_{i(\alpha)}\), and this contradicts that \(\langle
T_\alpha \mid \alpha < \lambda\rangle\) is a partition.

Since \(\nu\) is a limit of L\"owenheim-Skolem cardinals, \(G\) is
\(V_\nu\)-complete, and so \cref{LSUlam} implies that there is some \(\rho <
\lambda\) and a partition \(\langle A_\alpha \mid \alpha < \rho\rangle\) of
\(\eta\) into \(G\)-positive sets such that \(G\restriction A_\alpha\) is an
ultrafilter for all \(\alpha < \rho\).

The main claim is that \(G\cup F\) generates a proper filter. Granting the
claim, the proof is completed as follows. Let \(H\) be the filter generated by
\(G\cup F\). Since \(G\) and \(F\) are \(V_{\nu}\)-complete filters, given that
\(H\) is proper, in fact \(H\) is \(V_{\nu}\)-complete. In particular, for some
\(\alpha < \rho\), \(A_\alpha\) is \(H\)-positive. Let \(U = G\restriction
A_\alpha\), which is an ultrafilter by definition. Since \(H\restriction
A_\alpha\) is a proper filter and the ultrafilter \(U = G\restriction A_\alpha\)
is contained in \(H\restriction A_\alpha\), in fact, \(H\restriction A_\alpha =
U\). Since \(F\subseteq H\subseteq U\), \(U\) is a \(V_{\nu}\)-complete
extension of \(F\). This contradicts our choice of \(F\), and completes the
proof modulo the claim.

We finish by showing that \(G\cup F\) generates a proper filter. Suppose it does
not, so there is a set in \(G\) whose complement is in \(F\). Since \(G =
\mathcal F^{V_{\nu}}(\mathcal E,\eta)\), this means that there is some \(\beta <
\nu\) and a sequence \(\langle i_x \mid x\in V_{\beta}\rangle\subseteq \mathcal
E\) such that the set 
\[T = \bigcup \{\alpha < \eta \mid i_x'(\alpha) > \alpha\}\] belongs to \(F\).
Let \(j_x = i_x'\).

Fix \(x\in V_\beta\) for the rest of the paragraph. Since \(i_x\in \mathcal E\),
there is a filter \(F_x\) such that \(j_x(F_x) = F\). Moreover, \(j_x:\mathcal
H_{\epsilon+3}\to \mathcal H_{\epsilon+3}\) is \(\Sigma_{n_0}\)-elementary,
\(j_x(\nu) = \nu\), and \(j_x(\eta) = \eta\). It follows that \(F_x\) does not
extend to a \(V_\nu\)-complete ultrafilter: this is because \(j_x\) is
\(\Sigma_{n_0}\)-elementary and it is a \(\Sigma_{n_0}\)-expressible fact in
\(\mathcal H_{\epsilon+3}\) that \(j_x(F_x) = F\) does not extend to a
\(V_\nu\)-complete ultrafilter. 

Let \(D^x_\alpha\) denote the ultrafilter over \(\eta\) derived from \(j_x\)
using \(\alpha\). For \(\alpha\in T\), let 
\[D_\alpha = \bigcap \{D^x_\alpha \mid j_x(\alpha) > \alpha\}\] Thus for all
\(\alpha\in T\), \(D_\alpha\) is a countably complete filter and \(\alpha\in
D_\alpha\).

Notice that 
\begin{equation}\label{StratAmalgEqn}\bigcap_{x\in V_\beta} F_x\subseteq F\text{-lim}_{\alpha\in T}D_\alpha\end{equation}
The proof is a matter of unwinding the definitions. Fix \(A\in \bigcap_{x\in V_\beta} F_x\). For each \(x\in V_\beta\), let \(S_x = \{\alpha <\eta \mid A\in D^x_\alpha\}\). In other words, \(S_x = j_x(A)\), and so since \(A\in F_x\), \(S_x\in j_x(F_x) = F\). Let \(S = \bigcap_{x\in V_\beta} S_x\). Since \(F\) is \(V_{\nu}\)-complete, \(S\in F\). By definition, for \(\alpha \in S\cap T\), \(A\in \bigcap D^x_\alpha\subseteq D_\alpha\). Since \(S\cap T\in F\), this means that for \(F\)-almost all \(\alpha\), \(A\in D_\alpha\). In other words, \(A\in F\text{-lim}_{\alpha\in T}D_\alpha\), as desired.

Since \(F_x\) is \(V_\nu\)-complete for every \(x\in V_\beta\), \(\bigcap_{x\in V_\beta} F_x\) is a \(V_\nu\)-complete filter. Since \(\alpha\in D_\alpha\) for all \(\alpha\in T\), \cref{StratAmalgEqn} implies that \(\bigcap_{x\in V_\beta} F_x\sE F\). Since \(F\) is a minimal counterexample to the theorem, it follows that there is a \(V_\nu\)-complete ultrafilter \(W\) that extends \(\bigcap_{x\in V_\beta} F_x\). 

Recall that for every \(x\in V_\beta\), \(F_x\) does not extend to a \(V_\nu\)-complete ultrafilter. It follows that there is a set in \(W\) whose complement belongs to \(F_x\). Since \(\nu\) is a limit of L\"owenheim-Skolem cardinals, for some ordinal \(\gamma > \epsilon\), there is an elementary substructure \(X\prec V_\gamma\) with \(V_\beta\subseteq X\), \(\langle F_x \mid x\in V_\beta\rangle\in X\), \(W\in X\), and \(X\preceq^{*} V_{\zeta}\) for some \(\zeta < \nu\). Let \(S\) be the intersection of all \(W\)-large sets that belong to \(X\). Since \(X\preceq^{*} V_{\zeta}\), \(\zeta < \nu\), and \(W\) is \(V_{\nu}\)-complete, \(S\in W\). We claim that the complement of \(S\) belongs to \(\bigcap_{x\in V_\beta} F_x\). To see this, fix an \(x\in V_\beta\). There is a \(W\)-large set \(A\in X\) whose complement belongs to \(F_x\) since \(X\) is an elementary substructure of \(V_\gamma\) that contains \(F_x\) and \(W\). Since \(S\) is the intersection of all \(W\)-large sets in \(X\), \(S \subseteq A\). Hence the complement of \(S\) contains the complement of \(A\), and it follows that the complement of \(S\) belongs to \(F_x\). 

The existence of a set \(S\in W\) whose complement is in \(\bigcap_{x\in V_{\beta}} F_x\) contradicts that \(W\) extends \(\bigcap_{x\in V_\beta} F_x\). This contradiction proves the claim that \(G\cup F\) generates a proper filter, and thereby proves the theorem as explained above.
\end{proof}
\end{thm}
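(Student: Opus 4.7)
The plan is to argue by contradiction, producing a $V_\nu$-complete extension of $F$ as a restriction of an ordinal-definable fixed-point filter. Let me fix the setup: supposing the theorem fails, take $F$ to be a $\sE$-minimal $V_\nu$-complete filter on some $\eta < \theta_{\epsilon+2}$ with no $V_\nu$-complete ultrafilter extension; this is legitimate since the Ketonen order on filters is wellfounded under DC (\cref{KFilterWF}). The key background fact I would first establish is that for every $n<\omega$ and every $\xi \leq \epsilon$, there is a $\Sigma_n$-elementary $i : \mathcal H_{\epsilon+3}\to \mathcal H_{\epsilon+3}$ with $\kappa_\omega(i) \leq \nu$ and $i(\xi) = \xi$; the trick, following the argument in \cite{KWB}, is that otherwise the least offending $\xi$ would be definable from $\nu$ in $\mathcal H_{\epsilon+3}$, which $j^\star$ would then fix, a contradiction. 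Taking $\xi$ large enough to code $\nu$, $\eta$, and $F$, this yields an appropriately elementary $i$ with $i(\nu)=\nu$, $i(\eta)=\eta$, and $i(F)=F$.

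Next I would define a good ordinal-definable class of embeddings. Let $\mathcal E$ consist of those $k \in \mathscr E(V_{\epsilon+2})$ whose canonical lift $k' = (k^{+})^\star$ to $\mathcal H_{\epsilon+3}$ is $\Sigma_{n_0}$-elementary (for some fixed $n_0$ large enough that $F$ being in range, etc., are verifiable) with $k'(\nu)=\nu$, $k'(\eta)=\eta$, and $F\in \mathrm{ran}(k')$. Let $\mathcal D = \{k' : k\in \mathcal E\}$ and set $G = \mathcal F^{V_\nu}(\mathcal D,\eta)$. Since $\nu$ is a limit of L\"owenheim-Skolem cardinals, $G$ is $V_\nu$-complete. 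To show $G$ is $\lambda$-saturated (where $\lambda = \kappa_\omega(i)$), I run Woodin's Kunen-style argument: if $\langle S_\alpha \mid \alpha < \lambda\rangle$ were a partition of $\eta$ into $G$-positive sets, apply a sufficiently elementary $i$ produced above to this sequence to get $\langle T_\alpha \mid \alpha < \lambda\rangle = i(\langle S_\alpha\rangle)$; the $\mathcal E$-fixed-point set below $\eta$ is $G$-large and $i(G)$-positivity of $T_\kappa$ (with $\kappa = \mathrm{crit}(i)$) then forces a fixed point $\xi$ of $i$ into $T_\kappa \cap T_{i(\alpha)}$ for some $\alpha$, contradicting disjointness. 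By \cref{LSUlam}, $G$ then splits into fewer than $\lambda$-many atoms $\langle A_\alpha \mid \alpha < \rho\rangle$, with each $G\!\restriction\! A_\alpha$ an ultrafilter.

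The main claim, and the main obstacle, is that $G\cup F$ generates a proper filter. Granting this, $V_\nu$-completeness of both filters implies the generated filter is $V_\nu$-complete, so some atom $A_\alpha$ is $G\cup F$-positive, and then $G\restriction A_\alpha$ is the desired $V_\nu$-complete ultrafilter extending $F$, contradicting the choice of $F$. To prove compatibility, suppose for contradiction there were $\beta < \nu$ and $\langle k_x \mid x\in V_\beta\rangle \subseteq \mathcal E$ with $T = \bigcup_{x\in V_\beta}\{\alpha < \eta \mid k'_x(\alpha) > \alpha\}\in F$. For each $x$, elementarity provides a filter $F_x$ with $k'_x(F_x) = F$; crucially, since ``$F$ does not extend to a $V_\nu$-complete ultrafilter'' is $\Sigma_{n_0}$-expressible in $\mathcal H_{\epsilon+3}$ from $\nu, \eta, F$, none of the $F_x$ extend either.

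Now define, for $\alpha \in T$, the derived filter $D_\alpha = \bigcap\{D^x_\alpha \mid k'_x(\alpha) > \alpha\}$, where $D^x_\alpha$ is the ultrafilter derived from $k_x$ using $\alpha$; each $D_\alpha$ is countably complete with $\alpha \in D_\alpha$. A direct unwinding of definitions (using $k'_x(F_x) = F$ and $V_\nu$-completeness of $F$) shows $\bigcap_{x\in V_\beta} F_x \subseteq F\text{-lim}_{\alpha\in T} D_\alpha$, so $\bigcap_{x\in V_\beta} F_x \sE F$. By Ketonen-minimality of $F$, some $V_\nu$-complete ultrafilter $W$ extends $\bigcap_{x\in V_\beta} F_x$. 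To finish, use a L\"owenheim-Skolem substructure $X \prec V_\gamma$ with $V_\beta\subseteq X$, $\langle F_x\rangle, W \in X$, and $X \preceq^* V_\zeta$ for $\zeta < \nu$: by $V_\nu$-completeness of $W$, the intersection $S$ of all $W$-large sets in $X$ lies in $W$; elementarity of $X$ plus non-extendibility of each $F_x$ produces for each $x\in V_\beta$ a set $A\in X$ with $A\in W$ and $\eta\setminus A\in F_x$, whence $S\subseteq A$ and so $\eta\setminus S\in F_x$ for every $x$. Thus $\eta\setminus S\in \bigcap_{x\in V_\beta} F_x \subseteq W$, contradicting $S\in W$.
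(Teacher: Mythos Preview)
Your proposal is correct and follows essentially the same approach as the paper's proof: the same Ketonen-minimal counterexample $F$, the same fixed-point filter $G$ built from a class $\mathcal E$ of embeddings with $F$ in their range, the same Woodin--Kunen saturation argument, the same atomicity via \cref{LSUlam}, and the same proof of compatibility of $G$ and $F$ via $\bigcap_x F_x \sE F$ and a L\"owenheim--Skolem hull. One small over-specification: you ask for $i(F)=F$, but the basic observation only produces embeddings fixing a prescribed \emph{ordinal}, and the paper does not arrange (or need) $i(F)=F$; what is actually required for the saturation step is that $i\restriction V_{\epsilon+2}\in i(\mathcal E)$, which follows simply from $i(F)\in\operatorname{ran}(i)$.
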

By a similar argument, we also have the following consequence of \(I_0\):
\begin{thm}[ZFC]\label{I0FilterExtensionThm}
Suppose there is an elementary embedding from \(L(V_{\lambda+1})\) to \(L(V_{\lambda+1})\) with critical point below \(\lambda\). Then in \(L(V_{\lambda+1})\), every \(\lambda^+\)-complete filter over an ordinal less than \(\theta_{\lambda+2}\) extends to a \(\lambda^+\)-complete ultrafilter.\qed
\end{thm}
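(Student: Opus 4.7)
The plan is to transplant the proof of \cref{FilterExtensionThm} into the inner model $M = L(V_{\lambda+1})$. Working in ZFC, fix an elementary embedding $j : M \to M$ with $\text{crit}(j) < \lambda$. Since $M$ satisfies $\textnormal{DC}_{V_{\lambda+1}}$, the Ketonen order on $\lambda^+$-complete filters of $M$ is wellfounded (\cref{KFilterWF} applied in $M$), so assume toward contradiction that $F$ is an $\E$-minimal $\lambda^+$-complete filter on some ordinal $\eta < \theta_{\lambda+2}^M$ that fails to extend to a $\lambda^+$-complete ultrafilter in $M$.

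Next I would build an ordinal definable fixed-point filter. By a standard reflection (the parameters $\eta$ and $F$ are coded in $V_{\lambda+1}$, which lies in the range of every elementary self-embedding of $M$), for each sufficiently large $n$ there is an embedding $i : M \to M$ which is $\Sigma_n$-elementary, has $\text{crit}(i) < \lambda$, fixes $\eta$, and has $F \in \text{ran}(i)$. Let $\mathcal{E}$ be the set of such embeddings, with $n$ chosen large enough that ``$F$ does not extend to a $\lambda^+$-complete ultrafilter'' is $\Sigma_n$-expressible over $M$, and put $G = \mathcal{F}^{V_\lambda}(\mathcal{E}, \eta)$. Then $G \in M$ is $\lambda^+$-complete, and it is $\lambda$-saturated in $M$ by Woodin's Kunen-inconsistency argument, exactly as in \cref{FilterExtensionThm}: applying an $i \in \mathcal{E}$ with critical point $\kappa$ to a purported partition $\langle S_\alpha : \alpha < \lambda \rangle$ into $G$-positive pieces, the $G$-positive piece $T_\kappa$ of the image meets the $G$-large set of fixed points of $i$, forcing two distinct pieces of the image partition to overlap at an $i$-fixed ordinal. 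Since $M$ satisfies ZFC, Ulam splitting applied in $M$ decomposes $G$ into fewer than $\lambda$ many ultrafilter atoms $G \restriction A_\alpha$.

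The decisive step is to show $G \cup F$ generates a proper filter; once this is established, some atom $A$ of $G$ is $G \cup F$-positive, and $G \restriction A$ is a $\lambda^+$-complete ultrafilter extending $F$, the desired contradiction. Suppose instead that $G \cup F$ is inconsistent, so there are $\beta < \lambda$ and $\langle k_x : x \in V_\beta \rangle \subseteq \mathcal{E}$ with $T := \{\alpha < \eta : \exists x\ k_x(\alpha) > \alpha\} \in F$. For each $x$, let $F_x = k_x^{-1}(F)$; for $\alpha \in T$, let $D_\alpha = \bigcap \{D^x_\alpha : k_x(\alpha) > \alpha\}$, where $D^x_\alpha$ is the ultrafilter over $\eta$ derived from $k_x$ using $\alpha$. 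Copying the verification from \cref{FilterExtensionThm}, one checks that $\bigcap_{x\in V_\beta} F_x \subseteq F\text{-lim}_{\alpha \in T} D_\alpha$ with $\alpha \in D_\alpha$ for all $\alpha \in T$; this witnesses $\bigcap_x F_x \sE F$ in the Ketonen order of $M$. By minimality of $F$, $\bigcap_x F_x$ extends to a $\lambda^+$-complete ultrafilter $W$ in $M$, while each $F_x$ individually does not extend (by the full elementarity of $k_x$ applied to $k_x(F_x) = F$), so for each $x$ there is $A_x \in W$ with $\eta \setminus A_x \in F_x$.

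The main obstacle will be the closing move: extracting a single set $S \in W$ whose complement lies in $\bigcap_x F_x$, contradicting $W \supseteq \bigcap_x F_x$. In \cref{FilterExtensionThm} this used a L\"owenheim-Skolem cardinal to build a small elementary submodel $X$ containing $W$ and $\langle F_x : x\in V_\beta\rangle$, and then set $S = \bigcap (W \cap X)$. Here one replaces the choiceless L\"owenheim-Skolem hypothesis with the ordinary ZFC L\"owenheim-Skolem theorem applied in the ambient $V$ to a sufficiently large $V_\gamma$ containing a code for the relevant structure of $M$, then transfers the resulting submodel back into $M$ using that every element of $M$ is definable from an ordinal and an element of $V_{\lambda+1}$. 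Provided the submodel has $M$-cardinality at most $\lambda$ and $W$ is $\lambda^+$-complete in $M$, the set $S = \bigcap (W \cap X)$ belongs to $W$ and its complement lies in each $F_x$ by elementarity, producing the required contradiction and completing the proof.
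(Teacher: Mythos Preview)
Your overall approach is exactly what the paper intends: the paper's own ``proof'' is the single sentence ``By a similar argument,'' so transplanting the argument of \cref{FilterExtensionThm} into $M = L(V_{\lambda+1})$ is precisely what is being asked. The structure of your write-up --- minimal counterexample $F$, fixed-point filter $G$, Woodin's saturation argument, atomic decomposition of $G$, then the Ketonen-order descent to reach a contradiction --- is correct.

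There is, however, one genuine error. You write ``Since $M$ satisfies ZFC, Ulam splitting applied in $M$\ldots''. But $M = L(V_{\lambda+1})$ does \emph{not} satisfy ZFC: if it did, the embedding $j : M \to M$ would violate Kunen's inconsistency inside $M$. The correct justification for the atomic decomposition of $G$ is the same one used in \cref{FilterExtensionThm}, namely \cref{LSUlam}, which requires only that $\lambda$ be a limit of L\"owenheim--Skolem cardinals in $M$. This holds here because the ambient ZFC furnishes a wellorder of $V_\lambda$ lying in $V_{\lambda+1} \subseteq M$, so every strong limit cardinal $\kappa < \lambda$ is a L\"owenheim--Skolem cardinal of $M$ (one builds the required substructures using Skolem functions coming from the wellorder of $V_\lambda$ together with the canonical $\text{OD}$-from-$V_{\lambda+1}$ parametrization of $M$). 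The same observation handles your closing step: you do not need to pass to an ambient $V_\gamma$ and ``transfer back'' --- the small elementary substructure $X$ containing $W$ and $\langle F_x : x \in V_\beta\rangle$ already exists in $M$ by the internal L\"owenheim--Skolem property just described, and then $S = \bigcap(W \cap X) \in W$ by the $\lambda^+$-completeness of $W$ in $M$.
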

\section{Consistency results}\label{ConsistencySection}
\subsection{Introduction}
In a groundbreaking recent development, Schlutzenberg \cite{SchlutzenbergI0} has proved the consistency of the existence of an elementary embedding from \(V_{\lambda+2}\) to \(V_{\lambda+2}\) relative to \(\text{ZF} + I_0\):
\begin{thm}[Schlutzenberg]\label{SchlutzenI0}
Assume \(\lambda\) is an even ordinal and 
\[j : L(V_{\lambda+1})\to L(V_{\lambda+1})\] is an elementary embedding
with \(\textnormal{crit}(j) < \lambda\). 
Let \(M = L(V_{\lambda+1})[j\restriction V_{\lambda+2}]\).
Then \(V_{\lambda+2}\cap M = V_{\lambda+2}\cap L(V_{\lambda+1})\).
Hence \(M\) satisfies that there is an elementary embedding from \(V_{\lambda+2}\) to \(V_{\lambda+2}\).
\end{thm}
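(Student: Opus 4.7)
The target equality is $V_{\lambda+2}\cap M = V_{\lambda+2}\cap L(V_{\lambda+1})$; the $\supseteq$ direction is trivial, and the ``hence'' clause then follows because $i:=j\restriction V_{\lambda+2}^{L(V_{\lambda+1})}$ lies in $M$ and is elementary as a self-map of $V_{\lambda+2}^M=V_{\lambda+2}^{L(V_{\lambda+1})}$. My strategy for the $\subseteq$ inclusion is to show that $i$ is actually definable over $L(V_{\lambda+1})$ from a parameter already in $L(V_{\lambda+1})$, so that adjoining $i$ cannot introduce any new subset of $V_{\lambda+1}$.

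The first input is \cref{DefinabilityThm}: since $\lambda$ is even, $j\restriction V_{\lambda+1}$ is definable over $V_{\lambda+1}$ from $j[V_\lambda]\in V_{\lambda+1}$, so in particular $j\restriction V_{\lambda+1}\in L(V_{\lambda+1})$. The natural candidate for an $L(V_{\lambda+1})$-definition of $i$ is the canonical extension operation of \cref{SuccessorExtensionDef}, performed inside $L(V_{\lambda+1})$: for $A\in V_{\lambda+2}^{L(V_{\lambda+1})}$, set
\[(j\restriction V_{\lambda+1})^+(A)=\bigcup_{\Gamma\in\mathcal{H}_{\lambda+1}^{L(V_{\lambda+1})}}(j\restriction V_{\lambda+1})^\star(A\cap\Gamma).\]
The inclusion $(j\restriction V_{\lambda+1})^+(A)\subseteq j(A)$ is automatic from the elementarity of $j^\star$ and the uniqueness of the $\star$-extension. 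Securing the reverse inclusion amounts exactly to the cofinal embedding property at $\lambda+1$ for $j\restriction V_{\lambda+1}$ within $L(V_{\lambda+1})$, via \cref{ExtensionLma}. Once this is in hand, $i=(j\restriction V_{\lambda+1})^+$ will be definable over $L(V_{\lambda+1})$ from $j[V_\lambda]$, and the conclusion $V_{\lambda+2}\cap M=V_{\lambda+2}\cap L(V_{\lambda+1})$ follows, with $i\in M$ then witnessing the second assertion of the theorem.

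The key obstacle is precisely that $\lambda+1$ is odd, so \cref{ExtensionThm} does not supply the cofinal embedding property for free, and the approach must exploit the hypothesis that $j$ is elementary on all of $L(V_{\lambda+1})$ rather than only on $V_{\lambda+1}$. Concretely, one must show that for any $a\in V_{\lambda+1}$ (which always lies in $\mathcal{H}_{\lambda+1}^{L(V_{\lambda+1})}$), there is some $\Gamma\in\mathcal{H}_{\lambda+1}^{L(V_{\lambda+1})}$ with $a\in (j\restriction V_{\lambda+1})^\star(\Gamma)$. The plan is to exploit elementarity: if $a$ satisfies some $L(V_{\lambda+1})$-property $\varphi$ relative to the parameter $j[V_\lambda]$, then pulling back along $j^\star$ yields a preimage of $a$ residing in a suitable $\Gamma$. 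This reflection step, genuinely using the full $I_0$ strength of $j$, is where the real work lies; once it is carried out, the rest of the argument is a mechanical bookkeeping that combines the canonical extension formula with the $L(V_{\lambda+1})$-definability of $j\restriction V_{\lambda+1}$ from \cref{DefinabilityThm}.
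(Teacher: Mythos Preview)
First, a framing remark: the paper does not prove this theorem. It is stated as Schlutzenberg's result with a citation to \cite{SchlutzenbergI0}, so there is no proof in the paper to compare against.

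Your strategy, however, has a fatal flaw that can be seen without consulting Schlutzenberg's argument. You aim to show that \(i = j\restriction V_{\lambda+2}^{L(V_{\lambda+1})}\) coincides with the canonical extension \((j\restriction V_{\lambda+1})^{+}\) computed inside \(L(V_{\lambda+1})\). If this held, then \(i\) would be definable over \(L(V_{\lambda+1})\) from the parameter \(j[V_\lambda]\in V_{\lambda+1}\), and hence \(M = L(V_{\lambda+1})[i] = L(V_{\lambda+1})\). But \(M\) is \emph{strictly larger} than \(L(V_{\lambda+1})\). To see this, note that from \(i\) one can compute \(j\restriction \theta\), where \(\theta = (\theta_{\lambda+2})^{L(V_{\lambda+1})}\): for any \(\alpha < \theta\), take a prewellordering \(R\in V_{\lambda+2}^{L(V_{\lambda+1})}\) of \(V_{\lambda+1}\) of length \(\alpha\); then \(i(R) = j(R)\) has length \(j(\alpha)\). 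But \(j\restriction \theta\notin L(V_{\lambda+1})\), by exactly the ``standard argument'' sketched in the proof of \cref{ChoicelessWdnThm}: if it were, one could reconstruct \(j\) itself as a class definable over \(L(V_{\lambda+1})\), contradicting Suzuki's \cref{SuzukiThm}.

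So the cofinal embedding property at \(\lambda+1\) that you are hoping to prove \emph{must fail}, and the ``reflection step genuinely using the full \(I_0\) strength'' that you defer cannot be carried out. The content of Schlutzenberg's theorem is precisely that although \(M\supsetneq L(V_{\lambda+1})\), the two models nonetheless agree on \(V_{\lambda+2}\); this requires a genuinely different analysis (of what subsets of \(V_{\lambda+1}\) can be constructed from \(i\)), not a reduction to definability of \(i\) over \(L(V_{\lambda+1})\).
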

It follows that the existence of an elementary embedding from \(V_{\epsilon+2}\) to \(V_{\epsilon+2}\) is equiconsistent with \(I_0\). Moreover, neither hypothesis implies that \(V_{\epsilon+1}^\#\) exists.

If \(\epsilon\) is even and \(\mathcal H_{\epsilon+2}\) satisfies the Collection Principle, every elementary embedding from \(V_{\epsilon+2}\) to \(V_{\epsilon+2}\) extends to a \(\Sigma_0\)-elementary embedding from \(V_{\epsilon+3}\) to \(V_{\epsilon+3}\). Therefore the existence of a \(\Sigma_1\)-elementary embedding from \(V_{\epsilon+3}\) to \(V_{\epsilon+3}\) is in some sense the first rank-to-rank axiom beyond an elementary embedding from \(V_{\epsilon+2}\) to \(V_{\epsilon+2}\). (Also see \cref{Equicon}.) We can prove the existence of sharps from this principle:
\begin{thm}
Suppose \(\epsilon\) is an even ordinal and there is a \(\Sigma_1\)-elementary embedding from \(V_{\epsilon+3}\) to \(V_{\epsilon+3}\). Then \(A^\#\) exists for every \(A\subseteq V_{\epsilon+1}\).
\end{thm}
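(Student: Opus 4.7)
The plan is to show that for each $A \subseteq V_{\epsilon+1}$, the inner model $L[A]$ admits a nontrivial elementary self-embedding, which by Kunen's theorem is equivalent to the existence of $A^\#$. The strategy is to construct an iterable $L[A]$-ultrafilter on $\mathrm{crit}(j)$ from $j$ and then iterate it through all ordinals to produce a proper class of Silver indiscernibles for $L[A]$.

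First I would observe that since truth in $V_{\epsilon+2}$ is $\Delta_1$-expressible over $V_{\epsilon+3}$ with $V_{\epsilon+2}$ as a parameter, the $\Sigma_1$-elementarity of $j$ on $V_{\epsilon+3}$ implies that $i := j\restriction V_{\epsilon+2}$ is fully elementary. Let $\kappa = \mathrm{crit}(j)$ and $\lambda = \kappa_\omega(j)$; then $\kappa < \lambda \leq \epsilon$, so $P(\kappa) \subseteq V_\lambda \subseteq V_{\epsilon+2}$, and the derived $L[A]$-ultrafilter
\[
U \;=\; \{X \in L[A] : X \subseteq \kappa \text{ and } \kappa \in i(X)\}
\]
is well-defined. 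Full elementarity of $i$ implies that $U$ is a normal, $\kappa$-complete $L[A]$-ultrafilter. Moreover, $U$ is externally countably complete: any sequence $\langle X_n\rangle_{n<\omega} \subseteq U$ lies in $V_{\epsilon+1}$, so $i(\langle X_n\rangle) = \langle i(X_n)\rangle$, whence $\kappa \in \bigcap_n i(X_n) = i(\bigcap_n X_n)$, and therefore $\bigcap_n X_n \in U$.

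Next, I would perform the iterated ultrapower construction on $L[A]$ using $U$, producing iterates $\langle M_\alpha : \alpha \in \mathrm{Ord}\rangle$ and embeddings $j_{\alpha\beta} : M_\alpha \to M_\beta$. Well-foundedness of all $M_\alpha$ gives the desired class of indiscernibles for $L[A]$ and hence $A^\#$. External countable completeness of $U$ handles well-foundedness through $\omega$-length iterations. For transfinite iterates, my plan is to invoke the $\Sigma_1$-elementarity of $j$ on the full $V_{\epsilon+3}$: by \cref{LocalElementarity}, $j$ lifts to a $\Sigma_0$-elementary embedding $j^\star : \mathcal H_{\epsilon+3} \to \mathcal H_{\epsilon+3}$, and by the canonical extension property of \cref{RepresentationThm} one can iterate $i$ on itself inside $V_{\epsilon+3}$. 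These internal iterates then reflect into $\mathcal H_{\epsilon+3}$ and can be used to witness well-foundedness at every stage of the iteration on $L[A]$.

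The hard part will be establishing transfinite iterability in the absence of choice. Standard ZFC proofs of iterability invoke dependent choice to select threads through putative ill-founded direct limits, and in our setting such choices must be replaced by deployments of $j^\star$ and its internal iterates. This is precisely where the $V_{\epsilon+3}$-level hypothesis is essential, as opposed to mere elementarity on $V_{\epsilon+2}$: $V_{\epsilon+3}$ has enough structure to code iterated copies of the embedding $i$ as elements and to reflect their action on initial segments of $L[A]$ that belong to $\mathcal H_{\epsilon+3}$. Verifying that this reflection suffices to certify well-foundedness through all ordinal stages is the main technical step.
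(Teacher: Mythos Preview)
Your approach has a fundamental gap: the critical point \(\kappa\) of \(j\) is below \(\lambda = \kappa_\omega(j)\leq \epsilon\), whereas \(A\subseteq V_{\epsilon+1}\) has rank \(\epsilon+1\) or \(\epsilon+2\). The classical ``measurable cardinal implies sharps'' argument that you are trying to run requires the measurable to lie \emph{above} the rank of the set whose sharp you want. Concretely, the iteration of \(L[A]\) by your derived ultrafilter \(U\) on \(\kappa\) does not return to \(L[A]\): the first ultrapower is \(L[j_U(A)]\), and since \(A\notin V_\kappa\) one has \(j_U(A)\neq A\) in general. Thus the images of \(\kappa\) under the iteration maps are not indiscernibles for \(L[A]\) (nor for the structure \(L(V_{\epsilon+1},A)\), which is what \(A^\#\) really refers to here). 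Your appeal to ``internal iterates of \(i\)'' does not help: the embeddings \(i, j(i), j(j)(i),\ldots\) all have critical points on the critical sequence of \(j\), hence below \(\lambda\leq\epsilon\), so none of them can serve as a source of indiscernibles above \(V_{\epsilon+1}\).

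The paper's argument is completely different. Using \cref{NotODThm} (which is where the \(V_{\epsilon+3}\) hypothesis is genuinely used), one shows that \((\theta_{\epsilon+2})^{L(V_{\epsilon+1},A)} < \theta_{\epsilon+2}\): the model \(L(V_{\epsilon+1},A)\) has a definable sequence of surjections \(V_{\epsilon+1}\to\alpha\) for \(\alpha\) below its \(\theta_{\epsilon+2}\), and \cref{NotODThm} says no such sequence reaches the true \(\theta_{\epsilon+2}\). One then invokes \cref{SmallThetaSharpCor}, which rests on Woodin's factor-embedding argument (\cref{ChoicelessWdnThm}): comparing the ultrapower of \(L(V_{\epsilon+1},A)\) with that of a larger model \(L(V_{\epsilon+1},B)\), the factor map \(k\) between them has critical point strictly between \(\epsilon\) and \(\theta_B\). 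It is this \(k\), with critical point \emph{above} \(\epsilon\), that yields the normal ultrafilter from which \(A^\#\) is extracted. The whole point is to manufacture an embedding whose critical point lies above \(V_{\epsilon+1}\); your construction never does this.
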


Recall the following theorem:
\begin{prp}\label{I2Prp}
Suppose \(\lambda\) is a cardinal and there is an elementary embedding \(j: L(V_{\lambda})\to L(V_\lambda)\) such that \(\kappa_\omega(j) = \lambda\). Then \(V_{\lambda}^\#\) exists and for some \(\alpha < \lambda\), there is an elementary embedding from \(V_{\alpha}\) to \(V_{\alpha}\).
\end{prp}
We will prove the following somewhat unexpected equiconsistency at the \(V_{\lambda+2}\) to \(V_{\lambda+2}\) level, which shows that \cref{I2Prp} does not generalize to the other even levels:
\begin{repthm}{Equicon}
The following statements are equiconsistent over \textnormal{ZF}:
\begin{enumerate}[(1)]
\item For some \(\lambda\), there is a nontrivial elementary embedding from \(V_{\lambda+2}\) to \(V_{\lambda+2}\).
\item For some \(\lambda\), there is an elementary embedding from \(L(V_{\lambda+2})\) to \(L(V_{\lambda+2})\) with critical point below \(\lambda\).
\item There is an elementary embedding \(j\) from \(V\) to an inner model \(M\) that is closed under \(V_{\kappa_\omega(j)+1}\)-sequences.
\end{enumerate}
\end{repthm}
Combined with Schlutzenberg's theorem, all these principles are equiconsistent with
the existence of an elementary embedding from \(L(V_{\lambda+1})\) to \(L(V_{\lambda+1})\) with critical point below \(\lambda\). In particular, {\it the existence of an elementary embedding from \(L(V_{\lambda+1})\) to \(L(V_{\lambda+1})\) with critical point below \(\lambda\) is equiconsistent with the existence of an elementary embedding from \(L(V_{\lambda+2})\) to \(L(V_{\lambda+2})\) with critical point below \(\lambda\).}

We then turn to some long-unpublished work of the author. The following is technically an open question:
\begin{qst}
Does the existence of a nontrivial elementary embedding from \(V\) to \(V\) imply the consistency of ZFC + \(I_0\)? 
\end{qst}

Combining a forcing technique due to Woodin \cite{SEM} and the Laver-Cramer theory of inverse limits \cite{Cramer}, we provide the following partial answer:
\begin{repthm}{I0Con}
Suppose \(\lambda\) is an ordinal and there is a \(\Sigma_1\)-elementary embedding \(j :V_{\lambda+3}\to V_{\lambda+3}\) with \(\lambda = \kappa_\omega(j)\). Assume \(\textnormal{DC}_{V_{\lambda+1}}\). 
Then there is a set generic extension \(N\) of \(V\) such that \((V_\lambda)^N\) satisfies \(\textnormal{ZFC}+ I_0\).
\end{repthm}
In particular, in the presence of DC, the existence of a \(\Sigma_1\)-elementary embedding \(j :V_{\lambda+3}\to V_{\lambda+3}\) with \(\lambda = \kappa_\omega(j)\) implies the consistency of \(\text{ZFC}+I_0\).

We also briefly outline a proof of the following theorem:
\begin{repthm}{I0EquiCon}
The following statements are equiconsistent over \textnormal{ZF + DC}:
\begin{enumerate}[(1)]
\item For some \(\lambda\), \(\mathscr E(V_{\lambda+2})\neq \{\textnormal{id}\}\).
\item For some \(\lambda\), \(\lambda\)\textnormal{-DC} holds and \(\mathscr E(V_{\lambda+2})\neq \{\textnormal{id}\}\).
\item The Axiom of Choice\({}+I_0\).
\end{enumerate}
\end{repthm}
The equivalence of (2) and (3) is Schlutzenberg's Theorem.
\subsection{Equiconsistencies and sharps}
We begin with the equiconsistencies for embeddings of the even levels. Here we need some basic observations about ultrapowers assuming weak choice principles, which we will later apply to inner models of the form \(L(V_{\epsilon+1})[C]\), which satisfy these principles.
\begin{lma}\label{CollectionLos}
Suppose \(\epsilon\) is an even ordinal and \(M\) is an inner model containing \(V_{\epsilon+1}\). Suppose \(j : V_{\epsilon+2}^M\to V_{\epsilon+2}^M\) is a \(\Sigma_1\)-elementary embedding. Assume that for all relations \(R\subseteq V_{\epsilon+1}\times M\) in \(M\), there is some \(S\subseteq R\) in \(M\) such that \(\textnormal{dom}(S) = \textnormal{dom}(R)\) and, in \(M\), \(\text{ran}(S)\preceq^{*} V_{\epsilon+1}\). Let \(\mathcal U\) be the \(M\)-ultrafilter derived from \(j\) using \(j[V_{\epsilon}]\). Then the ultrapower of \(M\) by \(\mathcal U\) satisfies \L o\'s's Theorem. Moreover, if \(\mathcal U\in M\), then in \(M\), \(\textnormal{Ult}(M,\mathcal U)\) is closed under \(V_{\epsilon+1}\)-sequences.
\begin{proof}
To establish \L o\'s's Theorem, it suffices to show that if \(R\subseteq V_{\epsilon+1}\times M\) belongs to \(M\) and \(\text{dom}(R)\in \mathcal U\), has a {\it \(\mathcal U\)-uniformization} in \(M\), which is just a \(f\subseteq R\) in \(M\) such that \(\text{dom}(f)\in \mathcal U\). We can reduce to the case of relations on \(V_{\epsilon+1}\times V_{\epsilon+1}\). Given \(R\subseteq V_{\epsilon+1}\times M\), take \(S\subseteq R\) with \(\text{dom}(S) = \text{dom}(R)\) and \(\text{ran}(S)\preceq^{*} V_{\epsilon+1}\). Fix a surjection \(p : V_{\epsilon+1}\to \text{ran}(S)\). Let 
\[R' = \{(x,y) \mid (x,p(y))\in S\}\]
If \(g\) is a \(\mathcal U\)-uniformization of \(T\), then \(p\circ g\) is a \(\mathcal U\)-uniformization of \(S\), and therefore \(p\circ g\) is a \(\mathcal U\)-uniformization of \(R\).

Therefore fix \(R\subseteq V_{\epsilon+1}\times V_{\epsilon+1}\) in \(M\) with \(\text{dom}(R)\in \mathcal U\). We have that \(j[V_{\epsilon}]\in j(\text{dom}(R))\) by the definition of a derived ultrafilter. Note that \(\text{dom}(j(R)) = j(\text{dom}(R))\) by the \(\Sigma_1\)-elementarity of \(j\). (Here we extend \(j\) to act on \(R\), which is essentially an element of \(V_{\epsilon+2}\).) Therefore \(j[V_{\epsilon}]\in \text{dom}(j(R))\). 

Fix \(y\in V_{\epsilon+1}\) such that \((j[V_\epsilon],y)\in R\). Then the function \(f_y\) given by \cref{RepresentationDef}  has the property that \(y = j(f)(j[V_{\epsilon}])\) (by the proof of \cref{RepresentationThm}), but also \(f_y\in M\) since \(f_y\) is definable over \(V_{\epsilon+1}\) from \(y\). Let \(g = f\cap R\), so \(g\subseteq R\). Note that \(j(f)(j[V_{\epsilon}]) = y\) has the property that \((j[V_{\epsilon}],y)\in j(R)\), and hence \(j(g)(j[V_{\epsilon}])\) is defined and is equal to \(y\). In other words, \(j[V_\epsilon]\in j(\{x \in V_{\epsilon+1} \mid (x,g(x))\in R\})\), again using the \(\Sigma_1\)-elementarity of \(j\) on \(V_{\epsilon+2}\). This means that \(\text{dom}(g)\in \mathcal U\), so \(g\) is a \(\mathcal U\)-uniformization of \(R\) that belongs to \(M\).

We finally show that if \(\mathcal U\in M\), then \(\text{Ult}(M,\mathcal U)\) is closed under \(V_{\epsilon+1}\)-sequences in \(M\). We might as well assume \(V = M\), since what we are trying to prove is first-order over \(M\). Let \(N = \text{Ult}(V,\mathcal U)\). We cannot assume \(N\) is transitive, but we will abuse notation by identifying certain points in \(N\) with their extensions. 

We first show that every set \(X\in [N]^{V_{\epsilon+1}}\) is covered by a set \(Y\in N\) such that \(Y\preceq^{*} V_{\epsilon+1}\) in \(N\). Let \(p : V_{\epsilon+1}\to X\) be a surjection. Let \(R\subseteq V_{\epsilon+1}\times V\) be the relation defined by \(R(x,f)\) if \(p(x) = [f]_\mathcal U\). Take \(S\subseteq R\) such that \(\text{dom}(S) = \text{dom}(R)\) and \(\text{ran}(S)\preceq^{*} V_{\epsilon+1}\). Then 
\[X \subseteq \{j(f)([\text{id}]_{\mathcal U}) \mid f \in \text{ran}(S)\} \subseteq  \{g([\text{id}]_\mathcal U) \mid f\in j(S)\}\]
Let \(Y = \{g([\text{id}]_\mathcal U) \mid f\in j(S)\}\). Then \(X\subseteq Y\), \(Y\in N\), and \(Y\preceq^{*} V_{\epsilon+1}\) in \(N\).

Now we show that \(N\) is closed under \(V_{\epsilon+1}\)-sequences. It suffices to show that \([N]^{V_{\epsilon+1}}\subseteq N\). Fix \(X\in [N]^{V_{\epsilon+1}}\). Take \(Y\in N\) with \(X\subseteq Y\) and \(Y\preceq^{*} V_{\epsilon+1}\) in \(N\). Let \(q : V_{\epsilon+1} \to Y\) be a surjection that belongs to \(N\). Consider the set
\[A = \{x\in V_{\epsilon+1} \mid q(x)\in X\}\]
By \cref{RepresentationThm}, \(A\in N\). Hence \(q[A]= X\) belongs to \(N\). This finishes the proof.
\end{proof}
\end{lma}
To prove the wellfoundedness of the ultrapower seems to require a stronger hypothesis which is related to Schlutzenberg's results on ultrapowers using L\"owenheim-Skolem cardinals.
\begin{lma}\label{LowenheimWF}
Suppose \(\epsilon\) is an even ordinal and \(j : V_{\epsilon+2}\to V_{\epsilon+2}\) is a \(\Sigma_1\)-elementary embedding. Assume that every transitive set \(N\) containing \(V_{\epsilon+1}\) has an elementary substructure \(H\) containing \(V_{\epsilon+1}\) such that \(H\preceq^{*} V_{\epsilon+1}\). Let \(\mathcal U\) be the ultrafilter over \(V_{\epsilon+1}\) derived from \(j\) using \(j[V_{\epsilon}]\). Then \(\textnormal{Ult}(V,\mathcal U)\) is wellfounded.
\begin{proof}
Assume towards a contradiction that the lemma fails. Let \(\alpha\) be an ordinal greater than \(\epsilon\) such that \(V_{\alpha}\) is a \(\Sigma_4\)-elementary substructure of \(V\). Then \(\text{Ult}(V_\alpha,\mathcal U)\) is illfounded. Let \(H\) be an elementary substructure of \(V_\alpha\) containing \(V_{\epsilon+1}\) and \(\mathcal U\) such that \(H\preceq^{*} V_{\epsilon+1}\). (Take \(H = H'\cap V_\alpha\) where \(H'\) is an elementary substructure of a \(N = V_\alpha\cup \{V_{\alpha}\times \mathcal U\}\).) 

Let \(P\) be the Mostowski collapse of \(H\). Let \(\mathcal W = \mathcal U\cap P\). Since \(V_{\epsilon+1}\subseteq H\), \(\mathcal W\) is the image of \(\mathcal U\) under the Mostowski collapse map. Therefore by elementarity, \(\text{Ult}(P,\mathcal W)\) is illfounded. Note that there is a \(\Sigma_2\)-elementary embedding \(k : \text{Ult}(P,\mathcal W)\to j_\mathcal U(P)\) defined by \(k([f]_\mathcal W) = [f]_\mathcal U\). Therefore \(j_\mathcal U(P)\) is illfounded. Let \(E\subseteq V_{\epsilon+1}\times V_{\epsilon+1}\) be a wellfounded relation whose Mostowski collapse is \(P\). Then in \(\text{Ult}(V,\mathcal U)\), \(j_\mathcal U(E)\) has Mostowski collapse \(j_\mathcal U(P)\) since \L o\'s's Theorem holds by \cref{CollectionLos}. (Note that the L\"owenheim-Skolem hypothesis of this lemma is stronger than the collection hypothesis from \cref{CollectionLos}.)  It follows that \(j_\mathcal U(E)\) is illfounded. Since \(E\subseteq V_{\epsilon+1}\), \(j_\mathcal U(E) \cong j(E)\). Therefore \(j(E)\) is illfounded. (Here we must extend \(j\) slightly to act on binary relations.) This contradicts that \(j\) is a \(\Sigma_1\)-elementary embedding from \(V_{\epsilon+2}\) to \(V_{\epsilon+2}\), since such an embedding preserves wellfoundedness.
\end{proof}  
\end{lma}
The following lemma gives an example of a structure satisfying the hypotheses of \cref{CollectionLos} and \cref{LowenheimWF}:
\begin{lma}\label{InnerUltraLemma}
Suppose \(j : V_{\epsilon+2}\to V_{\epsilon+2}\) is a \(\Sigma_1\)-elementary embedding. Let \(\mathcal U\) be the ultrafilter over \(V_{\epsilon+1}\) derived from \(j\) using \(j[V_{\epsilon}]\). Then for any class \(C\), the ultrapower of \(L(V_{\epsilon+1})[C]\) by \(\mathcal U\) using functions in \(L(V_{\epsilon+1})[C]\) is wellfounded and satisfies \L o\'s's Theorem.
\begin{proof}
Let \(M = L(V_{\epsilon+1})[C].\) The L\"owenheim-Skolem hypothesis of \cref{LowenheimWF} holds inside \(M\) as an immediate consequence of the fact that \(M\) satisfies that every set is ordinal definable from parameters in \(V_{\epsilon+1}\cup \{C\cap M\}\). This yields Skolem functions \(\langle f_x \mid x\in V_{\epsilon+1}\rangle\) for any transitive structure \(N\): if \(\varphi(v_0,v_1)\) is a formula, \(f_x(\varphi,p)\) is the least \(a\in \text{OD}_{C\cap M,x}\) in the canonical wellorder of \(\text{OD}_{C\cap M,x}\) such that \(N\vDash \varphi(a,p)\). (Obviously this would work for any structure \(N\) in a countable language.) If \(V_{\epsilon+1}\subseteq N\), then closing under these Skolem functions, one obtains an elementary substructure \(H\prec N\) containing \(V_{\epsilon+1}\) such that \(H\preceq^{*} V_{\epsilon+1}\).
Since this hypothesis implies the collection hypothesis from \cref{CollectionLos}, \L o\'s's Theorem holds for the ultrapower in question.

For the proof of wellfoundedness, we would like to apply \cref{LowenheimWF} inside \(M\), but the problem arises that \(\mathcal U\cap M\) may not belong to \(M\). Note, however, that it suffices to show that \(\text{Ult}(M',\mathcal U\cap M')\) is wellfounded where \(M' = L(V_{\epsilon+1})[C,\mathcal U]\):
if the ultrapower \(i : M'\to \text{Ult}(N,\mathcal U\cap N)\) is wellfounded, then since \(\text{Ult}(M,\mathcal U\cap M)\) elementarily embeds into \(i(M)\) via the canonical factor map, \(\text{Ult}(M,\mathcal U\cap M)\) is wellfounded as well. Since \(M'\) is of the form \(L(V_{\epsilon+1})[C']\) for some class \(C'\) coding \(C\) and \(\mathcal U\), the previous paragraph yields that the hypothesis of \cref{LowenheimWF} holds inside \(M'\). Therefore \cref{LowenheimWF} yields the wellfoundedness of \(\text{Ult}(M',\mathcal U\cap M')\), which completes the proof.
\end{proof}
\end{lma}

\begin{thm}\label{Equicon}
The following theories are equiconsistent:
\begin{enumerate}[(1)]
\item For some \(\lambda\), there is a \(\Sigma_1\)-elementary embedding from \(V_{\lambda+2}\) to \(V_{\lambda+2}\).
\item For some \(\lambda\), there is an elementary embedding from \(V_{\lambda+2}\) to \(V_{\lambda+2}\).
\item For some \(\lambda\), there is an elementary embedding from \(L(V_{\lambda+2})\) to \(L(V_{\lambda+2})\) with critical point below \(\lambda\).
\item There is an elementary embedding \(j : V\to M\) where \(M\) is an inner model that is closed under under \(V_{\lambda+1}\)-sequences for \(\lambda = \kappa_\omega(j)\).
\end{enumerate}
\begin{proof}
Clearly each statement is implied by the next (except for (4)!), so it suffices to show that (1) implies that (4) holds in an inner model. Assume (1). Let \(\lambda\) be the least ordinal such that there is a \(\Sigma_1\)-elementary embedding \(j : V_{\lambda+2}\to V_{\lambda+2}\). Then \(\lambda\) is a limit ordinal. (1) still holds in \(L(V_{\lambda+2})[\mathcal U]\), and so applying the proof of \cref{InnerUltraLemma} and \cref{CollectionLos}, we obtain that \(L(V_{\lambda+2})[\mathcal U]\) satisfies (4). This proves the theorem.
\end{proof}
\end{thm}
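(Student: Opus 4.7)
The plan is to prove the equivalence by establishing the cycle $(4)\Rightarrow(3)\Rightarrow(2)\Rightarrow(1)$ directly in $V$, and then closing the loop by showing that $(1)$ implies that $(4)$ holds in an inner model. The first three arrows are essentially bookkeeping given the lemmas already in place; the real work is in the last implication.

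For the direct implications I would argue as follows. The implication $(2)\Rightarrow(1)$ is immediate from the definitions. For $(3)\Rightarrow(2)$, the point is that if $i\in\mathscr E(L(V_{\lambda+2}))$ has critical point below $\lambda$, then $(V_{\lambda+2})^{L(V_{\lambda+2})}=V_{\lambda+2}$: every $A\subseteq V_{\lambda+1}$ belongs to the (true) $V_{\lambda+2}\in L(V_{\lambda+2})$, and $L(V_{\lambda+2})$ is transitive. Hence $i\restriction V_{\lambda+2}$ is a nontrivial elementary embedding of $V_{\lambda+2}$ to itself. For $(4)\Rightarrow(3)$, let $j:V\to M$ be as given and set $\lambda=\kappa_\omega(j)$. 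Since $\lambda=\sup_n\kappa_n(j)$ and $j(\kappa_n(j))=\kappa_{n+1}(j)$, we have $j(\lambda)=\lambda$. Closure of $M$ under $V_{\lambda+1}$-sequences means that every $A\subseteq V_{\lambda+1}$, coded by its characteristic function, belongs to $M$, so $V_{\lambda+2}\subseteq M$ and thus $V_{\lambda+2}^M=V_{\lambda+2}$. By absoluteness of the constructibility hierarchy, $L(V_{\lambda+2})^M=L(V_{\lambda+2})$, and therefore $j\restriction L(V_{\lambda+2})$ is an elementary embedding of $L(V_{\lambda+2})$ into itself with critical point below $\lambda$.

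The main work is the implication $(1)\Rightarrow\operatorname{Con}((4))$. Assuming $(1)$, let $\lambda$ be least such that there exists a $\Sigma_1$-elementary $j:V_{\lambda+2}\to V_{\lambda+2}$; by minimality, $\lambda$ is a limit ordinal. Let $\mathcal U$ be the ultrafilter over $V_{\lambda+1}$ derived from $j$ using $j[V_\lambda]$, and set $W=L(V_{\lambda+2})[\mathcal U]$. I would now run the ultrapower construction internal to $W$: form $j_{\mathcal U}:W\to\operatorname{Ult}(W,\mathcal U)$ using functions in $W$. To show this ultrapower is well-defined, wellfounded, and gives an inner model of $(4)$, I would verify the hypotheses of \cref{InnerUltraLemma} and \cref{CollectionLos} inside $W$. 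The key observation is that every element of $W$ is ordinal definable from parameters in $V_{\lambda+1}\cup\{\mathcal U,V_{\lambda+2}\}$, so $W$ admits a definable sequence of Skolem functions parametrized by $V_{\lambda+1}$, exactly as in the proof of \cref{InnerUltraLemma}. This yields both the collection hypothesis needed for \cref{CollectionLos} (uniformizations are chosen by the Skolem least element) and the L\"owenheim--Skolem hypothesis needed for \cref{LowenheimWF} (elementary substructures closed under Skolem functions are coded by $V_{\lambda+1}$). Consequently \L o\'s's Theorem holds, $\operatorname{Ult}(W,\mathcal U)$ is wellfounded, and $\operatorname{Ult}(W,\mathcal U)$ is closed under $V_{\lambda+1}$-sequences in the sense of $W$.

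It remains to check that $j_{\mathcal U}$, viewed inside $W$, is a genuine witness to $(4)$: namely that $\kappa_\omega(j_{\mathcal U})=\lambda$. This follows because $\mathcal U$ is derived from $j$ using $j[V_\lambda]$, so by the usual factor-map argument $j_{\mathcal U}$ agrees with $j$ on ordinals below $\lambda$; in particular $\operatorname{crit}(j_{\mathcal U})=\operatorname{crit}(j)$ and the critical sequence of $j_{\mathcal U}$ matches that of $j$, forcing $\kappa_\omega(j_{\mathcal U})=\lambda$. The hard part I anticipate is not any individual verification but rather the bookkeeping to ensure that the base model $W$ has exactly the right amount of choice: enough to support the definable Skolem functions (so that $\operatorname{Ult}(W,\mathcal U)$ is wellfounded and computes sequences correctly), but with the ``$V_{\lambda+1}$-closure of the target'' computed relative to $W$ rather than $V$. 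This is why $W$ must contain $V_{\lambda+2}$ (not merely $V_{\lambda+1}$), so that the $\Sigma_1$-elementary embedding $j$ lives on objects that $W$ recognizes as subsets of $V_{\lambda+1}$, and why we carry $\mathcal U$ inside the model so that the ultrapower is genuinely internal.
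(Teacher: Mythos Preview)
Your proposal is correct and follows essentially the same approach as the paper. You spell out the easy implications $(4)\Rightarrow(3)\Rightarrow(2)\Rightarrow(1)$ in more detail than the paper (which dispatches them in a single sentence), and for the main step $(1)\Rightarrow\operatorname{Con}((4))$ you carry out exactly what the paper does: pass to $W=L(V_{\lambda+2})[\mathcal U]$ and invoke the proofs of \cref{CollectionLos} and \cref{InnerUltraLemma}, noting that the relevant hypothesis (every element of $W$ is ordinal definable from parameters in $V_{\lambda+1}\cup\{\mathcal U,V_{\lambda+2}\}$) yields the needed Skolem functions even though $W$ is not literally of the form $L(V_{\lambda+1})[C]$.
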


The following is a proof, without requiring \(\lambda\)-DC or any choice principles, of \cite[Lemma 28]{SEM2}:
\begin{thm}\label{ChoicelessWdnThm}
Suppose \(\epsilon\) is an even ordinal, \(A, B\subseteq V_{\epsilon+1}\), \(A\in L(V_{\epsilon+1},B)\), and \(j : L(V_{\epsilon+1},B)\to L(V_{\epsilon+1},B)\) is an elementary embedding that fixes \(B\). Assume \[(\theta_{\epsilon+2})^{L(V_{\epsilon+1},A)} < (\theta_{\epsilon+2})^{L(V_{\epsilon+1},B)}\]
Then \(A^\#\) exists, and \(A^\#\in L(V_{\epsilon+1},B)\).
\begin{proof}
For \(D\subseteq V_{\epsilon+1}\), let \(M_D = L(V_{\epsilon+1},D)\) and let \(\theta_D = (\theta_{\epsilon+2})^{L(V_{\epsilon+1},D)} \). If \(D\in M_B\), let \(\mathcal U_D\) be the \(M_D\)-ultrafilter over \(V_{\epsilon+1}\) derived from \(j\) using \(j[V_{\epsilon}]\). Let \(j_D : M_D \to \text{Ult}(M_D,\mathcal U_D)\) be the ultrapower embedding.

Note that \(M_A\cap V_{\epsilon+2}\preceq^{*} V_{\epsilon+1}\) in \(M_B\). Therefore 
\[j\restriction M_A\cap V_{\epsilon+2}\in M_B\]
This yields that \(\mathcal U_A\in M_B\). Thus within \(M_B\), one can compute the ultrapower 
\[j_A : M_A \to \text{Ult}(M_A,\mathcal U_A)\]
In particular, \(j_A\restriction \theta_B\in M_B\).
By a standard argument, \(j_B\restriction \theta_B\notin M_B\). ({\it Sketch:} Assume not. Inside \(M_B\), compute first \(j_B\restriction L_{\theta_B}(V_{\epsilon+1})\), then \(\mathcal U_B\), and finally \(j_B : M_B\to M_B\). Now in \(M_B\), there is a definable embedding from \(V\) to \(V\), contradicting \cref{SuzukiThm}.)

Since \(j_A\restriction \theta_B\in M_B\) and \(j_B\restriction \theta_B\notin M_B\), it must be that \(j_A\restriction \theta_B\neq j_B\restriction \theta_B\). Let \(k : \text{Ult}(M_A,\mathcal U_A)\to j_B(M_A)\) be the factor embedding. Note that \(k\restriction V_{\epsilon+1}\) is the identity. Therefore \(j_A(A) = j_B(A) = j(A)\), and so by elementarity and wellfoundedness, \(\text{Ult}(M_A,\mathcal U_A) = j_B(M_A) = M_{j(A)}\). 
Since \(j_A\restriction \theta_B\neq j_B\restriction \theta_B\), \(k\) has a critical point, and \(\text{crit}(k) < \theta_B\). Clearly \(\text{crit}(k) > \epsilon\) since \(k\restriction V_{\epsilon+1}\) is the identity. 
Thus we have produced an elementary embedding
\[k : L(V_{\epsilon+1},j(A))\to L(V_{\epsilon+1},j(A))\]
with critical point between \(\epsilon\) and \(\theta_B\). This implies that \(j(A)^\#\) exists. Moreover, since \(j_B\restriction Z\in M_B\) for all transitive sets such that \(Z\preceq^{*}V_{\epsilon+1}\) in \(M_B\), the same holds true of \(k\). In particular, the normal \(M_{j(A)}\)-ultrafilter \(W\) on \(\text{crit}(k)\) derived from \(k\) belongs to \(M_B\). Here we use the Coding Lemma (\cref{InnerModelCodingThm}) to see that \[P(\text{crit}(k))\cap M_A\subseteq P(\text{crit}(k))\cap M_B\preceq^{*} V_{\epsilon+1}\] in \(M_B\).

Since \(W\) is \(V_{\epsilon+1}\)-closed (\cref{CompletenessDef}), it is easy to check that the ultrapower of \(M_{j(A)}\) by \(W\) satisfies \L o\'s's Theorem. This ultrapower is wellfounded since it admits a factor embedding into \(k\). The elementary embedding \[j_W : M_{j(A)}\to M_{j(A)}\]
is therefore definable over \(M_B\). Thus \(M_B\) satisfies that \(j(A)^\#\) exists. By elementarity, \(M_B\) satisfies that \(A^\#\) exists. By absoluteness, \(A^\#\) exists, and \(A^\#\in M_B\).
\end{proof}
\end{thm}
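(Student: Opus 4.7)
The plan is to compare two ultrapowers derived from \(j\) --- one using functions in \(M_A = L(V_{\epsilon+1},A)\), the other using functions in \(M_B = L(V_{\epsilon+1},B)\) --- and argue that the factor embedding between them witnesses the existence of \(A^\#\). Writing \(\theta_D\) for \(\theta_{\epsilon+2}\) as computed in \(M_D\) (\(D\in\{A,B\}\)), let \(\mathcal{U}_D\) be the \(M_D\)-ultrafilter over \(V_{\epsilon+1}\) derived from \(j\) at the seed \(j[V_\epsilon]\), and let \(j_D : M_D \to \mathrm{Ult}(M_D,\mathcal{U}_D)\) be the associated ultrapower embedding. By \cref{InnerUltraLemma}, both ultrapowers are wellfounded and satisfy \L o\'s's Theorem.

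First I would verify that \(\mathcal{U}_A \in M_B\). Since \(A \in M_B\) and \(\theta_A < \theta_B\), inside \(M_B\) the set \(M_A\cap V_{\epsilon+2}\) admits a surjection from \(V_{\epsilon+1}\); combined with the canonical extension property (\cref{ExtensionThm}), this forces \(j\restriction M_A\cap V_{\epsilon+2}\) into \(M_B\), whence \(\mathcal{U}_A\in M_B\) and \(j_A\restriction \theta_B\) is computable inside \(M_B\). On the other hand, \(j_B\restriction \theta_B\) cannot lie in \(M_B\): otherwise one could iteratively recover \(j_B\restriction L_{\theta_B}(V_{\epsilon+1})\), then \(\mathcal{U}_B\), and then the full embedding \(j_B: M_B\to M_B\), definably inside \(M_B\), contradicting Suzuki's theorem (\cref{SuzukiThm}) applied in \(M_B\). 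It follows that \(j_A\restriction\theta_B \neq j_B\restriction\theta_B\); and since \(j_A(A) = j(A) = j_B(A)\), the canonical factor map \(k\) from \(\mathrm{Ult}(M_A,\mathcal{U}_A)\) into \(\mathrm{Ult}(M_B,\mathcal{U}_B)\), identified with a map of \(M_{j(A)} := L(V_{\epsilon+1},j(A))\) into itself, is nontrivial. Because \(k\) fixes \(V_{\epsilon+1}\) pointwise, its critical point lies strictly above \(\epsilon\) and strictly below \(\theta_B\).

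At this point \(k\) is already essentially a sharp for \(j(A)\), but \(k\) itself is not obviously in \(M_B\); this is the main obstacle. To produce a sharp actually living in \(M_B\), I would let \(W\) be the normal \(M_{j(A)}\)-ultrafilter on \(\mathrm{crit}(k)\) derived from \(k\). To show \(W \in M_B\), I would apply the Coding Lemma (\cref{InnerModelCodingThm}) inside \(M_B\) to conclude that \(P(\mathrm{crit}(k))\cap M_{j(A)} \preceq^{*} V_{\epsilon+1}\) in \(M_B\); coupled with the fact that the restriction of \(k\) to any transitive set surjected onto by \(V_{\epsilon+1}\) in \(M_B\) lies in \(M_B\) (again via canonical extension applied to \(j\) in \(M_B\)), this yields \(W\in M_B\). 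Finally, since \(W\) is \(V_{\epsilon+1}\)-closed, the ultrapower \(j_W : M_{j(A)} \to \mathrm{Ult}(M_{j(A)},W)\) satisfies \L o\'s's Theorem, is wellfounded (as it factors into \(k\)), and is definable inside \(M_B\); hence \(M_B\) satisfies that \(j(A)^\#\) exists. By the elementarity of \(j\) inside \(M_B\), \(A^\#\) exists in \(M_B\), and by absoluteness of sharps it exists in \(V\), completing the proof.
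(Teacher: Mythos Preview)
Your proposal is correct and follows essentially the same approach as the paper: compare the \(\mathcal{U}_A\)- and \(\mathcal{U}_B\)-ultrapowers, use Suzuki's theorem to force the factor map \(k:M_{j(A)}\to M_{j(A)}\) to be nontrivial, then pull the derived normal measure \(W\) into \(M_B\) via the Coding Lemma and conclude by elementarity of \(j\). The only cosmetic difference is that the paper describes the target of \(k\) as \(j_B(M_A)\) rather than as a submodel of \(\mathrm{Ult}(M_B,\mathcal{U}_B)\), but these are identified exactly as you do.
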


\begin{cor}\label{SmallThetaSharpCor}
Suppose \(\epsilon\) is an even ordinal and there is a \(\Sigma_1\)-elementary embedding from \(V_{\epsilon+2}\) to \(V_{\epsilon+2}\). Then \(A^\#\) exists for every \(A\subseteq V_{\epsilon+1}\) such that \[(\theta_{\epsilon+2})^{L(V_{\epsilon+1},A)} < \theta_{\epsilon+1}\]
\begin{proof}
Fix \(A\subseteq V_{\epsilon+1}\). By \cref{ProperFix}, there is a set \(B\subseteq V_{\epsilon+1}\) such that \(A\in L(V_{\epsilon+1},B)\), \(j(B) = B\), and \[(\theta_{\epsilon+2})^{L(V_{\epsilon+1},A)} < (\theta_{\epsilon+2})^{L(V_{\epsilon+1},B)}\]
Taking the ultrapower of \(L(V_{\epsilon+1},B)\) by the ultrafilter derived from \(j\), \cref{InnerUltraLemma} shows that one obtains an elementary embedding \(i : L(V_{\epsilon+1},B)\to L(V_{\epsilon+1},B)\) such that \(i(B) = B\). By \cref{ChoicelessWdnThm}, this implies that \(A^\#\) exists.
\end{proof}
\end{cor}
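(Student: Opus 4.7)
The plan is to reduce the corollary to Theorem \ref{ChoicelessWdnThm} by producing, from the given $A$, a second set $B \subseteq V_{\epsilon+1}$ that is fixed by the embedding $j$ (more precisely, by its canonical extension) and satisfies both $A \in L(V_{\epsilon+1},B)$ and the strict inequality $(\theta_{\epsilon+2})^{L(V_{\epsilon+1},A)} < (\theta_{\epsilon+2})^{L(V_{\epsilon+1},B)}$, and then to combine this with the ultrapower construction from \cref{InnerUltraLemma} to produce an elementary embedding from $L(V_{\epsilon+1},B)$ to itself fixing $B$.

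First, I would apply \cref{ProperFix} to obtain $B \subseteq V_{\epsilon+1}$ with $j^\star(B) = B$ and $A \in L(V_{\epsilon+1},B)$. Since $L(V_{\epsilon+1},A) \subseteq L(V_{\epsilon+1},B)$, every surjection from $V_{\epsilon+1}$ onto an ordinal that lives in $L(V_{\epsilon+1},A)$ also lives in $L(V_{\epsilon+1},B)$, giving at least the weak inequality $(\theta_{\epsilon+2})^{L(V_{\epsilon+1},A)} \le (\theta_{\epsilon+2})^{L(V_{\epsilon+1},B)}$. To upgrade this to strict inequality I would use the hypothesis $(\theta_{\epsilon+2})^{L(V_{\epsilon+1},A)} < \theta_{\epsilon+1}$: fixing any prewellorder of $V_\epsilon$ of length $\alpha$ with $(\theta_{\epsilon+2})^{L(V_{\epsilon+1},A)} < \alpha < \theta_{\epsilon+1}$, I would feed the code of this prewellorder into the ProperFix construction (applying the lemma to the set coding $A$ together with this extra parameter), so that the resulting $B$ has $\alpha$ in the surjective image of $V_{\epsilon+1}$ inside $L(V_{\epsilon+1},B)$, forcing $(\theta_{\epsilon+2})^{L(V_{\epsilon+1},B)} > (\theta_{\epsilon+2})^{L(V_{\epsilon+1},A)}$.

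Next, I would invoke \cref{InnerUltraLemma}: let $\mathcal U$ be the ultrafilter over $V_{\epsilon+1}$ derived from $j$ using $j[V_\epsilon]$, and form the ultrapower of $L(V_{\epsilon+1},B)$ by $\mathcal U$ using functions in $L(V_{\epsilon+1},B)$. \cref{InnerUltraLemma} tells us this ultrapower is wellfounded and satisfies \L o\'s's Theorem, so it produces an elementary embedding $i : L(V_{\epsilon+1},B) \to L(V_{\epsilon+1},B)$. Because $j^\star(B) = B$ and the ultrapower agrees with $j^\star$ on sets of the form $[f]_{\mathcal U}$ with $f \in L(V_{\epsilon+1},B)$, in particular on the constant function $B$, we get $i(B) = B$.

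Finally, I would apply \cref{ChoicelessWdnThm} to the triple $(A, B, i)$: the hypotheses are exactly $A \in L(V_{\epsilon+1},B)$, $i(B) = B$, and the strict theta inequality, and the conclusion is that $A^\#$ exists (and lies in $L(V_{\epsilon+1},B)$). The main obstacle in this plan is the bookkeeping for the strict inequality step — \cref{ProperFix} by itself only buys $A \in L(V_{\epsilon+1},B)$ and hence the weak inequality, so the theta hypothesis on $A$ has to be used to enlarge $B$ while still preserving the fixed-point property $j^\star(B) = B$; once this is arranged, the rest is a direct assembly of results already in the paper.
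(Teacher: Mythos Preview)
Your proposal is correct and follows exactly the paper's approach: apply \cref{ProperFix} to get a fixed $B$ with $A\in L(V_{\epsilon+1},B)$, use \cref{InnerUltraLemma} to turn the derived ultrafilter into an elementary self-embedding of $L(V_{\epsilon+1},B)$ fixing $B$, and conclude via \cref{ChoicelessWdnThm}. The only difference is that you explicitly spell out how to secure the \emph{strict} inequality $(\theta_{\epsilon+2})^{L(V_{\epsilon+1},A)} < (\theta_{\epsilon+2})^{L(V_{\epsilon+1},B)}$ by feeding a suitable prewellorder of $V_\epsilon$ (using the hypothesis $(\theta_{\epsilon+2})^{L(V_{\epsilon+1},A)}<\theta_{\epsilon+1}$) into the ProperFix construction, whereas the paper simply asserts this inequality as part of the output of \cref{ProperFix} without comment.
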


\begin{cor}\label{3SharpCor}
Suppose \(\epsilon\) is an even ordinal and there is a \(\Sigma_1\)-elementary embedding from \(V_{\epsilon+3}\) to \(V_{\epsilon+3}\). Then \(A^\#\) exists for every \(A\subseteq V_{\epsilon+1}\).
\begin{proof}
We claim that for all \(A\subseteq V_{\epsilon+1}\), \((\theta_{\epsilon+2})^{L(V_{\epsilon+1},A)} < \theta_{\epsilon+2}\). The corollary then follows by applying \cref{SmallThetaSharpCor}. To prove the claim, note that \(L(V_{\epsilon+1},A)\) satisfies that there is a sequence \(\langle f_\alpha \mid \alpha < \theta_{\epsilon+2}\rangle\) such that for all \(\alpha < \theta_{\epsilon+2}\), \(f_\alpha: V_{\epsilon+1}\to \alpha\) is a surjection; this is immediate from the fact that \(L(V_{\epsilon+1})\) satisfies that every set is ordinal definable from parameters in \(V_{\epsilon+1}\cup \{A\}\). By \cref{NotODThm}, this does not hold in \(V\), and therefore \((\theta_{\epsilon+2})^{L(V_{\epsilon+1},A)} < \theta_{\epsilon+2}\).
\end{proof}
\end{cor}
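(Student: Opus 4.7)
The plan is to reduce the corollary to \cref{SmallThetaSharpCor}. Since any \(\Sigma_1\)-elementary embedding \(V_{\epsilon+3} \to V_{\epsilon+3}\) restricts to a \(\Sigma_1\)-elementary embedding \(V_{\epsilon+2} \to V_{\epsilon+2}\), it suffices to show, for every \(A \subseteq V_{\epsilon+1}\), that \((\theta_{\epsilon+2})^{L(V_{\epsilon+1},A)} < \theta_{\epsilon+2}\). The tool that will deliver this inequality is \cref{NotODThm}, which under precisely the present large cardinal hypothesis rules out any \(V\)-sequence \(\langle f_\alpha \mid \alpha < \theta_{\epsilon+2}\rangle\) of surjections \(f_\alpha : V_{\epsilon+1} \to \alpha\).

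The first step is to show that \(L(V_{\epsilon+1},A)\) internally produces such a sequence up to its own version of \(\theta_{\epsilon+2}\). This rests on the standard fact---built into the construction of \(L(V_{\epsilon+1},A)\)---that every element of that model is ordinal definable there from parameters in \(V_{\epsilon+1}\cup \{A\}\). Fixing a lightface enumeration of formulas and absorbing ordinal parameters into the enumeration, I would obtain a uniformly definable partial map from \(V_{\epsilon+1} \times \text{Ord}\) onto \(L(V_{\epsilon+1},A)\); composing with rank and restricting domains yields, uniformly in \(\alpha < (\theta_{\epsilon+2})^{L(V_{\epsilon+1},A)}\), a surjection \(f_\alpha : V_{\epsilon+1} \to \alpha\). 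The uniformity is what lets the whole sequence \(\langle f_\alpha\rangle\) be a single object inside \(L(V_{\epsilon+1},A)\), and hence inside \(V\).

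If \((\theta_{\epsilon+2})^{L(V_{\epsilon+1},A)} \geq \theta_{\epsilon+2}\), then truncating this sequence to length \(\theta_{\epsilon+2}\) produces exactly the configuration forbidden by \cref{NotODThm}, a contradiction. So the desired inequality holds, \cref{SmallThetaSharpCor} applies, and \(A^\#\) exists. The only delicate point---the main obstacle, such as it is---is securing the uniformity in step one, so that the sequence itself, not merely each \(f_\alpha\) individually, is a definable object of \(L(V_{\epsilon+1},A)\); this is a routine bookkeeping matter once one commits to a fixed enumeration of formulas and a fixed way of absorbing ordinal parameters, so I do not expect any real difficulty beyond assembling the previously established pieces.
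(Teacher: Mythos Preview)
Your proposal is correct and follows essentially the same route as the paper: establish \((\theta_{\epsilon+2})^{L(V_{\epsilon+1},A)} < \theta_{\epsilon+2}\) by observing that \(L(V_{\epsilon+1},A)\) has a uniformly definable sequence of surjections \(f_\alpha : V_{\epsilon+1}\to \alpha\) (from ordinal definability over \(V_{\epsilon+1}\cup\{A\}\)), invoke \cref{NotODThm} to conclude this cannot persist to \(V\), and then apply \cref{SmallThetaSharpCor}. Your discussion of the uniformity point is exactly the one step the paper leaves implicit.
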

By a similar proof, we obtain the following consistency strength separation:
\begin{thm}\label{ConSepThm}
The existence of a \(\Sigma_1\)-elementary embedding from \(V_{\lambda+3}\) to \(V_{\lambda+3}\) implies the consistency of \textnormal{ZF} plus the existence of an elementary embedding from \(V_{\lambda+2}\) to \(V_{\lambda+2}\).
\end{thm}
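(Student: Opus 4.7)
The plan is to produce inside $V$ an inner model $M$ of ZF in which a nontrivial elementary embedding from $V_{\lambda+2}^M$ to $V_{\lambda+2}^M$ exists; by soundness, this yields $V \vdash \mathrm{Con}(\mathrm{ZF} + \exists V_{\lambda+2}\to V_{\lambda+2})$, which is the content of the theorem beyond the trivial observation that $V$ itself witnesses the axiom. As a first step, since every first-order statement about $V_{\lambda+2}$ is $\Delta_0$ when viewed over $V_{\lambda+3}$, the $\Sigma_1$-elementary embedding $j : V_{\lambda+3} \to V_{\lambda+3}$ restricts to a fully elementary embedding $\bar{j} : V_{\lambda+2} \to V_{\lambda+2}$.

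Next I would invoke \cref{3SharpCor} to conclude that $A^\#$ exists for every $A \subseteq V_{\lambda+1}$, and set $a = \bar{j}[V_\lambda] \in V_{\lambda+1}$. By \cref{DefinabilityThm}, $\bar{j} \restriction V_{\lambda+1}$ is definable over $V_{\lambda+1}$ from $a$ alone, so it belongs to the inner model $M := L(V_{\lambda+1}, a)$. Let $\mathcal{U}$ be the ultrafilter over $V_{\lambda+1}$ derived from $\bar{j}$ using the seed $a$; since $\mathcal{U}$ is definable from $\bar{j}\restriction V_{\lambda+1}$, we have $\mathcal{U} \cap M \in M$. Applying \cref{InnerUltraLemma} inside $M$ (which has the requisite form $L(V_{\lambda+1})[C]$), the internal ultrapower $i : M \to \mathrm{Ult}^M(M, \mathcal{U} \cap M)$ is well-founded and satisfies \L o\'s's theorem, and $i$ has a critical point strictly below $\lambda$ because $i \restriction V_{\lambda+1}$ agrees with $\bar{j} \restriction V_{\lambda+1}$.

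The main obstacle is to verify that the map $i$ restricts, inside $M$, to an elementary embedding $V_{\lambda+2}^M \to V_{\lambda+2}^M$. This hinges on a Schlutzenberg-style closure identity $V_{\lambda+2} \cap \mathrm{Ult}^M(M, \mathcal{U} \cap M) = V_{\lambda+2}^M$, ensuring that the ultrapower neither introduces nor loses subsets of $V_{\lambda+1}$. The canonical extension property (\cref{ExtensionThm}, \cref{RepresentationLma}) at the even ordinal $\lambda$ provides a uniform representation of elements of $V_{\lambda+2}$ by functions $f_A$ as in \cref{RepresentationDef}, and these representatives should be available internally to $M$. The sharps supplied by \cref{3SharpCor} are expected to play a role analogous to the $I_0$-level hypothesis in Schlutzenberg's \cref{SchlutzenI0}, namely to control which subsets of $V_{\lambda+1}$ can appear in $\mathrm{Ult}^M(M, \mathcal{U}\cap M)$. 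Once this closure identity is secured, $M$ is an inner model of ZF witnessing a nontrivial elementary embedding from $V_{\lambda+2}^M$ to $V_{\lambda+2}^M$, and the desired consistency follows.
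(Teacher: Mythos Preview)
Your opening sentence already contains the central gap: an inner model is a proper class, and exhibiting a proper class $M$ satisfying each axiom of $\mathrm{ZF} + \varphi$ does \emph{not} yield $\mathrm{Con}(\mathrm{ZF} + \varphi)$ as a theorem. Soundness applies only to set models. Indeed, $V$ itself already satisfies ``there is an elementary embedding from $V_{\lambda+2}$ to $V_{\lambda+2}$'' (via the restriction of $j$), so if your reasoning were valid the theorem would be trivial; but by G\"odel's second incompleteness theorem no consistent theory proves its own consistency, so more is needed. The paper obtains a \emph{set} model: after producing the inner model $M$, it uses \cref{ConSepPrp} to find $E \subseteq V_{\lambda+1}$ with $M \subseteq L(V_{\lambda+1},E)$, then invokes \cref{3SharpCor} to get $E^\#$, hence a proper class of inaccessibles in $L(V_{\lambda+1},E)$, and cuts off at some such $\delta$ so that $M \cap V_\delta$ is the required set model. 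You cite \cref{3SharpCor} but misidentify its purpose: the sharps are used at the end to find the cutoff ordinal, not to establish any closure property.

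Your route to producing the inner-model embedding is also unnecessarily tortuous. The paper (via \cref{ConSepPrp}) takes $M = L(V_{\lambda+1})[j]$ and simply observes that $j \restriction V_{\lambda+2}^M$ is already an elementary embedding of $V_{\lambda+2}^M$; no ultrapower and no closure identity $V_{\lambda+2}\cap \mathrm{Ult}^M(M,\mathcal U) = V_{\lambda+2}^M$ is required. What you flag as the ``main obstacle'' is avoided entirely. (Incidentally, since $a = \bar j[V_\lambda]\in V_{\lambda+1}$, your $L(V_{\lambda+1},a)$ is just $L(V_{\lambda+1})$; and your analogy between the sharps and the hypothesis in Schlutzenberg's \cref{SchlutzenI0} is unsubstantiated --- Schlutzenberg's argument is about adding $j$ to $L(V_{\lambda+1})$, not about ultrapowers of $L(V_{\lambda+1})$.) The key fact you are missing from \cref{ConSepPrp} is that \cref{NotODThm} forces $(\theta_{\lambda+2})^M < \theta_{\lambda+2}$, hence $V_{\lambda+2}^M$ together with the embedding is coded by a single $E \subseteq V_{\lambda+1}$; this is what makes the sharp-then-cutoff step possible.
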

This follows immediately from a more semantic fact:
\begin{prp}\label{ConSepPrp}
Suppose \(\epsilon\) is an even ordinal and there is a \(\Sigma_1\)-elementary embedding from \(V_{\epsilon+3}\) to \(V_{\epsilon+3}\). Then there is a set \(E\subseteq V_{\epsilon+1}\) and an inner model \(M\subseteq L(V_{\epsilon+1},E)\) containing \(V_{\epsilon+1}\) such that \(M\) satisfies that there is an elementary embedding from \(V_{\epsilon+2}\) to \(V_{\epsilon+2}\).
\end{prp}
\begin{proof}
Fix an elementary embedding \(j : V_{\epsilon+2}\to V_{\epsilon+2}\). Then the model \(M = L(V_{\epsilon+1})[j]\) satisfies that there is an elementary embedding from \(V_{\epsilon+2}^M\) to \(V_{\epsilon+2}^M\), namely \(j\restriction V_{\epsilon+2}^M\). This model also satisfies that there is a sequence \(\langle f_\alpha \mid \alpha < \theta_{\epsilon+2}\rangle\) such that for all \(\alpha < \theta_{\epsilon+2}\), \(f_\alpha: V_{\epsilon+1}\to \alpha\) is a surjection. Thus \((\theta_{\epsilon+2})^M < \theta_{\epsilon+2}\) by the proof of \cref{NotODThm}.  By condensation, this implies that \(V_{\epsilon+2}\cap M\preceq^{*} V_{\epsilon+1}\). Therefore 
\[V_{\epsilon+2}^M\cup \{j\restriction V_{\epsilon+2}^M\}\in \mathcal H_{\epsilon+2}\]
It follows that there is a wellfounded relation \(E\) on \(V_{\epsilon+1}\) whose Mostowski collapse is \(V_{\epsilon+2}^M\cup \{j\restriction V_{\epsilon+2}^M\}\). Hence \(M\subseteq L(V_{\epsilon+1},E)\), as desired.
\end{proof}
\begin{proof}[Proof of \cref{ConSepThm}]
By minimizing, we may assume \(\lambda\) is a limit ordinal. By \cref{ConSepPrp}, there is a set \(E\subseteq V_{\lambda+1}\) and an inner model \(M\subseteq L(V_{\lambda+1},E)\) containing \(V_{\lambda+1}\) such that \(M\) satisfies that there is an elementary embedding from \(V_{\lambda+2}\) to \(V_{\lambda+2}\). By \cref{3SharpCor}, \(E^\#\) exists. Therefore \(L(V_{\lambda+1},E)\) has a proper class of inaccessible cardinals. Fix an inaccessible \(\delta\) of \(L(V_{\lambda+1},E)\) such that \(\delta > \lambda\). Then \(M\cap V_\delta\) is a model satisfying \text{ZF} plus the existence of an elementary embedding from \(V_{\lambda+2}\) to \(V_{\lambda+2}\).
\end{proof}
\subsection{Forcing choice}
It is natural to wonder whether choiceless large cardinal axioms really are stronger than the traditional large cardinals in terms of the consistency hierarchy. Perhaps the situation is analogous to the status of the full Axiom of Determinacy in that traditional large cardinal axioms imply the existence of an inner model of ZF containing choiceless large cardinals. Could fairly weak traditional large cardinal axioms imply the consistency of the axioms we have been considering in this paper? Using the techniques of inner model theory, one can show that the choiceless cardinals imply the existence of inner models with many Woodin cardinals. But what about large cardinal axioms currently out of reach of inner model theory?

In fact, Woodin showed that one can prove that certain very large cardinals are equiconsistent with their choiceless analogs. For example:
\begin{thm}[Woodin]
The following theories are equiconsistent:
\begin{itemize}
	\item \textnormal{ZF + there is a proper class of supercompact cardinals}.
	\item \textnormal{ZFC + there is a proper class of supercompact cardinals}.
\end{itemize}
\end{thm}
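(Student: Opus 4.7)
The easy direction is trivial: any model of ZFC with a proper class of supercompacts witnesses the consistency of the ZF version. So the plan is to show the converse. Working in ZF with a proper class of supercompacts in the (choiceless) sense made precise in \cite{SEM}, I would construct a class forcing extension $V[G]$ in which the Axiom of Choice holds and every supercompact of $V$ is still supercompact.

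First I would define $\mathbb{P}$ as an Easton-support class iteration $\langle \mathbb{P}_\alpha, \dot{\mathbb{Q}}_\alpha : \alpha \in \text{Ord}\rangle$ whose stages force a wellordering of $V_\alpha$ (for instance via a Vop\v enka-style algebra using the OD-structure available even without AC, or via a pre-stipulated collapse poset that is definable from parameters in $V_\alpha$). The essential design constraint is a two-level one: for each supercompact cardinal $\kappa$ of $V$, the iteration must factor as $\mathbb{P}_\kappa \ast \dot{\mathbb{P}}^{\text{tail}}$, where $\mathbb{P}_\kappa$ has size and chain condition below $\kappa$, and the tail $\dot{\mathbb{P}}^{\text{tail}}$ is sufficiently closed (this is arranged so that conditions in the tail are easy to amalgamate). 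Second, for each supercompact $\kappa$ and each $\lambda \geq \kappa$ in $V$, I would take a $\lambda$-supercompact embedding $j : V \to M$ with $M$ closed under $\lambda$-sequences (which, in the choiceless setting, we extract from the normal fine ultrafilter on $P_\kappa(V_\lambda)$ guaranteed by Woodin's definition of supercompactness). By elementarity $j(\mathbb{P})$ factors through $\mathbb{P}$, and a standard master-condition/lifting argument produces in $V[G]$ a generic $H$ for the relevant tail over $M[G]$ and a lift $\tilde{j} : V[G] \to M[G][H]$; deriving the induced normal fine ultrafilter on $P_\kappa(V_\lambda)^{V[G]}$ then shows $\kappa$ remains $\lambda$-supercompact. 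Third, observe that in $V[G]$ each $V_\alpha$ is wellordered by the generic added at stage $\alpha$, whence AC (in fact global choice) holds.

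The main obstacle is the preservation step in the absence of AC in the ground model. The usual master-condition argument relies on enumerating $\lambda^+$-many dense sets in $M[G]$ and diagonalizing, which implicitly uses choice; here one must instead exploit the fact that the generic wellorderings added at intermediate stages of $\mathbb{P}$ already provide, inside $V[G]$, enough wellordering of the relevant sets in $M[G]$ to carry out the diagonalization. One also needs to verify that the ultrapower $M$ is genuinely closed under $\lambda$-sequences from $V$ — this is where the strength of Woodin's choiceless notion of supercompactness is used — and that the lifted embedding $\tilde{j}$ satisfies the required normality and fineness to witness supercompactness of $\kappa$ in $V[G]$ rather than some weaker property. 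Once these technical points are negotiated, the equiconsistency follows.
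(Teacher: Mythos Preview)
Your overall strategy---class-force the Axiom of Choice while preserving supercompacts via lifting---is the right one, and matches what the paper attributes to Woodin. But you have misdiagnosed the central obstacle, and the fix you propose does not address it.

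The real difficulty is not just the master-condition diagonalization. It is that the \emph{entire} standard analysis of Easton-support iterations breaks down in ZF: the basic lemma that a $\gamma^+$-closed forcing adds no new $\gamma$-sequences is literally equivalent to $\gamma$-DC. So when you write that the tail $\dot{\mathbb P}^{\text{tail}}$ is ``sufficiently closed,'' this buys you nothing at intermediate stages unless you already have enough DC there. Your proposed remedy---use the wellorders added by earlier stages to carry out the diagonalization in $V[G]$---is circular: you need the iteration to behave well (preserve cofinalities, not collapse the supercompacts, have the tail not add short sequences) \emph{before} you can conclude anything about $V[G]$, and that analysis itself requires DC at each intermediate stage.

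Woodin's key insight, as the paper describes it (see the discussion around \cref{WoodinForcing} and the sketch in the proof of \cref{I0EquiCon}), is to design the iteration so that it \emph{forces increasing amounts of DC as it goes}. Concretely, at stage $\beta$ one identifies the least $\kappa_\beta$ for which $\kappa_\beta$-DC fails in $V^{\mathbb P_\beta}$, then forces with $\mathrm{Col}(\kappa_\beta, V_{\delta_\beta})$ for a carefully chosen $\delta_\beta$ (whose existence uses the supercompact/L\"owenheim--Skolem hypothesis) so that $\kappa_\beta$-DC holds after this stage. One then proves inductively that the tail is $\kappa_\alpha$-closed \emph{and} that $\kappa_\alpha$-DC holds in $V[G\restriction\alpha]$, which together give the sequence-preservation needed. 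The $\kappa_\alpha$ increase to $\infty$, so $V[G]$ satisfies AC. Your vague ``force a wellordering of $V_\alpha$'' does not accomplish this calibration; you need to say precisely which collapse is used and why it secures the right amount of DC for the next step of the induction.
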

In this context, we are using the following definition of a supercompact cardinal:
\begin{defn}
A cardinal \(\kappa\) is {\it supercompact} if for all \(\alpha\geq \kappa\), 
for some \(\beta \geq \alpha\) and some transitive set \(N\)
with \([N]^{V_\alpha}\subseteq N\),
there is an elementary embedding \(j : V_\beta\to N\)
such that \(\text{crit}(j) = \kappa\) and \(j(\kappa) > \alpha\).
\end{defn}
The proof shows that if there is a proper class of supercompact cardinals, there is a class forcing extension preserving all supercompact cardinals in which the Axiom of Choice holds. (Not every countable model of ZF is an inner model of a model of ZFC, since for example every inner model of a model of ZFC has a proper class of regular cardinals. More recently, Usuba showed that the existence of a proper class of L\"owenheim-Skolem cardinals suffices to carry out Woodin's forcing construction.)

In particular, this theorem implies that the existence of an elementary embedding from \(V_{\lambda}\) to \(V_{\lambda}\), in ZF alone, implies the consistency of the existence of a proper class of supercompact cardinals in ZFC. Indeed, the same ideas produce models of ZFC with many \(n\)-huge cardinals from the same hypothesis. But the question arises whether the weakest of the choiceless large cardinal axioms in fact implies the consistency (with ZFC) of all the traditional large cardinal axioms.

In this section, we combine Woodin's method of forcing choice and a reflection theorem due to Scott Cramer to prove the following theorem:
\begin{thm}
Over \textnormal{ZF + DC}, the existence of a \(\Sigma_1\)-elementary embedding from \(V_{\lambda+3}\) to \(V_{\lambda+3}\) implies \(\textnormal{Con}(\textnormal{ZFC} + I_0)\).\qed
\end{thm}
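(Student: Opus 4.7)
The plan is to combine three ingredients: the existence of sharps derived from the $\Sigma_1$-elementarity, a reflection step to descend below $\lambda$ via the Laver-Cramer inverse-limit theory, and Woodin's forcing-to-choice construction at the reflected level. The guiding picture is that although the choiceless hypothesis itself is inconsistent with AC, one can reflect the relevant embeddings to a set level $V_\delta$ with $\delta < \lambda$, where set forcing can consistently force AC while preserving an $I_0$-witness.

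First, I would apply \cref{3SharpCor}, just established in the excerpt, to conclude that $A^\#$ exists for every $A \subseteq V_{\lambda+1}$. Combined with the action of $j^\star$ and its finite iterates on $L(V_{\lambda+1})$, this yields a rich coherent system of elementary self-embeddings of $L(V_{\lambda+1})$ with critical points cofinal in $\lambda$ — the raw material the inverse-limit construction requires.

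Next, I would invoke the Laver-Cramer theory of inverse limits (in its DC-available form) to extract, for some $\delta < \lambda$, an elementary embedding $k : L(V_{\delta+1}) \to L(V_{\delta+1})$ with critical point below $\delta$ — that is, a ZF-$I_0$-witness living inside $V_\lambda$. The hypothesis $\textnormal{DC}_{V_{\lambda+1}}$ is used to construct the $\omega$-inverse limit of the relevant embedding sequence and verify its coherence. The full $\Sigma_1$-elementarity of $j$ at the $V_{\lambda+3}$ level enters here: it ensures that $j^\star$ acts coherently on codes for wellfounded structures inside $\mathcal H_{\lambda+3}$, so that what reflects down is a genuine $L(V_{\delta+1})$-to-$L(V_{\delta+1})$ embedding rather than merely a rank-to-rank embedding.

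Finally, I would apply Woodin's forcing-to-choice method at the set level $V_\delta$. Since $\delta < \lambda$, the relevant forcing $\mathbb P$ lies in $V_\lambda$; $\textnormal{DC}_{V_{\lambda+1}}$ supplies enough choice in $V$ to build a $V$-generic filter $G \subseteq \mathbb P$, and the embedding $k$ lifts to $V[G]$ by a standard master-condition / L\'evy--Solovay argument (shrinking $\delta$ if needed so that $|\mathbb P|$ lies below $\textnormal{crit}(k)$). In $V[G]$, $(V_\delta)^{V[G]}$ then satisfies $\textnormal{ZFC} + I_0$, which suffices for the desired consistency conclusion. The main obstacle is the second step: ensuring that Laver-Cramer reflection produces a genuine $L(V_{\delta+1})$-level embedding robust enough to survive the forcing. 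The $\Sigma_1$-elementarity hypothesis is essential here and, by Schlutzenberg's theorem, cannot be weakened to mere elementarity at $V_{\lambda+2}$ — this is precisely the gap the hypothesis is tuned to bridge.
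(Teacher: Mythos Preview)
Your three-step outline (sharps via \cref{3SharpCor}, inverse-limit reflection to a level $\bar\lambda<\lambda$, Woodin's forcing-to-choice) matches the paper's approach, but the third step as you describe it would not work and reflects a misunderstanding of the structure of Woodin's argument.

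The main gap is the role of the supercompact cardinal and the two-stage forcing. You use a single ordinal $\delta$ both for the reflected level carrying $k:L(V_{\delta+1})\to L(V_{\delta+1})$ and for the rank whose $V_\delta$ is to model $\text{ZFC}+I_0$. These must be different: the $I_0$-witness at level $\bar\lambda$ lives \emph{above} $V_{\bar\lambda}$, so you need a larger $\delta$ with $\bar\lambda<\delta<\lambda$ such that $V_\delta\prec V_\lambda$ and $\delta$ is supercompact in $V_\lambda$ (which exists since $\text{crit}(j)$ is supercompact in $V_\lambda$). Woodin's theorem (\cref{WoodinForcing}) then yields a two-step iteration: a $\mathbb P\subseteq V_{\bar\lambda+1}$ definable over $V_{\bar\lambda+1}$ forcing $\bar\lambda$-DC with $j[G]\subseteq G$, followed by a $\bar\lambda^+$-closed $\mathbb Q$ in $V[G]_\delta$ forcing ZFC in $V[G][H]_\delta$. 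The closure of $\mathbb Q$ ensures $V[G][H]_{\bar\lambda+1}=V[G]_{\bar\lambda+1}$, so the lifted embedding on $L_\delta(V_{\bar\lambda+1})$ survives.

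Your lifting mechanism is also wrong. You propose ``shrinking $\delta$ if needed so that $|\mathbb P|$ lies below $\text{crit}(k)$'' to run L\'evy--Solovay. This is impossible: $\text{crit}(k)<\bar\lambda$, while $\mathbb P$ must be an Easton-style iteration cofinal in $\bar\lambda$ to force $\bar\lambda$-DC; it cannot be small relative to $\text{crit}(k)$. The correct mechanism is the master-condition lift: Woodin's $\mathbb P$ is constructed so that a condition $p$ forces $j[G]\subseteq G$, and since $j(\mathbb P)=\mathbb P$ (as $\mathbb P$ is definable without parameters over $V_{\bar\lambda+1}$), $j$ lifts to $L_\delta(V_{\bar\lambda+1})[G]$ by standard forcing theory. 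Finally, $\text{DC}_{V_{\lambda+1}}$ does not ``build a $V$-generic filter in $V$''---one passes to a genuine set-generic extension; DC is used earlier, in Cramer's inverse-limit construction.
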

This will follow as an immediate consequence of \cref{I0Con} below.

We appeal to the following result due to Scott Cramer:
\begin{thm}[Cramer, \cite{Cramer}]\label{InverseLimitRefl}
Suppose \(\lambda\) is a cardinal, \(V_{\lambda+1}^\#\) exists, and there is a \(\Sigma_1\)-elementary embedding from \((V_{\lambda+1},V_{\lambda+1}^\#)\) to \((V_{\lambda+1},V_{\lambda+1}^\#)\). Assume \(\textnormal{DC}_{V_{\lambda+1}}\). Then there is a cardinal \(\bar \lambda < \lambda\) such that \(V_{\bar \lambda}\prec V_\lambda\) and there is an elementary embedding from \(L(V_{\bar \lambda+1})\) to \(L(V_{\bar \lambda+1})\) with critical point less than \(\bar \lambda\).\qed
\end{thm}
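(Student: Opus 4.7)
The plan is to adapt the Laver--Cramer theory of inverse limit reflection. First I would observe that \(V_{\lambda+1}^\#\) canonically codes a class of Silver indiscernibles and Skolem functions for \(L(V_{\lambda+1})\), so that \(L(V_{\lambda+1})\) is effectively computable from \(V_{\lambda+1}\) together with the sharp. The hypothesis that \(i:(V_{\lambda+1},V_{\lambda+1}^\#)\to(V_{\lambda+1},V_{\lambda+1}^\#)\) is \(\Sigma_1\)-elementary is then strong enough to propagate \(i\) through the Skolem-hull construction, yielding a fully elementary embedding \(\hat\imath:L(V_{\lambda+1})\to L(V_{\lambda+1})\) with critical point \(\kappa<\lambda\).

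Next, I would iterate \(\hat\imath\) to form its critical sequence \(\langle\kappa_n\rangle\) with supremum \(\lambda^*\leq\lambda\), and invoke \(\textnormal{DC}_{V_{\lambda+1}}\) to construct a Laver inverse limit. Concretely, use DC to choose coherently a sequence of partial elementary embeddings \(k_n : V_{\kappa_n+1}\to V_{\kappa_{n+1}+1}\) whose iterated compositions match the iterates of \(\hat\imath\); the inverse limit \(k^{*}=\lim_n k_n\) is a new elementary embedding whose critical point lies strictly below every \(\kappa_n\). Because the sharp is preserved by \(\hat\imath\) and its iterates, it is also inherited by the limit system, so \(k^{*}\) in fact extends to an elementary embedding of the relevant \(L\)-model rather than merely of \(V_{\lambda^*+1}\).

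Finally, I would use the indiscernibles provided by \(V_{\lambda+1}^\#\) to carry out the reflection. The sharp yields an unbounded class of \(\bar\lambda<\lambda\) with \(V_{\bar\lambda}\prec V_\lambda\), and the statement ``there is an elementary embedding of \(L(V_{\bar\lambda+1})\) to itself with critical point below \(\bar\lambda\)'' can be arranged to be absolute between such an indiscernible and \(\lambda\) by coding the inverse-limit construction below. Choosing \(\bar\lambda\) in this class appropriately, the reflected inverse limit lives over \(V_{\bar\lambda+1}\) and produces the desired embedding \(L(V_{\bar\lambda+1})\to L(V_{\bar\lambda+1})\) with critical point strictly below \(\bar\lambda\), and simultaneously delivers \(V_{\bar\lambda}\prec V_\lambda\).

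The main obstacle will be ensuring that the inverse limit is wellfounded and genuinely elementary on \(L\), rather than merely a partial or \(\Sigma_1\)-preserving map. This is exactly where \(\textnormal{DC}_{V_{\lambda+1}}\) becomes essential: the iterability provided by \(V_{\lambda+1}^\#\), together with DC at the level of \(V_{\lambda+1}\)-indexed choices, is what allows Cramer to form a coherent sequence \(\langle k_n\rangle\) and guarantee that the resulting limit system collapses to a wellfounded structure. Verifying that DC at this level suffices (rather than requiring full AC, as in Laver's original argument) and that the sharp persists through the limit is the technical heart of the proof.
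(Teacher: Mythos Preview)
The paper does not prove this statement at all: \cref{InverseLimitRefl} is stated with a terminal \qed and attributed to Cramer \cite{Cramer}, with only the remark that ``this uses the method of \emph{inverse limit reflection}, which is the technique used to prove reflection results at the level of \(I_0\).'' There is therefore no argument in the paper to compare your sketch against; the author is simply quoting Cramer's result as a black box and noting that the use of DC here is the reason DC appears as a hypothesis in \cref{I0Con}.

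That said, your sketch does not quite capture how Cramer's inverse limit reflection works, and if you tried to carry it out as written you would get stuck. The inverse limit is not built from maps \(k_n : V_{\kappa_n+1}\to V_{\kappa_{n+1}+1}\) climbing \emph{up} the critical sequence; rather, one uses DC to choose a sequence of embeddings \(j_n : V_{\lambda+1}\to V_{\lambda+1}\) (obtained as ``square roots'' of the given embedding) whose direct limit of critical points is some \(\bar\lambda < \lambda\), and the inverse limit is then an embedding \(J : V_{\bar\lambda+1}\to V_{\lambda+1}\). The point of preserving \(V_{\lambda+1}^\#\) is that \(J\) pulls it back to \(V_{\bar\lambda+1}^\#\), which is what lets one decode an embedding of \(L(V_{\bar\lambda+1})\) to itself; your idea of first lifting to \(\hat\imath : L(V_{\lambda+1})\to L(V_{\lambda+1})\) and then reflecting via indiscernibles is not how the argument is organized, and the absoluteness claim in your third paragraph would be the hard part rather than a consequence. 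If you want to fill in the details, consult Cramer's paper directly rather than the present one.
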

This uses the method of {\it inverse limit reflection}, which is the technique used to prove
reflection results at the level of \(I_0\). For smaller large cardinals,
reflection results are typically not very deep, and tend to require
no use of the Axiom of Choice. It is not clear, however, whether inverse limit reflection
can be carried out without the use of DC. This is the underlying reason
that DC is required as a hypothesis in \cref{I0Con}.

We also appeal to the following theorem of Woodin:
\begin{thm}[{Woodin, \cite[Theorem 226]{SEM}}]\label{WoodinForcing}
Suppose \(\delta\) is supercompact, \(\bar \lambda < \delta\) is such that \(V_{\bar \lambda} \prec V_\delta\), and \(j : V_{\bar \lambda+1}\to V_{\bar \lambda+1}\) is an elementary embedding. Then there is a weakly homogeneous partial order \(\mathbb P\subseteq V_{\bar \lambda+1}\) definable over \(V_{\bar \lambda + 1}\) without parameters, a condition \(p\in \mathbb P\), and a \(\mathbb P\)-name \(\dot{\mathbb Q}\) such that for any \(V\)-generic filter \(G\subseteq \mathbb P\) with \(p\in G\), 
the following hold:
\begin{itemize}
\item \(V_{\bar \lambda+1}[G] = V[G]_{\bar \lambda+1}\) and 
\(j[G]\subseteq G\).
\item \(V[G]\) satisfies \(\bar \lambda\)\textnormal{-DC}.
\item \(\mathbb Q = (\dot{\mathbb Q})_G\) is a \(\bar \lambda^+\)-closed partial order in 
\(V[G]_\delta\).
\item For any \(V[G]\)-generic filter \(H\subseteq \mathbb Q\), \(V[G][H]_\delta\) satisfies \textnormal{ZFC}.\qed
\end{itemize}
\end{thm}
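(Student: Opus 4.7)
The plan is to build \(\mathbb{P}\) as a canonical forcing that generically wellorders \(V_{\bar\lambda+1}\) while being symmetric enough to lift \(j\). Concretely, I would take \(\mathbb{P}\) to consist of ``approximations'' to a surjection \(\bar\lambda^+ \to V_{\bar\lambda+1}\), coded so that each condition itself lies in \(V_{\bar\lambda+1}\) (for instance, by stratifying the forcing into levels indexed by ordinals \(\alpha < \bar\lambda\) and forming a direct limit). Since all objects involved belong to \(V_{\bar\lambda+1}\), the poset is definable over \(V_{\bar\lambda+1}\) without parameters, and the natural permutation action on conditions yields weak homogeneity. A generic filter \(G\) produces a surjection \(\pi_G : \bar\lambda^+ \to V_{\bar\lambda+1}\); a standard rank calculation gives \(V[G]_{\bar\lambda+1} = V_{\bar\lambda+1}[G]\), and \(\pi_G\) witnesses \(\bar\lambda\)-\textnormal{DC} in \(V[G]\) because any \(\bar\lambda\)-tree on a set in \(V_{\bar\lambda+1}\) can be threaded by minimizing \(\pi_G\)-codes.

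The crucial step is producing the master condition \(p\) forcing \(j[G] \subseteq G\). Here I would exploit elementarity: since \(\mathbb{P}\) is definable over \(V_{\bar\lambda+1}\) without parameters, \(j\) acts on \(\mathbb{P}\) as an order-preserving map with \(j(q) \in \mathbb{P}\) for every \(q \in \mathbb{P}\). I would choose \(p\) so that every extension of \(p\) lies in a \(j\)-invariant dense subset of \(\mathbb{P}\), arranged so that \(q \leq p\) implies \(q\) and \(j(q)\) are compatible; genericity then forces \(j(q) \in G\) for every \(q \in G\). The existence of such a \(p\) is essentially the assertion that the critical sequence of \(j\) reflects into the indexing ordinals of \(\mathbb{P}\), which follows from \(j\) fixing the definable parameters of \(\mathbb{P}\).

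For the upper forcing, I would invoke the supercompactness of \(\delta\) in \(V[G]\) (preserved because \(\mathbb{P}\) is small relative to \(\delta\)). Let \(\dot{\mathbb{Q}}\) name a \(\bar\lambda^+\)-directed-closed iteration in \(V[G]_\delta\) that Laver-prepares \(\delta\) and then generically wellorders \(V_\alpha\) for each \(\bar\lambda < \alpha < \delta\), extending the wellorder of \(V_{\bar\lambda+1}\) already given by \(\pi_G\). The \(\bar\lambda^+\)-closure of \(\mathbb{Q}\) ensures it adds no new subsets of \(V_{\bar\lambda}\), so \(V[G]_{\bar\lambda+1} = V[G][H]_{\bar\lambda+1}\) and \(\bar\lambda\)-\textnormal{DC} persists. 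Standard lifting arguments in \(V[G][H]\) then show that every \(V_\alpha\) for \(\alpha < \delta\) is wellorderable, so \(V[G][H]_\delta \models \textnormal{ZFC}\).

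The principal difficulty is the second paragraph: in the choiceless setting, the usual Laver-style master-condition construction relies on a well-chosen decreasing \(\bar\lambda\)-sequence, which may not exist in \(V\). I would circumvent this by defining \(p\) \emph{declaratively} — as a canonical condition specified by the first-order theory of \(V_{\bar\lambda+1}\) and \(j\restriction V_\alpha\) for \(\alpha < \bar\lambda\) — rather than constructing it by recursion, then using weak homogeneity of \(\mathbb{P}\) together with the elementarity of \(j\) on \(V_{\bar\lambda+1}\) (which lifts to a \(\Sigma_0\)-elementary embedding of \(\mathcal{H}_{\bar\lambda+1}\) by \cref{HAlphaLma}) to verify that \(p\) forces \(j[G] \subseteq G\) without any appeal to DC in the ground model.
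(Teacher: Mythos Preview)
The paper does not prove this theorem: it is stated with a citation to Woodin \cite[Theorem 226]{SEM} and closed with \qed. So there is no in-paper proof to compare against directly; what one can compare against is Woodin's actual construction, which the paper does allude to elsewhere (and sketches in commented-out material) as an Easton-style iteration of collapse forcings that progressively force more and more dependent choice.

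Your approach has a genuine gap at the very first step. You propose to take \(\mathbb P\) to be a single forcing adding a surjection \(\bar\lambda^+\to V_{\bar\lambda+1}\) and then argue that the resulting \(\pi_G\) ``witnesses \(\bar\lambda\)-DC'' by code-minimization. This does not work in the choiceless setting: the standard fact that a \(\gamma^+\)-closed forcing adds no new \(\gamma\)-sequences is \emph{equivalent} to \(\gamma\)-DC in the ground model, so without already having \(\bar\lambda\)-DC you cannot control what new sequences a single large collapse adds, and you cannot conclude that \(V[G]\) satisfies \(\bar\lambda\)-DC. Woodin's solution is precisely to iterate: at each stage \(\alpha\) one forces with \(\mathrm{Col}(\kappa_\alpha,V_{\delta_\alpha})\) to secure \(\kappa_\alpha\)-DC, and only \emph{then} is the tail forcing sufficiently closed to preserve what has been gained; the \(\kappa_\alpha\) increase to \(\bar\lambda\). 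This bootstrapping is the essential idea, and your one-shot collapse bypasses it.

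Your master-condition discussion inherits this problem. The actual construction of \(p\) (see e.g.\ \cite{HamkinsFragile}) builds \(p\) coordinatewise along the iteration, using at each stage \(\alpha\) that \(\{j(q)(\alpha)_G:q\in G\restriction\xi\}\) is a small directed set in a sufficiently directed-closed poset; this is exactly what the Easton structure provides and what a single collapse does not. Your ``declarative'' workaround is not a substitute: you still need an honest condition \(p\in\mathbb P\) with \(p\leq j(q)\) for all \(q\) in some large filter, and producing it requires the stage-by-stage directed closure. Finally, \(\dot{\mathbb Q}\) in Woodin's theorem is not a Laver preparation but the continuation of the same Easton iteration from \(\bar\lambda\) up to \(\delta\), which is \(\bar\lambda^+\)-closed because enough DC has been forced by stage \(\bar\lambda\).
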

Combining these two theorems, we show:
\begin{thm}\label{I0Con}
Suppose \(\lambda\) is an ordinal and there is a \(\Sigma_1\)-elementary embedding \(j :V_{\lambda+3}\to V_{\lambda+3}\) with \(\lambda = \kappa_\omega(j)\). Assume \(\textnormal{DC}_{V_{\lambda+1}}\). 
Then there is a set generic extension \(N\) such that for some \(\delta < \lambda\),
\((V_{\delta})^N\) satisfies \(\textnormal{ZFC}+ I_0\).
\end{thm}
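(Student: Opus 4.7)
The strategy is to reflect the choiceless hypothesis down to a cardinal $\bar\lambda<\lambda$ where an $I_0$-style embedding already lives, then generically well-order a $V_\delta$ containing $\bar\lambda$ via Woodin's forcing, and finally verify that the reflected embedding survives into the resulting \textnormal{ZFC} model. Three ingredients from earlier in the paper are required: the sharp-existence results of \cref{3SharpCor}, Cramer's inverse limit reflection theorem (\cref{InverseLimitRefl}), and Woodin's forcing theorem (\cref{WoodinForcing}).

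First I would invoke \cref{3SharpCor} to obtain $V_{\lambda+1}^\#$. Because $V_{\lambda+1}^\#$ is definable from $V_{\lambda+1}$ in a way that is captured inside $V_{\lambda+3}$, and $j:V_{\lambda+3}\to V_{\lambda+3}$ is $\Sigma_1$-elementary, the restriction $j\restriction V_{\lambda+1}$ is a $\Sigma_1$-elementary embedding of $(V_{\lambda+1},V_{\lambda+1}^\#)$ into itself. Together with $\textnormal{DC}_{V_{\lambda+1}}$, this is precisely the hypothesis of \cref{InverseLimitRefl}, which yields a cardinal $\bar\lambda<\lambda$ with $V_{\bar\lambda}\prec V_\lambda$ together with an elementary embedding $i:L(V_{\bar\lambda+1})\to L(V_{\bar\lambda+1})$ with critical point below $\bar\lambda$.

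Next I would locate a supercompact cardinal $\delta$ (in the choiceless sense used for \cref{WoodinForcing}) with $\bar\lambda<\delta<\lambda$ and $V_\delta\prec V_\lambda$; since $V_{\bar\lambda}\prec V_\lambda$, the latter automatically gives $V_{\bar\lambda}\prec V_\delta$. Supercompacts below $\lambda$ are plentiful under the hypothesis, in the spirit of the Woodin and Usuba results relating rank-to-rank embeddings to supercompact cardinals, and Cramer's reflection delivers unboundedly many admissible $\bar\lambda$, so one can freely choose $\bar\lambda$ below such a $\delta$. Applying \cref{WoodinForcing} to the triple $(\delta,\bar\lambda,i\restriction V_{\bar\lambda+1})$ then produces a two-step forcing $\mathbb P\ast\dot{\mathbb Q}$ and a $V$-generic $G\ast H$ so that, setting $N=V[G][H]$, one has $(V_\delta)^N\models\textnormal{ZFC}$, $(V_{\bar\lambda+1})^N=V_{\bar\lambda+1}[G]$, and $i\restriction V_{\bar\lambda+1}$ lifts to an elementary embedding $i^+:V_{\bar\lambda+1}[G]\to V_{\bar\lambda+1}[G]$.

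The hard part, and what I expect to be the principal obstacle, will be to upgrade $i^+$ from an $I_1$-style lift to a full $I_0$-style lift inside $N$, i.e.\ to extend it elementarily to $L(V_{\bar\lambda+1})^N\to L(V_{\bar\lambda+1})^N$. The extension itself is defined by the usual recursion along the $L$-hierarchy, and its elementarity would be verified by a name-forcing argument exploiting three features: the weak homogeneity of $\mathbb P$, which reduces truth in $V[G]$ below the chosen condition to truth in $V$; the $\bar\lambda^+$-closure of $\mathbb Q$ in $V[G]$, which ensures that no new subsets of $V_{\bar\lambda+1}$ are added by the second stage and hence $L(V_{\bar\lambda+1})^N$ is already computed in $V[G]$; and crucially the fact that $i$ was genuinely an elementary embedding of the full class $L(V_{\bar\lambda+1})^V$ in the ground model, so that elementarity transfers along $\mathbb P\ast\dot{\mathbb Q}$-names. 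Once this upgrade is in place, $(V_\delta)^N\models\textnormal{ZFC}+I_0$, completing the proof.
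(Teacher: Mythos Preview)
Your overall architecture matches the paper's: obtain $V_{\lambda+1}^\#$ via \cref{3SharpCor}, apply Cramer's inverse limit reflection to get $\bar\lambda$ and an $I_0$-embedding $i$ of $L(V_{\bar\lambda+1})$, pick a supercompact $\delta\in(\bar\lambda,\lambda)$ with $V_\delta\prec V_\lambda$, and force with Woodin's iteration. But you have misdiagnosed where the work lies, and there is a real gap in your final step.

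The lifting of $i$ through the forcing is not the hard part, and weak homogeneity is a red herring there. Since $\mathbb P$ is definable without parameters over $V_{\bar\lambda+1}$, one has $i(\mathbb P)=\mathbb P$; together with $i[G]\subseteq G$ (which \cref{WoodinForcing} hands you via the master condition), Silver's standard lifting criterion immediately extends $i$ to an elementary embedding of $L(V_{\bar\lambda+1})[G]$. No name-forcing argument or appeal to homogeneity is needed.

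The point you have not addressed is why the lifted embedding is visible \emph{inside} $(V_\delta)^N$. You produce an embedding of the proper class $L(V_{\bar\lambda+1})^N$, but $I_0$ must hold in the set-sized model $(V_\delta)^N$, where $L(V_{\bar\lambda+1})$ means $L_\delta(V_{\bar\lambda+1}^N)$. There is no reason a priori that your class embedding restricts to a self-map of $L_\delta(V_{\bar\lambda+1}^N)$: $i$ could move $\delta$. The paper handles this by first observing (via the ultrapower analysis) that the reflected embedding $i$ can be taken to be definable over $V_\lambda$ from a parameter in $V_{\bar\lambda+2}$. Since $V_\delta\prec V_\lambda$ and the parameter already lies in $V_\delta$, the same definition computed in $V_\delta$ yields an elementary self-map of $L_\delta(V_{\bar\lambda+1})$. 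One then lifts \emph{that} set-sized embedding through $G$, obtaining an elementary embedding of $L_\delta(V_{\bar\lambda+1})[G]=L_\delta(V[G][H]_{\bar\lambda+1})$, which is exactly what $(V_\delta)^N$ needs to witness $I_0$. Working with $L_\delta$ rather than $L$ from the outset is the missing idea.
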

\begin{proof}
By \cref{3SharpCor}, \(V_{\lambda+1}^\#\) exists. Since \(V_{\lambda+1}^\#\) is definable without parameters in \(V_{\lambda+2}\), any elementary embedding from \(V_{\lambda+2}\) to \(V_{\lambda+2}\) restricts to an elementary embedding from \((V_{\lambda+1},V_{\lambda+1}^\#)\) to \((V_{\lambda+1},V_{\lambda+1}^\#)\). Therefore the hypotheses of \cref{InverseLimitRefl} are satisfied. It follows that there is a cardinal \(\bar \lambda < \lambda\) such that \(V_{\bar \lambda}\prec V_\lambda\) and there is an elementary embedding from \(j : L(V_{\bar \lambda+1})\to L(V_{\bar \lambda+1})\) with critical point less than \(\bar \lambda\). We can assume (by the ultrapower analysis) that \(j\) is definable over \(V_\lambda\) from parameters in \(V_{\bar \lambda+2}\).

Now let \(\delta < \lambda\) be a supercompact cardinal of \(V_\lambda\) such that \(\delta > \bar \lambda\) and \(V_\delta \prec V_\lambda\). (If \(k : V_{\lambda}\to V_\lambda\) is elementary, then any point above \(\bar \lambda\) on the critical sequence of \(k\) will do.)
The embedding \(j : L(V_{\bar \lambda+1})\to L(V_{\bar \lambda+1})\) is 
definable from parameters in \(V_{\bar\lambda+2}\), so since \(V_\delta \prec V_\lambda\),
it then follows that \(j\) restricts to an elementary embedding from 
\(L_{\delta}(V_{\bar \lambda+1})\) to \(L_{\delta}(V_{\bar \lambda+1})\)
that is definable over \(L_{\delta}(V_{\bar \lambda+1})\).

The hypotheses of \cref{WoodinForcing} hold in \(V_\lambda\) (taking \(j\) equal to \(j\restriction V_{\bar \lambda+1}\)).
Let \(\mathbb P\), \(p\), and \(\dot {\mathbb Q}\) be as in \cref{WoodinForcing}
applied in \(V_\lambda\).
Let \(G\subseteq \mathbb P\) be \(V\)-generic with \(p\in G\) and \(H\subseteq (\dot {\mathbb Q})_G\) be \(V[G]\)-generic. 
We claim that \(V[G][H]_\delta\) satisfies ZFC + \(I_0\). The fact that \(V[G][H]_\delta\) satisfies ZFC is immediate from \cref{WoodinForcing} applied in \(V_\lambda\). (Here we use that
\(V[G][H]_\delta = V_\lambda[G][H]_\delta\), which follows from the fact that
\(\mathbb P*\dot{\mathbb Q}\in V_\lambda\).) 
Moreover \(j[G]\subseteq G\), \(\mathbb P\in L_\delta(V_{\bar \lambda+1})\), and \(j(\mathbb P) = \mathbb P\), so by standard forcing theory,
\(j\) extends to an elementary embedding from \(L_\delta(V_{\bar \lambda+1})[G]\) to \(L_\delta(V_{\bar \lambda+1})[G]\). Since \(V_{\bar \lambda+1}[G]= V[G]_{\bar \lambda+1} = V[G][H]_{\bar \lambda+1}\), it follows that \(j\) extends to an elementary embedding from \(L_\delta(V[G][H]_{\bar \lambda+1})\) to \(L_\delta(V[G][H]_{\bar \lambda+1})\). 
Therefore \(V[G][H]_\delta\) is a model of ZFC + \(I_0\), completing the proof.
\end{proof}

We finish by very briefly sketching the following equiconsistency:
\begin{thm}\label{I0EquiCon}
The following statements are equiconsistent over \textnormal{ZF + DC}:
	\begin{enumerate}[(1)]
		\item For some \(\lambda\), \(\mathscr E(V_{\lambda+2})\neq \{\textnormal{id}\}\).
		\item For some \(\lambda\), \(\lambda\)\textnormal{-DC} holds and \(\mathscr E(V_{\lambda+2})\neq \{\textnormal{id}\}\).
		\item The Axiom of Choice \textnormal{+ \(I_0\)}.
	\end{enumerate}
\end{thm}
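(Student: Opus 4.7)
The implication $(2) \Rightarrow (1)$ is trivial, and the equivalence $(2) \Leftrightarrow (3)$ is Schlutzenberg's Theorem as noted: \cref{SchlutzenI0} gives $(3) \Rightarrow (2)$, while the corollary to \cref{I0Con} yields $(2) \Rightarrow \textnormal{Con}(3)$. So the only direction requiring real work is $\textnormal{Con}(1) \Rightarrow \textnormal{Con}(3)$. The plan is to combine the inner model step from the proof of \cref{Equicon} with the forcing argument of \cref{I0Con}, passing through an intermediate model in which $\lambda$-\textnormal{DC} is available so that \cref{I0Con} can actually be applied.

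Suppose $\textnormal{ZF}+\textnormal{DC}+(1)$ holds, witnessed by $j : V_{\lambda+2}\to V_{\lambda+2}$. Let $\mathcal{U}$ be the ultrafilter over $V_{\lambda+1}$ derived from $j$ using $j[V_\lambda]$, and form $W = L(V_{\lambda+2})[\mathcal{U}]$. Following the proof of \cref{Equicon}, $W$ satisfies clause (4) of that theorem: there is an elementary embedding $j_0 : W\to M$ with $M$ closed under $V_{\lambda+1}^W$-sequences in $W$, and $\kappa_\omega(j_0) = \lambda$. My first aim is to promote this to the hypotheses of \cref{I0Con}. Because $M$ contains all $V_{\lambda+1}^W$-sequences from $W$, the restriction $j_0\restriction V_{\lambda+2}^W$ lies in $M$; using the canonical extension apparatus of \cref{SuccessorExtensionDef} together with the cofinal embedding property at the even ordinal $\lambda+2$ (\cref{ExtensionThm}), $j_0$ induces a $\Sigma_1$-elementary embedding $\hat{j} : V_{\lambda+3}^W\to V_{\lambda+3}^W$ with $\kappa_\omega(\hat{j})=\lambda$.

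Next I would verify that $W$ satisfies $\textnormal{DC}_{V_{\lambda+1}}$. The heuristic is that $V_{\lambda+1}$-closure of $M$ already encodes exactly the coherent choices needed: given a $V_{\lambda+1}^W$-closed tree $T\in W$ with no maximal branches, one uses $j_0$ and the closure of $M$ to select a cofinal branch through $j_0(T)$ inside $M$, then pulls it back by a standard reflection argument using the elementarity of $j_0$ together with the $V_{\lambda+1}$-closure. With $\textnormal{DC}_{V_{\lambda+1}}$ established inside $W$, the hypotheses of \cref{I0Con} are met in $W$, and applying that theorem yields a set-generic extension $N$ of $W$ such that $(V_\delta)^N\models \textnormal{ZFC}+I_0$ for some $\delta<\lambda$. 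This furnishes the desired consistency of (3).

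The principal obstacle is the verification of $\textnormal{DC}_{V_{\lambda+1}}$ in $W$: inner models of the form $L(V_{\lambda+2})[\mathcal{U}]$ are not automatically well-behaved with respect to strong dependent choice principles, and the branch-selection argument sketched above requires coherence that simultaneously respects the embedding $j_0$ and the $V_{\lambda+1}$-closure of $M$. If this direct route proves too delicate, the fallback is to invoke the L\"owenheim-Skolem machinery exploited in \cref{LSUlam} and \cref{ChoicelessUA}: the embedding $j_0$ generates L\"owenheim-Skolem cardinals densely below $\lambda$ in $W$ (or in a further inner model built over $W$), and these provide the choice principles needed to run both the Woodin-style forcing of \cref{WoodinForcing} in the shape of \cref{I0Con} and Cramer's inverse-limit reflection \cref{InverseLimitRefl}, yielding $\textnormal{Con}(\textnormal{ZFC}+I_0)$ as required.
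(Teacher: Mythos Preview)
Your approach has a genuine gap at the step where you claim the canonical extension produces a \(\Sigma_1\)-elementary embedding \(\hat{j}:V_{\lambda+3}^W\to V_{\lambda+3}^W\). The canonical extension machinery (\cref{SuccessorExtensionDef}, \cref{ExtensionThm}) does \emph{not} guarantee \(\Sigma_1\)-elementarity one level up: as the paper notes after \cref{Equicon}, under Collection the extension of a fully elementary \(V_{\epsilon+2}\to V_{\epsilon+2}\) map to \(V_{\epsilon+3}\) is only \(\Sigma_0\)-elementary (compare \cref{LocalElementarity}, which loses one quantifier). And the paper explicitly remarks, immediately after the corollary to \cref{I0Con}, that by Schlutzenberg's Theorem the hypothesis of \cref{I0Con} \emph{cannot} be reduced even to a \(\Sigma_0\)-elementary \(j:V_{\lambda+3}\to V_{\lambda+3}\) with \(j(V_{\lambda+2})=V_{\lambda+2}\). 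Indeed, \cref{ConSepThm} shows that a \(\Sigma_1\)-elementary embedding of \(V_{\lambda+3}\) is strictly stronger in consistency than (1), so deriving it from (1) inside \(W\) (which contains \(V_{\lambda+1}\)) would yield that (1) proves its own consistency. Your alternative route via the full ultrapower embedding \(j_0:W\to M\) does not help either: \(V_{\lambda+1}\)-closure of \(M\) gives \(V_{\lambda+2}^M=V_{\lambda+2}^W\), but not \(V_{\lambda+3}^M=V_{\lambda+3}^W\), so \(j_0\restriction V_{\lambda+3}^W\) lands in \(V_{\lambda+3}^M\), not \(V_{\lambda+3}^W\). Neither the \(\textnormal{DC}_{V_{\lambda+1}}\) heuristic nor the L\"owenheim--Skolem fallback addresses this obstruction.

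The paper's route is quite different. Rather than trying to reach the hypothesis of \cref{I0Con}, it works directly toward (2): pass to \(V=L(V_{\lambda+1})[j]\), run Woodin's Easton iteration \emph{as computed in} \(V_\lambda\), and use \cref{DCCor} (which combines Schlutzenberg's \cref{SchlutzenI0} with inverse limit reflection via \cref{InverseDCLma}) to show that each stage forces the required amount of DC not just over \(V_\lambda\) but over \(V\) itself. The tail forcings are sufficiently closed that \(\lambda\)-DC holds in the full extension, and the standard master-condition argument lifts \(j\) to \(V[G]_{\lambda+2}\). This gives (2); Schlutzenberg then closes the loop to (3). The essential idea you are missing is that Schlutzenberg's theorem is used \emph{inside} the (1)\(\Rightarrow\)(2) direction (via \cref{DCCor}), not merely to pass between (2) and (3).
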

The equiconsistency of (2) and (3) is due to Schlutzenberg.

The equiconsistency uses Schlutzenberg's Theorem (\cref{SchlutzenI0}) to reduce to the situation where inverse limit reflection \cite{Cramer} can be applied.
\begin{thm}\label{InverseDCLma}
Assume there is an embedding \(j \in \mathscr E(L(V_{\lambda+1}))\) with \(\lambda = \kappa_\omega(j)\). Assume \(\textnormal{DC}\) holds in \(L(V_{\lambda+1})\). Then for any infinite cardinal \(\gamma < \lambda\), if \(\gamma\textnormal{-DC}\) holds in \(V_\lambda\), then \(\gamma\textnormal{-DC}\) holds in \(L(V_{\lambda+1})\).
\begin{proof}
Assume \(\gamma\)-DC holds in \(V_\lambda\). By a standard argument, it suffices to show that \(\gamma\textnormal{-DC}_{V_{\lambda+1}}\) holds in \(L(V_{\lambda+1})\). Suppose \(T\) is a \(\gamma\)-closed tree on \(V_{\lambda+1}\) with no maximal branches. We must find a cofinal branch of \(T\). Fix \(\alpha < (\theta_{\lambda+2})^{L(V_{\lambda+1}}\) such that \(T\in L_\alpha(V_{\lambda+1})\). By inverse limit reflection \cite{Cramer}, there exist \(\gamma < \bar \lambda < \bar \alpha < \lambda\) and an elementary embedding \(J : L_{\bar \alpha}(V_{\bar \lambda+1})\to L_\alpha(V_{\lambda+1})\) with \(T\in \text{ran}(J)\). Let \(\bar T = J^{-1}(T)\). Working in \(V_\lambda\), \(\gamma\)-DC yields a cofinal branch \(\bar b\subseteq \bar T\). Since \(\bar b\) is a \(\gamma\)-sequence of elements of \(V_{\bar \lambda+1}\), \(\bar b \in L_1(V_{\bar \lambda+1})\). Therefore \(\bar \in L_{\bar \alpha}(V_{\bar \lambda+1})\). (We may assume without loss of generality that \(\bar \alpha \geq 1\).) Now \(J(\bar b)\) is a cofinal branch of \(T\), as desired.
\end{proof}
\end{thm}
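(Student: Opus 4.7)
The plan is to reduce $\gamma$-\textnormal{DC} in $L(V_{\lambda+1})$ to the easier principle $\gamma$-\textnormal{DC}$_{V_{\lambda+1}}$ in $L(V_{\lambda+1})$ by a standard coding argument: every element of $L(V_{\lambda+1})$ is coded by a pair $(\alpha,x)$ with $\alpha$ an ordinal and $x \in V_{\lambda+1}$, so a $\gamma$-closed tree $T$ on any set in $L(V_{\lambda+1})$ with no maximal branch can be replaced by a $\gamma$-closed tree $T'$ on $V_{\lambda+1}$ (coding both the ordinal and set components) with no maximal branch, whose cofinal branches decode to cofinal branches of $T$. So the real task is to produce cofinal branches of $\gamma$-closed trees $T \subseteq V_{\lambda+1}^{<\gamma}$ that happen to lie in $L(V_{\lambda+1})$.

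The key idea is that $V_\lambda$ already satisfies $\gamma$-\textnormal{DC}, so $V_\lambda$ can find branches through trees that live inside it; the trouble is that $T$ itself lives way above $\lambda$ in the $L(V_{\lambda+1})$ hierarchy. I would exploit Cramer's inverse limit reflection, invoked via the hypothesis that $j \in \mathscr E(L(V_{\lambda+1}))$ with $\kappa_\omega(j) = \lambda$ (and $\textnormal{DC}^{L(V_{\lambda+1})}$, which is what makes the inverse limit machinery go through in Cramer's proof). Fixing $\alpha < \theta_{\lambda+2}^{L(V_{\lambda+1})}$ with $T \in L_\alpha(V_{\lambda+1})$, reflection provides $\bar\lambda < \lambda$ with $\bar\lambda > \gamma$, an ordinal $\bar\alpha$, and an elementary embedding $J : L_{\bar\alpha}(V_{\bar\lambda+1}) \to L_\alpha(V_{\lambda+1})$ whose range contains $T$. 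Let $\bar T = J^{-1}(T)$, a $\gamma$-closed tree on $V_{\bar\lambda+1}$ with no maximal branches (by elementarity of $J$).

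Now $\bar T \in V_\lambda$, so $\gamma$-\textnormal{DC} in $V_\lambda$ (here I need $\gamma$-\textnormal{DC}$_{V_{\bar\lambda+1}}$ in $V_\lambda$, which follows from the version $\gamma$-\textnormal{DC} asserted in the hypothesis) yields a cofinal branch $\bar b$ of $\bar T$. The crucial absoluteness point is that $\bar b$ is a $\gamma$-sequence from $V_{\bar\lambda+1}$ with $\gamma < \bar\lambda$, so $\bar b \in V_{\bar\lambda+1}$ and hence $\bar b \in L_1(V_{\bar\lambda+1}) \subseteq L_{\bar\alpha}(V_{\bar\lambda+1})$ (taking $\bar\alpha \geq 1$). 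Applying $J$, the image $J(\bar b) \in L_\alpha(V_{\lambda+1}) \subseteq L(V_{\lambda+1})$ is a cofinal branch of $T$, which is exactly what $\gamma$-\textnormal{DC} in $L(V_{\lambda+1})$ demands.

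The main obstacle is verifying that inverse limit reflection delivers the required embedding $J$ with $T$ in its range and critical parameters above $\gamma$; this is the deepest input and is a black box from Cramer's work, relying essentially on $\textnormal{DC}^{L(V_{\lambda+1})}$. Once $J$ is in hand, the rest is a soft absoluteness argument, with the only subtlety being that a $\gamma$-sequence of sets of rank $\leq \bar\lambda$ lies in $L_1(V_{\bar\lambda+1})$ — this is where having $\gamma < \bar\lambda$ (so that the sequence is really an element of $V_{\bar\lambda+1}$) is essential, and is guaranteed by the reflection theorem's freedom to pick $\bar\lambda$ as large as we like below $\lambda$.
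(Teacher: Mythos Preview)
Your proposal is correct and follows essentially the same approach as the paper's proof: reduce to $\gamma$-\textnormal{DC}$_{V_{\lambda+1}}$, invoke Cramer's inverse limit reflection to obtain $J : L_{\bar\alpha}(V_{\bar\lambda+1})\to L_\alpha(V_{\lambda+1})$ with $\gamma < \bar\lambda$ and $T\in\operatorname{ran}(J)$, use $\gamma$-\textnormal{DC} in $V_\lambda$ to find a branch $\bar b$ of $\bar T = J^{-1}(T)$, observe that $\bar b\in L_{\bar\alpha}(V_{\bar\lambda+1})$, and push forward via $J$. You have also correctly isolated inverse limit reflection (and its reliance on \textnormal{DC} in $L(V_{\lambda+1})$) as the single nontrivial input.
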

\begin{cor}\label{DCCor}
Assume there is an embedding \(j \in \mathscr E(L(V_{\lambda+1}))\) with \(\lambda = \kappa_\omega(j)\). Assume \(\textnormal{DC}\) holds in \(L(V_{\lambda+1})\). Then for any infinite cardinal \(\gamma < \lambda\), if \(\gamma\textnormal{-DC}\) holds in \(V_\lambda\), then \(\gamma\textnormal{-DC}\) holds in \(L(V_{\lambda+1})[j\restriction V_{\lambda+2}]\).
\begin{proof}
Let \(M = L(V_{\lambda+1})[j\restriction V_{\lambda+2}]\). Again, it suffices to show \(\gamma\textnormal{-DC}_{V_{\lambda+1}}\) holds in \(M\). But by Schlutzenberg's Theorem, 
\(V_{\lambda+2}\cap M = V_{\lambda+2}\cap L(V_{\lambda+1})\), so
\(M\) satisfies \(\gamma\textnormal{-DC}_{V_{\lambda+1}}\) if and only if \(L(V_{\lambda+1})\) does. Applying \cref{InverseDCLma} then yields the corollary.
\end{proof}
\end{cor}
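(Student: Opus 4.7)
The plan is to reduce to the previous lemma \cref{InverseDCLma} by showing that passing from $L(V_{\lambda+1})$ to $M = L(V_{\lambda+1})[j\restriction V_{\lambda+2}]$ does not affect the truth of $\gamma$-DC$_{V_{\lambda+1}}$. First I would observe, by the usual coding argument, that to prove $\gamma$-DC in $M$ it is enough to establish $\gamma$-DC$_{V_{\lambda+1}}$ in $M$: every set in $M$ is the surjective image of some $V_{\lambda+1}\times \alpha$ for an ordinal $\alpha$, so an arbitrary $\gamma$-closed tree without maximal branches can be replaced by one on $V_{\lambda+1}$ itself (recall $\gamma<\lambda$ so $\gamma\in V_\lambda\subseteq V_{\lambda+1}$).

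The key step is then to observe that both the statement ``\,$T$ is a $\gamma$-closed tree on $V_{\lambda+1}$ with no maximal branches\," and the existence of a cofinal branch through such a $T$ are properties computed inside $V_{\lambda+2}$. Consequently, whether $\gamma$-DC$_{V_{\lambda+1}}$ holds in a transitive class model containing $V_{\lambda+1}$ depends only on that model's intersection with $V_{\lambda+2}$. Here I would invoke Schlutzenberg's Theorem (\cref{SchlutzenI0}), which gives exactly the equation
\[
V_{\lambda+2}\cap M \;=\; V_{\lambda+2}\cap L(V_{\lambda+1}).
\]
Therefore $M$ and $L(V_{\lambda+1})$ see the same $\gamma$-closed trees on $V_{\lambda+1}$ and the same candidate cofinal branches, so $\gamma$-DC$_{V_{\lambda+1}}$ holds in $M$ iff it holds in $L(V_{\lambda+1})$.

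With this equivalence in hand, the corollary follows at once from \cref{InverseDCLma}, which under the hypothesis that $\gamma$-DC holds in $V_\lambda$ (together with DC in $L(V_{\lambda+1})$ and the existence of $j\in \mathscr E(L(V_{\lambda+1}))$ with $\kappa_\omega(j)=\lambda$) yields $\gamma$-DC in $L(V_{\lambda+1})$. The only real subtlety — which I would handle carefully but do not expect to be a serious obstacle — is verifying that the reduction from arbitrary $\gamma$-DC in $M$ to $\gamma$-DC$_{V_{\lambda+1}}$ in $M$ goes through without choice: one must check that the witnessing surjections $V_{\lambda+1}\times\alpha \twoheadrightarrow X$ used to recode a tree on an arbitrary set can themselves be chosen inside $M$, which is immediate from the fact that in $M$ every set is OD from parameters in $V_{\lambda+1}\cup\{j\restriction V_{\lambda+2}\}$.
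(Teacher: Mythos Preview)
Your proposal is correct and follows essentially the same approach as the paper: reduce to $\gamma$-\textnormal{DC}$_{V_{\lambda+1}}$, observe via Schlutzenberg's Theorem that $V_{\lambda+2}\cap M = V_{\lambda+2}\cap L(V_{\lambda+1})$ so this principle transfers between $M$ and $L(V_{\lambda+1})$, and then invoke \cref{InverseDCLma}. Your additional justification for the reduction step and for why $\gamma$-\textnormal{DC}$_{V_{\lambda+1}}$ is determined by $V_{\lambda+2}$ is more explicit than the paper's terse treatment but amounts to the same argument.
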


\begin{proof}[Proof of \cref{I0EquiCon}]
Assume (1).
We may assume \(V = L(V_{\lambda+1})[j]\) for a nontrivial embedding 
\(j\in \mathscr E(V_{\lambda+2})\) with \(\kappa_\omega(j) = \lambda\). 
We build a forcing extension satisfying (2).  
Let \(\langle \mathbb Q_\alpha \mid \alpha < \lambda\rangle\) 
be Woodin's class Easton iteration for forcing AC, as computed in \(V_\lambda\). 
(See \cite[Theorem 226]{SEM}.) Let \(\mathbb P\) be the inverse limit of the sequence 
\(\langle\mathbb Q_\alpha\rangle_{\alpha < \lambda}\), and let 
\(\dot{\mathbb P}_{\alpha,\lambda}\) be the factor forcing, so 
\(\mathbb Q_\alpha * \dot{\mathbb P}_{\alpha,\lambda} \cong \mathbb P\). 
By construction, there is an increasing sequence 
\(\langle \kappa_\alpha\rangle_{\alpha < \lambda}\) 
such that \(\mathbb Q_\alpha\) forces \(\kappa_\alpha\)-DC 
over \(V_{\lambda}\) and \(\mathbb P_{\alpha,\lambda}\) 
is \(\kappa_\alpha^+\)-closed in \(V\). But by \cref{DCCor}, 
\(\mathbb Q_\alpha\) forces \(\kappa_\alpha\)-DC over \(V\). 
Therefore using that \(\mathbb P_{\alpha,\lambda}\) is \(\kappa_\alpha^+\)-closed in \(V\), 
if \(G\subseteq \mathbb P\) is \(V\)-generic,
\(V[G\cap \mathbb Q_\alpha]\) is closed under \(\kappa_\alpha\)-sequences 
in \(V[G]\) and \(V[G]\) satisfies \(\kappa_\alpha\)-DC. 
Since the cardinals \(\kappa_\alpha\) increase to \(\lambda\), 
this shows that \(V[G]\) satisfies \(\lambda\)-DC.
Moreover the standard master condition argument for \(I_1\)-embeddings, 
given for example in \cite[Lemma 5.2]{HamkinsFragile}, 
shows that \(G\) can be chosen so that the embedding 
\(j\) lifts to an elementary embedding \(j^* : V[G]_{\lambda+2}\to V[G]_{\lambda+2}\).

The equiconsistency of (2) and (3) is Schlutzenberg's Theorem \cite{SchlutzenbergI0}.
\end{proof}
\section{Appendix}
In this appendix, we collect together the wellfoundedness proofs for the various Ketonen orders we have used throughout the paper. 
We take a more general approach by considering a Ketonen order on countably complete filters on complete Boolean algebras.
The orders we have considered so far belong to the special case where the Boolean algebras involved are atomic. 
In our view, the more abstract approach significantly clarifies the wellfoundedness proofs. 
For a more concrete approach, see the treatment in the author's thesis \cite{UA}.

\begin{defn}
Suppose \(\mathbb B_0\) and \(\mathbb B_1\) are complete Boolean algebras. A {\it \(\sigma\)-map} from \(\mathbb B_0\) to \(\mathbb B_1\) is a function that preserves \(0\), \(1\), and countable meets.
\end{defn}
We work with \(\sigma\)-maps rather than countably complete homomorphisms so that our results apply to the Ketonen order on filters in addition to the Ketonen order on ultrafilters: if \(\langle F_y \mid y \in Y\rangle\) is a sequence of countably complete filters over \(X\), then the function \(h : P(X)\to P(Y)\) defined by \(h(A) = \{y\in Y \mid A\in F_y\}\) is a \(\sigma\)-map, but is a countably complete homomorphism only if \(F_y\) is an ultrafilter for all \(y\in Y\). 

Given a \(\sigma\)-map from \(\mathbb B_0\) to \(\mathbb B_1\), one can define a \(\mathbb B_1\)-valued relation on names for ordinals in \(V^{\mathbb B_0}\) and \(V^{\mathbb B_1}\):
\begin{defn}\label{BooleanRelationDef}
Suppose \(\mathbb B_0\) and \(\mathbb B_1\) are complete Boolean algebras, \(\dot \alpha_0\in V^{\mathbb B_0}\) and \(\dot \alpha_1\in V^{\mathbb B_1}\) are names for ordinals, and \(h : \mathbb B_0\to\mathbb B_1\) is a \(\sigma\)-map. Then \[\llbracket\dot \alpha_0 < \dot \alpha_1\rrbracket_h = \bigvee_{\beta\in \text{Ord}} h\left(\llbracket \dot \alpha_0 < \beta\rrbracket_{\mathbb B_0}\right)\cdot \llbracket \dot \alpha_1 = \beta\rrbracket_{\mathbb B_1}\]
\end{defn}
Note that ``\(\dot \alpha_0 < \dot \alpha_1\)'' is not a formula in the forcing language associated to either \(\mathbb B_0\) or \(\mathbb B_1\). The notation should be regarded as purely formal.

This notation is motivated by the following considerations. Suppose \(h :\mathbb B_0\to \mathbb B_1\) is a complete homomorphism. Then there is an embedding \(i :  V^{\mathbb B_0}\to V^{\mathbb B_1}\) defined by \(i (\dot x) = h\circ \dot x\). In this case, \(\llbracket\dot \alpha_0 < \dot \alpha_1\rrbracket_h = \llbracket i(\dot \alpha_0) < \dot \alpha_1\rrbracket_{\mathbb B_1}\).
More generally, \[\llbracket\dot \alpha_0 < \dot \alpha_1\rrbracket_h = \llbracket h(\llbracket \dot \alpha_0 < \dot \alpha_1\rrbracket_{\mathbb B_0})\in \dot G_{\mathbb B_1}\rrbracket_{\mathbb B_1}\]
Recall that \(\dot G_\mathbb B\) denotes the canonical name for a generic ultrafilter in the forcing language associated with the complete Boolean algebra \(\mathbb B\).
Given our assertion above that ``\(\dot \alpha_0 < \dot \alpha_1\)'' is not a formula in the forcing language associated to \(\mathbb B_0\), the reader may want to take some time interpreting the right-hand side of the formula above.



The following lemma asserts a form of wellfoundedness for the relation given by \cref{BooleanRelationDef}:
\begin{lma}\label{BooleanWFLma}
Suppose \(\langle \mathbb B_n,h_{n,m}: \mathbb B_n \to \mathbb B_m \mid m \leq n < \omega\rangle\) is an inverse system of complete Boolean algebras and \(\sigma\)-maps. Suppose for each \(n < \omega\), \(\dot \alpha_n\) is a \(\mathbb B_n\)-name for an ordinal. Then 
\(\bigwedge_{n < \omega}h_{n,0}(\llbracket\dot \alpha_{n+1} < \dot \alpha_n\rrbracket_{h_{n+1,n}}) = 0\).
\begin{proof}
Assume towards a contradiction that the lemma is false. Let \(\beta\) be the least ordinal such that for some \(\langle \mathbb B_n,h_{n,m}, \dot \alpha_n: m \leq n < \omega\rangle\) witnessing the failure of the lemma,
 \(\llbracket \dot \alpha_0 =\beta\rrbracket_{\mathbb B_0}\cdot \bigwedge_{n < \omega}h_{n,0}(\llbracket\dot \alpha_{n+1} < \dot \alpha_n\rrbracket_{h_{n+1,n}})\neq 0\). 

The definition of \(\llbracket\dot \alpha_{1} < \dot \alpha_0\rrbracket_{h_{1,0}}\) yields:
\begin{align}\llbracket \dot \alpha_0 =\beta\rrbracket_{\mathbb B_0}\cdot \llbracket\dot \alpha_{1} < \dot \alpha_0\rrbracket_{h_{1,0}}
&= \llbracket \dot \alpha_0 =\beta\rrbracket_{\mathbb B_0}\cdot h_{1,0}(\llbracket \dot \alpha_1 < \beta\rrbracket_{\mathbb B_1})\nonumber\\
&\leq h_{1,0}(\llbracket \dot \alpha_1 < \beta\rrbracket_{\mathbb B_1})\label{KeyKetIneq}\end{align}
For each \(m < \omega\), let 
\(a_m = \bigwedge_{m\leq n < \omega}h_{n,m}(\llbracket\dot \alpha_{n+1} < \dot \alpha_n\rrbracket_{h_{n+1,n}})\).
Since \(h_{1,0}\) is a \(\sigma\)-map, 
\[a_0 = \llbracket \dot \alpha_{1} < \dot \alpha_0\rrbracket_{h_{1,0}}\cdot h_{1,0}(a_1)\]
As a consequence of this and \cref{KeyKetIneq}: 
\begin{align*}
\llbracket \dot \alpha_0 =\beta\rrbracket_{\mathbb B_0}\cdot a_0 
&= \llbracket \dot \alpha_0 =\beta\rrbracket_{\mathbb B_0}\cdot \llbracket \dot \alpha_1 < \dot \alpha_0\rrbracket_{h_{1,0}} \cdot h_{1,0}(a_1)\\
&\leq h_{1,0}(\llbracket \dot \alpha_1 < \beta\rrbracket_{\mathbb B_1})\cdot h_{1,0}(a_1)\\
&= h_{1,0}(\llbracket \dot \alpha_1 < \beta\rrbracket_{\mathbb B_1}\cdot a_1)
\end{align*}
By our choice of \(\beta\), \(\llbracket \dot \alpha_0 =\beta\rrbracket_{\mathbb B_0}\cdot a_0 \neq 0\), 
so we can conclude that \(\llbracket \dot \alpha_1 < \beta\rrbracket_{\mathbb B_1}\cdot a_1\neq 0\).
Therefore there is some \(\xi < \beta\) such that \(\llbracket \dot \alpha_1 = \xi\rrbracket_{\mathbb B_1}\cdot a_1\neq 0\). This contradicts the minimality of \(\beta\).
\end{proof}
\end{lma}
\begin{defn}\label{SigmaReductions}
Suppose \(F_0\) and \(F_1\) are countably complete filters on the complete Boolean algebras \(\mathbb B_0\) and \(\mathbb B_1\). A {\it \(\sigma\)-reduction} \(h : F_0\to F_1\) is a \(\sigma\)-map \(h : \mathbb B_0\to \mathbb B_1\) such that \(F_0 \subseteq h^{-1}[F_1]\). Suppose \(\dot \alpha_0\in V^{\mathbb B_0}\) and \(\dot \alpha_1\in V^{\mathbb B_1}\) are names for ordinals. A {\it \(\sigma\)-comparison} \(h : (F_0,\dot \alpha_0)\to (F_1,\dot \alpha_1)\) is a \(\sigma\)-reduction \(h : F_0\to F_1\) such that \(\llbracket \dot \alpha_0 < \dot \alpha_1\rrbracket_h\in F_1\). 
\end{defn}
\begin{thm}\label{SigmaDescentThm}
There is no infinite sequence of \(\sigma\)-comparisons and countably complete filters of the form
\(\cdots \stackrel{h_{3,2}}\longrightarrow(F_2,\dot \alpha_2)\stackrel{h_{2,1}}\longrightarrow(F_1,\dot \alpha_1)\stackrel{h_{1,0}}\longrightarrow(F_0,\dot \alpha_0)\).
\begin{proof}
Assume towards a contradiction that there is such a sequence. Fix \(n < \omega\). Since \(h_{n+1,n} : (F_{n+1},\dot \alpha_{n+1})\to (F_{n},\dot \alpha_{n})\) is a \(\sigma\)-comparison, \[\llbracket \dot \alpha_{n+1} < \dot \alpha_n\rrbracket_{h_{n+1,n}}\in F_n\]
Let \(h_{n,0} = h_{1,0} \circ \cdots \circ h_{n,n-1}\). Clearly \(h_{n,0} : F_n\to F_0\) is a \(\sigma\)-reduction, and therefore 
\[h_{n,0}(\llbracket \dot \alpha_{n+1} < \dot \alpha_n\rrbracket_{h_{n+1,n}})\in F_0\]
Since \(F_0\) is a countably complete filter, 
\(\bigwedge_{m < \omega } h_{m,0}(\llbracket \dot \alpha_{m+1} < \dot \alpha_m\rrbracket_{h_{m+1,m}})\in F_0\).
In particular, this infinite meet is not \(0\), contrary to \cref{BooleanWFLma}.
\end{proof}
\end{thm}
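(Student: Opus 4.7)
The plan is to derive a contradiction directly from \cref{BooleanWFLma} by feeding in the data of an assumed descending chain. The setup of that lemma — an inverse system $\langle \mathbb{B}_n, h_{n,m} \mid m \leq n < \omega\rangle$ of complete Boolean algebras together with $\mathbb{B}_n$-names $\dot \alpha_n$ for ordinals — is essentially already handed to us by the hypothesis, once we close the given $\sigma$-maps $h_{n+1,n}$ under composition to obtain $h_{n,m} = h_{m+1,m} \circ \cdots \circ h_{n,n-1}$ for $m \leq n$. A quick routine check (which I would not belabor) confirms that the composition of $\sigma$-maps is a $\sigma$-map, and that the composition of $\sigma$-reductions is a $\sigma$-reduction (since $h^{-1}[G]$-preservation transports across compositions). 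So the sequence of $h_{n,0}$ gives $\sigma$-reductions $F_n \to F_0$.

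The key step is to use the comparison hypothesis to locate the relevant elements inside $F_0$. For each $n$, set
\[
b_n = \llbracket \dot \alpha_{n+1} < \dot \alpha_n \rrbracket_{h_{n+1,n}}.
\]
By definition of a $\sigma$-comparison, $b_n \in F_n$, and so $h_{n,0}(b_n) \in F_0$ because $h_{n,0}$ is a $\sigma$-reduction from $F_n$ to $F_0$.

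Now I invoke countable completeness of $F_0$: the countable meet $\bigwedge_{n < \omega} h_{n,0}(b_n)$ belongs to $F_0$, and in particular is strictly above $0$ since a filter never contains $0$. But \cref{BooleanWFLma} asserts that precisely this meet equals $0$, a contradiction. This closes the proof.

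There is no real obstacle here — the entire combinatorial work is absorbed into \cref{BooleanWFLma}, and the present theorem is simply the filter-theoretic repackaging of that Boolean-algebraic wellfoundedness statement. The only thing to be careful about is the bookkeeping in the verification that composed $\sigma$-reductions remain $\sigma$-reductions and that the composed $h_{n,0}$ land in the correct target algebra $\mathbb{B}_0$, but both are immediate from the definitions in \cref{SigmaReductions}.
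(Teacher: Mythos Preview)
Your proposal is correct and follows essentially the same route as the paper's own proof: compose the $\sigma$-reductions to get $h_{n,0}:F_n\to F_0$, push the comparison values $\llbracket \dot\alpha_{n+1}<\dot\alpha_n\rrbracket_{h_{n+1,n}}\in F_n$ into $F_0$, take the countable meet in $F_0$, and contradict \cref{BooleanWFLma}. The only difference is that you spell out the routine closure properties of $\sigma$-maps and $\sigma$-reductions under composition, which the paper leaves as ``clearly.''
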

We now use this to prove the wellfoundedness of the Ketonen order and the irreflexivity of the internal relation. This is a matter of specializing the theorems we have proved to the case of atomic Boolean algebras.
\begin{defn}\label{NormedKetonenDef}
A {\it pointed filter} on a set \(X\) is a pair \((F,f)\) where \(F\) is a countably complete filter over \(X\) and \(f : X\to \textnormal{Ord}\) is a function. 
\end{defn}

Every function \(f: X\to \text{Ord}\) can be associated to the \(P(X)\)-name \(\tau_f\) for the ordinal 
\(f(x_G)\) where \(x_G\in X\) is the point in \(X\) selected by the generic (i.e., principal) ultrafilter \(G\subseteq P(X)\).
More concretely, the name \(\tau_f\) is defined by setting \(\text{dom}(\tau_f) = \bigcup_{x\in X} f(x)\) and \[\tau_f(\alpha) = \{x\in X \mid \alpha < f(x)\}\]
for all \(\alpha\in \text{dom}(\tau_f)\). Identifying \((F,f)\) and \((F,\tau_f)\), \cref{SigmaReductions} is transformed as follows:

\begin{defn}Suppose \((F,f)\) and \((G,g)\) are pointed filters over \(X\) and \(Y\) and \(Z = \langle Z_y \mid y\in Y\rangle\) is a sequence of countably complete filters over \(X\). 
\begin{itemize}
\item \(Z\) is a {\it filter reduction from \(F\) to \(G\)} if \(F = G\text{-lim}_{y\in Y}Z_y\).
\item \(Z\) is a {\it filter comparison from \((F,f)\) to \((G,g)\)} if \(Z\) is a filter reduction from \(F\) to \(G\) and for \(G\)-almost all \(y\in Y\), for \(Z_y\)-almost all \(x\in X\), \(f(x) < g(y)\). 
\end{itemize}
We write \(Z : F\to G\) to indicate that \(Z\) is a filter reduction from \(F\) to \(G\). We write \(Z: (F,f)\to (G,g)\) to indicate that \(Z\) is a filter comparison from \((F,f)\) to \((G,g)\).
\end{defn}
Clearly, \(\sigma\)-reductions and \(\sigma\)-comparisons generalize filter reductions and filter comparisons. Let us state this more precisely.
\begin{defn}
Let \(\Phi\) be the function sending a \(\sigma\)-map \(h : P(X)\to P(Y)\) to the sequence \(\langle Z_y \mid y\in Y\rangle\) where \(Z_y = \{A\subseteq X \mid y\in h(A)\}\) is the filter over \(X\) derived from \(h\) using \(y\).
\end{defn}

\begin{lma}
Suppose \((F,f)\) and \((G,g)\) are pointed filters over \(X\) and \(Y\). Suppose \(h : P(X)\to P(Y)\) is a \(\sigma\)-map. Then \(\Phi(h)\) is a filter reduction from \(F\) to \(G\) if and only if \(h\) is a \(\sigma\)-reduction from \(F\) to \(G\). Moreover \(\Phi(h)\) is a filter comparison from \((F,f) \) to \((G,g)\) if and only if \(h\) is a \(\sigma\)-comparison from \((F,\tau_f)\) to \((G,\tau_g)\).\qed
\end{lma}

As an immediate corollary of these lemmas and \cref{SigmaDescentThm}, we have the following theorems:
\begin{thm}\label{KetonenWellfounded}
There is no descending sequence of pointed filters and filter comparisons of the form \(\cdots \stackrel{Z_3}{\longrightarrow} (F_2,f_2)\stackrel{Z_2}{\longrightarrow} (F_1,f_1)\stackrel{Z_1}{\longrightarrow}(F_0,f_0)\).\qed
\end{thm}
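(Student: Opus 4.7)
The plan is to reduce \cref{KetonenWellfounded} directly to the already-established \cref{SigmaDescentThm} by transporting a hypothetical descending chain of filter comparisons, which lives over the atomic complete Boolean algebras \(P(X)\), to a descending chain of \(\sigma\)-comparisons on those same algebras. The translation is carried out by the map \(\Phi\) introduced just before the statement: a \(\sigma\)-map \(h : P(X)\to P(Y)\) sends to \(\Phi(h) = \langle Z_y \mid y \in Y\rangle\), where \(Z_y = \{A\subseteq X \mid y\in h(A)\}\). Conversely, any \(Y\)-indexed family of countably complete filters on \(X\) determines a unique \(\sigma\)-map via \(h(A) = \{y \mid A \in Z_y\}\); the countable completeness of each \(Z_y\) is exactly what ensures that \(h\) preserves countable meets.

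The first verification I would carry out is that \(\Phi\) sends filter reductions to \(\sigma\)-reductions: by definition \(F = G\text{-lim}_{y\in Y}Z_y\) iff for every \(A\subseteq X\), \(A\in F\) iff \(h(A)\in G\), which is exactly \(F\subseteq h^{-1}[G]\). The second, and slightly more delicate, verification is that \(\Phi\) sends filter comparisons to \(\sigma\)-comparisons. Identifying \(f:X\to \textnormal{Ord}\) with the canonical name \(\tau_f\) discussed after \cref{NormedKetonenDef}, I would unwind \cref{BooleanRelationDef} as
\[\llbracket \tau_f < \tau_g\rrbracket_h = \bigvee_{\beta\in \textnormal{Ord}} h(\{x\in X \mid f(x) < \beta\})\cdot \{y\in Y \mid g(y) = \beta\}.\]
Since the sets \(\{y\in Y \mid g(y) = \beta\}\) partition \(Y\), a point \(y\) belongs to the right-hand side precisely when \(\{x \mid f(x) < g(y)\} \in Z_y\). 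Consequently, \(\llbracket\tau_f < \tau_g\rrbracket_h\in G\) is literally the statement that for \(G\)-almost all \(y\), for \(Z_y\)-almost all \(x\), \(f(x) < g(y)\), which is the filter-comparison condition.

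With these two matches in hand, any putative infinite descending chain \(\cdots \stackrel{Z_3}{\longrightarrow}(F_2,f_2)\stackrel{Z_2}{\longrightarrow}(F_1,f_1)\stackrel{Z_1}{\longrightarrow}(F_0,f_0)\) transports componentwise to an infinite descending chain of \(\sigma\)-comparisons between the pointed filters \((F_n,\tau_{f_n})\) on the atomic Boolean algebras \(P(X_n)\), in direct contradiction to \cref{SigmaDescentThm}. The substantive combinatorial content has already been expended in \cref{BooleanWFLma}, whose minimal-counterexample argument on the ordinal value \(\beta\) of \(\dot\alpha_0\) is what drives wellfoundedness; the present theorem is therefore genuinely immediate. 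The only real hazard in writing this out is bookkeeping, namely making sure the translation respects the atomic/ordinal encoding (in particular that passing from \(f\) to \(\tau_f\) requires no choice, which it does not since \(\tau_f\) is explicitly defined).
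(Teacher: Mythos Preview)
Your proposal is correct and is exactly the approach the paper takes: the paper states \cref{KetonenWellfounded} as an immediate corollary of \cref{SigmaDescentThm} together with the lemma that \(\Phi(h)\) is a filter comparison from \((F,f)\) to \((G,g)\) iff \(h\) is a \(\sigma\)-comparison from \((F,\tau_f)\) to \((G,\tau_g)\), and you have spelled out precisely this translation.
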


Of course, the Ketonen order on filters can be characterized in terms of the notion of a filter reduction:
\begin{lma}
Suppose \(F_0\) and \(F_1\) are countably complete filters over ordinals. Then \(F_0 \sE F_1\) in the Ketonen order on filters if and only if there is a \(\sigma\)-comparison from \((F_0,\textnormal{id})\) to \((F_1,\textnormal{id})\).\qed
\end{lma}

\begin{thm}[DC]\label{KFilterWF}
The Ketonen order on filters is wellfounded.\qed
\end{thm}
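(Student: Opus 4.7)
The plan is to deduce the wellfoundedness of the Ketonen order on countably complete filters directly from \cref{SigmaDescentThm} (equivalently, \cref{KetonenWellfounded}), using DC only to convert the non-existence of a minimal element into an honest infinite descending sequence. Concretely, suppose for contradiction that the Ketonen order is not wellfounded. Then there is a nonempty class $\mathcal{C}$ of countably complete filters on ordinals with no $\sE$-minimal element. By DC applied to the binary relation $\{(F,F') \in \mathcal{C}\times \mathcal{C} : F'\sE F\}$, which has no maximal element in the reverse direction, one obtains a sequence $\langle F_n \mid n<\omega\rangle$ of countably complete filters on ordinals with $F_{n+1}\sE F_n$ for all $n<\omega$.

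Next, I invoke the lemma immediately preceding the statement: since $F_{n+1}\sE F_n$, there is a $\sigma$-comparison from $(F_{n+1},\mathrm{id})$ to $(F_n,\mathrm{id})$ (where the ambient Boolean algebra is $P(\delta_n)$ for the ordinal $\delta_n$ underlying $F_n$). Here I quietly use DC one more time, to pick such a comparison $h_{n+1,n}$ for each $n$ simultaneously; this gives a descending chain of the form
\[
\cdots \stackrel{h_{3,2}}\longrightarrow (F_2,\mathrm{id})\stackrel{h_{2,1}}\longrightarrow (F_1,\mathrm{id})\stackrel{h_{1,0}}\longrightarrow (F_0,\mathrm{id}).
\]
This contradicts \cref{SigmaDescentThm}, completing the proof.

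The only place any real work is needed is justifying the double use of DC, and neither is subtle: the first extraction of a descending sequence is the standard DC-from-non-wellfoundedness argument, and the second is choosing one $\sigma$-comparison per level from a set of witnesses indexed by $\omega$. All the combinatorics has already been absorbed into \cref{BooleanWFLma} and its corollary \cref{SigmaDescentThm}, so no additional inverse-system argument needs to be reproduced. In short, the step most worth pausing over is simply recording that ``$F'\sE F$ is witnessed by a $\sigma$-comparison in the sense of \cref{SigmaReductions}'' — which is precisely the content of the lemma the authors state immediately before \cref{KFilterWF}.
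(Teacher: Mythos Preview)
Your proposal is correct and matches the paper's intended argument: the theorem is marked \qed\ because it follows immediately from the preceding lemma (identifying $F_0\sE F_1$ with the existence of a $\sigma$-comparison from $(F_0,\mathrm{id})$ to $(F_1,\mathrm{id})$) together with \cref{SigmaDescentThm}, using DC to extract the descending sequence and the witnessing comparisons. One minor stylistic note: since the Ketonen order is defined on countably complete filters over a fixed ordinal $\delta$, the collection in question is a set rather than a proper class, so ordinary DC suffices without further comment.
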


Whenever \(U\sE W\) in the Ketonen order on ultrafilters, \(U\sE W\) in the Ketonen order on filters. Therefore \cref{KFilterWF} implies:
\begin{thm}[DC]\label{KetonenWF}
The Ketonen order on ultrafilters is wellfounded.\qed
\end{thm}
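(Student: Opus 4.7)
The plan is to deduce \cref{KetonenWF} directly from \cref{KFilterWF} by observing that the strict Ketonen order on countably complete ultrafilters is a sub-order of the strict Ketonen order on countably complete filters, so wellfoundedness transfers.

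The only real step is to unpack the two definitions and check the embedding. Suppose $U \sE W$ in the sense of \cref{KetonenDef}, where $U$ and $W$ are countably complete ultrafilters on an ordinal $\delta$. Then $U$ is of the form $W\text{-lim}_{\alpha < \delta} Z_\alpha$ where each $Z_\alpha$ is a countably complete ultrafilter on $\delta$ with $\alpha \in Z_\alpha$ for $W$-almost every $\alpha$. Because each $Z_\alpha$ is in particular a countably complete filter on $\delta$ containing $\alpha$, and since $U = W\text{-lim}_{\alpha<\delta} Z_\alpha$ implies $U \subseteq W\text{-lim}_{\alpha<\delta} Z_\alpha$, the same sequence $\langle Z_\alpha \mid \alpha < \delta\rangle$ witnesses $U \sE W$ in the sense of \cref{KFilterDef}. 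Nothing in the filter definition demands that the filters $F_\alpha$ in the seed sequence be ultrafilters, so the verification is mechanical.

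Given this, any hypothetical infinite descending chain $\cdots \sE U_2 \sE U_1 \sE U_0$ in the Ketonen order on countably complete ultrafilters on ordinals constitutes an infinite descending chain in the Ketonen order on countably complete filters on ordinals, contradicting \cref{KFilterWF}. There is no real obstacle: all of the work is already absorbed into \cref{KFilterWF}, which in turn is the concrete instance of \cref{SigmaDescentThm} for $\sigma$-comparisons between pointed filters on complete Boolean algebras, specialized to the atomic algebras $P(\delta)$ with norming name $\tau_{\textnormal{id}}$. DC is used only through the appeal to \cref{KFilterWF}.
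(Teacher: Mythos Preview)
Your proposal is correct and is exactly the paper's approach: the paper simply notes that whenever \(U\sE W\) in the Ketonen order on ultrafilters, \(U\sE W\) in the Ketonen order on filters, and then invokes \cref{KFilterWF}. Your verification of this inclusion via the equivalent characterization of \(\sE\) on ultrafilters (concentrating on \(\alpha\)) is fine and matches the paper's intent.
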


We finally prove the irreflexivity of the internal relation.
\begin{lma}\label{EasyInternal}
Suppose \(U\) and \(W\) are countably complete ultrafilters over sets \(X\) and \(Y\). 
Suppose \(\langle Z_y \mid y\in Y\rangle\) is a sequence of countably complete ultrafilters witnessing \(U\I W\).
Suppose \(\kappa\) is an ordinal and \(g : Y\to \kappa\) is a function such that for any \(\alpha < \kappa\),
\(g(y) > \alpha\) for \(W\)-almost all \(y\in Y\).
Then for any function \(f : X\to \kappa\),  \(Z: (U,f)\to(W,g)\).
\begin{proof}
Let \(\langle U_y \mid y\in Y\rangle\) witness that \(U\I W\). Then easily
\[U = W\text{-}\lim_{y\in Y}Z_y\]
So \(\langle Z_y \mid y\in Y\rangle\) is an ultrafilter reduction from \(U\) to \(W\).
Fix a function \(f : X\to \kappa\).

We must now verify that for \(W\)-almost all \(y\in Y\), for \(Z_y\)-almost all \(x\in X\), \(f(x) < g(y)\).
Since \(\langle Z_y \mid y\in Y\rangle\) witnesses \(U\I W\), it suffices to show that
for \(U\)-almost all \(x\in X\), for \(W\)-almost all \(y\in Y\), \(f(x) < g(y)\).
But this is a trivial consequence of our assumption on \(g\) and \(W\).
\end{proof}
\end{lma}

\begin{cor}\label{InternalIrrefl}
Suppose \(U\) is a countably complete ultrafilter such that \(j_U\) has a critical point.
Then \(U\not \I U\).
\begin{proof}
Let \(X\) be the underlying set of \(U\). The fact that \(j_U\) has a critical point \(\kappa\) implies that there is a function \(g : X\to \kappa\) such that for any \(\alpha < \kappa\),
\(g(x) > \alpha\) for \(U\)-almost all \(x\in X\). Assume \(U\I U\). Then by \cref{EasyInternal}, there is
an ultrafilter comparison from \((U,g)\) to \((U,g)\). This contradicts \cref{KetonenWellfounded}.
\end{proof}
\end{cor}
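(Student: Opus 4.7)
The plan is to derive a contradiction from $U\I U$ by manufacturing an infinite descending chain of filter comparisons, which is ruled out by \cref{KetonenWellfounded}. The two ingredients I need are a ``cofinal norm'' on $U$ valued in the critical point $\kappa$ of $j_U$, and the almost-automatic production of a filter comparison from $U$ to $U$ supplied by \cref{EasyInternal}.

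First I would build the norm. Let $X$ be the underlying set of $U$. Since $\kappa = \textnormal{crit}(j_U)$ exists, $U$ is $\kappa$-complete but not $\kappa^+$-complete; exactly as in the proof of \cref{CompletenessSurjection}, I fix a strictly decreasing sequence $\langle A_\alpha \mid \alpha < \kappa\rangle$ of sets in $U$ with $\bigcap_{\alpha < \kappa}A_\alpha = \emptyset$, and define $g : X\to \kappa$ by $g(x) = \min\{\alpha : x\notin A_{\alpha+1}\}$. For each $\alpha < \kappa$ one has $\{x\in X : g(x) > \alpha\} = A_{\alpha+1}\in U$, so $g$ is ``cofinal in $\kappa$ modulo $U$'' in precisely the sense that \cref{EasyInternal} requires of the function on its target pointed filter.

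Now assume toward contradiction that $U\I U$, and let $\langle Z_y\mid y\in X\rangle$ witness this. I apply \cref{EasyInternal} with $W = U$ and $f = g$: the hypothesis on $g$ is exactly what was arranged in the previous paragraph, so the conclusion is that $\langle Z_y\rangle$ is a filter comparison $Z : (U,g)\to (U,g)$. Finally I iterate trivially: setting $(F_n, f_n) := (U, g)$ and $Z_n := Z$ for every $n < \omega$ gives a descending sequence
\[\cdots \stackrel{Z}{\longrightarrow}(U,g) \stackrel{Z}{\longrightarrow}(U,g) \stackrel{Z}{\longrightarrow}(U,g),\]
contradicting \cref{KetonenWellfounded}.

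All of the conceptual work is in the first step: the hypothesis ``$j_U$ has a critical point'' is intrinsically about the ultrapower, while the two lemmas I want to feed it into are purely combinatorial, so I need to bridge the two by extracting $g$ from the failure of $\kappa^+$-completeness. Once $g$ is in hand the descent is immediate, and the only point demanding any care is to record that the clause ``$g(y) > \alpha$ for $W$-almost all $y$'' in \cref{EasyInternal} is exactly what the strict decrease of $\langle A_\alpha\rangle$ delivers.
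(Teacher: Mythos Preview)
Your proof is correct and follows essentially the same route as the paper's: produce a function $g:X\to\kappa$ that is cofinal modulo $U$, apply \cref{EasyInternal} to get a comparison $(U,g)\to(U,g)$, and invoke \cref{KetonenWellfounded}. You are simply more explicit than the paper about two points---the construction of $g$ (which you lift verbatim from the argument for \cref{CompletenessSurjection}) and the passage from a single self-comparison to an infinite descending chain---both of which the paper leaves implicit.
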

\bibliography{Bibliography.bib}
\bibliographystyle{unsrt}
\end{document}